\theoremstyle{plain}
\newtheorem{theorem}{Theorem}[section]
\newtheorem{thmdef}[theorem]{Theorem/Definition}
\newtheorem{lemma}[theorem]{Lemma}
\newtheorem{corollary}[theorem]{Corollary}
\newtheorem{proposition}[theorem]{Proposition}
\newtheorem*{theorem*}{Theorem}
\newtheorem*{lemma*}{Lemma}
\newtheorem*{corollary*}{Corollary}
\newtheorem*{proposition*}{Proposition}
\theoremstyle{definition}
\newtheorem{definition}[theorem]{Definition}
\newtheorem*{definition*}{Definition}
\theoremstyle{remark}
\newtheorem{remark}[theorem]{Remark}
\theoremstyle{plain}
\newtheorem{introtheorem}{Theorem}
\title{$G$-typical Witt vectors with coefficients and the norm}
\author{Thomas Read\\\texttt{thomas.read@warwick.ac.uk}}
\date{}
\newcommand{\can}[1]{\underline{#1}}
\begin{document}

\maketitle

\newcommand{\din}
  {\mathrel{%
    \begin{tikzpicture}[baseline=-0.58ex]
    \begin{scope}[line cap=round, thin]
    \draw[yshift=-.25mm] (0,0) -- (-.275,0);
    \draw[yshift= .25mm] (0,0) -- (-.275,0);
    \end{scope}
    \filldraw[ rounded corners=.25pt
             , thin
             , rotate around={45:(0,0)}
             , fill=backgroundcolor
             ] (-.45mm,-.45mm) rectangle (.45mm,.45mm);
    \end{tikzpicture}%
  }}

\newcommand\Osq{\mathbin{\text{\scalebox{.84}{$\square$}}}}

\DeclarePairedDelimiter\abs{\lvert}{\rvert}%
\DeclarePairedDelimiter\norm{\lVert}{\rVert}%

\newcommand{\xrightarrowdbl}[2][]{%
  \xrightarrow[#1]{#2}\mathrel{\mkern-14mu}\rightarrow
}

\makeatletter
\let\oldabs\abs
\renewcommand\abs{\@ifstar{\oldabs}{\oldabs*}}
\let\oldnorm\norm
\renewcommand\norm{\@ifstar{\oldnorm}{\oldnorm*}}
\makeatother

\newcommand{\slashedrightarrow}{\relbar\joinrel\mapstochar\joinrel\rightarrow}

\section*{Abstract}

For a profinite group $G$ we describe an abelian group $W_G(R; M)$ of $G$-typical Witt vectors with coefficients in an $R$-module $M$ (where $R$ is a commutative ring). This simultaneously generalises the ring $W_G(R)$ of Dress and Siebeneicher and the Witt vectors with coefficients $W(R; M)$ of Dotto, Krause, Nikolaus and Patchkoria, both of which extend the usual Witt vectors of a ring. We use this new variant of Witt vectors to give a purely algebraic description of the zeroth equivariant stable homotopy groups of the Hill-Hopkins-Ravenel norm $N_{\{e\}}^G(X)$ of a connective spectrum $X$, for any finite group $G$. Our construction is reasonably analogous to the constructions of previous variants of Witt vectors, and as such is amenable to fairly explicit concrete computations.

\setcounter{tocdepth}{2}
\tableofcontents

\section{Introduction}

Witt vectors were first described in
\cite{witt_zyklische_1937} for the purpose of classifying field extensions, but
have since found wider significance in a variety of areas of mathematics. Given a commutative ring $R$, the ring of $n$-truncated, $p$-typical Witt vectors of $R$ is a commutative ring $W_{n, p}(R)$. It has underlying set $\prod_{0 \le i < n} R$
but a more complicated addition
and multiplication, defined such that for all $0 \le j < n$ the ``ghost component'' map $w_j : \prod_{0
  \le i < n} R \to R$ with formula
\begin{equation}\label{eq:intro_ghost}(a_i) \mapsto \sum_{0 \le i \le j} p^i a_i^{p^{j-i}}
\end{equation}
is a ring homomorphism.

The norm is an important construction in equivariant stable homotopy theory,
studied by Hill, Hopkins and Ravenel in their work \cite{hill_nonexistence_2016}
on the Kervaire invariant one problem. In the case we are interested in, and
using the point-set model of orthogonal $G$-spectra, the
definition is straightforward. For a finite
group $G$ and a spectrum $X$, the norm $N_{\{e\}}^G X$ is the
$G$-spectrum obtained by taking the smash product $X^{\wedge
  \abs{G}}$ of $\abs{G}$-many copies of (a cofibrant replacement of) $X$, and letting $G$ act by permuting
the factors.

In order to describe the zeroth homotopy groups of the norm, we introduce a
generalisation of Witt vectors. Given a profinite group $G$, a commutative ring
$R$ and an $R$-module $M$, we define a topological abelian group $W_{G}(R;
M)$ of $G$-typical Witt vectors with coefficients in $M$.
This group is specified by a list of defining properties
(Theorem~\ref{thm:witt_properties}) somewhat analogous to those for the usual Witt
vectors. Let $S$ be the set of open subgroups of $G$, and let $\can{S} \subseteq S$
consist of a choice of representative for each conjugacy class of open
subgroups. Then the underlying space of $W_{G}(R; M)$ is a quotient of
\[\prod_{V \in \can{S}} M^{\otimes_R G/V} \text{.}\]
The quotient and the addition operation are governed by ``ghost components''
\[w_U : \prod_{V \in \can{S}} M^{\otimes_R G/V} \to M^{\otimes_R G/U}\]
for $U \in S$, defined by a formula analogous to (\ref{eq:intro_ghost}) but
replacing powers by tensor powers and scalars by transfers.

When $T$ is a free commutative ring and $Q$ is a free $T$-module, we prove in
Proposition~\ref{prop:witt_free} that
there is a (not natural) isomorphism of topological abelian groups
\begin{equation}\label{eq:intro_witt_free}W_{G}(T; Q) \cong \prod_{V \in \can{S}} (Q^{\otimes_T G/V})_{N_G(V)} \text{,}\end{equation}
where $N_G(V)$ denotes the normaliser of $V$ in $G$.

The main aim of the paper is to prove the following (see Theorem~\ref{thm:main_theorem}).
\begin{introtheorem}\label{introthm:basic_iso}
  For a finite group $G$ and a connective spectrum $X$, we
  have an isomorphism of abelian groups
  \[\pi^G_0(N_{\{e\}}^GX) \cong W_G(\mathbb{Z}; \pi_0 X) \text{,}\]
  natural in $X$. Here $\pi_0^G$ denotes the zeroth $G$-equivariant stable homotopy group.
\end{introtheorem}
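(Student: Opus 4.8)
The plan is to reduce to Eilenberg--MacLane spectra, treat the case of free coefficient modules by decomposing the norm, and bootstrap to arbitrary modules using that both sides preserve sifted colimits. Since $N_{\{e\}}^G$ is built from the smash product, the filtration of $N_{\{e\}}^G Y$ attached to a cofibration $Y'\hookrightarrow Y$ of connective spectra -- by the number of smash factors landing in the cofibre $Y/Y'$ -- has bottom piece $N_{\{e\}}^G Y'$ and higher filtration quotients inheriting the connectivity of $Y/Y'$; applied to the $0$-truncation $X\to H\pi_0 X$, whose cofibre $\Sigma\tau_{\ge 1}X$ is $2$-connective, this gives $\underline{\pi}_0 N_{\{e\}}^G X\cong\underline{\pi}_0 N_{\{e\}}^G H\pi_0 X$ naturally, so it suffices to produce a natural isomorphism $\pi_0^G N_{\{e\}}^G HA\cong W_G(\mathbb{Z};A)$ for abelian groups $A$. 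Moreover both functors $A\mapsto\pi_0^G N_{\{e\}}^G HA$ and $A\mapsto W_G(\mathbb{Z};A)$ preserve filtered colimits and geometric realisations of simplicial abelian groups -- the former because $N_{\{e\}}^G$ commutes with these colimits and $\pi_0^G$ of a connective simplicial $G$-spectrum is the coequaliser computed from its $1$-skeleton, the latter from the construction in Theorem~\ref{thm:witt_properties} -- hence both are left Kan extended from the subcategory of finitely generated free modules, and it is enough to work there.

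The next ingredient is the ghost maps. For each subgroup $U\le G$ the standard diagonal computation of geometric fixed points of a norm gives $\Phi^U N_{\{e\}}^G X\simeq X^{\wedge[G:U]}$, with $W_G(U)=N_G(U)/U$ permuting the factors through the free $W_G(U)$-set $G/U$; taking $\pi_0$ yields a natural ``ghost'' map $\pi_0^G N_{\{e\}}^G HA\to\pi_0\Phi^U N_{\{e\}}^G HA\cong A^{\otimes_{\mathbb{Z}}G/U}$, which I expect to correspond to the algebraic ghost component $w_U$. Rationally these maps assemble into an isomorphism onto $\prod_{U}(\mathbb{Q}^{(S)})^{\otimes_{\mathbb{Q}}G/U}$ (rational $G$-spectra split along the subgroup poset), matching the rationalised algebraic ghost map; so for a free module it will suffice to check that the two integral subgroups of this rational target -- the images of $\pi_0^G N_{\{e\}}^G H\mathbb{Z}^{(S)}$ and of $W_G(\mathbb{Z};\mathbb{Z}^{(S)})$ -- coincide. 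This simultaneously pins the isomorphism down and, since ghost maps are injective on free modules and natural, forces its naturality for \emph{all} maps of free modules, not just those induced by maps of sets.

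For the free case itself, write $HA\simeq\bigvee_S H\mathbb{Z}$ for $A=\mathbb{Z}^{(S)}$ and distribute the smash product over the wedge:
\[
  N_{\{e\}}^G\Big(\bigvee_S H\mathbb{Z}\Big)\;\simeq\;\bigvee_{[\phi]\in\operatorname{Map}(G,S)/G} G_+\wedge_{V_\phi}\big((N_{\{e\}}^{V_\phi}H\mathbb{Z})^{\wedge[G:V_\phi]}\big),
\]
where $V_\phi\le G$ is the isotropy of $\phi$ under the permutation action of $G$ on $\operatorname{Map}(G,S)$ and $G$ permutes both the wedge summands and, on each, the $\abs{G}$ smash factors. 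Applying $\pi_0^G$: the Wirthm\"uller isomorphism gives $\pi_0^G(G_+\wedge_V Z)\cong\pi_0^V Z$, and then the key point is that $\mathbb{S}=(N_{\{e\}}^V\mathbb{S})^{\wedge k}\to(N_{\{e\}}^V H\mathbb{Z})^{\wedge k}$ is an isomorphism on $\underline{\pi}_0$ -- because $\mathbb{S}\to H\mathbb{Z}$ is a $\pi_0$-isomorphism, so by the filtration argument its $V$-norm is a $\underline{\pi}_0$-isomorphism, and smashing preserves this -- whence $\pi_0^V\big((N_{\{e\}}^V H\mathbb{Z})^{\wedge k}\big)\cong\pi_0^V\mathbb{S}=A(V)$, the Burnside ring of $V$. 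Hence $\pi_0^G N_{\{e\}}^G H\mathbb{Z}^{(S)}\cong\bigoplus_{[\phi]}A(V_\phi)$. On the other side Proposition~\ref{prop:witt_free} gives $W_G(\mathbb{Z};\mathbb{Z}^{(S)})\cong\prod_{V\in\can{S}}\big((\mathbb{Z}^{(S)})^{\otimes_{\mathbb{Z}}G/V}\big)_{N_G(V)}$, free abelian on $\coprod_{[V]}\operatorname{Map}(G/V,S)/N_G(V)$. Expanding $A(V_\phi)=\bigoplus_{[W\le V_\phi]}\mathbb{Z}$ and matching a triple $([V],[\phi],[W])$ with the function on $G/W$ obtained by pulling $\phi$ back along $G/W\twoheadrightarrow G/V$ (taken modulo $N_G(W)$) yields the required bijection of indexing sets -- its inverse sends $\psi\colon G/W\to S$ to its full isotropy group $V_\psi\supseteq W$ together with the resulting primitive function on $G/V_\psi$ -- and one checks this identification is compatible with the ghost maps above.

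Putting these together, the isomorphism on finitely generated free modules extends uniquely to a natural isomorphism $\pi_0^G N_{\{e\}}^G HA\cong W_G(\mathbb{Z};A)$, and hence to $\pi_0^G N_{\{e\}}^G X\cong W_G(\mathbb{Z};\pi_0 X)$ natural in the connective spectrum $X$, which is Theorem~\ref{thm:main_theorem}. The step I expect to be hardest is the free case: on one hand carrying the equivariant bookkeeping -- transfers, restrictions, residual Weyl-group actions, and the identification of $\pi_0^V$ of a smash power of a norm with a Burnside ring -- honestly through the wedge decomposition, and on the other (and equivalently) verifying that the integral lattice cut out topologically inside the rational ghost target, via the isotropy-separation attaching data for $N_{\{e\}}^G H\mathbb{Z}^{(S)}$, is precisely the one built into $W_G(\mathbb{Z};\mathbb{Z}^{(S)})$ -- that is, that the topological congruences are exactly the $G$-typical Dwork congruences with coefficients.
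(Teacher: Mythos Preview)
Your proposal is correct and shares the paper's overall architecture: reduce to Eilenberg--MacLane spectra, show both sides are left Kan extended from free modules (the paper uses reflexive coequalisers rather than general sifted colimits, but either suffices), and pin down the isomorphism on free modules via the ghost maps. The genuine difference is in how the free case is handled. The paper constructs an explicit Teichm\"uller map $\tau_{G/H}\colon M^{\otimes G/H}\to\pi_0^H N_{\{e\}}^G HM$ mirroring the algebraic one (Proposition~\ref{prop:norm_teichmuller_properties}), builds the comparison $I=\sum_V\text{tr}^G_V\tau_{G/V}$, proves the topological ghost map $\overline w$ is injective for free $Q$ by an isotropy-separation induction on truncation sets (Lemma~\ref{lem:top_ghost_injective}), and then checks $I$ is an isomorphism by comparing geometric fixed points. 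You instead compute $\pi_0^G N_{\{e\}}^G H\mathbb{Z}^{(S)}$ directly by distributing the smash power over the wedge---essentially tom Dieck splitting, as the paper acknowledges in the remark after Proposition~\ref{prop:witt_free}---obtaining a sum of Burnside rings, and match this combinatorially with the Witt-side basis from Proposition~\ref{prop:witt_free}; your injectivity of $\overline w$ (free abelian source plus rational isomorphism) is slicker than the paper's induction. The trade-off is that the paper's Teichm\"uller route is uniform across truncations and automatically delivers compatibility with all the Mackey operators and $R_{S'}$, whereas your approach is more direct for the untruncated statement but leaves as a genuine exercise the step you correctly flag as hardest: verifying that the combinatorial bijection of bases is compatible with the two ghost embeddings, i.e.\ that the topological lattice in the rational ghost target is cut out by exactly the Dwork congruences.
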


We believe that our Witt vector construction is interesting both from a topological point
of view and from a purely algebraic
point of view. Topologically, it completes a pattern of using Witt vector-like
methods to calculate the zeroth
homotopy groups of certain spectra related to topological
Hochschild homology. We describe this in Section~\ref{sec:prior_work} below. The
construction also gives us a concrete understanding of
the rich structure of the zeroth equivariant homotopy groups of the norm: for example the isomorphism
(\ref{eq:intro_witt_free}) above corresponds to the tom Dieck splitting of the norm of
a suspension spectrum. Algebraically, the
$G$-typical Witt vectors with coefficients provide a common generalisation of
several previous variants of Witt vectors, and so give insight into how these
structures are related and how they can be interpreted.
We note that if a reader just wishes to study
$W_G(R; M)$ as an algebraic construction,
they can safely read Sections~\ref{sec:preliminaries}, \ref{sec:witt_definition} and
most of \ref{sec:witt_mackey} as well as the worked example in Appendix~\ref{app:computation}
with no knowledge of homotopy theory.

\subsection{Motivation and prior work} \label{sec:prior_work}

In \cite{hesselholt_k-theory_1997}, Hesselholt and Madsen prove that the zeroth
equivariant stable homotopy
group of the topological Hochschild homology (THH) spectrum can be computed using the
$p$-typical Witt vectors of a ring. Specifically, they show that given a commutative ring
$R$ there is a ring isomorphism
\[\pi_0^{C_{p^n}} (\text{THH}(R)) \cong W_{n+1, p}(R) \text{.}\]

In \cite{dotto_witt_2022} and \cite{dotto_witt_2025}, the authors generalise
this to compute the zeroth homotopy group of topological restriction homology
$\text{TR}$ with coefficients. Given a (not necessarily commutative) ring $R$
and an $(R, R)$-bimodule $M$, they define an abelian group
$W_{n+1, p}(R; M)$
analogous to the Witt vectors, which they call the group of $n+1$-truncated
$p$-typical Witt vectors with coefficients in $M$.
For a connective ring
spectrum $E$ and a connective $(E, E)$-bimodule spectrum $X$, they give an isomorphism
of abelian groups
\begin{equation}\pi_0 (\text{TR}^{n+1}(E; X)) \cong W_{n+1, p}(\pi_0 E;
  \pi_0 X) \text{,}\label{eq:tr_witt}\end{equation}
where $\text{TR}^{n+1}(E; X)$ is the truncated topological restriction homology with
coefficients of \cite{lindenstrauss_taylor_2012} and
\cite{krause_polygonic_2023}.
This extends \cite{hesselholt_k-theory_1997} since for $R$ a commutative ring
and $HR$ the corresponding Eilenberg-MacLane spectrum, $\text{TR}^{n+1}(HR; HR) \simeq \text{THH}(R)^{C_{p^n}}$
and $W_{n+1, p}(R; R) \cong W_{n+1, p}(R)$.

In the special case where $E = \mathbb{S}$ is the sphere spectrum (so $X$
can be any connective spectrum, considered as an $(\mathbb{S},
\mathbb{S})$-bimodule) then by construction there is an equivalence
$\text{TR}^{n+1}(\mathbb{S}; X) \simeq (N_{\{e\}}^{C_{p^n}} X)^{C_{p^n}}$.
Hence the isomorphism (\ref{eq:tr_witt}) specialises to an isomorphism
\begin{equation}\label{eq:dknp_norm}\pi^{C_{p^n}}_0(N_{\{e\}}^{C_{p^n}} X) \cong W_{n+1, p}(\mathbb{Z}; \pi_0 X) \text{.}\end{equation}

The main application of the present paper is to generalise (\ref{eq:dknp_norm})
to a computation of
$\pi^G_0(N_{\{e\}}^GX)$
for $G$ any finite group, in terms of a Witt vector-like construction on $\pi_0 X$.
We take inspiration from the work of Dress and
Siebeneicher in \cite{dress_burnside_1988}, where they describe a variant of the
usual $p$-typical Witt vectors of a ring with the prime $p$ replaced by a group.
Given a commutative ring $R$ and a profinite group $G$, they build a commutative
 ring $W_G(R)$, which we will refer to as the ring of $G$-typical Witt
vectors of $R$.\footnote{There appears to be no consensus on a name
  for this construction. Other names include the Witt-Burnside ring and the ring
  of $G$-Witt vectors.} This generalises the $p$-typical Witt vectors in the sense that
$W_{C_{p^n}}(R) \cong W_{n+1, p}(R)$
(and taking $G$ to be the profinite completion of the integers recovers the
big Witt vectors $W(R)$ of \cite{atiyah_group_1969}). They define $W_G(\mathbb{Z})$ to be the Burnside
ring of $G$ when $G$ is finite, and a completed version of the Burnside ring for
$G$ infinite; they then extend this to rings other than $\mathbb{Z}$.

Our construction $W_{G}(R; M)$ gives a common generalisation of the Witt vectors with coefficients
and the $G$-typical Witt vectors.
We have $W_{C_{p^n}}(R; M) \cong W_{n+1, p}(R; M)$ and $W_G(R; R) \cong
W_G(R)$. For a connective spectrum $X$ and a finite group
$G$, Theorem~\ref{introthm:basic_iso} gives an isomorphism
\[\pi_0^G(N_{\{e\}}^G X) \cong W_G(\mathbb{Z}; \pi_0 X)\text{,}\]
completing
the pattern. More generally we can compute the $H$-equivariant zeroth homotopy
group for any subgroup $H$, and the Mackey functor structure of
the homotopy groups corresponds to our versions of the usual operators
between Witt vectors---we explain this in more detail in Section~\ref{sec:overview}.

As a sanity check, observe that we have $\pi^G_0(N_{\{e\}}^G\mathbb{S}) \cong W_G(\mathbb{Z};
\mathbb{Z}) \cong W_G(\mathbb{Z})$. Indeed this makes sense: $N_{\{e\}}^G\mathbb{S}
\simeq \mathbb{S}$ so $\pi^G_0(N_{\{e\}}^G \mathbb{S}) \cong \pi^G_0(\mathbb{S})$, and $\pi^G_0(\mathbb{S})$ and $W_G(\mathbb{Z})$ are both the
Burnside ring of $G$.

We note that we are not the first to give an algebraic description of
the Mackey functor of the equivariant homotopy groups
$\underline{\pi}_0(N_{\{e\}}^GX)$. In
\cite{ullman_symmetric_2013}, Ullman introduces a norm construction on Mackey functors. Given
a finite group $G$, a subgroup $H$ and an $H$-Mackey functor $\underline{M}$,
he defines $N^G_H(\underline{M})$ as a certain sub-Mackey
functor of the free Tambara functor on $\text{Ind}_H^G \underline{M}$; this can also be converted into a somewhat complicated
presentation via generators and relations. For any connective $H$-spectrum $X$
he shows
\[\underline{\pi}_0(N_H^G X) \cong N^G_H(\underline{\pi}_0X) \text{.}\]
Hoyer (in \cite{hoyer_two_2014})
gives a more abstract description of $N^G_H$ as a left Kan extension along a functor between Burnside categories.
Our approach provides new insight into the structure of these Mackey functors in
the $H = \{e\}$ case, and
is more amenable to explicit calculation. It also links these ideas with the
work on Witt vectors described above, which may suggest areas for future enquiry
on either side. We expect our construction for infinite profinite groups $G$ to
give a $G$-Mackey profunctor in the sense of Kaledin \cite{kaledin_mackey_2022}, and hence a connection to the quasifinitely
genuine $G$-spectra of \cite{krause_polygonic_2023}, but leave
investigation of this to future work (see Remark~\ref{rem:g_infinite}).

Other relevant work includes \cite{hill_equivariant_2019}, which gives an explicit
description of the norm of Mackey functors in the special case of cyclic $p$-groups, and
\cite{lindenstrauss_loday_2024}, which develops a version of the Loday construction
for $G$-Tambara functors.

\subsection{Overview of the results} \label{sec:overview}

In Section~\ref{sec:preliminaries} we give preliminary definitions and conventions.

In Section~\ref{sec:witt_definition} we briefly recall the construction of the usual
Witt vectors, then introduce our Witt vector construction $W_{G}(R; M)$. In fact we give a slightly more
general construction. Firstly, in order to compute the full Mackey functor of
zeroth equivariant homotopy groups of the norm, we want a group $W_{H \le G}(R; M)$ for each open
subgroup $H \le G$, where we define $W_G(R; M) \coloneqq W_{G \le G}(R; M)$.
In fact it turns out (Lemma~\ref{lem:HGHHiso}) that $W_{H \le G}(R; M) \cong W_{H \le H}(R; M^{\otimes_R G/H})$,
but this isomorphism is not canonical, so we need to parametrise by both $H$ and
$G$ to avoid a lot of unnecessary bookkeeping. Secondly, we will need a
truncated version of the Witt vectors, generalising the truncated big Witt vectors of a ring
(e.g.\ as described in \cite{hesselholt_big_2015}). Let $S$ be
a set of open subgroups of $H$ that is upwards closed
and closed under conjugation; we call this a truncation set for $H$. Then in
Theorem~\ref{thm:witt_properties} we will give defining properties for a group of truncated Witt
vectors $W_{H \le G}^S(R; M)$. Note $W_{H \le G}(R; M) \coloneqq W^{S_0}_{H \le
  G}(R; M)$ where $S_0$ is the set of all open subgroups of $H$.

The construction of the $G$-typical Witt vectors with coefficients is largely analogous to that in \cite{dotto_witt_2025}, with
some inspiration from \cite{dress_burnside_1988}. The key technical ingredient is the Dwork lemma
(Lemma~\ref{lem:dwork}), which takes a similar form to Theorem~2.7.3 of
\cite{dress_burnside_1988} but has an almost entirely new proof. We complete the
construction with Definition~\ref{def:witt}.

We define operators between these groups in Section~\ref{sec:operators}.
As with the usual Witt vectors, there are Frobenius and Verschiebung operators.
Given $K$ an open subgroup of $H$ we have natural
additive maps
\[F^H_K : W^S_{H \le G}(R; M) \to W^{S \mid_K}_{K \le G}(R; M) \text{,}\]
\[V^H_K : W^{S \mid_K}_{K \le G}(R; M) \to W^S_{H \le G}(R; M) \text{,}\]
where $S\!\!\mid_K$ is the set of subgroups in $S$ that are contained in $K$.
We also define a conjugation operator
\[c_g : W^S_{H \le G}(R; M) \to W^{gSg^{-1}}_{gHg^{-1} \le G}(R; M)\]
for $g \in G$, where $gSg^{-1}$ is the truncation set for $gHg^{-1}$
obtained by conjugating the subgroups in $S$.
Given another truncation set $S' \subseteq S$ for $H$ we define a truncation operator
\[R_{S'} : W^S_{H \le G}(R; M) \to W^{S'}_{H \le G}(R; M) \text{.}\]
We define a Teichm\"uller map
\[\tau_{G/H} : M^{\otimes_R G/H} \to W^S_{H \le G}(R; M) \text{.}\]
This depends on a choice of coset representatives for $G/H$, and is
not necessarily additive.
Finally we construct an external product
\[\star : W^S_{H \le G}(R; M) \otimes_{\mathbb{Z}} W^S_{H \le G}(R'; M') \to W^S_{H \le G}(R
  \otimes_{\mathbb{Z}} R'; M \otimes_{\mathbb{Z}} M') \text{,}\]
making $W^S_{H \le G}$ a lax monoidal functor from a suitable category of modules to
the category of abelian groups.

In Section~\ref{sec:generalise} we show that the $G$-typical Witt vectors
with coefficients generalise the previous variants of Witt vectors described in
Section~\ref{sec:prior_work}. The external product lets us recover the
multiplication on the $p$- and $G$-typical Witt vectors of a ring.

To prove that $W_{H \le G}(R; M)$ computes the zeroth homotopy of the norm we will use induction and the isotropy separation sequence
of spectra.
We review isotropy separation (and the corresponding algebraic
constructions on Mackey functors) in Section~\ref{sec:isotropy_and_mackey}. When
$G$ is finite the Frobenius, Verschiebung and conjugation operators
give $H/K \mapsto W^{S\mid_K}_{K \le G}(R; M)$ the structure of an $H$-Mackey functor, which we
denote $\underline{W}^S_G(R; M)$ (Definition~\ref{def:witt_mackey}). The analogue of the isotropy separation sequence is an exact
sequence relating different truncations (Lemma~\ref{lem:witt_exact}); %
note the complement of a truncation set is precisely the usual notion of a family of subgroups in
equivariant stable homotopy theory.
We also prove that the external product makes the untruncated Witt vector construction
$\underline{W}_G$ into a strong symmetric
monoidal functor from modules to $G$-Mackey functors, sending tensor products of
modules to box products of Mackey functors.

In Section~\ref{sec:norm} we recall the norm construction and prove the main
theorem (a more refined version of Theorem~\ref{introthm:basic_iso} above):
\begin{introtheorem}
  For $G$ a finite group and $X$ a connective spectrum, we
  have an isomorphism of Mackey functors
  \[\underline{\pi}_0(N_{\{e\}}^G X) \cong \underline{W}_G(\mathbb{Z}; \pi_0 X) \text{.}\]

  More generally, suppose $S$ is a truncation set for a subgroup $H \le G$ and let
  $\mathcal{F}(S) = \{U \le H \mid U \not\in S\}$ be the family of
  subgroups of $H$ that are not in $S$. Then
  \[\underline{\pi}_0(N_{\{e\}}^G X \wedge {\tilde{E}\mathcal{F}(S)}) \cong \underline{W}_G^S(\mathbb{Z}; \pi_0 X) \text{,}\]
  where ${\tilde{E} \mathcal{F}(S)}$ is the based $H$-space with
  ${\tilde{E} \mathcal{F}(S)}^U$ homotopy equivalent to $S^0$ if $U \in S$, and
  contractible otherwise.
\end{introtheorem}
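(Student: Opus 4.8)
The plan is to reduce to an Eilenberg--MacLane spectrum, write down a natural comparison map \emph{out of} the Witt vectors, and prove it is an isomorphism by induction on the size of the truncation set, using isotropy separation on both the homotopical and the algebraic side.

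First I would reduce to the case $X = H\pi_0 X$. The truncation map $X \to H\pi_0 X$ is $1$-connected, and the Hill--Hopkins--Ravenel connectivity estimates for the norm show that $N_{\{e\}}^G$ sends $1$-connected maps of connective spectra to $1$-connected maps of $G$-spectra; since smashing with $\tilde E\mathcal{F}(S)$ and applying $\underline{\pi}_0$ preserve this, the left-hand side depends only on $M \coloneqq \pi_0 X$, functorially, and commutes with filtered colimits in $M$ (as does $\underline{W}^S_G(\mathbb{Z};-)$). Next I would construct a natural transformation $\Theta_S \colon \underline{W}^S_G(\mathbb{Z}; M) \to \underline{\pi}_0(N_{\{e\}}^G HM \wedge \tilde E\mathcal{F}(S))$. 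Using that $\underline{W}^S_G(\mathbb{Z};M)$ is generated by the Teichm\"uller and Verschiebung classes of Section~\ref{sec:operators} (as for the usual Witt vectors), I send $\tau_{G/K}(y)$, for $y \in M^{\otimes_{\mathbb{Z}}G/K}$, to the image under the equivariant norm $N_{\{e\}}^K$ of the class of $y$ in $\pi_0$ of the $(G/K)$-fold smash power of $HM$ --- this lands in $\underline{\pi}_0(N_{\{e\}}^G HM)(G/K)$ because $N_{\{e\}}^K$ applied to that smash power is $\mathrm{res}^G_K N_{\{e\}}^G HM$ --- and I send each $V^H_K$ to the transfer $\mathrm{tr}^H_K$. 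That this is well defined and a map of Mackey functors follows by matching the defining relations of Theorem~\ref{thm:witt_properties} against the standard double-coset and projection-formula identities for norms, transfers, restrictions and conjugations in genuine $G$-spectra.

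The main work is the induction on $\lvert S\rvert$. The base case $S = \emptyset$ is immediate: $\tilde E\mathcal{F}(\emptyset) \simeq *$, so the left side vanishes, and $\underline{W}^\emptyset_G = 0$. For the inductive step choose a conjugacy class $[V]$ minimal in $S$ and put $S' \coloneqq S \setminus [V]$; removing minimal elements preserves upward-closure, so $S'$ is again a truncation set with $\lvert S'\rvert < \lvert S\rvert$. Algebraically, Lemma~\ref{lem:witt_exact} gives a short exact sequence of $H$-Mackey functors
\[
0 \to \underline{L} \to \underline{W}^S_G(\mathbb{Z};M) \xrightarrow{\,R_{S'}\,} \underline{W}^{S'}_G(\mathbb{Z};M) \to 0 ,
\]
where the ``layer'' $\underline{L}$ is, by the layerwise form of Proposition~\ref{prop:witt_free}, obtained from the free $W_H(V)$-Mackey functor on $M^{\otimes_{\mathbb{Z}}G/V}$ (with its Weyl action) by inflating along $N_H(V) \to W_H(V)$ and inducing up to $H$. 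Homotopically, the map $\tilde E\mathcal{F}(S) \to \tilde E\mathcal{F}(S')$ has fibre $E\mathcal{F}(S')_+ \wedge \tilde E\mathcal{F}(S)$, so smashing with $N_{\{e\}}^G HM$ gives a fibre sequence
\[
N_{\{e\}}^G HM \wedge E\mathcal{F}(S')_+ \wedge \tilde E\mathcal{F}(S) \to N_{\{e\}}^G HM \wedge \tilde E\mathcal{F}(S) \to N_{\{e\}}^G HM \wedge \tilde E\mathcal{F}(S')
\]
whose fibre $\Lambda$ has geometric isotropy exactly $[V]$ with $\Phi^V\Lambda \simeq \Phi^V(N_{\{e\}}^G HM)$. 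By the Hill--Hopkins--Ravenel description of the geometric fixed points of a norm, this is a smash power of the regular norm over $W_G(V)$; restricted to $W_H(V)$ it becomes $N_{\{e\}}^{W_H(V)}$ of a smash power of $HM$, which is a \emph{free} $W_H(V)$-spectrum. Hence $\underline{\pi}_\ast(\Lambda)$ is that of an inflation--induction of the homotopy orbits of the connective spectrum $\Phi^V\Lambda$; in particular $\underline{\pi}_{-1}(\Lambda)=0$ (so the map of $\underline{\pi}_0$'s on the right is surjective) and $\underline{\pi}_0(\Lambda)$ is the homotopy-orbit Mackey functor determined by $\pi_0$ of the underlying spectrum of $\Phi^V\Lambda$, namely $M^{\otimes_{\mathbb{Z}}G/V}$ with the correct Weyl action, so $\underline{\pi}_0(\Lambda)\cong\underline{L}$. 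Granting that the resulting sequence on $\underline{\pi}_0$ is short exact and matches the algebraic one under $\Theta_S$, the five lemma (with $\Theta_{S'}$ an isomorphism by induction and $\underline{L}\to\underline{\pi}_0(\Lambda)$ an isomorphism by the free-case identification) shows $\Theta_S$ is an isomorphism. Taking $S = S_0$, the set of all open subgroups, gives $\tilde E\mathcal{F}(S_0)\simeq S^0$ and hence $\underline{\pi}_0(N_{\{e\}}^G X)\cong\underline{W}_G(\mathbb{Z};\pi_0 X)$.

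I expect the inductive step to be the crux, specifically the two points left implicit above: that the topological fibre sequence yields a genuinely \emph{short} exact sequence on $\underline{\pi}_0$ --- equivalently, that the $\pi_1$-level connecting map dies, or that the layer map $\underline{L}\to\underline{\pi}_0(N_{\{e\}}^G HM\wedge\tilde E\mathcal{F}(S))$ is injective, which I would prove either by exhibiting a splitting after smashing with $EW_H(V)_+$, or by detecting the layer on a ghost component, or by reducing to the torsion-free case via filtered colimits of finitely generated $M$; and that $\underline{\pi}_0(\Lambda)$ is identified with $\underline{L}$ \emph{compatibly with $\Theta$}. Here the Hill--Hopkins--Ravenel formula for $\Phi^V N_{\{e\}}^G$, the (non-canonical) splitting of Proposition~\ref{prop:witt_free}, and the bookkeeping of the $W_H(V)$-actions must all be lined up, and extra care is warranted precisely because the free-case isomorphism is not natural.
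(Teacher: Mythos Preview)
Your overall strategy---reduce to $X=HM$, build a comparison map from the Witt side to the norm side out of Teichm\"uller classes and transfers, then induct on $\lvert S\rvert$ via isotropy separation---is the same as the paper's. But your inductive step has a genuine gap, and it is precisely the one you flag yourself: for a general abelian group $M$, neither the Witt-side sequence of Lemma~\ref{lem:witt_exact} nor the $\underline{\pi}_0$ sequence coming from isotropy separation (Lemma~\ref{lem:isotropy}) is short exact; both are only right exact. So you cannot invoke the five lemma as stated. Your suggested fix via filtered colimits only passes from finitely generated free $M$ to torsion-free $M$, not to all $M$. Your claim that $\Phi^V\Lambda$, viewed as a $W_H(V)$-spectrum, is $N_{\{e\}}^{W_H(V)}$ of something and therefore a \emph{free} $W_H(V)$-spectrum is also wrong: a norm is not a free equivariant spectrum, so you do not get $\underline{\pi}_{-1}(\Lambda)=0$ that way.

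The paper resolves this by running the inductive five-lemma argument \emph{only for free abelian $Q$}, and the device that makes the induction go is a topological ghost map $\overline{w}\colon \pi_0^H(N_{\{e\}}^G HQ\wedge\tilde E\mathcal F(S))\to \mathrm{gh}^S_{H\le G}(\mathbb Z;Q)$ built from restrictions and truncations down to $S=\{U\}$ (where $\tau_{G/U}$ is an isomorphism). One proves by induction on $\lvert S\rvert$ that $\overline{w}$ is injective: at the layer for a minimal $K\in S$ the left vertical map becomes the transfer $\mathrm{tr}^{N_H(K)}_K\colon (Q^{\otimes G/K})_{N_H(K)}\to (Q^{\otimes G/K})^{N_H(K)}$, which is injective exactly because $Q$ is free, and this forces both left exactness of the top row and injectivity of $\overline{w}$ via the four lemma. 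Since your map $\Theta_S$ factors the usual ghost map through $\overline{w}$, this pins down $\Theta_S$ as an injection for free $Q$; surjectivity follows by checking geometric fixed points (Lemma~\ref{lem:mackey_isotropy_surjective}). The passage to arbitrary $M$ is then handled not by filtered colimits but by reflexive coequalisers: the paper shows (Lemma~\ref{lem:norm_refl_coeq}) that $M\mapsto\pi_0^H(N_{\{e\}}^G HM\wedge\tilde E\mathcal F(S))$ preserves reflexive coequalisers, so the free-case isomorphism extends by the Kan-extension machinery of Lemma~\ref{lem:finite_module_kan}. Your own phrase ``detecting the layer on a ghost component'' is exactly this idea---if you pursue it, you will reconstruct $\overline{w}$ and the paper's argument.
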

This will be Theorem~\ref{thm:main_theorem} in the main text. Our proof takes an
approach
broadly analogous to that in \cite{dotto_witt_2025}. The main new work is
the construction of an analogue of the Teichm\"uller map for the norm, in Section~\ref{sec:norm_teichmuller}.

In Appendix~\ref{app:computation} we outline some strategies for doing explicit
computations with the $G$-typical Witt vectors with coefficients,
illustrated by the calculation
\[W_{D_6}(\mathbb{Z}; \mathbb{Z}/3\mathbb{Z}) \cong (\mathbb{Z}/3\mathbb{Z})^2
  \oplus \mathbb{Z}/9\mathbb{Z} \text{.}\]

\subsection{Acknowledgements}

We would like to thank Emanuele Dotto and Irakli Patchkoria for many helpful
conversations, and the anonymous referee for their helpful comments. %

The author is supported by the Warwick Mathematics Institute Centre for Doctoral Training, and gratefully acknowledges funding from the University of Warwick.

\section{Preliminaries}\label{sec:preliminaries}

Before we start the construction of the $G$-typical Witt vectors with
coefficients, we will introduce some basic concepts and notation.

\subsection{Conventions and notation}

Unless otherwise specified, all rings are commutative.

We will write $\text{Top}_\text{Haus}$ for the category of Hausdorff topological spaces
and $\text{Ab}_\text{Haus}$ for the category of Hausdorff topological abelian groups.

Let $R$ be a ring and $M$ an $R$-module. We use $M^{\otimes_R n}$ to denote the $n$-fold
tensor product of $M$ over $R$. For $X$ a set, we use $M^{\otimes_R X}$ to
denote the $\abs{X}$-fold tensor product of $M$ over $R$, which we consider to
be indexed by elements of $X$. It is important that $R$ is
commutative, so ${\otimes_R}$ gives a symmetric monoidal product on the
category of $R$-modules; this means that $M^{\otimes_R X}$ is well-defined
without needing a distinguished ordering on the set $X$.\footnote{The
  Witt vectors with coefficients of \cite{dotto_witt_2022} use an $(R, R)$-bimodule $M$ where $R$
  is not necessarily commutative. Instead of usual tensor powers, they use the
  construction $M^{\circledcirc_R n} \coloneqq
  M^{\otimes_R n}/[R, M^{\otimes_R n}]$,
  which can be thought of as $n$ copies of $M$ ``tensored together around
  a circle'' and as such has a natural action of the cyclic group $C_n$. An
  arbitrary group does not come with any particular cyclic or linear ordering, so we
  do not expect to be able to usefully extend our construction to
  non-commutative rings.}
We will often treat $M^{\otimes_R X}$ as a discrete topological abelian group.

Let $G$ be a profinite group. We write $V \le_o G$ to denote an open subgroup. Given $g \in G$, we write
$\prescript{g}{}{V}$ to denote $gVg^{-1}$ and $V^g$ to denote $g^{-1}Vg$.
Let $U$ be another open subgroup of $G$. We say that $U$ is subconjugate to $V$ if
there exists some conjugate of $U$ that is contained in $V$.

\subsection{Category of modules}\label{sec:cat_of_modules}

Our Witt vector construction can be applied to any module over any (commutative)
ring, and will be functorial in the choice of module.
To formalise this, recall the category of all modules.

\begin{definition}Let $\text{Mod}$ denote the category of all modules. The objects are pairs
  \[(R; M)\]
  where $R$ is a ring and $M$ is an $R$-module. Morphisms are pairs
  \[(\alpha; f) : (R; M) \to (R'; M')\]
  consisting of a ring homomorphism $\alpha : R \to R'$ together with an
  $R$-module homomorphism $f: M \to \alpha^\ast M'$ (where $\alpha^\ast$ denotes restriction of scalars).
\end{definition}

Let $\mathbb{Z}[X]$ denote the free ring on a set $X$, i.e.\ the polynomial ring
with variables $X$. Let $R(Y)$
denote the free $R$-module on a set $Y$. We have a free-forgetful adjunction, where the forgetful functor $U : \text{Mod} \to \text{Set} \times \text{Set}$ given by
$(R; M) \mapsto (R, M)$ has left adjoint $F$ given by $(X, Y) \mapsto
(\mathbb{Z}[X], \mathbb{Z}[X](Y))$. When we say ``a free object of
$\text{Mod}$'' we mean an object in the essential image of $F$, i.e.\ a free module over a
free ring.

\subsection{Reflexive coequalisers}

A reflexive coequaliser in a category $\mathcal{C}$ is a coequaliser
\[A \, \substack{\xrightarrow{f}\\[-0.2em] \xrightarrow[g]{}} \, B \xrightarrowdbl{} C\]
where the parallel morphisms $f, g : A \to B$ of $\mathcal{C}$ have a common section $s :
B \to A$. A reflexive
coequaliser of abelian groups can be computed by
taking the coequaliser of the underlying sets and giving it
the quotient group structure. But this coequaliser is particularly nice.
Given $b, b' \in B$, say
that $b \sim b'$ iff there exists $a \in A$ such that $f(a) = b$ and $g(a) =
b'$. Then $\sim$ is an equivalence relation, and the coequaliser is the quotient
$B / \!\!\sim$. For general reflexive coequalisers of sets, the analogous relation is not transitive, so we
would need to identify elements connected by chains of such relations. Reflexive coequalisers in categories of rings or modules can be computed
by taking the coequaliser of the underlying abelian groups (or sets).
Using
this, it is easy to check that $\text{Mod}$ has reflexive coequalisers
and these are preserved by the forgetful functor $U : \text{Mod} \to \text{Set} \times \text{Set}$.

We will need to understand reflexive coequalisers in $\text{Top}_\text{Haus}$ and
$\text{Ab}_\text{Haus}$. To compute a reflexive coequaliser in $\text{Top}_\text{Haus}$, take the coequaliser
in $\text{Top}$, then quotient by the smallest
equivalence relation such that the result is a Hausdorff space. %
To compute a
reflexive coequaliser in $\text{Ab}_\text{Haus}$, take the coequaliser in $\text{Top}_\text{Haus}$ and give
it the quotient group structure. %
Equivalently, the coequaliser of $A \,
\substack{\xrightarrow{f}\\[-0.2em] \xrightarrow[g]{}} \, B$ in
$\text{Ab}_{\text{Haus}}$ is the quotient of $B$ by the closure of the subgroup
$\{f(x)-g(x) \mid x \in A\}$.

We gather together some lemmas about reflexive coequalisers.

\begin{lemma} \label{lem:space_coeq_quotient}
  Reflexive coequalisers in $\text{Top}_\text{Haus}$ preserve topological quotient maps.
\end{lemma}
\begin{proof}
  Consider a commutative diagram
  \[\begin{tikzcd}
      A_1 \ar[r, shift left=0.5ex] \ar[r, shift right=0.5ex] \ar[d, "\theta_A"] & B_1 \ar[r,
      "p_1"] \ar[d, "\theta_B"] & C_1 \ar[d, "\theta_C"] \\
      A_2 \ar[r, shift left=0.5ex] \ar[r, shift right=0.5ex] & B_2 \ar[r, "p_2"] & C_2
    \end{tikzcd}\]
  where the rows are reflexive coequalisers in $\text{Top}_\text{Haus}$ and $\theta_B$ is a quotient map.
  Then since $p_1$ and $p_2$ are quotient maps, considering the commutative
  square $\theta_C p_1 = p_2 \theta_B$ shows that $\theta_C$ is also a quotient map.
\end{proof}

\begin{corollary}
  \label{cor:abgroup_coeq_quotient}
  Reflexive coequalisers in $\text{Ab}_\text{Haus}$ preserve topological
  quotient maps.
\end{corollary}
\begin{proof}
  As noted above the underlying space of a reflexive coequaliser of Hausdorff topological abelian
  groups is the reflexive coequaliser of the underlying Hausdorff spaces, so
  this follows from the above lemma.
\end{proof}

\begin{lemma} \label{lem:ab_product_preserves_reflexive_coequalisers}
  Let $I$ be a (possibly infinite) indexing set. Then the product functor $\text{Ab}^I \to \text{Ab}_\text{Haus}$ preserves reflexive coequalisers.
\end{lemma}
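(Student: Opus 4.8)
The plan is to compute both sides explicitly, using the descriptions of reflexive coequalisers in $\text{Ab}$, $\text{Top}_\text{Haus}$ and $\text{Ab}_\text{Haus}$ recalled above. Fix a reflexive pair $f_j, g_j \colon A_j \to B_j$ in $\text{Ab}$ for each $j \in I$; since $\text{Ab}^I$ has reflexive coequalisers computed componentwise, the coequaliser is the family $q_j \colon B_j \to C_j$ with $C_j = B_j / N_j$, where $N_j = \{f_j(a) - g_j(a) \mid a \in A_j\}$ is the image of the homomorphism $f_j - g_j$ (hence already a subgroup). Give every group in sight the discrete topology; then the product in $\text{Ab}_\text{Haus}$ of the $B_j$ is $\prod_j B_j$ with the product topology (a product of Hausdorff spaces is Hausdorff), and likewise for the $A_j$ and the $C_j$. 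We must show that $\prod_j q_j \colon \prod_j B_j \to \prod_j C_j$ is the coequaliser in $\text{Ab}_\text{Haus}$ of the parallel pair $\prod_j f_j, \prod_j g_j$.

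Two topological facts will do the work. First, $\prod_j q_j$ is a topological quotient map: each $q_j$ is a surjection between discrete spaces, hence an open surjection, and an arbitrary product of open surjections is again open — it is enough to check on a basic open set $\prod_j U_j$ of $\prod_j B_j$, where $U_j = B_j$ for all $j$ outside some finite set $F$, and then $(\prod_j q_j)(\prod_j U_j) = \prod_j q_j(U_j)$ is the basic open set with the open set $q_j(U_j)$ in the coordinates $j \in F$ and $C_j$ elsewhere; an open continuous surjection is a quotient map. Second, writing $\pi_j$ for the projection, $\prod_j N_j = \bigcap_j \pi_j^{-1}(N_j)$ is closed in $\prod_j B_j$, since each $N_j$ is clopen in the discrete group $B_j$.

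To conclude, note that $\prod_j q_j$ coequalises $\prod_j f_j$ and $\prod_j g_j$ (each $q_j$ does so), with kernel $\prod_j N_j$, which is exactly the image of $\prod_j f_j - \prod_j g_j$. By the description of the coequaliser in $\text{Ab}_\text{Haus}$ as the quotient of $\prod_j B_j$ by the \emph{closure} of that image, combined with the second fact, the coequaliser has underlying group $\prod_j B_j / \prod_j N_j \cong \prod_j C_j$ with the quotient topology; by the first fact this quotient topology is the product topology, as required. (If one prefers, the universal property can be checked by hand: a morphism $h \colon \prod_j B_j \to D$ of $\text{Ab}_\text{Haus}$ coequalising the pair kills $\prod_j N_j$, hence factors through the group $\prod_j C_j$; the factorisation is continuous because $\prod_j q_j$ is a quotient map, and unique because $\prod_j q_j$ is an epimorphism.) The only point that is not pure bookkeeping is that an infinite product of open surjections is an open surjection — an infinite product of quotient maps need not be a quotient map, so we genuinely use here that the $B_j$ carry the discrete topology.
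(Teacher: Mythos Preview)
Your proof is correct and follows essentially the same approach as the paper: both identify the key topological point as showing that $\prod_j q_j$ is an open surjection (hence a quotient map), and both then observe that the resulting quotient is Hausdorff so that the coequaliser in $\text{Ab}_{\text{Haus}}$ coincides with the naive one. Your version is arguably a touch cleaner in that you work directly with the subgroup $\prod_j N_j = \operatorname{im}(\prod_j f_j - \prod_j g_j)$ and verify it is closed, whereas the paper phrases the algebraic step via the equivalence relation $b \sim b'$ associated to a reflexive pair; your formulation makes it transparent that reflexivity is not actually needed for this particular lemma.
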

\begin{proof}
  Suppose
  \[A_i \, \substack{\xrightarrow{f}\\[-0.2em]
      \xleftarrow{}\\[-0.2em] \xrightarrow[g]{}} \, B_i \to C_i\]
  is a reflexive coequaliser of abelian groups, for all $i \in I$. We want to show that
  \[\prod_{i \in I} A_i \, \substack{\xrightarrow{}\\[-0.2em]
      \xleftarrow{}\\[-0.2em] \xrightarrow[]{}} \, \prod_{i \in I} B_i \to
    \prod_{i \in I} C_i\]
  is a reflexive coequaliser in $\text{Ab}_\text{Haus}$, where the terms have
  the usual product topology.

  We first observe that this diagram is a reflexive coequaliser in $\text{Ab}$.
  This is fairly straightforward to check, crucially using the fact that $f(x)
  \sim g(x)$ for $x \in A_i$ is already an equivalence relation on $B_i$ (note
  infinite products do not in general preserve reflexive coequalisers of sets since this
  step fails; the abelian group structure is crucial here). Since
  $\prod_{i \in I} B_i \to \prod_{i \in I} C_i$ is a quotient map (it is
  surjective and open), the diagram is also a reflexive coequaliser of
  topological abelian groups. Since all terms are Hausdorff, we conclude that the diagram is a reflexive coequaliser in $\text{Ab}_\text{Haus}$.
\end{proof}

\begin{lemma}\label{lem:tensorcoequaliser}
  Tensor powers preserve reflexive coequalisers. That is, let
  \[(R_1; M_1) \,\substack{\xrightarrow{} \\[-0.2em]
      \xleftarrow{}\\[-0.2em] \xrightarrow{}} \, (R_0; M_0) \xrightarrowdbl{} (R; M)\]
  be a reflexive coequaliser diagram in $\text{Mod}$. Then
  \[(R_1; M_1^{\otimes_{R_1} n}) \,\substack{\xrightarrow{} \\[-0.2em]
      \xleftarrow{}\\[-0.2em] \xrightarrow{}} \, (R_0; M_0^{\otimes_{R_0} n})
    \xrightarrowdbl{} (R; M^{\otimes_R n})\]
  is a reflexive coequaliser in $\text{Mod}$ (and hence also gives a reflexive
  coequaliser of underlying abelian groups).
\end{lemma}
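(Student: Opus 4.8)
The plan is to check directly that $(R; M^{\otimes_R n})$, equipped with the fork coming from functoriality of the $n$-fold tensor power, satisfies the universal property of the reflexive coequaliser in $\text{Mod}$; the one substantive point will be the well-definedness of a map out of $M^{\otimes_R n}$, which is where the common section of the reflexive pair gets used. (For $n = 0$ the claim is simply that $R = \mathrm{coeq}(R_1 \rightrightarrows R_0)$, which is part of the hypothesis, so one may assume $n \geq 1$.)

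Write the given diagram as $(R_1; M_1) \xrightarrow{\;p_0,\,p_1\;} (R_0; M_0) \xrightarrow{\;q\;} (R; M)$ with common section $s$, so $p_0 s = p_1 s = \mathrm{id}$. Applying $(-)^{\otimes n}$ yields a reflexive pair $(R_1; M_1^{\otimes_{R_1} n}) \rightrightarrows (R_0; M_0^{\otimes_{R_0} n})$ (with common section $s^{\otimes n}$), and the maps $(q^R; q^{\otimes n})$ to $(R; M^{\otimes_R n})$ form a fork, since $q^{\otimes n} p_i^{\otimes n} = (q p_i)^{\otimes n}$ and $q p_0 = q p_1$. To see this fork is a coequaliser in $\text{Mod}$, take any $(\psi; f)\colon (R_0; M_0^{\otimes_{R_0}n}) \to (A; N)$ coequalising the pair. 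Its ring part factors uniquely as $\psi = \bar\psi\, q^R$ by the universal property of $R = \mathrm{coeq}(R_1 \rightrightarrows R_0)$, and I would define $\bar f\colon M^{\otimes_R n} \to \bar\psi^{\ast} N$ on simple tensors by $\bar f(\bar m_1 \otimes \cdots \otimes \bar m_n) = f(\tilde m_1 \otimes \cdots \otimes \tilde m_n)$, where $\tilde m_i \in M_0$ is any lift of $\bar m_i$ along the surjection $q$.

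The key step is independence of the choice of lifts. By additivity it suffices to show $f(m \otimes \tilde m_2 \otimes \cdots \otimes \tilde m_n) = 0$ whenever $m \in \ker q$, and since $\ker q$ is generated by the elements $p_0(x) - p_1(x)$ with $x \in M_1$ (reflexive coequalisers in $\text{Mod}$ being computed on underlying abelian groups), we may take $m = p_0(x) - p_1(x)$; then, using $p_0 s = p_1 s = \mathrm{id}$,
\[
m \otimes \tilde m_2 \otimes \cdots \otimes \tilde m_n = p_0^{\otimes n}(w) - p_1^{\otimes n}(w), \qquad w = x \otimes s(\tilde m_2) \otimes \cdots \otimes s(\tilde m_n) \in M_1^{\otimes_{R_1}n},
\]
which $f$ annihilates because it coequalises $p_0^{\otimes n}$ and $p_1^{\otimes n}$ (and symmetrically for the remaining factors, using the symmetric monoidal structure to move the altered factor into first position). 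Granting this, one checks routinely that $\bar f$ is additive and $R$-linear in each variable — for linearity, lift a scalar of $R$ to $R_0$ and use that $q$ and $f$ are $R_0$-linear and that $M_0^{\otimes_{R_0}n}$ is a tensor product over $R_0$ — so $\bar f$ descends to a homomorphism on $M^{\otimes_R n}$; that $(\bar\psi; \bar f)$ recomposed with the fork gives back $(\psi; f)$; and that it is the unique such morphism, since $q^{\otimes n}$ is surjective. The only real obstacle is the independence-of-lifts computation, and it is the sole place where reflexivity enters: one needs, factor by factor, an element of $M_1$ carried to the chosen lift by \emph{both} $p_0$ and $p_1$, and $s$ supplies exactly such an element. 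This use of the section is unavoidable — tensor products do not preserve non-reflexive coequalisers — while everything else is routine bookkeeping, and once the fork is a reflexive coequaliser in $\text{Mod}$, the statement about underlying abelian groups follows since $U$ preserves reflexive coequalisers.
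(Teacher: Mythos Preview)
Your argument is correct and is precisely the ``tedious but not too hard exercise'' that the paper declines to write out (the paper gives no proof beyond that remark and a pointer to \cite{dotto_witt_2022}). The key computation --- using the common section $s$ to write $(p_0(x)-p_1(x))\otimes \tilde m_2\otimes\cdots\otimes\tilde m_n$ as $p_0^{\otimes n}(w)-p_1^{\otimes n}(w)$ with $w=x\otimes s(\tilde m_2)\otimes\cdots\otimes s(\tilde m_n)$ --- is exactly the point where reflexivity is needed, and you have identified and executed it cleanly.
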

\begin{proof}
  This is a somewhat tedious but not too hard exercise. Alternatively, see the proof of
  Proposition~1.14 in \cite{dotto_witt_2022}.
\end{proof}

\subsection{Moving between tensor powers}\label{sec:movingtensor}

To define the Witt vectors we will need to use various operations on tensor
powers indexed by cosets.

First we look at the operations induced by right multiplication of cosets. Let
$V$ be an open subgroup of a profinite group $G$. Given $g \in G$, there is an
isomorphism $G/V \cong G/\prescript{g}{}{V}$ given by $aV \mapsto aVg^{-1}$. This induces
an isomorphism
\[g \cdot ({-}) : M^{\otimes_R G/V} \xrightarrow{\cong} M^{\otimes_R G/\prescript{g}{}{V}} \text{,}\]
defined on elementary tensors by sending
\[\bigotimes_{aV \in G/V} m_{aV}\]
in $M^{\otimes_R G/V}$ to the tensor
\[\bigotimes_{a \prescript{g}{}{V} \in G/\prescript{g}{}{V}} m_{ag V}\]
in $M^{\otimes_R G/\prescript{g}{}{V}}$.
Note that this map only depends on the coset $gV \in G/V$. The notation
is chosen to be reminiscent of a group action. Indeed when $g$ is in the
normaliser $N_G(V)$ of $V$ in $G$, we get
\[g \cdot ({-}) : M^{\otimes_R G/V} \xrightarrow{\cong} M^{\otimes_R G/V} \text{,}\]
giving an action of the Weyl group $N_G(V)/V$ on $M^{\otimes_R G/V}$. And for any $g \in
G$, we get a map on the product over all subgroups
\[g \cdot ({-}) : \prod_{U \le_o G} M^{\otimes_R G/U} \xrightarrow{\cong} \prod_{U \le_o G}
  M^{\otimes_R G/\prescript{g}{}{U}} \xrightarrow{\cong} \prod_{U \le_o G} M^{\otimes_R G/U}\]
(where the second isomorphism comes from reindexing the product), giving an action of
$G$ on $\prod_{U \le_o G} M^{\otimes_R G/U}$. These actions give us transfer
maps between groups of fixed points. We have a map
\begin{align*}
  \text{tr}_W^{W'} : (M^{\otimes_R G/V})^W &\to (M^{\otimes_R G/V})^{W'}\\
  x &\mapsto \sum_{aW \in W'/W} a \cdot x
\end{align*}
for $V \le_o W \le_o W' \le_o N_G(V)$, and
\begin{align*}
\text{tr}_K^H : \left( \prod_{U \in S} M^{\otimes_R G/U} \right)^K &\to
  \left(\prod_{U \in S} M^{\otimes_R G/U}\right)^H\\
  x &\mapsto \sum_{aK \in H/K} a \cdot x
\end{align*}
for $K \le_o H \le_o G$. The transfer $\text{tr}_W^{W'}$ factors
through the $W'$-coinvariants, giving a map $\left((M^{\otimes_R
    G/V})^W\right)_{W'} \to (M^{\otimes_R G/V})^{W'}$, and
similarly for the transfer on the product. We will frequently use the following result:
\begin{lemma}
  Let $T$ be a torsion-free ring, and let $Q = T(Y)$ be a
  free $T$-module. Then for $V \le_o W \le_o N_G(V)$ the transfer
  \[\text{tr}_V^{W} : (Q^{\otimes_T G/V})_{W} \to (Q^{\otimes_T G/V})^{W}\]
  is injective.

  Here $({-})_{W}$ denotes the abelian group of orbits, i.e.\ the quotient
  by the subgroup generated by elements of the form $g \cdot x - x$ for $g \in W$.
\end{lemma}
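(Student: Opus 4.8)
The plan is to reduce to a computation with permutation modules over $T$. Set $X = G/V$, a finite set since $V$ is open, and $\Gamma = W/V$, a finite group; the right-multiplication maps of Section~\ref{sec:movingtensor} make $\Gamma$ act on $X$. Since $V$ acts trivially on $Q^{\otimes_T G/V}$, the $W$-action factors through $\Gamma$, so $(Q^{\otimes_T G/V})_W = (Q^{\otimes_T X})_\Gamma$, $(Q^{\otimes_T G/V})^W = (Q^{\otimes_T X})^\Gamma$, and under these identifications $\text{tr}_V^W$ is the usual norm map $z \mapsto \sum_{\gamma \in \Gamma} \gamma \cdot z$ of the $\mathbb{Z}[\Gamma]$-module $Q^{\otimes_T X}$, which always factors through the coinvariants and lands in the invariants.

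Next I would use freeness of $Q$. Writing $Q = T(Y)$ and iterating the canonical isomorphism $T(Y) \otimes_T T(Y) \cong T(Y \times Y)$ gives $Q^{\otimes_T X} \cong T(Y^X)$, a free $T$-module whose basis $Y^X$ (the functions $X \to Y$) is permuted by $\Gamma$ via its action on $X$; thus $Q^{\otimes_T X}$ is a permutation $T[\Gamma]$-module. Decomposing $Y^X$ into $\Gamma$-orbits and choosing a representative of each with stabiliser $\Lambda_i \le \Gamma$ yields an isomorphism $Q^{\otimes_T X} \cong \bigoplus_i T[\Gamma/\Lambda_i]$ of $T[\Gamma]$-modules. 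Coinvariants commute with this (possibly infinite) direct sum because they form a colimit, and invariants commute with it because an element of the direct sum has finite support and $\Gamma$-invariance can be checked in each coordinate separately; the norm map respects the decomposition. So it suffices to prove that for each $i$ the norm $T[\Gamma/\Lambda_i]_\Gamma \to T[\Gamma/\Lambda_i]^\Gamma$ is injective.

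This last step is a direct computation. For any subgroup $\Lambda \le \Gamma$ we have $T[\Gamma/\Lambda]_\Gamma \cong T$, generated by the class of the basis element $e\Lambda$, and $T[\Gamma/\Lambda]^\Gamma \cong T$, generated by $\sum_{g\Lambda \in \Gamma/\Lambda} [g\Lambda]$. The norm sends $[e\Lambda]$ to $\sum_{\gamma \in \Gamma}[\gamma\Lambda] = |\Lambda| \sum_{g\Lambda \in \Gamma/\Lambda}[g\Lambda]$, so under these identifications it is multiplication by $|\Lambda|$ on $T$. Since $T$ is torsion-free and $|\Lambda| \ge 1$, this map is injective, and a direct sum of injective maps is injective.

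I do not expect a genuine obstacle: the argument is essentially bookkeeping. The points that require a little care are the identification of $\text{tr}_V^W$ with the group-theoretic norm and the commutation of $\Gamma$-invariants with an infinite direct sum of permutation modules when $Y$ is infinite. It is also worth noting that freeness of the $\Gamma$-action on $G/V$ plays no role here; only the freeness of $Q$ over $T$ and the finiteness of $G/V$ are used.
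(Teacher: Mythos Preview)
Your proof is correct and follows essentially the same approach as the paper: both identify $Q^{\otimes_T G/V}$ with the free $T$-module $T(Y^{\times G/V})$ carrying a permutation action, observe that coinvariants and invariants are free $T$-modules indexed by orbits, and check that the transfer sends each orbit class to a positive integer multiple of the corresponding orbit sum. Your version simply makes the factorisation through $\Gamma = W/V$ and the orbit decomposition more explicit.
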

\begin{proof}
  Note $Q^{\otimes_T G/V} \cong T(Y^{\times G/V})$. The $T$-module $(Q^{\otimes_T G/V})_{W}$ is free with basis elements
  corresponding to orbits of $W$ acting on the set $Y^{\times G/V}$. And
  $(Q^{\otimes_T G/V})^{W}$ is free with a basis element corresponding to
  the formal sum of the elements of an orbit in $Y^{\times G/V}$. The transfer
  sends an orbit to a natural number multiple of the formal sum of the elements in the
  orbit. So in this basis it is given by a diagonal matrix, hence injective.
\end{proof}

Let $U \le_o V \le_o G$. To define our version of the ghost map, we will need
some sort of ``tensor power map'' $M^{\otimes_R G/V} \to
M^{\otimes_R G/U}$. However the canonical tensor power map $({-})^{\otimes_R V/U} : M^{\otimes_R
  G/V} \to M^{\otimes_R G/V \times V/U}$ has the wrong codomain. To fix this, we
will postcompose with a (non-canonical) isomorphism $M^{\otimes_R G/V \times V/U} \cong
M^{\otimes_R G/U}$. Suppose we have chosen a set of coset representatives
$\{g_i V\}_{i \in I}$ for $G/V$. Then we can define an isomorphism $G/V \times V/U \cong G/U$ via $(g_i V, sU) \mapsto g_i s U$.

\begin{definition} \label{def:f}
  Given a set of coset representatives for $G/V$, we denote the induced isomorphism on tensor powers by
  \[f_{G/V} : M^{\otimes_R G/V \times V/U} \to M^{\otimes_R G/U} \text{.}\]
\end{definition}

\begin{remark} \label{rem:fidentity}
We will later need the following identity: given $a \in V$ and $n \in M^{\otimes_R G/V}$ we have
\[a \cdot f_{G/V}(n^{\otimes_R V/U}) = f_{G/V}(n^{\otimes_R V/\prescript{a}{}{U}}) \text{.}\]
\end{remark}

\subsection{Frobenius lifts}

The tensor power map $M^{\otimes_R G/V} \to M^{\otimes_R G/U}$ discussed above
is not additive. However when $(R; M)$ is free, there is a related additive map $\phi_U^V$
which we call the Frobenius lift.\footnote{This map $\phi_U^V$ will behave similarly to an
  ``external Frobenius'' as defined in
  \cite{dotto_witt_2025} Definition~A.4. %
  However while the proofs in \cite{dotto_witt_2025} work with a bimodule
  equipped with any choice of external Frobenius, we will simply use the
  specific map $\phi_U^V$.}

Let $T = \mathbb{Z}[X]$ be the free (commutative) ring on a set $X$, and let $Q =
T(Y)$ be the free $T$-module on a set $Y$.
For $n \in \mathbb{N}$, let
\[\varphi_n : \mathbb{Z}[X] \to \mathbb{Z}[X]\]
denote the ring homomorphism defined by sending each generator $x \in X$ to
$x^n$.

\begin{definition}\label{def:frobenius_phi}
  Given $U \le_o V \le_o G$, we define the Frobenius lift
  \[\phi_U^V : Q^{\otimes_T G/V} \to Q^{\otimes_T G/U}\]
  as follows. We have $Q^{\otimes_T G/V} = T(Y)^{\otimes_T G/V} \cong T(Y^{\times G/V})$, and so
we can write any element of $Q^{\otimes_T G/V}$ in the form
\[\sum_{j \in J} \left( r_j \bigotimes_{gV \in G/V} y^j_{gV} \right) \]
where $J$ is a finite indexing set, $r_j \in T$ and $y^j_{gV} \in Y$. There is a canonical
surjection $G/U \xrightarrowdbl{} G/V$ given by $gU \mapsto gV$. This induces a
map $Y^{\times G/V} \to Y^{\times G/U}$, and we can extend this to define a map
\begin{align*} \phi_U^V : Q^{\otimes_T G/V} &\to Q^{\otimes_T G/U}\\
  \sum_{j \in J} \left( r_j \bigotimes_{gV \in G/V} y^j_{gV} \right) &\mapsto \sum_{j \in J} \left( \varphi_{\abs{V:U}} (r_j) \bigotimes_{gU \in G/U} y^j_{gV} \right) \text{.}\end{align*}
\end{definition}

Note that $\phi_U^V$ is additive but not a $T$-module homomorphism. Also note
that it depends on the choice of generators of $T$ and $Q$. %

\begin{remark}
For $g \in G$ and $m \in Q^{\otimes_T G/V}$ we have the following identity:
\[g \cdot \phi_U^V(m) = \phi_{\prescript{g}{}{U}}^{\prescript{g}{}{V}}(g \cdot m) \text{.}\]
For $U \le V \le W$ we have
\[\phi^V_U \phi^W_V = \phi^W_U \text{.}\]
\end{remark}

\section{Definition of the Witt vectors} \label{sec:witt_definition}

We start by briefly recalling how the usual $p$-typical Witt vectors of a
commutative ring are defined, following a similar approach to the exposition in
\cite{hesselholt_big_2015}. Next we will see how to generalise this to give a
uniqueness theorem for the $G$-typical Witt vectors with coefficients, Theorem~\ref{thm:witt_properties}.
We define the operators and monoidal structure on the Witt vectors (including versions of the usual
Frobenius and Verschiebung), and finally prove that our construction generalises
those prior variants of Witt vectors that we discussed in Section~\ref{sec:prior_work}.

\subsection{\texorpdfstring{$p$}{p}-typical Witt vectors} \label{sec:classical_witt}

Let $p$ be a prime and $n$ a natural number. Given a commutative ring $R$, the
ring of $n$-truncated, $p$-typical Witt vectors of $R$ is a commutative ring $W_{n, p}(R)$. A good example to keep in mind is that
\[W_{n, p}(\mathbb{F}_p) \cong \mathbb{Z}/p^n \mathbb{Z} \text{.}\]
Rather than describe the ring $W_{n, p}(R)$ explicitly, it is often defined by
giving a list of properties that specify it uniquely.

\begin{thmdef}
  There is a unique functor
  \[W_{n, p} : \text{CRing} \to \text{CRing}\]
  satisfying the following.
\begin{enumerate}[(i)]
\item{The underlying set of $W_{n, p}(R)$ is $\prod_{0 \le i < n} R$. Given a %
    ring homomorphism $f : R \to S$, the map $W_{n, p}(f) : W_{n, p}(R) \to
    W_{n, p}(S)$ is $\prod_{0 \le i < n} f$ at the level of sets.}
\item{For each $0 \le j < n$, define the map $w_j : W_{n, p}(R) \to R$ via
      \[(a_i) \mapsto \sum_{0 \le i \le j} p^i a_i^{p^{j-i}} \text{.}\]
    The product
    \[(w_0, \dotsc, w_{n-1}) : W_{n, p}(R) \to \prod_{0 \le j < n} R\]
    is called the ghost map, and denoted $w$. This map is a ring homomorphism (where the right hand
    side has the usual product ring structure).
  }
\end{enumerate}
\end{thmdef}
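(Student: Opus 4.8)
The plan is to prove uniqueness first and then existence, the latter resting on the classical Dwork congruence lemma --- the prototype of the Dwork lemma (Lemma~\ref{lem:dwork}) used later in the paper.

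\emph{Uniqueness.} The starting point is that when $R$ is $p$-torsion-free the ghost map $w\colon\prod_{0\le i<n}R\to\prod_{0\le j<n}R$ is injective: since $w_j(a)-p^j a_j$ is a polynomial in $a_0,\dots,a_{j-1}$ with integer coefficients, one recovers $a_0,a_1,\dots,a_{n-1}$ in turn from $w_0,\dots,w_{n-1}$. If two ring structures on $\prod_{0\le i<n}R$ both make $w$ a ring homomorphism then injectivity of $w$ forces them to coincide; thus property~(ii) pins down the ring structure on $W_{n,p}(R)$ whenever $R$ is $p$-torsion-free. For general $R$, choose a surjection $\pi\colon R'\twoheadrightarrow R$ with $R'$ a polynomial ring over $\mathbb{Z}$ (hence $p$-torsion-free), so the ring structure on $W_{n,p}(R')$ is already determined by the previous case. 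By property~(i) the map $W_{n,p}(\pi)$ is $\prod\pi$ on underlying sets, hence surjective, and it is a ring homomorphism because $W_{n,p}$ is assumed to land in $\text{CRing}$. So the ring operations on $W_{n,p}(R)$ are obtained by lifting elements along $\prod\pi$, operating in $W_{n,p}(R')$, and pushing down; they are therefore determined, and since property~(i) also prescribes $W_{n,p}$ on morphisms, the functor is unique.

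\emph{Existence.} To build the functor I would first produce universal addition and multiplication polynomials. Let $A=\mathbb{Z}[x_0,\dots,x_{n-1},y_0,\dots,y_{n-1}]$, which is $p$-torsion-free, and let $\phi\colon A\to A$ be the ring endomorphism sending each generator to its $p$-th power, so $\phi(a)\equiv a^p\pmod{pA}$. The Dwork congruence lemma (see \cite{hesselholt_big_2015}) states that a tuple $(b_j)\in\prod_{0\le j<n}A$ lies in the image of the injective ghost map $w$ exactly when $b_j\equiv\phi(b_{j-1})\pmod{p^jA}$ for $1\le j<n$. A short computation shows $w(\bm{x})$ and $w(\bm{y})$ satisfy these congruences, and the congruences are preserved under the componentwise addition and multiplication on $\prod A$ because $\phi$ is a ring homomorphism; hence $w(\bm{x})+w(\bm{y})$ and $w(\bm{x})\cdot w(\bm{y})$ lie in the image of $w$, yielding unique tuples $(S_i),(P_i)\in\prod A$, i.e.\ polynomials $S_i,P_i\in\mathbb{Z}[x_\bullet,y_\bullet]$ with $w_j(S_0,\dots,S_j)=w_j(\bm{x})+w_j(\bm{y})$ and $w_j(P_0,\dots,P_j)=w_j(\bm{x})\cdot w_j(\bm{y})$ for all $j$. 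I expect the integrality here --- that these a priori $\mathbb{Z}[1/p]$-coefficient polynomials in fact have integer coefficients --- to be the only genuinely non-formal point of the whole argument, and it is precisely the role the Dwork lemma will play in the body of the paper.

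\emph{Assembling the functor.} For arbitrary $R$, define addition and multiplication on $\prod_{0\le i<n}R$ by substitution into the $S_i$ and $P_i$, with $0$ and $1$ the tuples $(0,\dots,0)$ and $(1,0,\dots,0)$. Each ring axiom, together with the assertion that $w$ is a ring homomorphism, is a family of polynomial identities with integer coefficients in the entries of the Witt vectors involved, so it suffices to verify it in the universal example of a polynomial ring over $\mathbb{Z}$; there $w$ is injective and, by construction, carries these operations to the componentwise ring operations on a product of copies of the ring, for which the identities are trivial. Finally, $\prod f$ commutes with evaluation of integer polynomials for any ring map $f$, so $W_{n,p}$ is a functor, and by construction its ghost map is a natural ring homomorphism, giving the desired $W_{n,p}\colon\text{CRing}\to\text{CRing}$.
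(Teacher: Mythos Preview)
Your proof is correct and follows essentially the same approach as the paper's sketch: use the Dwork lemma (Lemma~\ref{lem:classical_dwork}) together with injectivity of the ghost map over torsion-free rings to handle the polynomial-ring case, then extend to arbitrary rings by functoriality. Your version makes the extension step explicit via the universal sum and product polynomials $S_i,P_i$, whereas the paper simply says ``use functoriality to extend from the free ring case'', but these amount to the same thing.
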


We will not prove this in detail, but we will give a brief sketch, since
we will use some of the same ideas later in the paper. We already know the
underlying set of $W_{n, p}(R)$, so we just
need to define the ring structure on this set in a way that satisfies the above properties.
The key technical ingredient is the following lemma.

\begin{lemma}[Dwork lemma for $p$-typical Witt vectors]\label{lem:classical_dwork}
  Suppose there exists a ring homomorphism $\phi_p : R \to R$ such that
  $\phi_p(r) \equiv r^p \text{ modulo } pR$ for all $r \in R$ (that is, a
  lift of the Frobenius homomorphism on $R/pR$). Then an element $a
  \in \prod_{0 \le j < n} R$ is in the image of the ghost map $w : W_{n, p}(R)
  \to \prod_{0 \le j < n} R$ iff it satisfies
  \[a_j \equiv \phi_p(a_{j-1}) \text{ modulo } p^j R\]
  for all $1 \le j < n$.
\end{lemma}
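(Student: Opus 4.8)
The plan is to prove both directions by induction on $n$, using the characterisation of the image of the ghost map in terms of congruences. For the base case $n=1$ the ghost map is the identity $R \to R$ and there is nothing to check. So assume the result for $n$ and consider $W_{n+1,p}(R)$, with truncation map $R_n : W_{n+1,p}(R) \to W_{n,p}(R)$ forgetting the last coordinate, and similarly the projection $\prod_{0 \le j \le n} R \to \prod_{0 \le j < n} R$; these are compatible with the ghost maps. First I would handle the ``only if'' direction: given a Witt vector $(a_0,\dots,a_n)$, I need to show its ghost components satisfy $w_j \equiv \phi^p(w_{j-1}) \bmod p^j R$. The key computation is that, modulo $p^{j}R$, applying $\phi^p$ to the ghost sum $w_{j-1} = \sum_{0 \le i \le j-1} p^i a_i^{p^{j-1-i}}$ term by term gives $\sum p^i \phi^p(a_i)^{p^{j-1-i}}$, and since $\phi^p(a_i) \equiv a_i^p \bmod pR$ we get $\phi^p(a_i)^{p^{j-1-i}} \equiv a_i^{p^{j-i}} \bmod p^{j-i}R$ (raising a congruence mod $pR$ to the $p^k$-th power improves it to mod $p^{k+1}R$); multiplying by $p^i$ this is a congruence mod $p^j R$, and summing gives $\phi^p(w_{j-1}) \equiv \sum_{0 \le i \le j-1} p^i a_i^{p^{j-i}} \bmod p^j R$, which is exactly $w_j$ minus its top term $p^j a_j^{p^0} = p^j a_j \equiv 0 \bmod p^j R$. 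This establishes the congruences.

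For the ``if'' direction I would argue that the congruences are exactly enough to solve for a Witt vector inductively. Suppose $a = (a_0,\dots,a_n)$ satisfies $a_j \equiv \phi^p(a_{j-1}) \bmod p^j R$ for all $1 \le j \le n$. By the inductive hypothesis $(a_0,\dots,a_{n-1})$ is $w(b_0,\dots,b_{n-1})$ for some Witt vector components $b_i$. It remains to find $b_n \in R$ with $w_n(b_0,\dots,b_n) = a_n$, i.e. $p^n b_n = a_n - \sum_{0 \le i < n} p^i b_i^{p^{n-i}}$. Since $R$ need not be $p$-torsion-free I cannot just divide, but I can show the right-hand side is divisible by $p^n$ in a way that pins down a genuine element: the point is that $\sum_{0 \le i < n} p^i b_i^{p^{n-i}}$ and $\phi^p(w_{n-1}(b)) = \phi^p(a_{n-1})$ are congruent mod $p^n R$ by the \emph{same} computation as in the first half (applied to the $b_i$), and $a_n \equiv \phi^p(a_{n-1}) \bmod p^n R$ by hypothesis, so $a_n - \sum p^i b_i^{p^{n-i}} \in p^n R$; choosing any $b_n$ with $p^n b_n$ equal to this difference completes the Witt vector. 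The usual way to make this rigorous for all $R$ at once is the standard reduction: verify the statement first for $R = \mathbb{Z}[x_0,x_1,\dots]$ a polynomial ring (which is $p$-torsion-free, so division is unambiguous), and then deduce it for general $R$ by functoriality, writing $R$ as a quotient of a polynomial ring and using that the ghost map and the congruence conditions are natural; I would phrase the argument this way to avoid choosing $b_n$ non-canonically.

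The main obstacle is the ``if'' direction in the presence of $p$-torsion: one must be careful that the existence of $b_n$ really follows from the congruences and does not secretly require $R$ to be torsion-free. The clean resolution is the polynomial-ring reduction just described, together with the observation that both ``is in the image of $w$'' and ``satisfies the Dwork congruences'' are preserved and reflected appropriately under the surjection from a polynomial ring — more precisely, the polynomial ring admits a canonical Frobenius lift $\varphi^p$ sending $x_i \mapsto x_i^p$, the Witt vector whose ghost components are the universal polynomials pulls back correctly, and surjectivity of $\mathbb{Z}[x_\bullet] \twoheadrightarrow R$ transports the solution. Everything else is the elementary congruence bookkeeping sketched above, which I would not spell out in full.
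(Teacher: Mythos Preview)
The paper does not actually prove this lemma; it simply cites \cite{hesselholt_big_2015} Lemma~1.1. Your argument is the standard one and is correct. One small remark: your worry about $p$-torsion in the ``if'' direction is unnecessary, and the polynomial-ring reduction is not needed. Once you have shown that $a_n - \sum_{i<n} p^i b_i^{p^{n-i}} \in p^n R$, the existence of some $b_n \in R$ with $p^n b_n$ equal to this difference is immediate from the definition of the ideal $p^n R$; uniqueness of $b_n$ is irrelevant since the claim is only that $a$ lies in the image of $w$.
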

\begin{proof}
  See \cite{hesselholt_big_2015} Lemma~1.1. Commonly attributed to Dwork.
\end{proof}
\begin{corollary}
  Under the
  conditions of the lemma, the image of the ghost map is a subring of $\prod_{0
    \le j < n} R$.
\end{corollary}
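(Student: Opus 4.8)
The plan is to read the corollary straight off the Dwork lemma. By Lemma~\ref{lem:classical_dwork}, under the stated hypothesis the image of the ghost map $w : W_{n, p}(R) \to \prod_{0 \le j < n} R$ is exactly the subset
\[A = \{\, (a_j)_{0 \le j < n} \in \prod_{0 \le j < n} R \;\mid\; a_j \equiv \phi^p(a_{j-1}) \bmod p^j R \text{ for all } 1 \le j < n \,\},\]
so it suffices to check that $A$ is a subring of the product ring $\prod_{0 \le j < n} R$: that it contains $(0,\dots,0)$ and $(1,\dots,1)$ and is closed under addition, negation and multiplication.

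This is a routine entrywise verification, the point being that $\phi^p$ is a ring homomorphism. The constant tuples $0$ and $1$ lie in $A$ because $\phi^p(0) = 0$ and $\phi^p(1) = 1$. For closure, given $(a_j), (b_j) \in A$, write $a_j = \phi^p(a_{j-1}) + p^j x_j$ and $b_j = \phi^p(b_{j-1}) + p^j y_j$ with $x_j, y_j \in R$; then additivity of $\phi^p$ gives $a_j + b_j \equiv \phi^p(a_{j-1} + b_{j-1}) \bmod p^j R$, multiplicativity gives $a_j b_j \equiv \phi^p(a_{j-1} b_{j-1}) \bmod p^j R$, and likewise $-a_j \equiv \phi^p(-a_{j-1}) \bmod p^j R$. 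Hence $(a_j) + (b_j)$, $(a_j)(b_j)$ and $-(a_j)$ all satisfy the Dwork congruences, so they lie in $A$.

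I expect no genuine obstacle here: the entire content sits in the Dwork lemma, and the only thing to keep track of is that the congruence defining $A$ at index $j$ involves solely the entries in positions $j$ and $j-1$, so there is no cross-talk between different coordinates and the verification proceeds coordinate by coordinate. This structural observation — that an explicit congruence description of the ghost image is automatically closed under the relevant algebraic operations because the Frobenius lift respects them — is precisely the mechanism we will reuse, in a more elaborate form, when constructing the $G$-typical Witt vectors with coefficients.
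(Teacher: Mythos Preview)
Your proof is correct and follows essentially the same approach as the paper: both use the Dwork lemma to identify the image with the set of tuples satisfying the congruences $a_j \equiv \phi^p(a_{j-1}) \bmod p^j R$, then observe that since $\phi^p$ is a ring homomorphism this set is closed under the ring operations and contains the unit. The paper's version is simply a one-sentence compression of what you have written out.
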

\begin{proof}
  Since $\phi_p$ is a ring homomorphism, the congruence condition shows that $\text{im}(w)$
  contains the multiplicative unit and is closed under subtraction and
  multiplication, so it is a subring.
\end{proof}

Consider $R = \mathbb{Z}[X]$, the free commutative ring on a
set $X$. It is easy to check that in this case the ghost map $w : W_{n,
  p}(\mathbb{Z}[X]) \to \prod_{0 \le j < n} \mathbb{Z}[X]$ is injective (this
only needs the fact that $\mathbb{Z}[X]$ is torsion free). If we can check that its image is
a subring then there is clearly a unique ring structure on $W_{n,
p}(\mathbb{Z}[X])$ so that the ghost map is a ring homomorphism. If we define a ring homomorphism $\phi_p : \mathbb{Z}[X] \to
\mathbb{Z}[X]$ by $x \mapsto x^p$ for each $x \in X$, then this satisfies
$\phi_p(r) \equiv r^p \text{ mod } p\mathbb{Z}[X]$. So
Lemma~\ref{lem:classical_dwork} applies, and the Corollary shows that the image
of the ghost map is a subring.

For a general commutative ring $R$, we can use functoriality to extend from the
free ring case and define a ring structure on $W_{n,
  p}(R)$. With some care we can do this in a well-defined way, and show that these are the unique ring structures satisfying the properties above.

\subsection{Defining properties of the \texorpdfstring{$G$}{G}-typical Witt vectors with coefficients} \label{sec:defining_properties}

Recall that we refer to the complement of a family of subgroups as a truncation set.

\begin{definition} A truncation set for a profinite group $H$ is a set $S$
  of open subgroups of $H$, such that
  \begin{enumerate}[(i)]
  \item{$S$ is upwards closed (i.e.\ if $U \in S$ and $U'$ an open subgroup of $H$
      containing $U$ then
      $U' \in S$), and}
  \item{$S$ is closed under conjugation.}
  \end{enumerate}
\end{definition}

Let $G$ be a profinite group, $H$ an open subgroup and $S$ a truncation set for $H$.
Let $R$ be a commutative ring and $M$ an $R$-module. We
will define a Hausdorff topological abelian group of Witt vectors $W^S_{H \le G}(R; M)$.

As in the case of the usual Witt vectors, the group $W^S_{H \le G}(R; M)$ is hard to describe
very explicitly. Instead we show that there is a unique construction (up to unique
isomorphism) satisfying various conditions.
These were inspired by the various
previous definitions of Witt vectors, particularly those properties used in
\cite{dotto_witt_2025} Section~1.1 to describe Witt vectors with coefficients. %
One subtlety is that our specification will require making some arbitrary choices,
although these won't affect the final result.

For the usual Witt vectors, we know the underlying set of $W_{n, p}(R)$ is
$\prod_{0 \le i < n} R$, and
the ring operations are governed by the ghost map $w : \prod_{0 \le i < n} R \to
\prod_{0 \le j < n} R$. In our case, we can't immediately describe the
underlying space so precisely, but it will at least be a topological quotient of
\[\prod_{V \in \can{S}} M^{\otimes_R G/V} \text{,}\]
where $\can{S} \subseteq S$ consists of a choice of distinguished representative for
each $H$-conjugacy class of subgroups in $S$.

The addition operation and this quotient are again governed by a
ghost map. For each distinguished subgroup $V \in \can{S}$, we need to fix a
choice of coset representatives for $G/V$.
Then using these choices, we have an explicit formula for a continuous map
\[w : \prod_{V \in \can{S}} M^{\otimes_R G/V} \to \left( \prod_{U \in S} M^{\otimes_R G/U} \right)^H\]
called the ghost map; we delay describing this until
Section~\ref{sec:ghost_map}.

We can now give a formal uniqueness result defining the $S$-truncated
$G$-typical Witt vectors with coefficients, although the proof that they exist relies on
results from the next several subsections.

\begin{theorem} \label{thm:witt_properties}
  There is a functor
  \[W^S_{H \le G} : \text{Mod} \to \text{Ab}_\text{Haus}\]
  and there are quotient maps of underlying spaces
  \[q : \prod_{V \in \can{S}} M^{\otimes_R G/V} \xrightarrowdbl{} W^S_{H \le G}(R; M)\]
  natural in $(R; M)$, satisfying the following properties:
  \begin{enumerate}[(i)]
  \item The ghost map $w$ factors through $q$, inducing a natural additive map
    \[w : W^S_{H \le G}(R; M) \to \left( \prod_{U \in S} M^{\otimes_R G/U} \right)^H\]
    that we will also call the ghost map.
  \item For each $(T; Q)$ free (i.e.\ a free module over a free ring) the
    ghost map out of the Witt vectors is an embedding of topological abelian groups. That is, the ghost map induces an isomorphism
  \[W^S_{H \le G}(T; Q) \cong \text{im}(w) \le \left( \prod_{U \in S}
      Q^{\otimes_T G/U} \right)^H \text{.}\]
\item The functor $W^S_{H \le G}$ preserves reflexive coequalisers.
  \end{enumerate}
  These properties specify the functor $W^S_{H \le G}$ together with the natural
  map $w
  : W^S_{H \le G}(R; M) \to \left( \prod_{U \in S} M^{\otimes_R G/U} \right)^H$
  uniquely up to unique natural isomorphism, even if we make different choices
  of the distinguished subgroups or coset representatives used to define $w : \prod_{V
    \in \can{S}} M^{\otimes_R G/V} \to \left( \prod_{U \in S} M^{\otimes_R G/U} \right)^H$.
\end{theorem}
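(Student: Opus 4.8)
The plan is to mimic the two-stage construction of the classical $p$-typical Witt vectors recalled in Section~\ref{sec:classical_witt}: first build the functor on free objects of $\text{Mod}$ and verify the three properties there, and then extend it to all of $\text{Mod}$ by forcing property~(iii).

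\emph{Free objects.} Once the distinguished subgroups $\can S$ and the coset representatives are fixed, the ghost map
\[ w : \prod_{V \in \can S} Q^{\otimes_T G/V} \longrightarrow \Bigl( \prod_{U \in S} Q^{\otimes_T G/U} \Bigr)^{\!H} \]
of Section~\ref{sec:ghost_map} is a fixed continuous map of sets (it is \emph{not} additive for the naive product group structure on the source, exactly as in the classical case). For $(T;Q)$ free I would define $W^S_{H\le G}(T;Q)$ to be the set-theoretic image $\operatorname{im}(w)$, with the subspace topology from $\bigl(\prod_{U\in S} Q^{\otimes_T G/U}\bigr)^{H}$; take $q$ to be the corestriction of $w$; and take the ghost map out of the Witt vectors to be the inclusion, so that property~(ii) holds tautologically. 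For this to be a Hausdorff topological abelian group and to give property~(i) I need two things: (a) $\operatorname{im}(w)$ is a subgroup of $\bigl(\prod_{U\in S} Q^{\otimes_T G/U}\bigr)^{H}$; and (b) $q$ is a topological quotient map. Point~(a) is exactly where the Dwork lemma (Lemma~\ref{lem:dwork}) does its work: just as classically, one shows that $\operatorname{im}(w)$ is cut out by a system of congruences (here phrased in terms of the Frobenius lifts $\phi^V_U$ and transfers), which is visibly closed under addition and negation. Point~(b) follows from the ``triangular'' shape of the ghost map with respect to the subconjugacy partial order on $S$: the component $w_U$ depends only on the coordinates indexed by subgroups above $U$, with honest additive dependence on the strictly higher ones, and this lets one argue by induction that $w$ is a quotient map onto its image. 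Functoriality on morphisms of free objects is then immediate from naturality of the external ghost map, since any such morphism carries $\operatorname{im}(w)$ into $\operatorname{im}(w)$.

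\emph{General modules.} Every object $(R;M)$ of $\text{Mod}$ is the reflexive coequaliser of the canonical diagram $FUFU(R;M) \rightrightarrows FU(R;M) \to (R;M)$ of free objects, with common section the unit of the free--forgetful adjunction; property~(iii) therefore forces
\[ W^S_{H\le G}(R;M) \coloneqq \operatorname{coeq}\Bigl( W^S_{H\le G}\bigl(FUFU(R;M)\bigr) \rightrightarrows W^S_{H\le G}\bigl(FU(R;M)\bigr) \Bigr) \]
computed in $\text{Ab}_\text{Haus}$, with $q$ induced from the free case. I would then verify, in order: (1) this reproduces the free-case definition when $(R;M)$ is itself free, because the free-case construction already sends reflexive coequalisers of free objects to coequalisers; (2) it is functorial and independent of the chosen free resolution up to canonical isomorphism, which is formal once one knows that $\text{Mod}$ has reflexive coequalisers and that $U$ preserves them; (3) it genuinely preserves \emph{all} reflexive coequalisers, using that the functor $\text{Mod} \to \text{Ab}_\text{Haus}$, $(R;M) \mapsto \prod_{V\in\can S} M^{\otimes_R G/V}$, preserves them (combine Lemma~\ref{lem:tensorcoequaliser} and Lemma~\ref{lem:ab_product_preserves_reflexive_coequalisers}), that reflexive coequalisers commute with one another, and Lemma~\ref{lem:coeq_quotient} to see that the induced $q$ is still a quotient map; (4) property~(i), i.e.\ that the ghost map on $\prod_{V\in\can S} M^{\otimes_R G/V}$ factors through $q$ --- this follows from the universal property of the coequaliser defining $W^S_{H\le G}(R;M)$ together with naturality of the ghost map and the free case, and produces a natural additive ghost map $w : W^S_{H\le G}(R;M) \to \bigl(\prod_{U\in S} M^{\otimes_R G/U}\bigr)^{H}$. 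Property~(ii) was already established on free objects.

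\emph{Uniqueness and independence of choices.} If $(W',q',w')$ is a second triple satisfying (i)--(iii), then on a free object $(T;Q)$ property~(i) gives $w'q'=w$ while property~(ii) makes $w'$ an embedding onto $\operatorname{im}(w')=\operatorname{im}(w)$; since $q'$ is a quotient map this yields a canonical isomorphism $W'(T;Q)\cong\operatorname{im}(w)=W^S_{H\le G}(T;Q)$ compatible with the $q$'s and the $w$'s, and naturality over morphisms of free objects is automatic. Both functors preserve reflexive coequalisers and every object of $\text{Mod}$ is a canonical reflexive coequaliser of free objects, so this natural isomorphism extends uniquely to a natural isomorphism $W'\cong W^S_{H\le G}$ respecting $q$ and $w$, and it is the only such. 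Finally, changing the distinguished subgroups or the coset representatives only post-composes the external ghost map $w$ with an isomorphism of its codomain (coming from reindexing of the product and from the permutation isomorphisms $f_{G/V}$ of Definition~\ref{def:f}); tracing this through the above construction gives a canonical isomorphism between the two resulting functors, compatible with $q$ and $w$. I expect the main obstacle to be the free case, and more precisely the application of the Dwork lemma to show that $\operatorname{im}(w)$ is a subgroup; by contrast the passage to arbitrary modules, though bookkeeping-heavy, is a routine manipulation of reflexive coequalisers.
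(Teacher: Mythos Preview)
Your approach is essentially the paper's: define $W^S_{H\le G}(T;Q)=\operatorname{im}(w)$ on free objects using the Dwork lemma, then extend by reflexive coequalisers via the canonical free resolution. The paper packages the extension step into a standalone left Kan extension lemma (Lemma~\ref{lem:finite_module_kan}), and your step~(1) --- that the construction on free objects already preserves reflexive coequalisers of free objects --- is handled there by the observation that a free resolution of a free object is \emph{split} (Lemma~\ref{lem:refl_coeq_free_split}); you assert this but do not give the reason.

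There is one concrete error. Your argument for independence of the choices of $\can S$ and coset representatives claims that changing these choices ``post-composes the external ghost map $w$ with an isomorphism of its codomain''. That is not so: the codomain $\bigl(\prod_{U\in S} M^{\otimes_R G/U}\bigr)^{\!H}$ is indexed over \emph{all} of $S$, not over $\can S$, and involves no coset representatives, so it does not change at all; meanwhile the two ghost maps do not differ by any simple pre- or post-composition. The paper's mechanism is different and uses the same Dwork lemma you already invoke: the congruence conditions characterising $\operatorname{im}(w)$ (Lemma~\ref{lem:dwork}) reference only the Frobenius lifts $\phi^V_U$ and transfers, none of which depend on $\can S$ or coset representatives, so the image is literally the same closed subgroup regardless of choices (Corollary~\ref{cor:ghost_image_subgroup_indep}). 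Once you have that, your uniqueness argument goes through unchanged.
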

\begin{proof}
  In Section~\ref{sec:dwork} we study $\text{im}(w) \subseteq \left(\prod_{U \in
      S} Q^{\otimes_T G/U}\right)^H$ for $(T; Q)$ free. In particular, Corollary~\ref{cor:ghost_image_subgroup_indep} shows that the image is a closed subgroup and independent of the choices used in its
  definition, and Lemma~\ref{lem:ghost_quotient} shows that in this case $w$ is
  a quotient onto its image.

  First consider existence of $W^S_{H \le G}$ and $q$ satisfying these properties. In Section~\ref{sec:witt_final_definition} we show
  (Lemma~\ref{lem:finite_module_kan}) that any functor defined
  on the full subcategory of free objects of $\text{Mod}$ admits an essentially unique
  extension to a reflexive coequaliser-preserving functor on all of
  $\text{Mod}$. So we can define $W^S_{H \le G}$ on free modules via $W^S_{H \le
  G}(T; Q) \coloneqq \text{im}(w)$, and extend to get a functor on all
modules (Definition~\ref{def:witt}).
For free coefficients $w : \prod_{V \in \can{S}} Q^{\otimes_T G/V} \to \left(
  \prod_{U \in S} Q^{\otimes_T G/U} \right)^H$ factors through the Witt vectors as a
quotient followed by an embedding; by Lemma~\ref{lem:finite_module_kan}, for general coefficients this extends to a factorisation of $w$ through the Witt vectors as a quotient $q$ followed by
an additive map (Definitions~\ref{def:witt_quotient} and \ref{def:witt_ghost}).

Now consider the uniqueness result. This follows because reflexive
coequaliser-preserving extensions are essentially unique.
Explicitly, suppose we had a functor $W'^S_{H \le G}$ and map $w' :
W'^S_{H \le G}(R; M) \to \left( \prod_{U \in S} M^{\otimes_R G/U} \right)^H$
also coming from the above defining properties but potentially using different choices of coset
representatives. Since the image of the ghost map doesn't depend on the choices,
$w$ and $w'$ induce an isomorphism $u : W^S_{H \le G}(T; Q) \cong \text{im}(w) \cong
W'^S_{H \le G}(T; Q)$ for $(T; Q)$ free, unique such that $w'u = w$. And now by
Lemma~\ref{lem:finite_module_kan} this extends to a unique natural isomorphism $u : W^S_{H
  \le G}(R; M) \cong W'^S_{H \le G}(R; M)$ such that $w'u = w$. %
\end{proof}

\subsection{The ghost map} \label{sec:ghost_map}

To complete the definition in Section~\ref{sec:defining_properties} we need to describe the ghost map
\[w : \prod_{V \in \can{S}} M^{\otimes_R G/V} \to \left(\prod_{U \in S} M^{\otimes_R
    G/U}\right)^H \text{.}\]
Note that $w$ will not in general be a homomorphism; merely a continuous map of topological spaces (considering
$M^{\otimes_R n}$ to be discrete, and using the usual product and subspace
topologies). However it will descend to a continuous abelian group homomorphism out of the
Witt vectors.

Let $U$ and $V$ be open subgroups of $H$.
Observe that $U$ acts on $H/V$ by left multiplication, and the fixed
points $(H/V)^U$ are the cosets $hV$ such that $U \le \prescript{h}{}{V}$, or
equivalently $U^h \le V$. In particular $(H/V)^U$ is only non-empty when $U$ is
subconjugate to $V$, and this holds for finitely many $V$, so the below sums are well-defined.

\begin{definition}[Ghost map] \label{def:ghost_map}
  For $U \in S$, the $U$-component of the ghost map is the map
  $w_U : \prod_{V \in \can{S}} M^{\otimes_R G/V} \to M^{\otimes_R G/U}$ given by
        \[n \mapsto \sum_{V \in \can{S}} \left( \sum_{hV \in (H/V)^U} h \cdot f_{G/V}(n_V^{\otimes_R V/U^h}) \right) \text{.}\]
  The isomorphism $f_{G/V} : M^{\otimes_R
    G/V \times V/U^h} \to M^{\otimes_R G/U^h}$ (described in Definition~\ref{def:f}) depends on
  our fixed choice of coset representatives for $G/V$.
  Note the choice of representative $h \in hV$ doesn't matter, by Remark~\ref{rem:fidentity}.

  We define the ghost map
  \[w : \prod_{V \in \can{S}} M^{\otimes_R G/V} \to \left( \prod_{U \in S} M^{\otimes_R
      G/U} \right)^H \]
  to be the product of the ghost components $w_U$, over all $U \in S$.
\end{definition}

To see that this is well-defined, we need to check that the image of the product of the ghost
components really does lie in the $H$-fixed points of $\prod_{U \in S} M^{\otimes_R G/U}$.

\begin{lemma}For $a \in H$, we have
  \[a \cdot w_U(n) = w_{\prescript{a}{}{U}}(n) \text{.}\]
\end{lemma}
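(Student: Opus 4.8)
The plan is to compute $a \cdot w_U(n)$ directly from the formula in Definition~\ref{def:ghost_map} and show it rearranges into $w_{\prescript{a}{}{U}}(n)$. Recall that
\[w_U(n) = \sum_{V \in \can{S}} \sum_{hV \in (H/V)^U} h \cdot f_{G/V}\bigl(n_V^{\otimes_R V/U^h}\bigr) \text{.}\]
Applying $a \cdot ({-})$ and using that this is an additive isomorphism, we get
\[a \cdot w_U(n) = \sum_{V \in \can{S}} \sum_{hV \in (H/V)^U} (ah) \cdot f_{G/V}\bigl(n_V^{\otimes_R V/U^h}\bigr) \text{,}\]
where I have used that $a \cdot (h \cdot x) = (ah) \cdot x$ since both sides are just the relabelling of cosets $bV \mapsto bVh^{-1}a^{-1}$.

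The key step is then to reindex the inner sum. Left multiplication by $a$ gives a bijection $(H/V)^U \to (H/V)^{\prescript{a}{}{U}}$, sending $hV$ to $ahV$: indeed $hV \in (H/V)^U$ means $U^h \le V$, and writing $h' = ah$ we have $(h')^{-1}(\prescript{a}{}{U})h' = h^{-1}a^{-1}aUa^{-1}ah = h^{-1}Uh = U^h \le V$, so $h'V \in (H/V)^{\prescript{a}{}{U}}$; this is clearly invertible. Moreover $U^h = U^{h'}$ where on the right we mean $(\prescript{a}{}{U})^{h'}$ — that is, $(h')^{-1}\prescript{a}{}{U}h' = h^{-1}Uh$ — so the exponent appearing in the tensor power is unchanged under the reindexing. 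Substituting $h' = ah$ (so $h = a^{-1}h'$ is irrelevant since only $h'$ and the subgroup $U^{h'} = (\prescript{a}{}{U})^{h'}$ appear), the double sum becomes
\[\sum_{V \in \can{S}} \sum_{h'V \in (H/V)^{\prescript{a}{}{U}}} h' \cdot f_{G/V}\bigl(n_V^{\otimes_R V/(\prescript{a}{}{U})^{h'}}\bigr) = w_{\prescript{a}{}{U}}(n) \text{,}\]
which is exactly the claim. One should check that everything is well-defined: $\prescript{a}{}{U} \in S$ since $S$ is closed under conjugation and $a \in H$, and the choice of representative $h'$ in its coset does not matter by Remark~\ref{rem:fidentity} (just as for the original sum).

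I do not expect a serious obstacle here; the only thing requiring a little care is bookkeeping the various conjugation conventions ($\prescript{g}{}{V} = gVg^{-1}$ versus $V^g = g^{-1}Vg$) so that the exponent $U^h$ on $V/U^h$ genuinely matches up after the shift $h \mapsto ah$. The mild subtlety is that the map $a \cdot ({-})$ in the definition of $w_U$ is the composite over all subgroups, which incorporates a reindexing of the product; but restricted to a single factor and precomposed with $h \cdot ({-})$ it is simply the coset relabelling $bV \mapsto bV(ah)^{-1}$, which makes the identity $a \cdot (h \cdot x) = (ah)\cdot x$ immediate. Once that is pinned down, the proof is the one-line reindexing above.
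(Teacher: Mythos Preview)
Your proof is correct and follows essentially the same approach as the paper's: apply $a \cdot ({-})$ to the defining sum, use $a \cdot (h \cdot x) = (ah)\cdot x$, then reindex via the bijection $hV \mapsto ahV$ from $(H/V)^U$ to $(H/V)^{\prescript{a}{}{U}}$, noting that $U^h = (\prescript{a}{}{U})^{ah}$. The paper's version is terser but the argument is identical.
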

\begin{proof}
  We have
  \begin{align*}
    a \cdot w_U(n) &= \sum_{V \in \can{S}} \left( \sum_{hV \in (H/V)^U} ah \cdot f_{G/V}(n_V^{\otimes_R V/U^h})) \right)\\
                   &= \sum_{V \in \can{S}} \left( \sum_{hV \in (H/V)^{\prescript{a}{}{U}}} h \cdot f_{G/V}(n_V^{\otimes_R V/U^{a^{-1}h}}) \right)\\
                   &= w_{\prescript{a}{}{U}}(n) \text{,}
  \end{align*}
  where the second equality holds because $U$ fixes $hV$ iff $\prescript{a}{}{U}$ fixes
  $ahV$.
\end{proof}

Since we will use it a lot, let us give a name to the codomain of the ghost map.

\begin{definition}
  Define the ghost group
  \[\text{gh}^S_{H \le G}(R; M) \coloneqq \left( \prod_{U \in S} M^{\otimes_R G/U} \right)^H \text{.}\]
\end{definition}

This extends to a functor $\text{gh}^S_{H \le G} :
\text{Mod} \to \text{Ab}_\text{Haus}$, and the ghost map gives a natural transformation
\[w : \prod_{V \in \can{S}} M^{\otimes_R G/V} \to \text{gh}^S_{H \le G}(R; M) \text{.}\]
Note that we can also write
\[\text{gh}^S_{H \le G}(R; M) \cong \prod_{V \in \can{S}} (M^{\otimes_R
  G/V})^{N_H(V)/V} \text{.}\]

\begin{lemma}
  The ghost map defined in Definition~\ref{def:ghost_map} generalises the ghost maps of the usual (possibly truncated) Witt vectors of a commutative
  ring, the $G$-typical Witt vectors of a commutative ring,
  and the Witt vectors with coefficients
  (in the case of a module over a commutative ring).
\end{lemma}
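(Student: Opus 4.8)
The plan is to verify the claim by specialising the general ghost map formula of Definition~\ref{def:ghost_map} in each of the three cases, checking that it recovers the known formula after unwinding notation. In each case the key observation is that the sum over double cosets in our formula collapses, because either the group is abelian (so conjugation is trivial and $N_H(V) = H$), or the module is the ring itself (so tensor powers become ordinary powers and transfers become multiplication by the index).

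First I would handle the $p$-typical Witt vectors of a commutative ring $R$. Take $G = H = C_{p^n}$ (or $\mathbb{Z}_p$ for the untruncated big version), $M = R$, and $S$ the truncation set of all open subgroups. The open subgroups are $C_{p^{n-j}}$ for $0 \le j \le n$, which I index by $j$; since the group is abelian each is its own conjugacy class, so $\can{S} = S$ and every normaliser is the whole group. The tensor power $R^{\otimes_R G/V}$ is canonically $R$, and for $U \le V$ the map $f_{G/V}$ composed with the tensor power map $({-})^{\otimes_R V/U}$ becomes the $\abs{V:U}$-th power map $r \mapsto r^{\abs{V:U}}$ on $R$. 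The action of $h \in H$ is trivial here. So the $U = C_{p^{n-j}}$ component of the ghost map sends $(a_i)$ to $\sum_{i : C_{p^{n-i}} \le C_{p^{n-j}}} \abs{(C_{p^{n-j}}/C_{p^{n-i}})} \cdot$ (something); more precisely the inner sum over $(H/V)^U$ has $\abs{H/V}/\abs{H/U}$-many... I would carefully track that $(H/V)^U$ has $\abs{V:U}$ elements when $U \le V$ and the transfer contributes this multiplicity, yielding exactly $\sum_{0 \le i \le j} p^i a_i^{p^{j-i}}$, recovering (\ref{eq:intro_ghost}). The truncated case is identical with $n$ finite.

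Next the $G$-typical Witt vectors of a commutative ring $R$ in the sense of Dress--Siebeneicher: take $M = R$, so again $R^{\otimes_R G/V} \cong R$ for all $V$, and the tensor power map followed by $f_{G/V}$ becomes the $\abs{V:U}$-power map composed with a reindexing. Now conjugation is genuinely present, but with $M = R$ all the reindexing isomorphisms $g \cdot ({-})$ act as the identity on $R$, so $w_U(n)$ becomes $\sum_{V \in \can{S}} \abs{(H/V)^U} \cdot (n_V)^{\abs{V:U^h}}$ where I note $\abs{V:U^h}$ is independent of $h$. Comparing with the ghost map used in \cite{dress_burnside_1988} (their Theorem~2.7.3 or the defining congruences), which sums over conjugacy classes of subgroups weighted by the number of fixed cosets, gives agreement; I would cite the relevant formula there and match term by term. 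The Witt vectors with coefficients $W(R;M)$ of \cite{dotto_witt_2022} over a commutative ring is the case $G = H = \mathbb{Z}_p$ (or $C_{p^n}$), with $M$ a general $R$-module regarded as a symmetric bimodule; since the group is abelian, $\can{S} = S$, every $(H/V)^U$ is the whole of $H/V$ when $U \le V$ with $\abs{V:U}$ elements, and the ghost formula reduces to the transfer-twisted power sum of \cite{dotto_witt_nodate}, matching their ghost map definition directly.

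The main obstacle will be bookkeeping the identifications of tensor powers with the ring (in the $M = R$ cases) and of the coset reindexing maps $f_{G/V}$ and $g \cdot ({-})$ with the naive power and permutation maps, done consistently enough that the multiplicities $\abs{(H/V)^U}$ and $\abs{V:U}$ land in exactly the right places to reproduce the published formulas; this is not conceptually hard but requires care, especially keeping straight the difference between summing over $\can{S}$ (conjugacy class representatives) here versus however the comparison sources organise their sums. I would lean on Remark~\ref{rem:fidentity} to see that the choice of coset representative $h$ is immaterial, and on the observation that in all three special cases the transfer maps appearing implicitly in the ghost group $\text{gh}^S_{H \le G}$ degenerate to multiplication by an index, so no subtle torsion phenomena intervene.
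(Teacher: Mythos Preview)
Your approach is the same as the paper's: specialise the general ghost formula in each case and match it against the known formula. The paper only writes out the $n+1$-truncated $p$-typical case in detail and defers the rest to Section~\ref{sec:generalise}.

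However, you have a repeated counting slip. You claim that $(H/V)^U$ has $\abs{V:U}$ elements when $U \le V$ (and again that ``$H/V$ has $\abs{V:U}$ elements'' in the coefficients case). In an abelian group with $U \le V$, every coset $hV$ satisfies $U^h = U \le V$, so $(H/V)^U = H/V$ has $\abs{H:V}$ elements, not $\abs{V:U}$. In the classical formula $\sum_{0 \le i \le j} p^i a_i^{p^{j-i}}$ (with the paper's indexing $V = C_{p^{n-i}}$, $U = C_{p^{n-j}}$), the multiplier $p^i = \abs{H:V}$ comes from the size of the sum over $(H/V)^U$, while the exponent $p^{j-i} = \abs{V:U}$ comes from the tensor power $({-})^{\otimes_R V/U}$. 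Your stated count swaps these roles, and followed literally would not reproduce the correct formula; your Dress--Siebeneicher display $\sum_{V} \abs{(H/V)^U}\,(n_V)^{\abs{V:U^h}}$ is in fact correct, so the issue is just the miscount of $\abs{(H/V)^U}$, not the overall shape.

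One further point the paper flags and you omit: for the Witt vectors with coefficients of \cite{dotto_witt_2022}, the comparison only goes through after fixing a specific choice of coset representatives (namely $\{0,1,\dotsc,i-1\}$ for $\hat{\mathbb{Z}}/i\hat{\mathbb{Z}}$), since both constructions implicitly use an identification $M^{\otimes ni} \cong (M^{\otimes n})^{\otimes i}$ and these must agree. Also note that the big Witt vectors correspond to $G = \hat{\mathbb{Z}}$, not $\mathbb{Z}_p$; the latter gives the $p$-typical version.
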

\begin{proof}
  These are all straightforward to check. See Section~\ref{sec:generalise} for a
  precise description of how we generalise these constructions---in particular,
  to generalise the Witt vectors with coefficients we need to make the correct
  choice of coset representatives.

  For illustrative purposes, let us discuss the case of the $n+1$-truncated $p$-typical Witt
  vectors of a commutative ring. Let $S$ be the collection of all subgroups of
  $C_{p^n}$; then we expect $W^S_{C_{p^n} \le C_{p^n}}(R; R) \cong W_{n+1, p}(R)$.
  Note $R^{\otimes_R G/V} \cong R$, and under this isomorphism $h
  \cdot f_{G/V}(({-})^{\otimes_R V/U^h})$ is the map raising an element of
  $R$ to the $\abs{V : U^h}$ power. So for
  $0 \le k \le n$ the
  $C_{p^k}$-component of our
  ghost map becomes the map $w_{C_{p^k}} : \prod_{C_{p^l} \le C_{p^n}} R \to R$
  defined by
    \[a \mapsto \sum_{C_{p^k} \le C_{p^l} \le C_{p^n}} \sum_{hC_{p^l}\in C_{p^n}/C_{p^l}} a_{C_{p^l}}^{p^{l-k}} = \sum_{k \le l \le n} p^{n-l} a_{C_{p^l}}^{p^{l-k}} \text{.}\]
  We can clean this up by labelling the $C_{p^k}$ component of the product by
  the index $j = n-k$; then we get $w_j : \prod_{0 \le i \le n} R \to R$
  defined by
    \[a \mapsto \sum_{0 \le i \le j} p^i a_i^{p^{j-i}} \text{.}\]
  This agrees with the description of the $j$-component of the ghost map for $W_{n+1, p}(R)$
  given in Section~\ref{sec:classical_witt}
\end{proof}

\subsection{The Dwork lemma}\label{sec:dwork}

The group of Witt vectors with free coefficients will be isomorphic to the
image of the ghost map. We need to prove that the image is a subgroup, in order to get an
abelian group structure on the Witt vectors. We also want to show that the image
doesn't depend on any of the arbitrary choices used in the definition of the
ghost map.

Recall Lemma~\ref{lem:classical_dwork}, the Dwork lemma for the usual $p$-typical Witt vectors of a ring $R$. This says that if $R$ can be equipped with a
Frobenius lift $\phi_p : R \to R$ (a ring homomorphism such that $\phi_p(r)
\equiv r \text{ mod } pR$) then we can describe the image of the ghost map
using a collection of congruences. We want to prove an analogue for the
$G$-typical Witt vectors with coefficients. We will only work with the case of a
free module over a free ring $(T; Q)$, and use the maps $\phi_U^V : Q^{\otimes_T
  G/V} \to Q^{\otimes_T G/U}$ described in Definition~\ref{def:frobenius_phi}
in place of the $\phi_p$. We will use a more complicated collection of
congruences, modelled after Theorem~2.7.3 of
\cite{dress_burnside_1988}.\footnote{That theorem doesn't
  involve any sort of Frobenius lift. However it only applies to
  $W_G(\mathbb{Z})$, and $\text{id} : \mathbb{Z} \to \mathbb{Z}$ is a Frobenius lift for the integers. The theorem itself is essentially
a consequence of Burnside's lemma; the last line of our proof will proceed
similarly, but we will need to do quite a bit of work to reduce to that.}

The essence of the Dwork lemma is to show that
for any $n \in \prod_{V \in \can{S}} Q^{\otimes_T G/V}$, the value of $w_U(n)$
modulo the image of the transfer map $\text{tr}_U^{N_H(U)} : Q^{\otimes_T G/U}
\to Q^{\otimes_T G/U}$ is determined by the values of $w_V(n)$ for all subgroups
$V$ of lower index than $U$ in $H$. We prove this in the following lemma. Then in the
subsequent lemma we will see that any element of $\text{gh}^S_{H \le G}(T; Q)$ satisfying such congruence conditions is in the image of the
ghost map, since the conditions are of exactly the right form to let us
inductively construct a preimage (indeed for $U \in \can{S}$ then $w_U(n)$ is
$\text{tr}_U^{N_H(U)}(n_U)$ plus a term depending only on $n_V$ for lower index $V$).

In fact in the following we prove a slightly more general result: given open subgroups
$U \le_o K \le_o H$, we
describe how $w_U(n)$ is constrained by the values of $w_V(n)$ for $V$ a
subgroup of $K$ of lower
index than $U$. We will need this version later when we
define operators on Witt vectors.

\def\lvUr{{\langle vU \rangle}}

\begin{lemma} \label{lem:congruence}
  Let $G$ be a profinite group, $H$ an
  open subgroup, and $S$ a truncation set for $H$. Let $K \in S$ be an open subgroup of
  $H$, and $U \in S$ an open
  subgroup of $K$.
  Let $T = \mathbb{Z}[X]$ be the free ring on a set $X$, and let $Q =
  T(Y)$ the free $T$-module on a set $Y$.
  Then for any $n \in \prod_{V \in \can{S}} Q^{\otimes_T G/V}$, the sum
  \begin{equation}\label{eq:congruence1}\sum_{vU \in N_K(U)/U} \phi^{\lvUr}_U (w_{\lvUr}(n))\end{equation}
  is in the image of $\text{tr}_U^{N_K(U)} : Q^{\otimes_T G/U} \to Q^{\otimes_T
    G/U}$ (where $\lvUr$ denotes the subgroup of $N_K(U)$ generated by the
  elements of $vU$---observe that since $S$ is upwards closed and $U \in S$, we
  have $\lvUr \in S$ so $w_{\langle vU \rangle}$ is defined).

  Note in most applications we will use the case $K = H$.
\end{lemma}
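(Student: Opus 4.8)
The plan is to substitute the definition of the ghost components into the sum, push the Frobenius lifts through the translation operators, reorganise the resulting multiple sum, and then recognise each piece as a transfer by separating the generic tensor monomials (which sit in free Weyl-group orbits) from the diagonal ones (where the classical Frobenius congruence and a Burnside count supply the needed divisibility).

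First I would expand each $w_{{\langle vU\rangle}}(n)$ using Definition~\ref{def:ghost_map} and push the Frobenius lift through the $h\cdot({-})$ operators by means of the identity $\phi^{{\langle vU\rangle}}_U(h\cdot m)=h\cdot\phi^{{\langle vU\rangle}^h}_{U^h}(m)$, which is immediate from the identities $g\cdot\phi^V_U(m)=\phi^{gVg^{-1}}_{gUg^{-1}}(g\cdot m)$ and $\phi^V_U\phi^W_V=\phi^W_U$. The sum then becomes
\[\sum_{vU}\ \sum_{V\in\can{S}}\ \sum_{hV\in(H/V)^{{\langle vU\rangle}}}h\cdot\phi^{{\langle vU\rangle}^h}_{U^h}\bigl(f_{G/V}(n_V^{\otimes_T V/{\langle vU\rangle}^h})\bigr)\text{.}\]
Since ${\langle vU\rangle}=\langle U,v\rangle$, the condition $hV\in(H/V)^{{\langle vU\rangle}}$ says precisely that $hV\in(H/V)^U$ and $v\in N_K(U)\cap hVh^{-1}$, and the latter depends only on the coset $vU$. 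I would therefore swap the order of summation, ranging first over $V$, then over $hV\in(H/V)^U$, then over $vU\in(N_K(U)\cap hVh^{-1})/U$; after substituting $v'=v^h$ and writing $P=U^h\le V$, this isolates — for each $V$ and each such $hV$ — an inner term of the shape $h\cdot\bigl(\sum_{v'P}\phi^{{\langle v'P\rangle}}_P(f_{G/V}(n_V^{\otimes_T V/{\langle v'P\rangle}}))\bigr)$, in which only the single cofactor $n_V$ appears and the inner sum runs over $v'P\in(N_{K^h}(P)\cap V)/P$.

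The core of the argument is to show each such inner expression lies in $\text{im}(\text{tr}^{N_K(U)}_U)$. Using freeness of $(T;Q)$, I would expand in the basis $Y^{\times G/U}$ of $Q^{\otimes_T G/U}\cong T(Y^{\times G/U})$, on which the Weyl group $W_K(U)=N_K(U)/U$ acts; the orbit description in the torsion-free transfer lemma of Section~\ref{sec:movingtensor} says that a $W_K(U)$-invariant element lies in $\text{im}(\text{tr}^{N_K(U)}_U)$ exactly when the (orbit-constant) coefficient of each basis monomial $b$ is divisible, in $T=\mathbb{Z}[X]$, by $\abs{\text{Stab}_{W_K(U)}(b)}$. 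Monomials in free $W_K(U)$-orbits satisfy this vacuously, and these carry the generic contributions; for a monomial $b$ with nontrivial stabiliser the coefficient combines an honest power $r^k$ of a coefficient $r$ of $n_V$ (from the term with $\langle v'P\rangle=P$, where $\phi$ is the identity) with its Frobenius twists $\varphi^k(r)$ (from the terms with $\langle v'P\rangle\supsetneq P$, where $\phi^{{\langle v'P\rangle}}_P$ applies $\varphi^{\abs{{\langle v'P\rangle}:P}}$ to coefficients), weighted by the fixed-point multiplicities $\abs{(H/V)^U}$ and by the number of cosets $v'P$ of each order. A Burnside count of those cosets identifies the coefficient, modulo $\abs{\text{Stab}_{W_K(U)}(b)}$, with an integer combination of Frobenius twists of the coefficients of $n_V$ that vanishes modulo $\abs{\text{Stab}_{W_K(U)}(b)}$ by the congruence $\varphi^p(r)\equiv r^p\pmod p$ and its iterates — a congruence of the kind underlying the corollary to Lemma~\ref{lem:classical_dwork}.

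I expect the main obstacle to be exactly this reduction. There are four nested summations — over $V\in\can{S}$, over $hV\in(H/V)^U$, over the cosets $vU$, and over the tensor monomials — and, since neither the maps $\phi^\bullet_\bullet$ nor the tensor-power operations are additive, one cannot reduce $n_V$ to a single generator at the outset. Tracking which monomials of which $\phi^{{\langle v'P\rangle}}_P(f_{G/V}(n_V^{\otimes_T\cdots}))$ contribute to a given basis element $b$, separating cleanly the free-orbit part from the part with nontrivial stabiliser, and checking that the latter assembles into a single classical Frobenius congruence, is where essentially all the work lies; once this is set up, the conclusion follows immediately, just as the analogous counting finishes Theorem~2.7.3 of \cite{dress_burnside_1988}.
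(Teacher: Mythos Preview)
Your outline follows the same global strategy as the paper's proof: expand the ghost components, push the $\phi$'s past the $h\cdot({-})$'s, swap the order of summation, and reduce to a Burnside-type count. Two points deserve correction or sharpening.

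First, a genuine gap: you claim that for each fixed $V$ and each fixed $hV\in(H/V)^U$ the inner term $h\cdot\bigl(\sum_{v'P}\phi^{\langle v'P\rangle}_P f_{G/V}(n_V^{\otimes V/\langle v'P\rangle})\bigr)$ lies in $\text{im}(\text{tr}^{N_K(U)}_U)$. This is false as stated, because that term is not even $N_K(U)$-invariant: the Weyl group $N_K(U)/U$ permutes the cosets $hV\in(H/V)^U$, with stabiliser $N_{K\cap{}^hV}(U)$. The paper inserts exactly the step you are missing: decompose $(H/V)^U$ into $N_K(U)$-orbits, extract a transfer $\text{tr}^{N_K(U)}_{N_{K\cap{}^hV}(U)}$, and thereby reduce (after setting $h=e$ without loss) to showing that the inner sum lies in $\text{im}\bigl(\text{tr}^{N_{K\cap V}(U)}_U\bigr)$ --- a smaller transfer, but now applied to something that is genuinely invariant.

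Second, your sketch of the endgame is vaguer than what is actually needed, and the paper's device is worth knowing. Rather than tracking coefficients of basis monomials $b\in Y^{\times G/U}$ and invoking the ring-level congruence $\varphi^p(r)\equiv r^p\pmod p$, the paper writes each coefficient of $n_V$ as $c_j\theta_j$ with $c_j\in\mathbb{Z}$ and $\theta_j$ a monic monomial in $X$, then expands the tensor power of $n_V$ over functions $\chi\colon V/U\to J$. The point is that $\varphi^k(\theta_j)=\theta_j^k$ \emph{exactly} for monomials, so after a second orbit decomposition (now over $N_{K\cap V}(U)$ acting on the $\chi$'s) the $X$-monomial part $\Theta_\chi$ and the $Y$-part $Y_\chi$ factor out completely, and one is left with the purely numerical congruence
\[
\sum_{vU\in (K\cap\text{Stab}(\chi))/U} C_\chi^{\,\lvert\text{Stab}(\chi):\langle vU\rangle\rvert}\equiv 0\pmod{\lvert K\cap\text{Stab}(\chi):U\rvert},
\]
with $C_\chi\in\mathbb{Z}$. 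This is Redfield--P\'olya for $(K\cap\text{Stab}(\chi))/U$ acting on $\text{Stab}(\chi)/U$ by right multiplication. Your route via $\text{Stab}_{W_K(U)}(b)$ and ring-level Frobenius congruences would have to cope with several different $\chi$'s contributing to the same $b$, and does not obviously collapse to a single classical congruence; the integer--monomial split is what makes the bookkeeping tractable.
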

\begin{proof}
Substituting in the definition of the components of the ghost map and using
additivity of $\phi_U^\lvUr$, we find that (\ref{eq:congruence1}) equals
\[\sum_{vU \in N_K(U)/U} \left( \sum_{V \in \can{S}} \ \sum_{hV \in (H/V)^{\lvUr}}
  \phi_U^{\lvUr} (h \cdot f_{G/V}(n_V^{\otimes_T V/\langle vU \rangle^h})) \right) \text{.}\]
Interchanging sums, we get
\[\sum_{V \in \can{S}} \  \sum_{hV \in (H/V)^U} \left( \sum_{vU \in N_{K
        \cap \prescript{h}{}{V}}(U)/U}
  \phi_U^{\lvUr} (h \cdot f_{G/V}(n_V^{\otimes_T V/\langle vU
    \rangle^h})) \right) \text{.}\]
Using our identities for how the group action interacts with the other maps, we have
\begin{align*}
  & \sum_{vU \in N_{K \cap \prescript{h}{}{V}}(U)/U} \phi_U^{\lvUr} (h \cdot f_{G/V}(n_V^{\otimes_T V/\langle vU \rangle^h}))\\
  & \sum_{vU \in N_{K \cap \prescript{h}{}{V}}(U)/U} h \cdot \phi_{U^h}^{\lvUr^h} (f_{G/V}(n_V^{\otimes_T V/\langle vU \rangle^h}))\\
  &= h \cdot \left(  \sum_{vU^h \in N_{K^h \cap V}(U^h)/U^h} \phi_{U^h}^{\langle v U^h \rangle} (f_{G/V}(n_V^{\otimes_T V/\langle v U^h \rangle})) \right) \text{.}
\end{align*}
We can now decompose according to the orbits of the action of $N_K(U)$ on $(H/V)^U$. Note that
the stabiliser of $hV \in (H/V)^U$ is $\prescript{h}{}{V} \cap N_K(U) = N_{K \cap \prescript{h}{}{V}}(U)$,
so we compute that (\ref{eq:congruence1}) equals the sum over $V$ in $\can{S}$ of
\begin{align*}
  & \sum_{hV \in (H/V)^U/{N_K(U)}} \  \sum_{a \in
    N_K(U)/N_{K \cap \prescript{h}{}{V}}(U)} ah \cdot \left( \sum_{vU^h \in N_{K^h \cap V}(U^h)/U^h}
    \phi_{U^h}^{\langle v U^h \rangle} (f_{G/V}(n_V^{\otimes_T V/\langle v U^h \rangle})) \right)\\
  &= \sum_{hV \in (H/V)^U/{N_K(U)}} \text{tr}^{N_K(U)}_{N_{K \cap \prescript{h}{}{V}}(U)} \left( h \cdot \sum_{vU^h \in N_{K^h \cap V}(U^h)/U^h}
    \phi_{U^h}^{\langle v U^h \rangle} (f_{G/V}(n_V^{\otimes_T V/\langle v U^h \rangle})) \right) \text{.}
\end{align*}
Hence it suffices to prove that
\[ h \cdot \sum_{vU^h \in N_{K^h \cap V}(U^h)/U^h}
  \phi_{U^h}^{\langle v U^h \rangle} (f_{G/V}(n_V^{\otimes_T V/\langle v U^h \rangle}))\]
is in the image of $\text{tr}_U^{N_{K \cap \prescript{h}{}{V}}(U)}$. Since
$h \cdot
\text{tr}_{U^h}^{N_{K^h \cap V}(U^h)}({-}) = \text{tr}_U^{N_{K \cap \prescript{h}{}{V}}(U)} \, h \cdot ({-})$, we can assume without loss of
generality that $h = e$ (just replace $U$ by $U^h$ and $K$ by $K^h$).
So we want to show that
\begin{equation}\label{eq:congruence2}
  \sum_{vU \in N_{K \cap V}(U)/U} \phi_U^{\lvUr} (f_{G/V}(n_V^{\otimes_T V/\lvUr}))
\end{equation}
is in the image of $\text{tr}_U^{N_{K \cap V}(U)}$.

The polynomial ring $T =
\mathbb{Z}[X]$ has a $\mathbb{Z}$-module basis given by the (monic) monomials in
the variables $X$.
Using this, we can write $n_V \in Q^{\otimes_T G/V} \cong \mathbb{Z}[X](Y^{\times G/V})$ in the form
\[n_V = \sum_{j \in J} \left( c_j \theta_j \bigotimes_{gV \in G/V} y^j_{gV} \right)\]
for $c_j \in \mathbb{Z}$, $\theta_j \in \mathbb{Z}[X]$ monomials, and $y^j_{gV} \in Y$. We can expand out
\[n_V^{\otimes_T V/\lvUr} = \sum_{\chi : V/\lvUr \to J}  \
  \bigotimes_{s\lvUr \in V/\lvUr} \left( c_{\chi(s)} \theta_{\chi(s)}
  \bigotimes_{gV \in G/V} y^{\chi(s)}_{gV} \right) \in Q^{\otimes_T G/V \times V/\langle vU \rangle}\text{,}\]
where $\chi$ runs through all functions (of sets) from $V/\lvUr$ to $J$.

We have
\[f_{G/V}(n_V^{\otimes_T V/\lvUr}) = \sum_{\chi : V/\lvUr \to J}
  \left[ \left( \prod_{s\lvUr \in V/\lvUr} c_{\chi(s)} \theta_{\chi(s)} \right) \bigotimes_{g_i s \lvUr
  \in G/\lvUr} y^{\chi(s)}_{g_iV} \right]\]
(now an element of $Q^{\otimes_T G/\langle vU \rangle}$) where in the tensor
product we use the fact that any coset in $G/\lvUr$ can be
uniquely written as $g_i (s \lvUr)$ where $g_i$ is one of the
distinguished coset representatives for $G/V$ and $s\lvUr$ is a coset in $V/\lvUr$.
And so we calculate
\[\phi_U^{\lvUr}(f_{G/V}(n_V^{\otimes_T V/\lvUr})) = \sum_{\chi : V/\lvUr \to J}
  \left[ \left( \prod_{s\lvUr \in V/\lvUr} c_{\chi(s)}
    \theta_{\chi(s)}^{\abs{\lvUr : U}} \right) \bigotimes_{g_i t U
    \in G/U} y^{\chi(t)}_{g_iV} \right]\]
(an element of $Q^{\otimes_T G/U}$) where now $tU$ is a coset in $V/U$.

We can think of a function $V/\lvUr \to J$ as a function $V/U \to J$ with the
property that it
factors through the canonical surjection $V/U \xrightarrowdbl{} V/\lvUr$.
There is an action of $N_V(U)$ on the set $\text{Set}(V/U, J)$ of functions $V/U \to J$, via $(v \cdot \chi)(sU) = \chi(sUv)$. Given $U \le W \le N_V(U)$, a function $\chi : V/U \to J$ factors through
$V/U \xrightarrowdbl{} V/W$ iff $W \le \text{Stab}_{N_V(U)}(\chi)$. So we can expand out (\ref{eq:congruence2}) and
interchange summation to get
\[\sum_{\chi \in \text{Set}(V/U, J)} \  \sum_{vU \in (K \cap \text{Stab}(\chi))/U} \left[ \left(
    \prod_{s\lvUr \in V/\lvUr} c_{\chi(s)} \theta_{\chi(s)}^{\abs{\lvUr:U}} \right) \bigotimes_{g_i t U
    \in G/U} y^{\chi(t)}_{g_iV} \right]\]
(where by $\text{Stab}(\chi)$ we always mean $\text{Stab}_{N_V(U)}(\chi)$).
Since $\chi$ factors through $V/\text{Stab}(\chi)$, we can define
\begin{align*}
  C_\chi &\coloneqq \prod_{s\text{Stab}(\chi) \in V/\text{Stab}(\chi)} c_{\chi(s)} \in \mathbb{Z}\\
  \Theta_\chi &\coloneqq \prod_{s\text{Stab}(\chi) \in V/\text{Stab}(\chi)} \theta_{\chi(s)} \in \mathbb{Z}[X]\\
  Y_\chi &\coloneqq \bigotimes_{g_itU \in G/U} y_{g_iV}^{\chi(t)} \in Q^{\otimes_T G/U}
\end{align*}
and then (\ref{eq:congruence2}) simplifies to
\[\sum_{\chi \in \text{Set}(V/U, J)} \left[ \left( \sum_{vU \in (K \cap \text{Stab}(\chi))/U}
  C_\chi^{\abs{\text{Stab}(\chi):\lvUr}} \right)
\Theta_\chi^{\abs{\text{Stab}(\chi):U}} Y_\chi \right]
  \text{.}\]
This decomposes as the sum over $\chi$ in the set of orbits $\text{Set}(V/U, J)/{N_{K \cap V}(U)}$
of
\begin{align*} & \sum_{w \in
    N_{K \cap V}(U)/(K \cap \text{Stab}(\chi))} \left[ \left( \sum_{vU \in (K \cap \text{Stab}(w \cdot \chi))/U}
  C_{w \cdot \chi}^{\abs{\text{Stab}(w \cdot \chi):\lvUr}} \right) \Theta_{w \cdot
                 \chi}^{\abs{\text{Stab}(w \cdot \chi):U}} Y_{w \cdot \chi} \right] \\
               &= \sum_{w \in
                 N_{K \cap V}(U)/(K \cap \text{Stab}(\chi))} \left[ \left( \sum_{vU \in (K \cap \text{Stab}(\chi))/U}
                 C_{w \cdot \chi}^{\abs{\prescript{w}{}{\text{Stab}(\chi)}:\prescript{w}{}{\lvUr}}} \right) \Theta_{w \cdot
                 \chi}^{\abs{\prescript{w}{}{\text{Stab}(\chi)}:U}} Y_{w \cdot \chi} \right] \text{.}
\end{align*}
For $w \in N_V(U)$, we have the identities
\begin{align*}
C_{w \cdot \chi} &= C_\chi\\
\Theta_{w \cdot \chi} &= \Theta_\chi\\
Y_{w \cdot \chi} &= w \cdot Y_\chi %
\end{align*}
and (\ref{eq:congruence2}) becomes the sum over $\chi \in \text{Set}(V/U,
J)/{N_{K \cap V}(U)}$ of
\begin{align*}
  &\sum_{w \in
    N_{K \cap V}(U)/(K \cap \text{Stab}(\chi))} \left[ \left( \sum_{vU \in (K \cap \text{Stab}(\chi))/U}
    C_{\chi}^{\abs{\text{Stab}(\chi):\lvUr}} \right) \Theta_{\chi}^{\abs{\text{Stab}(\chi):U}} (w \cdot Y_{\chi}) \right] \\
  &= \text{tr}_{K \cap \text{Stab}(\chi)}^{N_{K \cap V}(U)} \left[
    \left( \sum_{vU \in (K \cap \text{Stab}(\chi))/U}
    C_{\chi}^{\abs{\text{Stab}(\chi):\lvUr}} \right) \Theta_{\chi}^{\abs{\text{Stab}(\chi):U}} Y_{\chi} \right] \text{.}
  \end{align*}
So it is enough to show that
\[\left( \sum_{vU \in (K \cap \text{Stab}(\chi))/U}
    C_{\chi}^{\abs{\text{Stab}(\chi):\lvUr}} \right)
  \Theta_{\chi}^{\abs{\text{Stab}(\chi):U}} Y_{\chi}\]
is in the image of $\text{tr}_U^{K \cap \text{Stab}(\chi)}$. But
$Y_\chi$ is fixed by $\text{Stab}(\chi)$, so it suffices to show that
\begin{equation}\label{eq:congruence3}
  \sum_{vU \in (K \cap \text{Stab}(\chi))/U}
  C_{\chi}^{\abs{\text{Stab}(\chi):\lvUr}} \equiv 0 \quad \text{mod } \abs{K \cap \text{Stab}(\chi):U} \text{.}
\end{equation}
We will prove that this congruence holds for any integer $C_{\chi}$. The Redfield-P\'{o}lya enumeration theorem (a
straightforward corollary of Burnside's lemma) shows that for a finite group $G$ acting
on a finite set $X$, we have %
\[\sum_{g \in G} m^{c(g)} \equiv 0 \quad \text{mod } \abs{G}\]
for any integer $m$, where $c(g)$ is the number of cycles of $g$ considered as a
permutation of $X$ (equivalently the number of orbits of the action on $X$ by the subgroup generated
by $g$). Applying this to $(K \cap \text{Stab}(\chi))/U$ acting on $\text{Stab}(\chi)/U$ by right multiplication gives (\ref{eq:congruence3}).
\end{proof}

We can now show that the image of the ghost map is precisely the set of elements that satisfy these congruences.

\begin{lemma}[Dwork lemma] \label{lem:dwork}
  Let $(T; Q)$ be free. For an element $a \in \text{gh}^S_{H \le G}(T; Q)$, the
  following are equivalent:
  \begin{enumerate}[(i)]
  \item the element $a$ is in the image of the ghost map
    \[w : \prod_{V \in \underline{S}} Q^{\otimes_T G/V} \to \text{gh}^S_{H \le
        G}(T; Q) \text{,}\]
  \item for all subgroups $U \in S$,
  \begin{equation}\label{eq:dwork1}\sum_{vU \in N_H(U)/U} \phi^{\lvUr}_U (a_{\lvUr})\end{equation}
  is in the image of $\text{tr}_U^{N_H(U)} : Q^{\otimes_T G/U} \to Q^{\otimes_T G/U}$.
  \end{enumerate}
\end{lemma}
\begin{proof}
Setting $K = H$ in Lemma \ref{lem:congruence} we see that any element in the image of the ghost map satisfies
condition (ii), so we just need to prove the converse. Suppose $a
\in \text{gh}^S_{H \le G}(T; Q)$ satisfies
(ii), then we want to construct $n \in \prod_{V \in \can{S}} Q^{\otimes_T G/V}$ such that
$w(n) = a$.

We will pick the components of $n = (n_W)_{W \in \can{S}}$ by induction on the index
of $W$ in $H$.
Let $k \in \mathbb{N}$, and suppose that for every distinguished subgroup $V \in
\can{S}$ with $\abs{H:V} < k$ we have already chosen a value for $n_V$. Moreover
suppose $w_V(n) = a_V$ for all such $V$ (note $w_V(n)$ is well-defined
since it only depends on those $n_{V'}$ where $V'$ has index lower or equal to
the index of $V$). We want to choose a value of $n_W$ for every distinguished
subgroup $W \in \can{S}$
with $\abs{H:W} = k$, such that $w_W(n) = a_W$.

Note that since the
image of the ghost map is fixed by
$H$, we automatically have $w_{U}(n) = a_{U}$ for all (not necessarily distinguished) $U \in S$ with $\abs{H:U} < k$.
Explicitly, if $V \in \can{S}$ with $\abs{H:V} < k$ then for any conjugate subgroup
$\prescript{h}{}{V}$ we have $w_{\prescript{h}{}{V}}(n) = h
\cdot w_V(n) = h \cdot a_V = a_{\prescript{h}{}{V}}$.

Let $W \in \can{S}$ be an index $k$ distinguished subgroup. The definition of the ghost component $w_W$ is
\[w_W(n) = \sum_{V \in \can{S}} \left( \sum_{hV \in (H/V)^W}
    h \cdot f_{G/V}(n_V^{\otimes_T V/W^h}) \right) \text{.}\]
The $V=W$ term of the sum simplifies to
\begin{align*}
  \sum_{hW \in (H/W)^W} h \cdot f_{G/W}(n_W^{\otimes_T W/W^h}) &= \sum_{hW \in N_H(W)/W} h \cdot n_W\\
                                                              &= \text{tr}_W^{N_H(W)}(n_W) \text{,}
\end{align*}
so if we define
\[r_W \coloneqq \sum_{V \in \can{S},\, V \ne W} \left( \sum_{hV \in (H/V)^W}
    h \cdot f_{G/V}(n_V^{\otimes_T V/W^h}) \right)\]
then we have
\[w_W(n) = \text{tr}_W^{N_H(W)}(n_W) + r_W\text{,}\]
where $r_W$ only depends on those values of $n_V$ that we have already chosen.
Since $\phi^W_W$ is the identity, splitting off the $vW = W$ term in Lemma
\ref{lem:congruence} (with $K = H$, $U = W$ and $n_W$ temporarily set to $0$) tells us that
\[r_W + \sum_{vW \in N_H(W)/W,\, vW \ne W} \phi^{\langle vW \rangle}_W (w_{\langle vW \rangle}(n))\]
is in the image of $\text{tr}_W^{N_H(W)}$. By condition (ii) we have that
\[a_W + \sum_{vW \in N_H(W)/W,\, vW \ne W} \phi^{\langle vW \rangle}_W (a_{\langle vW \rangle})\]
is in the image of $\text{tr}_W^{N_H(W)}$, and by assumption we have ensured
$w_{\langle vW \rangle}(n)
= a_{\langle vW \rangle}$ for $vW \ne W$. So $a_W - r_W$ must be in the image of $\text{tr}_W^{N_H(W)}$.
Choose $n_W$ such that $\text{tr}_W^{N_H(W)}(n_W) = a_W - r_W$, and
then $w_W(n) = \text{tr}_W^{N_H(W)}(n_W) + r_W = a_W$ as desired.

Similarly choose a value of $n_{W}$ for all other $W \in \can{S}$ with
$\abs{H : W} = k$, such
that $w_{W}(n) = a_{W}$; note we can do this simultaneously for potentially
infinitely many different $W$. Then the
induction holds. At the end of this inductive process we will have determined
$n$ such that $w_U(n) = a_U$ for all $U \in S$, i.e.\ $w(n) = a$.
\end{proof}

\begin{corollary}\label{cor:ghost_image_subgroup_indep}
  The image $\text{im}(w)$ is a closed subgroup of $\text{gh}^S_{H \le G}(T;
  Q)$, and it is independent of our choices of conjugacy
  class representatives $\can{S}$ or coset representatives for subgroups in $\can{S}$.
\end{corollary}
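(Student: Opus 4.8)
The plan is to deduce everything directly from the Dwork lemma (Lemma~\ref{lem:dwork}), which already gives an intrinsic description of $\text{im}(w)$. For each $U \in S$ I would introduce the map
\[\pi_U : \text{gh}^S_{H \le G}(T; Q) \to Q^{\otimes_T G/U}, \qquad a \mapsto \sum_{vU \in N_H(U)/U} \phi_U^{\langle vU\rangle}(a_{\langle vU\rangle}),\]
and set $B_U \coloneqq \pi_U^{-1}\bigl(\text{im}(\text{tr}_U^{N_H(U)})\bigr)$. Then Lemma~\ref{lem:dwork} says precisely that $\text{im}(w) = \bigcap_{U \in S} B_U$ as subsets of $\text{gh}^S_{H \le G}(T; Q)$, and the whole proof will consist of extracting the three claimed properties from this equality.

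First I would record the subgroup property: each $\phi_U^V$ is additive (Definition~\ref{def:frobenius_phi}) and each transfer is additive, so every $\pi_U$ is a group homomorphism and $\text{im}(\text{tr}_U^{N_H(U)})$ is a subgroup of $Q^{\otimes_T G/U}$; hence each $B_U$, and therefore $\text{im}(w) = \bigcap_U B_U$, is a subgroup. For closedness, the key point is that $U$ is open in $H$, so $N_H(U)/U$ is finite and $\pi_U$ is a finite sum $\sum_{vU} \phi_U^{\langle vU\rangle}\circ \text{pr}_{\langle vU\rangle}$ of continuous maps (the coordinate projections $\text{pr}_V$ are continuous for the product topology, and each $\phi_U^{\langle vU\rangle}$ is automatically continuous as a map of discrete groups), so $\pi_U$ is continuous. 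Since $Q^{\otimes_T G/U}$ is discrete, $\text{im}(\text{tr}_U^{N_H(U)})$ is closed in it, whence $B_U$ is closed in $\text{gh}^S_{H \le G}(T; Q)$, and so is the intersection $\text{im}(w)$.

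It remains to address independence of the choices. Although the ghost map $w$ is built from a choice of conjugacy-class representatives $\can{S}$ and of coset representatives for each $V \in \can{S}$ (through the maps $f_{G/V}$), the set $\bigcap_U B_U$ refers to none of this data. Indeed the index sets $N_H(U)/U$ and the subgroups $\langle vU\rangle$ are canonical; the Frobenius lifts $\phi_U^{\langle vU\rangle}$ are defined via the canonical surjection $G/U \twoheadrightarrow G/\langle vU\rangle$ and depend only on the fixed generators of $(T; Q)$; the components $a_{\langle vU\rangle}$ are coordinates of $\text{gh}^S_{H \le G}(T; Q) = (\prod_{U \in S} Q^{\otimes_T G/U})^H$, which is indexed by all of $S$; and $\text{tr}_U^{N_H(U)}$ comes from the canonical action of $N_G(U)/U$ on $Q^{\otimes_T G/U}$. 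Since Lemma~\ref{lem:dwork} applies verbatim to the ghost map produced by any admissible choices and always yields the same subset $\bigcap_U B_U$, the image $\text{im}(w)$ is independent of those choices. I do not expect a serious obstacle in any of this; the one point that genuinely needs checking, and it is immediate from the definitions, is that $\phi_U^{\langle vU\rangle}$ and $\text{tr}_U^{N_H(U)}$ carry no dependence on $\can{S}$ or on the coset representatives.
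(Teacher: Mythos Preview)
Your proposal is correct and follows essentially the same approach as the paper: both deduce everything from the Dwork lemma's characterisation $\text{im}(w) = \bigcap_{U \in S} B_U$, using additivity of $\phi_U^{\langle vU\rangle}$ for the subgroup property, the fact that each condition involves only finitely many coordinates for closedness, and the absence of any reference to $\can{S}$ or coset representatives in the conditions for independence. Your version is simply a more explicit unpacking of the paper's terse three-sentence argument.
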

\begin{proof}
  Since $\phi^{\langle  vU \rangle}_U({-})$ is additive we see
  that the image of the ghost map is closed under subtraction, so a subgroup.
  For each $U \in S$, the expression (\ref{eq:dwork1}) only references finitely many
  coordinates of $a \in \text{gh}^S_{H \le G}(T; Q)$, so the intersection of the
  relevant conditions for all $U$ specifies a closed subset. These conditions don't depend on our choices of $\can{S}$ or coset representatives for $G/V$, so the image is independent of these choices.
\end{proof}

We also note that the surjection $w : \prod_{V \in \can{S}} Q^{\otimes_T G/V}
\xrightarrowdbl{} \text{im}(w)$ is a topological quotient map.

\begin{lemma} \label{lem:ghost_quotient}
  For $(T; Q)$ free, the map
  \[w : \prod_{V \in \can{S}} Q^{\otimes_T G/V} \xrightarrowdbl{} \text{im}(w)\]
  is open, hence a quotient map (where $\text{im}(w) \subseteq \text{gh}^S_{H \le G}(T; Q)$ has the
  subspace topology).
\end{lemma}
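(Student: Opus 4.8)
The statement is that for $(T;Q)$ free, the ghost map $w : \prod_{V \in \can{S}} Q^{\otimes_T G/V} \twoheadrightarrow \mathrm{im}(w)$ is open. Since $\mathrm{im}(w)$ has the subspace topology inside $\mathrm{gh}^S_{H\le G}(T;Q)$, which is a product of discrete spaces restricted to $H$-fixed points (hence itself carries a product-of-discretes topology), and the domain $\prod_{V\in\can S} Q^{\otimes_T G/V}$ is likewise a product of discrete abelian groups, the topologies on both sides are the product topologies on products of discrete sets.

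Let me think about the structure. We have $w = \prod_{U \in S} w_U$ where $w_U(n)$ depends only on the components $n_V$ with $|H:V| \le |H:U|$. In fact, from the analysis in the proof of the Dwork lemma, for $U \in \can S$ we have $w_U(n) = \mathrm{tr}_U^{N_H(U)}(n_U) + r_U$ where $r_U$ depends only on components of strictly lower index. This triangular structure is the key.

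Here is the plan. A basic open set in the domain is one that constrains only finitely many of the components $n_V$, and for each constrained $V$ constrains $n_V$ to a single element (since each $Q^{\otimes_T G/V}$ is discrete, basic opens are products of singletons over a finite set of coordinates with everything else free). To show $w$ is open, it suffices to show the image under $w$ of such a basic open set $B$ is open in $\mathrm{im}(w)$. Suppose $B$ fixes the components $n_V$ for $V$ in some finite set $\mathcal{F} \subseteq \can S$. Enlarge $\mathcal F$, if necessary, to be downward-index-closed: include every $W \in \can S$ with $|H:W| \le \max_{V \in \mathcal F} |H:V|$ — still a finite set since $H$ has only finitely many open subgroups of bounded index. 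Now: if $n, n'$ are two points of the domain agreeing on all components indexed by $\mathcal F$, then $w_U(n) = w_U(n')$ for every $U \in S$ with $|H:U| \le \max_{V\in\mathcal F}|H:V|$, because each such $w_U$ depends only on components of index at most that of $U$, all of which lie in (the conjugacy-closure of) $\mathcal F$ — and $w$ being $H$-equivariant means the values on non-distinguished conjugates are determined too. Hence $w(B)$ is contained in (indeed, I claim equal to) the subset of $\mathrm{im}(w)$ obtained by fixing those finitely many low-index coordinates $a_U$ to the common values $w_U(n)$, which is a basic open subset of $\mathrm{im}(w)$.

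For the reverse inclusion — that $w(B)$ is \emph{all} of that basic open subset of $\mathrm{im}(w)$, hence genuinely open — I would argue exactly as in the inductive preimage construction in the proof of Lemma~\ref{lem:dwork}. Given any $a \in \mathrm{im}(w)$ whose low-index coordinates (those indexed by subgroups of index $\le \max_{V\in\mathcal F}|H:V|$) agree with $w_U(n_0)$ for the fixed point $n_0$ determining $B$, run the Dwork-lemma induction on index to build a preimage $n$ with $w(n) = a$; at each stage of index $\le \max_{V\in\mathcal F}|H:V|$ we are \emph{forced} (using injectivity of the transfer on free modules, Lemma in Section~\ref{sec:movingtensor}, in the $T$ torsion-free case — or just directly, since the $r_W$ and the $a_W$ are already pinned down) to take $n_W$ to be precisely the prescribed value, so $n \in B$; for higher index we have free choice. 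This shows $w(B)$ equals the basic open set, completing the proof. The main obstacle is bookkeeping: making the dependence "$w_U$ depends only on components of index $\le |H:U|$" precise together with the conjugation-equivariance, and checking that fixing the distinguished low-index components of $n$ really does fix \emph{all} low-index ghost coordinates $a_U$ and conversely that the induction is genuinely forced at low index — but both of these are already essentially carried out in the proof of Lemma~\ref{lem:dwork}, so it is a matter of repackaging rather than new ideas.
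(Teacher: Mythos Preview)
Your approach is essentially the same as the paper's: both show the image of a basic open set is open by invoking the inductive preimage construction from the proof of the Dwork lemma (Lemma~\ref{lem:dwork}).

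There is, however, a genuine gap. You assert that enlarging $\mathcal{F}$ to include every $W \in \can{S}$ with $|H:W| \le \max_{V \in \mathcal{F}} |H:V|$ yields ``still a finite set since $H$ has only finitely many open subgroups of bounded index''. This is false for general profinite $H$: for example $H = \prod_{i \ge 1} \mathbb{Z}/2\mathbb{Z}$ is profinite but has infinitely many open subgroups of index $2$. If the enlarged $\mathcal{F}$ is infinite then your $B$ is no longer a basic open set in the product topology, and the set of $a \in \mathrm{im}(w)$ with the corresponding ghost coordinates fixed is likewise not open, so your conclusion that $w(B)$ is open fails.

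The fix, which is exactly what the paper does, is to enlarge $\mathcal{F}$ not to be index-closed but to be $\can{S'}$ for a finite \emph{sub-truncation set} $S' \subseteq S$: take the upward-and-conjugation closure of $\mathcal{F}$, which is finite because each $V \in \mathcal{F}$ has finite index and hence only finitely many overgroups in $H$. Then $w_U(n)$ for $U \in S'$ depends only on $\{n_V : V \in \can{S'}\}$ (since $S'$ is upward-closed, every $V$ containing a conjugate of $U$ already lies in $S'$), and your Dwork-induction argument for the reverse inclusion goes through unchanged: for $W \in \can{S'}$ the remainder term $r_W$ depends only on components $n_{W'}$ with $W'$ strictly containing a conjugate of $W$, all of which lie in $\can{S'}$, so the prescribed value $(n_0)_W$ is forced and works.
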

\begin{proof}
  Let $S'$ be a finite sub-truncation set of $S$ and take conjugacy class
  representatives $\can{S'} = S' \cap \can{S}$. Let $\pi_{S'} : \prod_{V \in \can{S}}
Q^{\otimes_T G/V} \to \prod_{V \in \can{S'}} Q^{\otimes_T G/V}$ be the
projection map. The
topology on $\prod_{V \in \can{S}} Q^{\otimes_T G/V}$ has a basis given by sets of the form
$\pi_{S'}^{-1}(n')$ for some finite $S'$ and some $n'\in \prod_{V \in \can{S'}} Q^{\otimes_T G/V}$. So it suffices to show that
the image $w(\pi_{S'}^{-1}(n'))$ of a basic open set is open in $\text{im}(w)$.
We will show that each element of the set $w(\pi_{S'}^{-1}(n'))$ has an open
neighbourhood contained in the set. Consider some $n \in
\pi_{S'}^{-1}(n')$ (that is, an element of the product that agrees with $n'$ at
the components indexed by $S'$). Let $\tilde{R}_{S'} :
\text{gh}^S_{H \le G}(T; Q) \to \text{gh}^{S'}_{H \le G}(T; Q)$ denote the
projection on ghost groups, so we get a commutative diagram
\[\begin{tikzcd}
    \prod_{V \in \can{S}} Q^{\otimes_T G/V} \ar[r,"w"] \ar[d, "\pi_{S'}"] & \text{gh}^S_{H \le G}(T; Q) \ar[d, "\tilde{R}_{S'}"]\\
    \prod_{V \in \can{S'}} Q^{\otimes_T G/V} \ar[r, "w"] & \text{gh}^{S'}_{H \le G}(T; Q) \text{.}
\end{tikzcd}\]
Then we claim that $\tilde{R}_{S'}^{-1}(w(n')) \cap \text{im}(w)$ is an open
neighbourhood of $w(n)$ contained in $w(\pi_{S'}^{-1}(n'))$. Certainly it is open (in the
subspace topology on $\text{im}(w)$) and contains $w(n)$. It remains to show that it
is contained in $w(\pi_{S'}^{-1}(n'))$. Given $a \in
\tilde{R}_{S'}^{-1}(w(n')) \cap \text{im}(w)$ we need to check that $a \in
w(\pi_{S'}^{-1}(n'))$. Since $a$ is in the image of $w$, it satisfies the
congruence conditions of the Dwork lemma, Lemma~\ref{lem:dwork}. But the proof
of the Dwork lemma constructs an element of $w^{-1}(a)$ inductively, in such
a way that we can start with $n'$ and extend it to a preimage of $a$.
Then this preimage is an element of $\pi_{S'}^{-1}(n')$ as desired.

Deduce that $w(\pi_{S'}^{-1}(n'))$ is open in $\text{im}(w)$, so $w
: \prod_{V \in \can{S}} Q^{\otimes_T G/V} \to \text{im}(w) \subseteq
\text{gh}^S_{H \le G}(T; Q)$ is an open map.
\end{proof}

While Lemma~\ref{lem:dwork} is most similar in form to Theorem~2.7.3 from
\cite{dress_burnside_1988}, it is also closely related to the original
Dwork lemma, as illustrated in the following remark.

\begin{lemma}
  Lemma~\ref{lem:dwork} generalises Lemma~\ref{lem:classical_dwork}, the Dwork
  lemma for the usual $p$-typical Witt vectors of commutative rings, in the case
  of a free commutative ring with the standard Frobenius lift.\footnote{Since our ghost
    map generalises the usual one, this lemma will obviously be true in the sense that
    the images of the ghost maps are the same. The point is to see how to show
    directly that the conditions in both Dwork lemmas are equivalent.}
\end{lemma}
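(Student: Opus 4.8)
The plan is to specialise both statements to the relevant case and match the congruence conditions by hand. Take $G = H = C_{p^n}$, let $S = \{C_{p^k} : 0 \le k \le n\}$ be the truncation set of all subgroups, and let $(T; Q) = (\mathbb{Z}[X]; \mathbb{Z}[X])$ with $\mathbb{Z}[X]$ regarded as the rank-one free module over itself; write $\psi := \varphi^p : \mathbb{Z}[X] \to \mathbb{Z}[X]$, $x \mapsto x^p$, which is the standard Frobenius lift $\phi^p$ of Lemma~\ref{lem:classical_dwork}. Since $C_{p^n}$ is abelian every subgroup is normal, so $N_H(C_{p^k}) = H$ for all $k$, each $Q^{\otimes_T C_{p^n}/C_{p^k}}$ is canonically $\mathbb{Z}[X]$, and the Weyl group $C_{p^n}/C_{p^k}$ acts trivially on it (the multiplication map is symmetric). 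Hence $\text{gh}^S_{C_{p^n} \le C_{p^n}}(\mathbb{Z}[X]; \mathbb{Z}[X]) \cong \prod_{0 \le k \le n} \mathbb{Z}[X]$, and reindexing by $j = n - k$ identifies this with $\prod_{0 \le j < n+1}\mathbb{Z}[X]$, the underlying set of $W_{n+1,p}(\mathbb{Z}[X])$. As already observed (and made precise in Section~\ref{sec:generalise}), under this reindexing our ghost components $w_{C_{p^k}}$ become the classical ghost components $w_j$, so ``lies in the image of the ghost map'' has the same meaning for both constructions; it only remains to check that the two families of congruence conditions coincide.

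Next I would unwind Lemma~\ref{lem:dwork} in this case. Under the identifications above $\text{tr}_{C_{p^k}}^{C_{p^n}}$ is multiplication by $p^{n-k}$, so its image is $p^{n-k}\mathbb{Z}[X]$; and by Definition~\ref{def:frobenius_phi}, $\phi^{C_{p^l}}_{C_{p^k}}$ becomes $\varphi^{p^{l-k}} = \psi^{l-k}$. For fixed $k$, a coset $vC_{p^k} \in C_{p^n}/C_{p^k}$ has $\langle vC_{p^k}\rangle = \langle v \rangle C_{p^k} = C_{p^l}$ exactly when $vC_{p^k} \subseteq C_{p^l}$ but $vC_{p^k} \not\subseteq C_{p^{l-1}}$, which holds for one coset when $l = k$ and for $p^{l-k} - p^{l-1-k}$ cosets when $k < l \le n$. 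Hence the condition of Lemma~\ref{lem:dwork} at $U = C_{p^k}$ says
\[a_{C_{p^k}} + \sum_{l=k+1}^{n}\bigl(p^{l-k} - p^{l-1-k}\bigr)\,\psi^{l-k}\!\bigl(a_{C_{p^l}}\bigr) \in p^{n-k}\mathbb{Z}[X]\text{,}\]
which, substituting $j = n - k$ and $i = n - l$, is exactly
\[(C_j):\qquad a_j + \sum_{i=0}^{j-1}\bigl(p^{j-i} - p^{j-i-1}\bigr)\,\psi^{j-i}(a_i) \equiv 0 \pmod{p^j\mathbb{Z}[X]}\text{.}\]
So it suffices to show that requiring $(C_j)$ for all $1 \le j \le n$ is equivalent to requiring $a_j \equiv \psi(a_{j-1}) \bmod p^j\mathbb{Z}[X]$ for all $1 \le j \le n$ (the conditions of Lemma~\ref{lem:classical_dwork} for $W_{n+1,p}(\mathbb{Z}[X])$).

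For this I would introduce auxiliary elements $T_{-1} := 0$ and $T_j := \sum_{i=0}^{j} p^{j-i}\psi^{j-i}(a_i)$ for $j \ge 0$. Using that $\psi$ is a ring endomorphism fixing $\mathbb{Z}$ (so $\psi(p^s c) = p^s\psi(c)$), one checks the recursion $T_j = a_j + p\,\psi(T_{j-1})$, that $(C_j)$ is equivalent to $T_j \equiv \psi(T_{j-1}) \bmod p^j\mathbb{Z}[X]$, and the identity
\[(1 - p)\,\psi(T_{j-1}) - \psi(a_{j-1}) = -\,p\,\psi\bigl(T_{j-1} - \psi(T_{j-2})\bigr)\]
for all $j \ge 1$. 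Now induct on $j$. For $j = 1$ the right-hand side is $-\,p\,\psi(a_0) \in p\mathbb{Z}[X]$, so $(C_1)$, which reads $a_1 \equiv (1-p)\psi(a_0) \bmod p\mathbb{Z}[X]$, is equivalent to $a_1 \equiv \psi(a_0) \bmod p\mathbb{Z}[X]$. For $j \ge 2$, assume inductively that $(C_m)$ is equivalent to the classical condition at $m$ for every $m < j$, and that both hold; then $(C_{j-1})$ gives $T_{j-1} - \psi(T_{j-2}) \in p^{j-1}\mathbb{Z}[X]$, so the right-hand side of the displayed identity lies in $p^j\mathbb{Z}[X]$, whence $(1-p)\psi(T_{j-1}) \equiv \psi(a_{j-1}) \bmod p^j\mathbb{Z}[X]$ and therefore $(C_j)$, i.e.\ $a_j \equiv (1-p)\psi(T_{j-1}) \bmod p^j\mathbb{Z}[X]$, is equivalent to $a_j \equiv \psi(a_{j-1}) \bmod p^j\mathbb{Z}[X]$. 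This closes the induction and shows the two families of conditions agree. The only real difficulty is bookkeeping — keeping the two indexing conventions aligned and getting the coset count right — together with spotting the substitution $T_j$ that turns the congruences of Lemma~\ref{lem:dwork} into the classical recursion; everything else is elementary.
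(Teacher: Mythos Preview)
Your proof is correct and follows essentially the same approach as the paper: specialise to $G=H=C_{p^n}$, $(T;Q)=(\mathbb{Z}[X];\mathbb{Z}[X])$, count cosets to reduce the Dwork condition to the congruences $(C_j)$, and then show inductively that $(C_j)$ is equivalent to the classical condition $a_j\equiv\psi(a_{j-1})\bmod p^j$ given $(C_{j-1})$. Your auxiliary $T_j=\sum_{i=0}^j p^{j-i}\psi^{j-i}(a_i)$ and the identity $(1-p)\psi(T_{j-1})-\psi(a_{j-1})=-p\psi(T_{j-1}-\psi(T_{j-2}))$ are a clean repackaging of the paper's direct manipulation (applying $\varphi^p$ to the $(C_k)$-expression and rewriting the $(C_{k+1})$-expression), but the underlying computation is identical.
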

\begin{proof}
We are interested in the case of Lemma~\ref{lem:dwork} where $T = Q = \mathbb{Z}[X]$, $H = G =
C_{p^n}$, and $S$ is the set of all subgroups. We want to show that this generalises
Lemma~\ref{lem:classical_dwork} applied to $W_{n+1, p}(T)$, with Frobenius
lift the ring homomorphism $\varphi_p : T \to T$ defined by $x \mapsto x^p$ for $x
\in X$.

Note $T^{\otimes_T r} \cong T$, and under this isomorphism
\[\phi_{C_{p^{n-k}}}^{C_{p^{n-k+l}}} : T^{\otimes_T C_{p^n}/C_{p^{n-k+l}}} \to
  T^{\otimes_T C_{p^n}/C_{p^{n-k}}}\]
becomes the ring homomorphism $\varphi_{p^l} : T \to T$ taking $x \mapsto x^{p^l}$.
Given $U = C_{p^{n-k}} \le C_{p^n}$,
observe that $\langle i + C_{p^{n-k}} \rangle \le C_{p^n}/C_{p^{n-k}}$ is equal to the subgroup $C_{p^{n-k+l}}/C_{p^{n-k}}$ for
$p^{l}-p^{l-1}$ different values of $0 \le i < p^k-1$ (for $1 \le l \le k$, and equals
$C_{p^{n-k}}/C_{p^{n-k}}$ once).

Consider some $a \in \text{gh}^S_{H \le G}(T; Q) \cong \prod_{0 \le i
  \le n} T$ (where the $i$th component is $T \cong T^{\otimes_T
  C_{p^n}/C_{p^{n-i}}}$). The conditions in
Lemma~\ref{lem:dwork} ask that
\begin{equation} \label{eq:dwork_equiv_classical} a_k + \sum_{l=1}^k (p^l-p^{l-1})\varphi_{p^l}(a_{k-l}) \in p^kT \text{,} \end{equation}
for all $0 \le k \le n$. We want to show that these are equivalent to the
conditions in the classical
Dwork lemma, which asks that $a_{k} \equiv \varphi_p(a_{k-1})$ mod $p^{k}T$ for $1
\le k \le n$.

When $k=0$ then (\ref{eq:dwork_equiv_classical}) just says $a_0 \in T$, which is
trivially always true. Now suppose (\ref{eq:dwork_equiv_classical}) holds for
some $k$ with $0 \le k < n$. Since $\varphi_p$ is a ring homomorphism, we have $\varphi_p(p^kT) \subseteq p^kT$. Also
note that $\varphi_p \circ \varphi_{p^l} = \varphi_{p^{l+1}}$. So applying $\varphi_p$ to
(\ref{eq:dwork_equiv_classical}) at $k$ tells us that
\[\varphi_p(a_k) + \sum_{l=1}^k (p^l - p^{l-1})\varphi_{p^{l+1}}(a_{k-l}) \in p^k T \text{.}\]
But condition (\ref{eq:dwork_equiv_classical}) at $k+1$ asks that
\begin{align*}& a_{k+1} + \sum_{l=1}^{k+1} (p^l - p^{l-1})\varphi_{p^l}(a_{k+1-l})\\ &= a_{k+1} -
  \varphi_p(a_{k}) + p \left( \varphi_p(a_{k}) + \sum_{l=1}^k (p^l - p^{l-1})
    \varphi_{p^{l+1}}(a_{k-l}) \right) \in p^{k+1}T\end{align*}
so holds iff $a_{k+1} \equiv \varphi_p(a_k)$ mod $p^{k+1}T$.
By induction on $k$, we see that Lemma~\ref{lem:classical_dwork} and
Lemma~\ref{lem:dwork} are equivalent in this special case.
\end{proof}

We can actually get a rather concrete understanding of the subgroup $\text{im}(w)
\le \text{gh}^S_{H \le G}(T; Q)$. In particular when $S$ is finite then
$\text{im}(w)$ is a subgroup of a free abelian group, so free abelian, and the
following lemma will allow us to write down a basis. The idea is to consider a
variant of the ghost map where we replace
the tensor power $f_{G/V}(({-})^{\otimes_T V/U})$ with the Frobenius lift
$\phi^V_U$, giving an additive map.

\begin{lemma} \label{lem:witt_free_description}
  Let $T = \mathbb{Z}[X]$, $Q = T(Y)$. Define a continuous additive map
  \[w^f : \left(\prod_{U \in S} Q^{\otimes_T G/U}\right)_H \to \left(\prod_{U \in S}
    Q^{\otimes_T G/U}\right)^H = \text{gh}^S_{H \le G}(T; Q)\]
via
\[w^f_U(n) = \sum_{W \in S} \sum_{hW \in (H/W)^U} h \cdot \phi^W_{U^h}(n_W) \text{.}\]
The map $w^f$ is an embedding, with image equal to the image of the ghost
map $w : \prod_{V \in \can{S}}
Q^{\otimes_T G/V} \to \text{gh}^S_{H \le G}(T; Q)$.

We can also write
\[\left(\prod_{U \in S} Q^{\otimes_T G/U}\right)_H \cong \prod_{V
    \in \can{S}} ( Q^{\otimes_T G/V} )_{N_H(V)}\text{,}\]
and under this isomorphism we see that for $n \in \prod_{V \in
  \can{S}} (Q^{\otimes_T G/V})_{N_H(V)}$ we have $w^f_U(n) = \sum_{V \in \can{S}}
\sum_{hV \in (H/V)^U} h \cdot \phi^V_{U^h}(n_V)$. Note $(Q^{\otimes_T
  G/V})_{N_H(V)} \cong T((Y^{\times G/V})_{N_H(V)})$ is a free abelian group,
so when $S$ is finite this lets us write down a basis of $\text{im}(w)$.
\end{lemma}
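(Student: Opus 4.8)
The plan is to compare $w^f$ with the ghost map $w$ via a triangular change of coordinates. First I would set up the three maps carefully: the ghost map $w : \prod_{V \in \can{S}} Q^{\otimes_T G/V} \to \text{gh}^S_{H \le G}(T; Q)$ from Definition~\ref{def:ghost_map}, the new additive map $w^f$, and the ``tensor power minus Frobenius lift'' discrepancy. The key observation is that on free coefficients the difference between $f_{G/V}(n_V^{\otimes_T V/U^h})$ and $\phi^V_{U^h}(n_V)$, after passing to the relevant quotient, is controlled by lower-index data: both agree modulo transfers (this is essentially what is packaged in Lemma~\ref{lem:congruence}), so the two maps $w$ and $w^f$ have the same image. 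More precisely, I would argue that $\text{im}(w^f)$ satisfies the Dwork congruences of Lemma~\ref{lem:dwork}, hence is contained in $\text{im}(w)$, and conversely that every element of $\text{im}(w)$ lies in $\text{im}(w^f)$ by running the same inductive preimage construction as in the proof of Lemma~\ref{lem:dwork} but solving for $n_W$ using $w^f$ instead of $w$ — the point being that in both cases the ``new'' coordinate $n_W$ enters only through $\text{tr}_W^{N_H(W)}(n_W)$ (for $w$) respectively through $\phi^W_W(n_W) = n_W$ summed over $(H/W)^W = N_H(W)/W$, which is again $\text{tr}_W^{N_H(W)}(n_W)$, plus strictly-lower-index terms.

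For injectivity of $w^f$ I would argue componentwise up the lattice of subgroups. Writing $Q^{\otimes_T G/U} \cong T(Y^{\times G/U})$, the map $w^f_U$ expresses the $U$-ghost component as $\text{tr}_U^{N_H(U)}(n_U)$ plus a sum over strictly larger $W \in S$ of terms $h \cdot \phi^W_{U^h}(n_W)$. By the injectivity statement in the unnamed lemma following Definition~\ref{def:f} (the transfer $\text{tr}_V^W : (Q^{\otimes_T G/V})_W \to (Q^{\otimes_T G/V})^W$ is injective for torsion-free $T$), if $w^f(n) = 0$ then descending induction on $\abs{H:U}$ forces each $n_U = 0$ in $(Q^{\otimes_T G/U})_{N_H(U)}$: the top-index (maximal) subgroups have no larger $W$ contributing, so $\text{tr}_U^{N_H(U)}(n_U) = 0$ gives $n_U = 0$; then at lower index the contributions of the already-killed higher $n_W$ vanish and the same argument applies. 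This also shows $w^f$ factors through, and is injective on, $\left(\prod_{U \in S} Q^{\otimes_T G/U}\right)_H \cong \prod_{V \in \can{S}}(Q^{\otimes_T G/V})_{N_H(V)}$, since $w^f$ is manifestly $H$-invariant in the sense that $a \cdot w^f_U(n) = w^f_{\prescript{a}{}{U}}(n)$ by the conjugation identity $g \cdot \phi^V_U(m) = \phi^{\prescript{g}{}{V}}_{\prescript{g}{}{U}}(g \cdot m)$.

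To see that $w^f$ is an \emph{embedding} of topological abelian groups (not merely injective), I would note that both source and target carry the product topology over $\can{S}$ (after the displayed reindexing), each factor discrete, and that $w^f$ respects the filtration by finite sub-truncation sets: for a finite truncation set $S$ everything is a finitely generated free abelian group and injectivity plus the triangular form gives a continuous inverse on the image; for general $S$ one takes the limit over finite sub-truncation sets, exactly as in Lemma~\ref{lem:ghost_quotient}. The identification $\left(\prod_{U \in S} Q^{\otimes_T G/U}\right)_H \cong \prod_{V \in \can{S}}(Q^{\otimes_T G/V})_{N_H(V)}$ is the standard computation of coinvariants of a permutation-type module: the $H$-set $\coprod_{U \in S} G/U$ (with $H$ permuting the indexing subgroups by conjugation and acting on each $G/U$ by the $g \cdot ({-})$ action through $N_H(U)/U$ — wait, more carefully, $H$ acts on the index set $S$ and the stabiliser of $U \in S$ is $N_H(U)$) decomposes into $H$-orbits indexed by $\can{S}$ with the orbit of $V$ contributing $(Q^{\otimes_T G/V})_{N_H(V)}$. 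The formula for $w^f_U$ in these coordinates follows by choosing the distinguished representative $V$ in each conjugacy class. The main obstacle I anticipate is the bookkeeping in verifying that the inductive preimage construction for $w$ can be run verbatim for $w^f$ — i.e., checking that the ``lower-index remainder'' $r_W$ is genuinely the same in both pictures up to something in the image of $\text{tr}_W^{N_H(W)}$, so that solvability of $\text{tr}_W^{N_H(W)}(n_W) = a_W - r_W$ transfers between the two — but this is exactly the content already extracted in Lemma~\ref{lem:congruence}, so it should be a matter of careful reference rather than new ideas.
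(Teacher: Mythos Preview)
Your proposal is correct and follows essentially the same approach as the paper: verify the Dwork congruences for $w^f$ directly (by a manipulation parallel to Lemma~\ref{lem:congruence}), show the images coincide by rerunning the inductive preimage construction of Lemma~\ref{lem:dwork} with $w^f$ in place of $w$, prove injectivity from the triangular form together with injectivity of the transfer, and conclude openness by the argument of Lemma~\ref{lem:ghost_quotient}. One small slip: your injectivity induction should be \emph{ascending} on $\abs{H:U}$ (start with $U$ of smallest index, where only $n_U$ contributes), not descending --- the paper phrases this more cleanly by taking $W$ of minimal index with $n_W \ne 0$ and observing $w^f_W(n) = \text{tr}^{N_H(W)}_W(n_W) \ne 0$.
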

\begin{proof}
  It is straightforward to check that $w^f$ is well-defined as a map out of the
  group of $H$-orbits. Next we show that $\text{im}(w^f) \subseteq
  \text{im}(w)$. By Lemma~\ref{lem:dwork} it suffices to show that for all $U
  \in S$ and $n \in \left( \prod_{U \in S} Q^{\otimes_T G/U} \right)_H$,
  \begin{equation}\sum_{vU \in N_H(U)/U} \phi^{\langle vU \rangle}_U(w^f_{\langle vU \rangle}(n))\label{eq:congruence4}\end{equation}
  is in the image of $\text{tr}^{N_H(U)}_U$. We can prove this by manipulation
  very similar to the start of the proof of Lemma~\ref{lem:congruence}.
  Expanding out and interchanging summation gives
  \begin{align*}\sum_{vU \in N_H(U)/U} \phi^{\langle vU \rangle}_U(w^f_{\langle vU \rangle}(n)) &= \sum_{vU \in N_H(U)/U} \, \sum_{W \in S} \, \sum_{hW \in (H/W)^{\langle vU \rangle}} h \cdot \phi^{{\langle vU \rangle}^h}_{U^h}\phi^W_{\langle vU \rangle^h}(n_W)\\
    &= \sum_{W \in S} \, \sum_{hW \in (H/W)^U} \, \sum_{vU \in N_{H \cap
      \prescript{h}{}{W}}(U)/U} h \cdot \phi^W_{U^h}(n_W)\end{align*}
and decomposing into $N_H(U)$-orbits shows that (\ref{eq:congruence4}) equals
  \[\sum_{W \in S} \sum_{hW \in (H/W)^U/{N_H(U)}} \text{tr}^{N_H(U)}_{N_{H \cap \prescript{h}{}{W}}(U)} \left(\sum_{vU \in N_{H \cap \prescript{h}{}{W}}(U)/U} h \cdot \phi^W_{U^h}(n_W) \right) \text{.}\]
  So it suffices to prove that
  \[\sum_{vU \in N_{H \cap \prescript{h}{}{W}}(U)/U} h \cdot \phi^W_{U^h}(n_W)\]
  is in the image of $\text{tr}_U^{N_{H \cap \prescript{h}{}{W}}(U)}$, which is
  true since $h \cdot \phi^W_{U^h}(n_W) = \phi^{\prescript{h}{}{W}}_U(h \cdot
  n_W)$ is fixed by $N_{H  \cap \prescript{h}{}{W}}(U)$
  (the map $\phi^{\prescript{h}{}{W}}_U$ commutes with the action of $N_{H \cap
    \prescript{h}{}{W}}(U)$, and $h \cdot n_W \in
  Q^{\otimes_T G/\prescript{h}{}{W}}$ so is fixed by $N_{H \cap
    \prescript{h}{}{W}}(U) \le \prescript{h}{}{W}$)

  Now we can show that in fact $\text{im}(w^f) = \text{im}(w)$. This follows by
  essentially exactly the same proof as Lemma~\ref{lem:dwork}. For $n \in
  \prod_{V \in \can{S}} ( Q^{\otimes_T G/V} )_{N_H(V)}$ we have
  \begin{align*}w^f_W(n) &= \sum_{V \in \can{S}} \sum_{hV \in (H/V)^W} h \cdot \phi^V_{W^h}(n_V)\\
    &= \text{tr}^{N_H(W)}_{W}(n_W) + \sum_{V \in \can{S}, V \ne W} \left( \sum_{hV \in (H/V)^W} h \cdot \phi^V_{W^h}(n_V) \right) \text{,}
  \end{align*}
  so $w_W^f(n)$ is the sum of $\text{tr}^{N_H(W)}_W(n_W)$ and a term that only
  depends on $n_V$ for $V$ of smaller index than $W$. The approach of
  Lemma~\ref{lem:dwork} shows that given any element of $\text{gh}^S_{H \le
    G}(T; Q)$ satisfying the Dwork congruences (i.e.\ any element in
  $\text{im}(w)$), we can inductively construct a preimage under $w^f$. We
  conclude that $\text{im}(w^f) = \text{im}(w)$.

  The homomorphism $w^f$ is injective. Indeed suppose $n \in \prod_{V \in
    \can{S}} \left( Q^{\otimes_T G/V} \right)_{N_H(V)}$ is non-zero. Let $W \in
  \can{S}$ be of minimal index in $H$ such that $n_W \ne 0 \in (Q^{\otimes_T
      G/V})_{N_H(V)}$. Then $w^f_W(n) = \text{tr}^{N_H(W)}_W(n_W) \ne 0$,
  since $\text{tr}^{N_H(W)}_W : (Q^{\otimes_T G/W})_{N_H(W)} \to (Q^{\otimes_T
    G/W})^{N_H(W)}$ is injective.

  Now we know that $w^f : \prod_{V \in \can{S}} (Q^{\otimes_T G/V})_{N_H(V)} \to
  \text{im}(w)$ is a continuous additive bijection. Finally, exactly the same
  proof as Lemma~\ref{lem:ghost_quotient} shows that it is an open map so we
  conclude that it is an embedding.
\end{proof}

\begin{remark} \label{rem:witt_free_choices}
  Note that the Frobenius lift $\phi^V_U$ depends on the choice of generators of
  $T$ and $Q$, so the isomorphism $w^f : \left(\prod_{U \in S} Q^{\otimes_T
      G/V}\right)_H \cong \text{im}(w)$ is not natural with respect to general maps
  between free objects of $\text{Mod}$. However unlike the usual ghost map $w$,
  the map $w^f$ (with the domain described in this way) does not depend on any
  choice of distinguished subgroups or coset representatives.
\end{remark}

\subsection{Extension from free modules}\label{sec:witt_final_definition}

We have now nearly finished the definition of the group of Witt vectors. Using the Dwork lemma, we can define the Witt vectors
with free coefficients $W^S_{H \le G}(T; Q)$ to be the image of the ghost map $\text{im}(w) \le
\text{gh}^S_{H \le G}(T; Q)$.
It remains to show that this uniquely extends to a reflexive
coequaliser-preserving functor $W^S_{H \le G} : \text{Mod} \to \text{Ab}_\text{Haus}$.

Let $(R; M) \in \text{Mod}$, and suppose we have
\[(\overline{T}; \overline{Q}) \, \substack{\xrightarrow{f}\\[-0.2em]
    \xleftarrow{}\\[-0.2em] \xrightarrow[g]{}} \, (T; Q) \xrightarrowdbl{\epsilon} (R; M)\]
a reflexive coequaliser diagram where $(\overline{T}; \overline{Q})$ and $(T;
Q)$ are free. We call this a free resolution of $(R; M)$. Every object of
$\text{Mod}$ has a canonical free resolution originating from the free-forgetful
adjunction:
\[FUFU(R; M) \,\substack{\xrightarrow{} \\[-0.2em] \xleftarrow{}\\[-0.2em] \xrightarrow{}} \, FU(R; M) \xrightarrowdbl{} (R; M) \text{.}\]
Let $\text{Mod}_F$ be the full subcategory of $\text{Mod}$ spanned by the free
objects.
The existence of free resolutions shows that
reflexive coequaliser-preserving functors on $\text{Mod}$ are uniquely defined
by their values on $\text{Mod}_F$. In the next couple of lemmas we will show that
in fact any functor $\text{Mod}_F \to \mathcal{C}$ extends uniquely to a reflexive
coequaliser-preserving functor $\text{Mod} \to \mathcal{C}$, as long as the category
$\mathcal{C}$ has all reflexive coequalisers. First we check that any functor
out of $\text{Mod}_F$ must already preserves those reflexive coequalisers
consisting of free objects.

\begin{lemma} \label{lem:refl_coeq_free_split}
  A free resolution of a free object in $\text{Mod}$ is split.
\end{lemma}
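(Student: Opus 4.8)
The plan is to show that the diagram is a \emph{split} coequaliser, which in particular is absolute and hence preserved by every functor. Write the free resolution as
\[(\overline{T}; \overline{Q}) \, \substack{\xrightarrow{f}\\[-0.2em] \xleftarrow{s}\\[-0.2em] \xrightarrow[g]{}} \, (T; Q) \xrightarrowdbl{\epsilon} (R; M) \text{,}\]
with common section $fs = gs = \mathrm{id}_{(T;Q)}$. I would produce morphisms $\sigma : (R;M) \to (T;Q)$ and $t : (T;Q) \to (\overline{T}; \overline{Q})$ of $\mathrm{Mod}$ satisfying $\epsilon\sigma = \mathrm{id}$, $ft = \mathrm{id}$ and $gt = \sigma\epsilon$, which is exactly the data exhibiting a split coequaliser.

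To build $\sigma$: since $U : \mathrm{Mod} \to \mathrm{Set}\times\mathrm{Set}$ preserves reflexive coequalisers and coequalisers in $\mathrm{Set}$ are surjective, $\epsilon$ is surjective on underlying sets. Writing $(R;M) = F(X, Y)$, choose an $\epsilon$-preimage for each generator in $X$ and $Y$; by the free--forgetful adjunction this extends to a morphism $\sigma$ of $\mathrm{Mod}$, and $\epsilon\sigma = \mathrm{id}$ because both sides agree on generators.

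To build $t$: since $(T; Q)$ is free it suffices, again by the adjunction, to specify for each ring generator $z$ of $T$ (resp.\ module generator $z$ of $Q$) an element $w_z \in \overline T$ (resp.\ $w_z \in \overline Q$) with $f(w_z) = z$ and $g(w_z) = \sigma\epsilon(z)$; then $t(z) \coloneqq w_z$ gives a morphism of $\mathrm{Mod}$ with $ft = \mathrm{id}$ and $gt = \sigma\epsilon$, since these two pairs of maps out of the free object $(T;Q)$ agree on generators. To find $w_z$, start from $s(z)$, which satisfies $f(s(z)) = g(s(z)) = z$, and correct it by an element of $\ker f$. The key sublemma is that $g$ restricts to a \emph{surjection} $\ker f \twoheadrightarrow \ker\epsilon$ on each of the ring and module components: because reflexive coequalisers in $\mathrm{Mod}$ are computed on underlying abelian groups, $\ker\epsilon = \mathrm{im}(f - g)$ (where $f - g$ is the difference of the underlying additive maps, not a morphism of $\mathrm{Mod}$), and $g(\ker f) \subseteq \ker\epsilon$ since $\epsilon f = \epsilon g$; conversely, given $k = f(a) - g(a) \in \ker\epsilon$, the element $a' \coloneqq a - s(f(a))$ lies in $\ker f$ and has $g(a') = g(a) - f(a) = -k$, so $k = g(-a')$. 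Granting this, pick $v \in \ker f$ with $g(v) = \sigma\epsilon(z) - z$ (legitimate, since $\sigma\epsilon(z) - z \in \ker\epsilon$) and set $w_z \coloneqq s(z) + v$ using the additive structure of $\overline T$ (resp.\ $\overline Q$); then $f(w_z) = z + 0 = z$ and $g(w_z) = z + (\sigma\epsilon(z) - z) = \sigma\epsilon(z)$, as wanted.

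The main obstacle, and really the only subtlety, is that morphisms of $\mathrm{Mod}$ are not additive, so neither $\sigma$ nor $t$ can be written down by a single additive formula; the role of the free--forgetful adjunction is to let us perform the additive corrections \emph{elementwise} inside the target free objects while still obtaining genuine morphisms of $\mathrm{Mod}$. Note that reflexivity is used essentially, through the common section $s$, both to construct the corrections $w_z$ and to prove the sublemma; everything else is a routine check.
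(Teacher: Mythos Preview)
Your proof is correct and follows essentially the same strategy as the paper: both construct the splitting section $\sigma$ (the paper's $u$) by lifting generators along the surjection $\epsilon$, and then build $t$ generator-by-generator using the free--forgetful adjunction. The only cosmetic difference is that where you explicitly correct $s(z)$ by an element of $\ker f$ via the sublemma $g(\ker f) = \ker\epsilon$, the paper invokes directly the fact (recorded earlier) that in a reflexive coequaliser the relation ``$\exists\,a$ with $f(a)=b$, $g(a)=b'$'' is already an equivalence relation, so a suitable $x'$ with $f(x')=x$ and $g(x')=u(\epsilon(x))$ exists.
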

\begin{proof}
  \newcommand{\ol}[1]{\overline{#1}}
  \newcommand{\mc}{(T; Q)}
  \newcommand{\mb}{(T_0; Q_0)}
  \newcommand{\ma}{(T_1; Q_1)}

  Suppose we have a reflexive coequaliser diagram of free objects
  \[\ma \, \substack{\xrightarrow{f}\\[-0.2em]
      \xleftarrow{}\\[-0.2em] \xrightarrow[g]{}} \, \mb
    \xrightarrowdbl{\epsilon} \mc \text{.}\]
  Denote the common section of $f$ and $g$ by $s : \mb
  \to \ma$. We want to show this is a split coequaliser, by defining maps $u : \mc \to \mb$ and $t : \mb \to \ma$ such
  that $\epsilon u = 1_{\mc}$, $u \epsilon = gt$ and $ft = 1_{\mb}$.

  Recall from Section~\ref{sec:cat_of_modules} that we have a free-forgetful
  adjunction between $\text{Mod}$ and $\text{Set} \times \text{Set}$. If $\mc$
  is the free object on a pair of sets $(X, Y)$ then defining a map $\mc \to \mb$
  is equivalent to defining a map of pairs of sets $(X, Y) \to U \mb$.

  So since $\epsilon$ is surjective (on underlying sets of rings and modules)
  we can choose a map $u : \mc \to \mb$ such that $\epsilon u = 1_{\mc}$, by
  sending each generator to a preimage under $\epsilon$.

  Next we can define $t$. We start by defining the ring component of $t$,
  via showing where to send each generator $x \in T_0$.
  Observe that $\epsilon(u(\epsilon(x))) = \epsilon(x)$, i.e.\ $u(\epsilon(x))$ and $x$ are
  identified by the reflexive coequaliser quotient $T_0 \xrightarrowdbl{\epsilon} T$.
  So there exists some $x' \in T_1$ such that $g(x') = u(\epsilon(x))$ and
  $f(x') = x$. Define the ring component of $t$ to send $x$ to $x'$, and
  similarly for the other generators of the free ring $T_0$. Then the ring
  component of $t$ satisfies $u \epsilon = gt$ and $ft = 1_{T_0}$. We can use
  exactly the same argument to define the module component of $t$.
\end{proof}

In the following lemma we prove that we can use free resolutions to uniquely extend a functor $G: \text{Mod}_F \to
\mathcal{C}$ to a reflexive
coequaliser-preserving functor $\hat{G} : \text{Mod} \to \mathcal{C}$.
Moreover this is part of an adjunction: given any functor $H : \text{Mod} \to \mathcal{C}$, natural
transformations from $G$ to the restriction of $H$ to free objects
are in bijection with natural
transformations from $\hat{G}$ to $H$.

\begin{lemma}\label{lem:finite_module_kan}
  Let $\iota : \text{Mod}_F \to \text{Mod}$ be the
  inclusion functor, and $\mathcal{C}$ a category that admits reflexive
  coequalisers. Then the left Kan extension
  \[\text{Lan}_\iota : \text{Fun}(\text{Mod}_F, \mathcal{C}) \to
    \text{Fun}(\text{Mod}, \mathcal{C})\]
  exists (that is, the restriction functor $\iota^\ast : \text{Fun}(\text{Mod},
  \mathcal{C}) \to \text{Fun}(\text{Mod}_F, \mathcal{C})$ has a left adjoint). The functor $\text{Lan}_\iota$ is full and faithful, and has essential image the full subcategory $\text{Fun}_{rc}(\text{Mod}, \mathcal{C})$ of reflexive
  coequaliser-preserving functors $\text{Mod} \to \mathcal{C}$. This
  exhibits $\text{Fun}_{rc}(\text{Mod}, \mathcal{C})$ as a coreflective
  subcategory of $\text{Fun}(\text{Mod}, \mathcal{C})$.
\end{lemma}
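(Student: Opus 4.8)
The plan is to construct $\text{Lan}_\iota$ explicitly from the canonical free resolutions, i.e.\ the bar resolutions of the comonad $FU$ on $\text{Mod}$: recall that each object $(R;M)$ carries its canonical free resolution $FUFU(R;M) \rightrightarrows FU(R;M) \twoheadrightarrow (R;M)$, which is a reflexive coequaliser in $\text{Mod}$ whose first two terms are free. Given $G : \text{Mod}_F \to \mathcal{C}$, I would set $\hat G(R;M)$ to be the reflexive coequaliser in $\mathcal{C}$ of $G(FUFU(R;M)) \rightrightarrows G(FU(R;M))$; this exists since $\mathcal{C}$ has reflexive coequalisers. A morphism $(R;M)\to(R';M')$ induces a morphism of these reflexive pairs by naturality of the counit, so the universal property makes $\hat G$ a functor $\text{Mod}\to\mathcal{C}$.

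I would then verify that $\hat{(-)}$ is left adjoint to the restriction $\iota^\ast$, so that it is a model for $\text{Lan}_\iota$ and, in particular, the Kan extension exists. The required bijection $\text{Nat}(\hat G, H) \cong \text{Nat}(G, \iota^\ast H)$ is the expected one: a transformation $\alpha : G \Rightarrow \iota^\ast H$ gives, at each $(R;M)$, the map $\hat G(R;M) \to H(R;M)$ induced by $H(\epsilon_{(R;M)})\circ\alpha_{FU(R;M)}$ (this coequalises the defining pair because $\epsilon$ does in $\text{Mod}$), and conversely a transformation $\hat G \Rightarrow H$ is restricted along the natural comparison $\iota^\ast\hat G\cong G$ established in the next step; these two assignments are mutually inverse because the coequaliser projection out of $\hat G(FU(R;M)) = G(FU(R;M))$ is epic and all the maps involved are natural.

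For full faithfulness it suffices that the unit $G \Rightarrow \iota^\ast\text{Lan}_\iota G$ is an isomorphism. For $(T;Q)$ free, Lemma~\ref{lem:refl_coeq_free_split} says its canonical free resolution is a split coequaliser, and the splitting maps lie between free objects; applying $G$, which (like any functor) preserves split coequalisers, shows that the canonical map $\hat G(T;Q)\to G(T;Q)$ coming from $G(\epsilon_{(T;Q)})$ is an isomorphism, naturally in $(T;Q)$. Hence $\text{Lan}_\iota$ is full and faithful. One half of the essential-image claim is now immediate: if $H$ preserves reflexive coequalisers, then applying this to the canonical free resolution of each $(R;M)$ shows the counit $\text{Lan}_\iota\iota^\ast H\to H$ is an isomorphism, so $H$ lies in the essential image.

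The main obstacle is the converse inclusion, that every $\hat G = \text{Lan}_\iota G$ preserves reflexive coequalisers. Given a reflexive coequaliser $(R_1;M_1)\rightrightarrows(R_0;M_0)\twoheadrightarrow(R;M)$ in $\text{Mod}$, I would apply $FU$ and $FUFU$ levelwise. Since $U$ preserves reflexive coequalisers (as noted earlier) and $F$ is cocontinuous, $FU$ and $FUFU$ preserve reflexive coequalisers; moreover the resulting diagrams consist entirely of free objects, so they are free resolutions of the free objects $FU(R;M)$ and $FUFU(R;M)$ and hence split by Lemma~\ref{lem:refl_coeq_free_split}. Applying $G$, which preserves these split coequalisers, turns the two ``rows'' into coequalisers in $\mathcal{C}$ with vertices $G(FU(R;M))$ and $G(FUFU(R;M))$, while the three ``columns'' are by definition the reflexive coequalisers computing $\hat G(R_1;M_1)$, $\hat G(R_0;M_0)$ and $\hat G(R;M)$; a standard interchange of iterated colimits then yields that $\hat G(R_1;M_1)\rightrightarrows\hat G(R_0;M_0)\twoheadrightarrow\hat G(R;M)$ is a coequaliser, and it is reflexive because $\hat G$ sends a common section to a common section. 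This completes the identification of the essential image with $\text{Fun}_{rc}(\text{Mod},\mathcal{C})$; the final claim is then formal, a fully faithful left adjoint always exhibiting its essential image as a coreflective subcategory, here with coreflector $\text{Lan}_\iota\iota^\ast$ (corestricted to $\text{Fun}_{rc}$).
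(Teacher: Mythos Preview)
Your proof is correct and follows essentially the same approach as the paper: the explicit construction of $\hat G$ via the bar resolution, the use of Lemma~\ref{lem:refl_coeq_free_split} to show $\iota^\ast\hat G\cong G$, and the $3\times 3$ iterated-colimit argument to show $\hat G$ preserves reflexive coequalisers are all exactly what the paper does. The only cosmetic difference is that the paper verifies the adjunction via unit and counit rather than the hom-set bijection, and presents the steps in a slightly different order.
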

\begin{proof}
  We will construct $\text{Lan}_\iota : \text{Fun}(\text{Mod}_F, \mathcal{C})
  \to \text{Fun}(\text{Mod}, \mathcal{C})$ explicitly, then show that it has the
  desired properties.

  Let $G :
  \text{Mod}_F \to \mathcal{C}$. Define $\hat{G} : \text{Mod} \to \mathcal{C}$ via
  \[\hat{G}(R; M) = \text{coeq}(GFUFU(R; M) \,\substack{\xrightarrow{} \\[-0.2em] \xrightarrow{}} \, GFU(R; M)) \text{.}\]

  Since split coequalisers are absolute, the previous lemma shows that
  applying $G$ to a free resolution of a free object gives a (split) coequaliser. So if $(T; Q)$ is free then
  \[GFUFU(T; Q) \,\substack{\xrightarrow{} \\[-0.2em]
      \xrightarrow{}} \, GFU(T; Q) \to G(T; Q)\]
  is a coequaliser diagram, and hence $\hat{G}(T; Q)$ is naturally
  isomorphic to $G(T; Q)$. So $\hat{G}$ really is an extension of $G$ (up to
  isomorphism, or we may choose coequalisers such that it is an extension on the
  nose).

  Next we show that $\hat{G}$ preserves all reflexive coequalisers. Suppose that
  \[(R_1; M_1) \, \substack{\xrightarrow{f\,}\\[-0.2em]
      \xleftarrow{}\\[-0.2em] \xrightarrow[g\,]{}} \, (R_0; M_0) \xrightarrowdbl{\epsilon\,} (R; M)\]
  is a reflexive coequaliser. Using the canonical free resolutions, we
  have a diagram
  \[\begin{tikzcd}
      FUFU(R_1; M_1) \ar[r, shift left] \ar[r, shift right] \ar[d, shift left] \ar[d,
      shift right]& FUFU(R_0; M_0) \ar[d, shift left] \ar[d, shift right] \ar[r]&
      FUFU(R; M) \ar[d, shift left] \ar[d, shift right]\\
      FU(R_1; M_1) \ar[r, shift left] \ar[r, shift right] \ar[d] & FU(R_0; M_0) \ar[r]
      \ar[d]& FU(R; M) \ar[d]\\
      (R_1; M_1) \ar[r, shift left] \ar[r, shift right] & (R_0; M_0) \ar[r] & (R; M) \text{.}
    \end{tikzcd}\]
  We know that the forgetful functor $U$ preserves reflexive coequalisers, and
  $F$ is a left adjoint, so $FU$ preserves reflexive coequalisers. Hence all the
  rows and columns of the diagram are reflexive coequalisers. The top two rows
  consist of free objects, so are split coequalisers.

  Apply $\hat{G}$ to the diagram. Since the restriction of $\hat{G}$ to free objects is
  naturally isomorphic to $G$, we get a diagram
  \[\begin{tikzcd}
      GFUFU(R_1; M_1) \ar[r, shift left] \ar[r, shift right] \ar[d, shift left] \ar[d,
      shift right]& GFUFU(R_0; M_0) \ar[d, shift left] \ar[d, shift right] \ar[r]&
      GFUFU(R; M) \ar[d, shift left] \ar[d, shift right]\\
      GFU(R_1; M_1) \ar[r, shift left] \ar[r, shift right] \ar[d] & GFU(R_0; M_0) \ar[r]
      \ar[d]& GFU(R; M) \ar[d]\\
      \hat{G}(R_1; M_1) \ar[r, shift left] \ar[r, shift right] & \hat{G}(R_0; M_0) \ar[r] & \hat{G}(R; M) \text{.}
    \end{tikzcd}\]
  The columns are coequalisers by the definition of $\hat{G}$.
  The top two rows are split coequalisers. Since colimits commute with colimits, we deduce that the
  bottom row is a coequaliser as desired.

  So $G$ has a reflexive coequaliser-preserving extension $\hat{G} \in \text{Fun}_{rc}(\text{Mod}, \mathcal{C})$.
  Define $\text{Lan}_{\iota} : \text{Fun}(\text{Mod}_F, \mathcal{C}) \to
  \text{Fun}(\text{Mod}, \mathcal{C})$ on objects by $G \mapsto \hat{G}$, and on morphisms via the
  canonically induced maps between coequalisers. We claim that this is the left
  Kan extension---that is, it is left adjoint to the functor
  $\iota^\ast : \text{Fun}(\text{Mod}, \mathcal{C}) \to \text{Fun}(\text{Mod}_F,
  \mathcal{C})$. We have already shown that the identity is canonically
  isomorphic to $\iota^\ast \text{Lan}_\iota$; this is the unit of the
  adjunction. Given $H : \text{Mod} \to \mathcal{C}$ and $(R; M) \in
  \text{Mod}$, the universal property of the coequaliser
  \[(\text{Lan}_\iota \iota^\ast H)(R; M) = \text{coeq}(HFUFU(R; M)
    \,\substack{\xrightarrow{} \\[-0.2em]
      \xrightarrow{}} \, HFU(R; M))\]
  gives a factorisation of $HFU(R; M) \to H(R; M)$ through the map $HFU(R; M)
  \to (\text{Lan}_\iota \iota^\ast H)(R; M)$; the collection of resulting maps
  $(\text{Lan}_\iota \iota^\ast H)(R; M) \to H(R; M)$ gives
  the counit of the adjunction. It is straightforward to check the triangle
  identities, verifying that we have an adjunction $\text{Lan}_\iota \dashv \iota^\ast$.

  Since the unit is a natural isomorphism, $\text{Lan}_\iota$ is full and faithful.
  We have seen that every functor in the image of
  $\text{Lan}_\iota$ preserves reflexive coequalisers. But also the counit is
  clearly an isomorphism at any $H : \text{Mod} \to \mathcal{C}$ that preserves reflexive coequalisers,
  so the essential image of $\text{Lan}_\iota$ is precisely the full subcategory
  $\text{Fun}_{rc}(\text{Mod}, \mathcal{C})$ of reflexive coequaliser-preserving
  functors. This makes $\text{Fun}_{rc}(\text{Mod}, \mathcal{C})$ into a
  coreflective subcategory of $\text{Fun}(\text{Mod}, \mathcal{C})$.
\end{proof}

\begin{remark} \label{rem:finite_module_kan}
  An analogous statement holds if instead of $\text{Mod}$ and $\text{Mod}_F$ we
  consider the category $\text{Ab}$ of abelian groups and the full subcategory
  $\text{Ab}_F$ of free abelian groups, or the categories $\text{CRing}$ and
  $\text{CRing}_F$ of commutative rings and free commutative rings. The proofs use exactly the same
  ideas. In particular free resolutions of free abelian groups or free
  commutative rings are split; this can be proved analogously to
  Lemma~\ref{lem:refl_coeq_free_split} or deduced from it.
\end{remark}

This was the last ingredient we need for the proof of Theorem~\ref{thm:witt_properties},
the uniqueness theorem for
the $S$-truncated $G$-typical Witt vectors with coefficients.

\begin{definition}[$S$-truncated $G$-typical Witt vectors with coefficients] \label{def:witt}
  Define $W^S_{H \le G} : \text{Mod} \to \text{Ab}_\text{Haus}$ to be a
  reflexive coequaliser-preserving extension of the functor $\text{Mod}_F \to
  \text{Ab}_\text{Haus}$ given by
  \[(T; Q) \mapsto \text{im}\left(w : \prod_{V \in \can{S}} Q^{\otimes_T G/V} \to
    \text{gh}^S_{H \le G}(T; Q)\right) \text{.}\]
As noted in the proof of Lemma~\ref{lem:finite_module_kan} we may choose $W^S_{H \le G}$ such that it
is genuinely an on-the-nose extension, so we have $\iota^\ast W^S_{H \le G}(T;
Q) = \text{im}(w)$.
\end{definition}

\begin{remark}
  We write $W^S_G(R; M)$ (omitting the subgroup $H \le G$) as shorthand for $W^S_{G \le G}(R; M)$.
  We
  write $W_{H \le G}(R; M)$ (omitting the truncation set $S$) to mean the
  untruncated Witt vectors, i.e.\ $W^{S_0}_{H \le G}(R; M)$ where $S_0$ is the set of all open subgroups of
  $H$.
\end{remark}

We saw in Corollary~\ref{cor:ghost_image_subgroup_indep} that $\text{im}(w) \le
\text{gh}^S_{H \le G}(T; Q)$ is a closed subgroup, and
Lemma~\ref{lem:ghost_quotient} showed that the surjection $w : \prod_{V \in
  \can{S}} Q^{\otimes_T G/V} \xrightarrowdbl{} \text{im}(w) = W^S_{H \le G}(T; Q)$ is a quotient map.
Since tensor powers and products preserve reflexive coequalisers (Lemmas~\ref{lem:tensorcoequaliser} and \ref{lem:ab_product_preserves_reflexive_coequalisers}) we see that the functor
\[(R; M) \mapsto \prod_{V \in \can{S}} M^{\otimes_R G/V}\]
preserves reflexive coequalisers. This lets us define the Witt vector quotient map:

\begin{definition}[Witt vector quotient map] \label{def:witt_quotient}
  We define the natural map of underlying spaces $q : \prod_{V \in \can{S}} M^{\otimes_R G/V}
  \xrightarrowdbl{} W^S_{H \le G}(R; M)$ to be the extension of the quotient
  $\prod_{V \in \can{S}} Q^{\otimes_T G/V} \xrightarrowdbl{w} \text{im}(w) =
  \iota^\ast W^S_{H \le G}(T; Q)$ defined for free coefficients.
  Since reflexive coequalisers preserve quotients
  (Lemma~\ref{lem:space_coeq_quotient}), $q$ is a topological quotient map.
\end{definition}

And we can show that the ghost map factors through this quotient, giving us an
additive ghost map out of the Witt vectors:

\begin{definition}[Witt vector ghost map] \label{def:witt_ghost}
  The ghost map factorises as
\[\prod_{V \in \can{S}} Q^{\otimes_T G/V} \xrightarrowdbl{q} \iota^\ast W^S_{H \le G}(T;
  Q) = \text{im}(w) \hookrightarrow \iota^\ast \text{gh}^S_{H \le G}(T; Q)\]
for $(T; Q)$ free, where the
  inclusion map is additive. Recall that the adjunction proved in Lemma~\ref{lem:finite_module_kan} shows that for any functors $G : \text{Mod}_F \to
  \mathcal{C}$ and $H : \text{Mod} \to \mathcal{C}$, natural transformations
  from $G$ to the restriction of $H$ to free objects are in bijection with
  natural transformations from the reflexive coequaliser-preserving extension
  $\hat{G}$ to $H$; moreover this bijection is given by extending the natural
  transformation in the obvious way (consider the definition in terms of
  applying the left Kan extension then postcomposing the counit).
  So the natural transformation $\iota^\ast
  W^S_{H \le G}(T; Q) \hookrightarrow \iota^\ast \text{gh}^S_{H \le G}(T; Q)$ (of
  $\text{Ab}_{\text{Haus}}$-valued functors)
  uniquely extends to a natural transformation  $W^S_{H \le G}(R; M) \to
  \text{gh}^S_{H \le G}(R; M)$.
  The composition
  \[\prod_{V \in \can{S}} M^{\otimes_R G/V} \xrightarrowdbl{q} W^S_{H \le G}(R; M) \to \text{gh}^S_{H \le G}(R; M)\]
  gives a natural transformation of $\text{Top}_{\text{Haus}}$-valued functors
  that matches the ghost map $w : \prod_{V
    \in \can{S}} M^{\otimes_R G/V} \to \text{gh}^S_{H \le G}(R; M)$ for $(R; M)$
  free; but in fact by Lemma~\ref{lem:finite_module_kan} the
  extension from free objects is unique, so this composition must match the ghost map for
  all $(R; M)$.

  In summary, the ghost map descends to an additive map out of
  the quotient $\prod_{V \in \can{S}} M^{\otimes_R G/V} \xrightarrowdbl{q}
  W^S_{H \le G}(R; M)$. Following \cite{dotto_witt_2025} we will also refer to this map $W^S_{H \le G}(R; M) \to \text{gh}^S_{H \le
    G}(R; M)$ as the ghost map, and denote it by $w$.
  Whether we mean this map or the map $w : \prod_{V \in
    \can{S}} M^{\otimes_R G/V} \to \text{gh}^S_{H \le G}(R; M)$ should be clear from context.
\end{definition}

\begin{remark}
  The Dwork lemma shows that the image of the ghost map is independent of the
  choices of $\can{S}$ and coset representatives that we made, so both the Witt vectors themselves and the ghost map $w : W^S_{H \le G}(R; M) \to \text{gh}^S_{H
    \le G}(R; M)$ don't depend
  on these choices. However the expression of the underlying space of $W^S_{H \le G}(R; M)$ as a
  quotient via the map $q$ does depend on the arbitrary choices.
\end{remark}

Unwinding some definitions, we can write this quotient
more explicitly as follows
(analogously to how the Witt vectors are defined in \cite{dotto_witt_2025}
Definition~1.3): %

\begin{remark}
  Let $(R; M) \in \text{Mod}$, and
  \[(\overline{T}; \overline{Q}) \, \substack{\xrightarrow{f}\\[-0.2em]
      \xleftarrow{}\\[-0.2em] \xrightarrow[g]{}} \, (T; Q) \xrightarrowdbl{\epsilon} (R; M)\]
  a free resolution of $(R; M)$ (that is, a reflexive coequaliser diagram where
  $(\overline{T}; \overline{Q})$ and $(T; Q)$ are free). Define an equivalence
  relation $\sim$ on $\prod_{V \in \can{S}} M^{\otimes_R G/V}$ by
  $a \sim b$ if there exists $z \in \prod_{V \in \can{S}}
  \overline{Q}^{\otimes_R G/V}$ and $q, u \in \prod_{V \in \can{S}}
  Q^{\otimes_R G/V}$ such that
  \begin{align*}
    a = \epsilon_\ast(q) &\quad b = \epsilon_\ast(u)\\
    f_\ast(w(z)) = w(q) &\quad g_\ast(w(z)) = w(u) \text{.}
  \end{align*}
  Then the underlying topological space of the group of Witt vectors is
  \[W^S_{H \le G}(R; M) \cong \bigg( \prod_{V \in \can{S}} M^{\otimes_R G/V} \bigg)
    \!\raisebox{-.5em}{$\Big/$} \raisebox{-.65em}{$\sim$} \, \text{.}\]
\end{remark}

Our primary approach for proving identities involving the Witt vectors will be
to show that they hold for free coefficients, and then use
Lemma~\ref{lem:finite_module_kan} to show that they in fact hold in general. As
a demonstration of this, we show that we can compute $W^S_{H \le G}$ in terms
of $W^S_{H} \coloneqq W^S_{H \le H}$.

\begin{lemma} \label{lem:HGHHiso}
  A choice of coset representatives for $G/H$ gives us a
  natural isomorphism
  \[W^S_{H \le G}(R; M) \cong W^S_{H \le H}(R; M^{\otimes_R G/H}) \text{.}\]
\end{lemma}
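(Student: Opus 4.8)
The plan is to prove the isomorphism for free coefficients by comparing the two ghost maps directly, and then bootstrap to arbitrary coefficients via Lemma~\ref{lem:finite_module_kan}. The first observation is that both sides of the claimed isomorphism define reflexive coequaliser-preserving functors $\text{Mod} \to \text{Ab}_\text{Haus}$: the left hand side by Definition~\ref{def:witt}, and the right hand side because $(R; M) \mapsto (R; M^{\otimes_R G/H})$ preserves reflexive coequalisers by Lemma~\ref{lem:tensorcoequaliser} while $W^S_{H \le H}$ does by Definition~\ref{def:witt}, so their composite does too. By the full faithfulness of $\text{Lan}_\iota$ in Lemma~\ref{lem:finite_module_kan} (with $\mathcal{C} = \text{Ab}_\text{Haus}$), a natural isomorphism between the restrictions of these two functors to the subcategory $\text{Mod}_F$ of free objects extends uniquely to a natural isomorphism on all of $\text{Mod}$. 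So it suffices to build the isomorphism for free $(T; Q)$, naturally in morphisms of free objects.

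Next I would fix coset representatives $\{g_j H\}$ for $G/H$. For each open $U \le_o H$ this gives a bijection $G/H \times H/U \cong G/U$, $(g_j H, hU) \mapsto g_j h U$, and hence by functoriality of tensor powers an isomorphism of $R$-modules $\psi_U \colon (M^{\otimes_R G/H})^{\otimes_R H/U} \xrightarrow{\cong} M^{\otimes_R G/U}$, natural in $M$. Writing $N = M^{\otimes_R G/H}$ (so that $(T; N)$ is free whenever $(T; Q)$ is), the $\psi_V$ for $V \in \can{S}$ assemble into an isomorphism from the domain $\prod_{V \in \can{S}} N^{\otimes_R H/V}$ of the ghost map of $W^S_{H \le H}(R; N)$ to the domain $\prod_{V \in \can{S}} M^{\otimes_R G/V}$ of the ghost map of $W^S_{H \le G}(R; M)$. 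The point that makes this work on the level of ghost groups is that the $\psi_U$ respect the $H$-actions: for $a \in H$ the bijection $bU \mapsto bUa^{-1}$ of $G/U$ with $G/\prescript{a}{}{U}$ leaves the $G/H$-coordinate $g_j H$ of $g_j h U$ fixed (as $a \in H$ gives $g_j h a^{-1} \in g_j H$) and acts on the $H/U$-coordinate exactly as the corresponding bijection for $H$; so the $\psi_U$ induce an isomorphism of topological abelian groups $\Psi \colon \text{gh}^S_{H \le H}(R; N) \xrightarrow{\cong} \text{gh}^S_{H \le G}(R; M)$, again natural in $M$.

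Then I would check, for free $(T; Q)$, that these coordinate-change isomorphisms intertwine the ghost maps $w'$ of $W^S_{H \le H}(T; N)$ and $w$ of $W^S_{H \le G}(T; Q)$. By Theorem~\ref{thm:witt_properties} the Witt vectors do not depend on the coset representatives chosen to define the ghost map, so I am free to take the representatives for $G/V$ ($V \in \can{S}$) to be the products $g_j s_i$ for some choice $\{s_i V\}$ of representatives for $H/V$; with this choice $f_{G/V}$ becomes, under the identifications $\psi$, just $f_{H/V}$ in the $H/V$-coordinate and the identity in the $G/H$-coordinate. Expanding the $U$-components of $w$ and $w'$ using Definition~\ref{def:ghost_map} and using Remark~\ref{rem:fidentity} to match up the $h \cdot ({-})$ terms should then give $\Psi \circ w' = w \circ \big(\prod_{V \in \can{S}} \psi_V\big)$ on the nose. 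Since $\prod_V \psi_V$ is a bijection, $\Psi$ then restricts to an isomorphism of topological abelian groups $W^S_{H \le H}(T; N) = \text{im}(w') \xrightarrow{\cong} \text{im}(w) = W^S_{H \le G}(T; Q)$, natural in $(T; Q) \in \text{Mod}_F$ because the $\psi$'s are; extending along Lemma~\ref{lem:finite_module_kan} finishes the proof. The main obstacle is this last verification that the ghost maps agree after the coordinate change: it is a bookkeeping calculation with coset representatives and the maps $f_{G/V}$, and the key simplification is choosing the representatives for the $G/V$ compatibly with those for $G/H$.
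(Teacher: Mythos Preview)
Your proposal is correct and follows essentially the same strategy as the paper: build the isomorphism $\Psi$ of ghost groups from the coset-representative bijections, verify for free coefficients that it restricts to an isomorphism between the images of the two ghost maps, and then extend via Lemma~\ref{lem:finite_module_kan}. The only minor difference is that the paper checks image containment in both directions by appealing to the Dwork lemma, whereas you verify directly that $\Psi \circ w' = w \circ \prod_V \psi_V$ after choosing the $G/V$-representatives compatibly with the fixed $G/H$-representatives; your route is slightly more concrete but amounts to the same thing.
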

\begin{proof}
  The choice of coset representatives induces
  isomorphisms
  \[f_{G/H} : M^{\otimes_R G/H \times H/U} \cong M^{\otimes_R G/U}\text{.}\]
  The product of these isomorphisms gives an $H$-equivariant isomorphism
  \[\prod_{U \in
    S} M^{\otimes_R G/H \times H/U} \to  \prod_{U \in S} M^{\otimes_R
    G/U}\text{,}\]
and
  restricting to $H$-fixed points gives a natural isomorphism
  \[\theta : \text{gh}^S_{H \le H}(R; M^{\otimes_R G/H})
    \to \text{gh}^S_{H \le G}(R; M) \text{.}\]

  Let $P : \text{Mod} \to \text{Mod}$ be the functor $(R; M) \mapsto (R;
  M^{\otimes_R G/H})$. Composing the ghost map and our isomorphism gives a
  natural transformation $\theta w_P : W^S_{H \le H} P \Rightarrow \text{gh}^S_{H
    \le G}$.
  It's straightforward to check that
  when $(T; Q)$ is free, the isomorphism $\theta$ sends the image
  of the ghost map $W^S_{H \le H}(T; Q^{\otimes_T G/H}) \to \text{gh}^S_{H \le
    H}(T; Q^{\otimes_T G/H})$ to the image of the ghost map $W^S_{H \le G}(T; Q)
  \to \text{gh}^S_{H \le G}(T; Q)$: use the characterisation of the image in
  Lemma~\ref{lem:dwork}, and observe that the Frobenius lifts
  $\phi^V_U$ and the transfer both commute with the isomorphisms $f_{G/H}$.
  Since the ghost map is injective for free coefficients, we get
 a natural transformation $\iota^\ast
  (W^S_{H \le H} P) \Rightarrow \iota^\ast W^S_{H \le G}$.
  Lemma~\ref{lem:tensorcoequaliser} shows that $W^S_{H \le H}P$ preserves
  reflexive coequalisers. So by Lemma~\ref{lem:finite_module_kan} our
  natural transformation of functors defined on free objects extends uniquely to a natural transformation
  \[\Theta : W^S_{H \le H}P \Rightarrow W^S_{H \le G}\]
  lifting $\theta$ along the ghost maps.

  Similarly we can lift $\theta^{-1}$ along the ghost maps to get a natural transformation
  \[\Theta' : W^S_{H \le G} \Rightarrow W^S_{H \le H}P \text{.}\]
  The compositions $\Theta \Theta'$ and $\Theta' \Theta$
  are both the identity at free objects, so by uniqueness of extension they must be the identity at all objects.
  So $\Theta$ and $\Theta'$ specify a natural isomorphism $W^S_{H \le H}(R; M^{\otimes_R
    G/H}) \cong W^S_{H \le G}(R; M)$.
\end{proof}
\begin{remark}
  In the setting of \cite{dotto_witt_2022} where $G$ is cyclic (or procyclic),
  there are obvious choices of coset representatives for $G/H$.
  Implicitly using the corresponding isomorphisms allows the authors to only work with $W_G$.
  However it turns out that the Witt vector operators are more naturally defined between the
  $W_{H \le G}$ (for varying $H$), and so in our setting with no canonical choice
  of coset representatives available to us it will be easier to work with the $W_{H \le
  G}$ directly, at the cost of complicating the notation.
\end{remark}

\subsection{Initial computations}

When $(T; Q)$ is free we have a complete description of the Witt vectors.

\begin{proposition} \label{prop:witt_free}
   For $(T, Q)$ free we have a (not natural) isomorphism of topological abelian groups
  \[W^S_{H \le G}(T; Q) \cong \text{im}(w) \cong \left( \prod_{U \in S} Q^{\otimes_T G/V}
    \right)_H \cong \prod_{V \in \can{S}} (Q^{\otimes_T G/V})_{N_H(V)}\text{.}\]
\end{proposition}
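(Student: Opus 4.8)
\emph{Proof proposal.} The plan is to assemble the displayed chain of isomorphisms from results already established, so the argument is essentially a repackaging of Lemma~\ref{lem:witt_free_description} together with Definition~\ref{def:witt}. The first isomorphism $W^S_{H \le G}(T; Q) \cong \text{im}(w)$ is immediate: on free objects the functor $W^S_{H \le G}$ was \emph{defined} (Definition~\ref{def:witt}) to be an on-the-nose extension whose value at $(T; Q)$ is $\text{im}\bigl(w : \prod_{V \in \can{S}} Q^{\otimes_T G/V} \to \text{gh}^S_{H \le G}(T; Q)\bigr)$, equipped with the subspace topology from the ghost group; by Corollary~\ref{cor:ghost_image_subgroup_indep} this is a closed subgroup, independent of the choices. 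For the next isomorphism I would simply invoke Lemma~\ref{lem:witt_free_description}, which produces the additive map $w^f : \bigl(\prod_{U \in S} Q^{\otimes_T G/U}\bigr)_H \to \text{gh}^S_{H \le G}(T; Q)$, proves it is a topological embedding, and identifies its image with $\text{im}(w)$. Hence $w^f$ restricts to an isomorphism of topological abelian groups $\bigl(\prod_{U \in S} Q^{\otimes_T G/U}\bigr)_H \cong \text{im}(w) = W^S_{H \le G}(T; Q)$.

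It remains to spell out the last isomorphism $\bigl(\prod_{U \in S} Q^{\otimes_T G/U}\bigr)_H \cong \prod_{V \in \can{S}} (Q^{\otimes_T G/V})_{N_H(V)}$, which is also asserted inside Lemma~\ref{lem:witt_free_description}. I would group $S$ into its $H$-conjugacy classes, writing $\prod_{U \in S} Q^{\otimes_T G/U} \cong \prod_{V \in \can{S}} \bigl( \prod_{hN_H(V) \in H/N_H(V)} Q^{\otimes_T G/\prescript{h}{}{V}} \bigr)$. Since $V$ is open, $N_H(V)$ is open in the profinite group $H$, so each inner product is finite and is preserved setwise by the $H$-action (which sends conjugates of $V$ to conjugates of $V$ via the reindexing maps of Section~\ref{sec:movingtensor}). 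For a single conjugacy class this is a Shapiro-type identity: restricting a tuple to its component at the identity coset gives $\bigl( \prod_{hN_H(V)} Q^{\otimes_T G/\prescript{h}{}{V}} \bigr)_H \cong (Q^{\otimes_T G/V})_{N_H(V)}$, the stabiliser of that component being $N_H(V)$ acting on $Q^{\otimes_T G/V}$, with inverse given by summing over the orbit. Assembling over $\can{S}$ yields the claimed group isomorphism, and since each class contributes only a \emph{finite} product (so the orbit quotient commutes with it) and each $(Q^{\otimes_T G/V})_{N_H(V)}$ is discrete, the induced continuous bijection is open, hence a homeomorphism.

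For the non-naturality claim I would point to Remark~\ref{rem:witt_free_choices}: the map $w^f$, and hence the whole composite, depends on the chosen free generators of $T$ and $Q$ through the Frobenius lifts $\phi^V_U$, so it cannot be natural with respect to arbitrary morphisms of free objects. I do not expect a genuine obstacle anywhere in this: the only point that needs a little care is the interaction of the (possibly infinite) product over $\can{S}$ with the orbit functor and the quotient topology, and this is harmless precisely because each conjugacy class is finite, so the ``mixing'' of orbits with products takes place only inside finite products of discrete groups quotiented by an open equivalence relation.
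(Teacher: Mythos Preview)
Your proposal is correct and matches the paper's approach exactly: the paper's proof is the single sentence ``This is an immediate consequence of Lemma~\ref{lem:witt_free_description},'' and you have simply unpacked that lemma together with Definition~\ref{def:witt}. Your extra paragraph spelling out the Shapiro-type identification $\bigl(\prod_{U \in S} Q^{\otimes_T G/U}\bigr)_H \cong \prod_{V \in \can{S}} (Q^{\otimes_T G/V})_{N_H(V)}$ is a helpful elaboration of something the paper merely asserts inside Lemma~\ref{lem:witt_free_description}, but it is not a different route.
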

\begin{proof}
  This is an immediate consequence of Lemma~\ref{lem:witt_free_description}.
\end{proof}

\begin{remark}
  This isomorphism can be thought of as analogous to tom Dieck splitting for fixed points of
  equivariant suspension spectra (and indeed when $T = \mathbb{Z}$ it precisely
  corresponds to using tom
  Dieck splitting to compute the zeroth equivariant stable homotopy groups of the norm of a
  suspension spectrum).
\end{remark}

\begin{remark}
  This generalises the isomorphism of \cite{dotto_witt_2025} Corollary~A.9 in
  the case of free coefficients equipped with an external Frobenius
  defined analogously to the Frobenius lift in this paper. Note this is a
  different isomorphism to the isomorphism of abelian
  groups in Proposition~1.14 of \cite{dotto_witt_2025}. The appropriate
  generalisation of that isomorphism will be given in Lemma~\ref{lem:witt_ab_group_free_coeff}.
\end{remark}

For general coefficients it can be hard to explicitly describe either the underlying space or the additive structure of the Witt vectors. However we can analyse some special cases.

\begin{lemma} \label{lem:witt_trivial_cases}
  We have
  \[W^\emptyset_{H \le G}(R; M) = 0\]
  and
  \[W^{\{H\}}_{H \le G}(R; M) \cong M^{\otimes_R G/H}\]
  (as topological abelian groups).
\end{lemma}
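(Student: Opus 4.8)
The plan is to treat the two cases separately, in each reducing to free coefficients via Theorem~\ref{thm:witt_properties} and Lemma~\ref{lem:finite_module_kan}. For $W^\emptyset_{H \le G}(R; M) = 0$ almost nothing is needed: $\emptyset$ is (vacuously) a truncation set with $\can{\emptyset} = \emptyset$, so the quotient map $q$ of Theorem~\ref{thm:witt_properties} is a surjection onto $W^\emptyset_{H \le G}(R; M)$ out of the empty product $\prod_{V \in \emptyset} M^{\otimes_R G/V}$, which is the zero group; hence $W^\emptyset_{H \le G}(R; M) = 0$.

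For $W^{\{H\}}_{H \le G}(R; M) \cong M^{\otimes_R G/H}$ I would first check that $\{H\}$ is a valid truncation set (upwards closed, since $H$ is the only open subgroup of $H$ containing $H$; conjugation closed, since $\prescript{h}{}{H} = H$) and take $\can{S} = \{H\}$, so that both the source of $q$ and the ghost group are built from the single module $M^{\otimes_R G/H}$. Next I would compute the ghost group and ghost map. For $a \in H$ the reindexing map $a \cdot ({-}) : M^{\otimes_R G/H} \to M^{\otimes_R G/\prescript{a}{}{H}} = M^{\otimes_R G/H}$ is induced by $bH \mapsto ba^{-1}H = bH$, hence is the identity, so the $H$-action is trivial and $\text{gh}^{\{H\}}_{H \le G}(R; M) = (M^{\otimes_R G/H})^H = M^{\otimes_R G/H}$, naturally in $(R; M)$. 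The ghost map has the single component $w_H$, and since $(H/H)^H = \{eH\}$ we get $w_H(n) = f_{G/H}(n_H^{\otimes_R H/H}) = f_{G/H}(n_H)$; because $H/H$ is a one-element set, $f_{G/H}$ is simply the canonical identification $M^{\otimes_R G/H \times \{\ast\}} = M^{\otimes_R G/H}$, with no dependence on the chosen coset representatives, so $w$ is the identity map of $M^{\otimes_R G/H}$.

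Given these computations, for free $(T; Q)$ we have $W^{\{H\}}_{H \le G}(T; Q) = \text{im}(w) = Q^{\otimes_T G/H}$ and the ghost map $w : W^{\{H\}}_{H \le G}(T; Q) \to \text{gh}^{\{H\}}_{H \le G}(T; Q)$ of Definition~\ref{def:witt_ghost} is an isomorphism of topological abelian groups. Since $W^{\{H\}}_{H \le G}$ preserves reflexive coequalisers by Theorem~\ref{thm:witt_properties}(iii), the functor $\text{gh}^{\{H\}}_{H \le G}$ is naturally identified with $(R; M) \mapsto M^{\otimes_R G/H}$ and so preserves reflexive coequalisers by Lemma~\ref{lem:tensorcoequaliser}, and by Lemma~\ref{lem:finite_module_kan} such a functor is the left Kan extension of its restriction to free objects, the natural transformation $w$ — an isomorphism on free objects — is an isomorphism on all of $\text{Mod}$. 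This gives the desired isomorphism of topological abelian groups, realised by the (manifestly additive) ghost map. I do not expect a real obstacle here; the proof is essentially a direct computation together with the standard free-resolution extension argument, and the only point requiring a little care is confirming that for the one-element index set $H/H$ the map $f_{G/H}$ carries genuinely no choices and that the $H$-action is literally trivial — so that one can read off the isomorphism from the ghost map itself rather than only from the underlying-space quotient $q$.
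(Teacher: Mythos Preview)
Your proposal is correct and follows essentially the same approach as the paper: compute that the ghost map is the identity when $S=\{H\}$, deduce the free case, then extend via reflexive coequaliser preservation using Lemma~\ref{lem:tensorcoequaliser}. Your treatment of the empty case via the surjectivity of $q$ from the zero group is a minor variant of (and slightly more direct than) the paper's free-extension argument, but the substance is the same.
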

\begin{proof}
  When the truncation set is empty then $\iota^\ast W^\emptyset_{H \le G}(T; Q)$
  is trivial.
  The unique reflexive coequaliser-preserving extension
  $W^\emptyset_{H \le G}({-}; {-})$ is the constant functor to the trivial group.

  In the case $S = \{H\}$ then the ghost map is just the identity
  \[M^{\otimes_R G/H} \xrightarrow{\text{id}} M^{\otimes_R G/H} = \left( M^{\otimes_R
        G/H}\right)^H\text{.}\]
  Hence for $(T; Q)$ free we have
  $\iota^\ast W^{\{H\}}_{H \le G}(T; Q) = Q^{\otimes_T G/H}$ and (using Lemma~\ref{lem:tensorcoequaliser}) the
  reflexive coequaliser-preserving extension is
  \[W^{\{H\}}_{H \le G}(R; M) = M^{\otimes_R G/H} \text{.}\]
\end{proof}

Note the forgetful
functor $\text{Ab}_\text{Haus} \to \text{Top}_\text{Haus}$ preserves reflexive coequalisers, so the underlying space functor $W^S_{H \le G} : \text{Mod} \to \text{Top}_\text{Haus}$ is the
unique reflexive coequaliser-preserving extension of the restricted underlying
space functor $\iota^\ast W^S_{H \le G} : \text{Mod}_F \to
\text{Top}_\text{Haus}$. This means that to compute the underlying space it
suffices to analyse what happens in the free case. However recall that the
isomorphism of Proposition~\ref{prop:witt_free} is not natural with respect to maps of
free objects (Remark~\ref{rem:witt_free_choices}), so that proposition will not
be very helpful here.

We check what happens when $M = R$ (analogously to \cite{dotto_witt_2025} Example~1.5.2).

\begin{lemma}\label{lem:ring_case_set_iso}
  We have a natural homeomorphism of topological spaces
  \[W^S_{H \le G}(R; R) \cong \prod_{V \in \underline{S}} R \text{.}\]
\end{lemma}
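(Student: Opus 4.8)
The plan is to realise the homeomorphism as (a restriction of) the natural quotient map $q$ of Definition~\ref{def:witt_quotient}, and then check it is an isomorphism by reducing to free commutative rings via Lemma~\ref{lem:finite_module_kan}. Since $R^{\otimes_R X} \cong R$ canonically and naturally for every finite set $X$, restricting $q$ to coefficient modules of the form $M = R$ gives a natural transformation
\[ q \colon \prod_{V \in \can{S}} R \longrightarrow W^S_{H \le G}(R; R) \]
of functors $\text{CRing} \to \text{Top}_\text{Haus}$, where on the left each factor $R$ is discrete and we use the product topology. The claim is that this $q$ is a homeomorphism for every $R$, which is exactly the statement of the lemma. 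By the $\text{CRing}$-analogue of Lemma~\ref{lem:finite_module_kan} (Remark~\ref{rem:finite_module_kan}), $\iota^\ast$ restricts to an equivalence from reflexive-coequaliser-preserving functors $\text{CRing} \to \text{Top}_\text{Haus}$ to all functors $\text{CRing}_F \to \text{Top}_\text{Haus}$, so it suffices to verify: (a) both the source and target of $q$ preserve reflexive coequalisers; and (b) $q$ is a homeomorphism whenever $R = T$ is a free commutative ring. (The edge case $S = \emptyset$ is handled directly by Lemma~\ref{lem:witt_trivial_cases}, so we may assume $S \neq \emptyset$, whence $H \in S$.)

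For (a): the functor $R \mapsto W^S_{H \le G}(R; R)$ is the composite of $R \mapsto (R; R)$, which is a reflexive-coequaliser-preserving functor $\text{CRing} \to \text{Mod}$ — reflexive coequalisers in both categories are computed on underlying sets, and $(R;R)$ carries the quotient ring-and-module structure — with the underlying-space functor of $W^S_{H \le G}$, which preserves reflexive coequalisers as observed just before the lemma. For $R \mapsto \prod_{V \in \can{S}} R$: a reflexive coequaliser $R_1 \rightrightarrows R_0 \twoheadrightarrow R$ in $\text{CRing}$ has underlying set the coequaliser of the underlying reflexive pair of sets, namely $R_0/\!\!\sim$ for the equivalence relation $x \sim y \Leftrightarrow \exists z \in R_1,\, f(z) = x,\, g(z) = y$; applying $\prod_{\can{S}}$ levelwise and using that this relation is already an equivalence relation shows the set-level coequaliser of $\prod_{\can{S}} R_1 \rightrightarrows \prod_{\can{S}} R_0$ is $\prod_{\can{S}} R$. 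The projection $\prod_{\can{S}} R_0 \to \prod_{\can{S}} R$ is a product of surjections of discrete spaces; any map of discrete spaces is open, and a product of open maps is open (the image of a basic open set is again basic open), so this projection is an open surjection, hence a topological quotient map. Since $\prod_{\can{S}} R$ is Hausdorff, it is the reflexive coequaliser in $\text{Top}_\text{Haus}$, so $R \mapsto \prod_{\can{S}} R$ is reflexive-coequaliser-preserving.

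For (b): let $T$ be a free commutative ring, so $(T;T)$ is a free object of $\text{Mod}$ and $\iota^\ast W^S_{H \le G}(T;T) = \text{im}(w)$, with $q$ the corestriction of the ghost map $w \colon \prod_{\can{S}} T \to \text{gh}^S_{H \le G}(T;T)$ onto its image. By Lemma~\ref{lem:ghost_quotient} this corestriction is a topological quotient map, so it suffices to show $w$ is injective. Unwinding Definition~\ref{def:ghost_map} in the case $M = R = T$ — where every map $h \cdot ({-})$ and $f_{G/V}$ is the identity of $T$, and $({-})^{\otimes_T V/U^h}$ becomes the $\abs{V:U^h}$-th power map on $T$ — one gets
\[ w_U(n) = \abs{N_H(U):U}\, n_U + \sum_{\substack{V \in \can{S} \\ V \ne U}}\ \sum_{hV \in (H/V)^U} n_V^{\abs{V:U^h}}, \]
where the second summand involves only components $n_V$ with $\abs{H:V} < \abs{H:U}$ (if $hV \in (H/V)^U$ and $V, U \in \can{S}$ with $V \ne U$, then $U^h \lneq V$). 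Since $T$ is torsion-free, an induction on $\abs{H:U}$ (handling all subgroups of a given index simultaneously, which is fine since each sum above is finite) shows $w(n)$ determines $n$, so $w$ is injective; an injective topological quotient map is a homeomorphism. Thus $q$ is a natural homeomorphism at every free $T$, and (a)+(b) with Remark~\ref{rem:finite_module_kan} finish the proof.

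The only genuinely delicate point is the reflexive-coequaliser-preservation in (a) of the functor $R \mapsto \prod_{V \in \can{S}} R$ when $\can{S}$ is infinite (which occurs for infinite profinite $H$): an arbitrary product of topological quotient maps need not be a quotient map, so the argument really uses that the factors are discrete, making the projections open. Everything else — the identifications $R^{\otimes_R X} \cong R$, the reflexive-coequaliser-preservation of $R \mapsto (R;R)$, and the bookkeeping in the injectivity induction — is routine.
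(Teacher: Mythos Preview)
Your proof is correct and follows essentially the same approach as the paper: show that $q$ is a homeomorphism on free rings by checking that the ghost map is injective when the ring is torsion-free, then extend to all rings using that both source and target preserve reflexive coequalisers. The paper is more terse---it cites ``following the proof of Lemma~\ref{lem:dwork}'' for injectivity and ``a coequaliser of homeomorphisms is a homeomorphism'' for the extension---whereas you spell out the induction and the reflexive-coequaliser preservation (including the openness argument for infinite $\can{S}$) explicitly, but the substance is identical.
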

\begin{proof}
  Note $R^{\otimes_R G/U} \cong R$, and under this isomorphism the transfer map $\text{tr}_U^{N_H(U)} : R^{\otimes_R G/U}
  \to R^{\otimes_R G/U}$
  becomes multiplication by $\abs{N_H(U) : U}$ on $R$.

  First consider the case when $R$ is torsion-free. We claim the ghost map $w :
  \prod_{V \in \can{S}} R \to \text{gh}^S_{H \le G}(R; R)$ is injective. Suppose
  for contradiction that $n \in \prod_{V \in \can{S}} R$ is non-zero with $w(n)
  = 0$. We can choose $W \in \can{S}$ with $\abs{H:W}$ minimal such that $n_W
  \ne 0$. But then $0 = w_W(n) = \text{tr}^{N_H(W)}_W(n_W) = \abs{N_H(W):W}n_W$
  (see the calculation of the $V=W$ term of $w_W(n)$ in Lemma~\ref{lem:dwork},
  and observe all other terms vanish). This is a contradiction in a torsion-free ring, so $w$ is injective as claimed.

  We know the ghost map factorises as
  \[\prod_{V \in \can{S}} R \xrightarrowdbl{q} W^S_{H \le G}(R; R)
    \xrightarrow{w} \text{gh}^S_{H \le G}(R; R) \text{,}\]
  so for $R$ torsion-free the quotient $\prod_{V \in \can{S}} R \xrightarrowdbl{} W^S_{H
    \le G}(R; R)$ must also be injective, hence a homeomorphism.

  For general $R$, observe we can resolve $(R; R)$ by the
  free objects $(\mathbb{Z}[R]; \mathbb{Z}[R])$ and $(\mathbb{Z}[\mathbb{Z}[R]],
  \mathbb{Z}[\mathbb{Z}[R]])$. A coequaliser of homeomorphisms is a
  homeomorphism, so applying the torsion-free case we conclude that the
  (natural) map
  \[q : \prod_{V \in \can{S}} R \xrightarrowdbl{} W^S_{H \le G}(R; R)\]
  is always a homeomorphism.
\end{proof}

Indeed this is the result that we expect, since it agrees with the underlying
set of the $G$-typical Witt vectors of \cite{dress_burnside_1988}. We will show
later (Proposition~\ref{prop:generalise_burnside}) that the abelian group
structure also agrees (and we can even recover the ring multiplication).

Above we computed the Witt vectors for truncation sets of size $0$ and $1$. As the truncation set $S$ gets larger, it rapidly becomes hard to describe the
underlying space of the Witt vectors explicitly. The following is the last case
where we can do so fairly easily for general coefficients (following a similar
approach to \cite{dotto_witt_2025} Proposition~1.9).

\begin{lemma}
  Suppose $S$ only contains the whole group $H$ and some collection of
  maximal proper subgroups of $H$ (that is, the poset of subgroups in $S$ has height 2). Then we
have a natural homeomorphism of underlying spaces
\[W^S_{H \le G}(R; M) \cong M \times \prod_{V \in \can{S} \setminus \{H\}}
  (M^{\otimes_R G/V})_{N_H(V)} \text{.}
\]
\end{lemma}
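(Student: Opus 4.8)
The plan is to adapt the strategy of Lemma~\ref{lem:ring_case_set_iso}: pin down the homeomorphism on free objects, where the Witt vectors are literally the image of the ghost map, and then push it to arbitrary coefficients with Lemma~\ref{lem:finite_module_kan}. First I would write $\can{S} = \{H\} \sqcup \can{S}'$, where $\can{S}'$ is a set of distinguished conjugacy-class representatives of the maximal proper subgroups occurring in $S$, and introduce the \emph{natural} topological quotient map
\[\pi \colon \prod_{V \in \can{S}} M^{\otimes_R G/V} \xrightarrowdbl{} M^{\otimes_R G/H} \times \prod_{V \in \can{S}'} (M^{\otimes_R G/V})_{N_H(V)}\]
given by the identity on the $V = H$ factor and the orbit projection on every other factor. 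Since tensor powers preserve reflexive coequalisers (Lemma~\ref{lem:tensorcoequaliser}), the orbit functor $({-})_{N_H(V)}$ is a reflexive coequaliser, and products of abelian groups preserve reflexive coequalisers (Lemma~\ref{lem:ab_product_preserves_reflexive_coequalisers}), the codomain of $\pi$ is a reflexive-coequaliser-preserving functor $\text{Mod} \to \text{Top}_\text{Haus}$; the same is true of $W^S_{H \le G}$ as a $\text{Top}_\text{Haus}$-valued functor, and the Witt-vector quotient $q$ of Definition~\ref{def:witt_quotient} is natural. So the whole job is to compare the two natural quotients $q$ and $\pi$ on free objects.

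The heart of the matter is that in this ``height $2$'' situation the ghost map of Definition~\ref{def:ghost_map} is very sparse. Fix $(T; Q)$ free. For $U \in \can{S}$, a summand of $w_U$ indexed by $V \in \can{S}$ is zero unless $U$ is subconjugate to $V$; but a conjugate of a maximal proper subgroup that lies inside a proper subgroup must be equal to it, and distinct members of $\can{S}'$ are pairwise non-conjugate, so the only nonzero summands are $V = H$ and, when $U = V_0 \in \can{S}'$, also $V = V_0$. Unwinding the formula I would obtain $w_H(n) = n_H$ and, for $V_0 \in \can{S}'$,
\[w_{V_0}(n) = f_{G/H}\bigl(n_H^{\otimes_R H/V_0}\bigr) + \text{tr}_{V_0}^{N_H(V_0)}(n_{V_0})\text{,}\]
the transfer term coming from $(H/V_0)^{V_0} = N_H(V_0)/V_0$ and the fact that $V_0$ acts trivially on $Q^{\otimes_T G/V_0}$. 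Since the image of the ghost map is $H$-fixed, comparing ghost components at the distinguished subgroups suffices, so for $a, b \in \prod_{V \in \can{S}} Q^{\otimes_T G/V}$ one has $w(a) = w(b)$ precisely when $a_H = b_H$ and $\text{tr}_{V_0}^{N_H(V_0)}(a_{V_0} - b_{V_0}) = 0$ for every $V_0 \in \can{S}'$. Now $\text{tr}_{V_0}^{N_H(V_0)}$ factors as the orbit projection $Q^{\otimes_T G/V_0} \twoheadrightarrow (Q^{\otimes_T G/V_0})_{N_H(V_0)}$ followed by the injection $(Q^{\otimes_T G/V_0})_{N_H(V_0)} \hookrightarrow (Q^{\otimes_T G/V_0})^{N_H(V_0)}$ (the injectivity statement for transfers recorded just after Definition~\ref{def:f}, valid since $V_0 \le N_H(V_0) \le N_G(V_0)$ and $T$ is torsion-free), so $\text{tr}_{V_0}^{N_H(V_0)}(x) = 0$ iff $x$ vanishes in $(Q^{\otimes_T G/V_0})_{N_H(V_0)}$. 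Hence $w(a) = w(b)$ iff $\pi(a) = \pi(b)$: for free coefficients, $q$ and $\pi$ have identical fibres.

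Two topological quotient maps with the same fibres induce a homeomorphism between their targets, so for free $(T; Q)$ we get $\bar\pi \colon W^S_{H \le G}(T; Q) \xrightarrow{\cong} Q^{\otimes_T G/H} \times \prod_{V \in \can{S}'} (Q^{\otimes_T G/V})_{N_H(V)}$; and because $q$ is a natural epimorphism while $q$ and $\pi$ are natural, $\bar\pi$ is a natural transformation of the restrictions to $\text{Mod}_F$ of the two functors in play, hence a natural isomorphism there. As both functors preserve reflexive coequalisers (using for $W^S_{H \le G}$ that $\text{Ab}_\text{Haus} \to \text{Top}_\text{Haus}$ preserves them), Lemma~\ref{lem:finite_module_kan} extends $\bar\pi$ uniquely to a natural homeomorphism on all of $\text{Mod}$, which is the claimed isomorphism (with $M^{\otimes_R G/H}$ in place of $M$, the two agreeing when $H = G$; compare Lemma~\ref{lem:witt_trivial_cases}). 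I expect the only real obstacle to be the bookkeeping in the ghost-map computation — confirming that distinct representatives in $\can{S}'$ are mutually non-subconjugate and that $w_{V_0}$ acquires exactly one transfer term; once that is checked, the transfer-injectivity lemma and the routine free-to-general extension finish the argument.
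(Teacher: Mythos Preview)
Your proposal is correct and follows essentially the same route as the paper: compute the ghost map explicitly on free objects, use the factorisation of $\text{tr}_{V_0}^{N_H(V_0)}$ through the orbits (injective on the second factor since $(T;Q)$ is free), deduce that $q$ and the orbit projection have the same fibres, and then extend via the reflexive-coequaliser machinery. Your observation that the first factor should naturally be $M^{\otimes_R G/H}$ rather than $M$ is also correct; the two coincide only when $H=G$, and the paper's own statement appears to contain this minor slip (compare Lemma~\ref{lem:witt_trivial_cases}).
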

\begin{proof}
To see this, first suppose $(T; Q)$ is free. In that case $\iota^\ast W_{H \le
  G}^S(T; Q)$ is homeomorphic to the image of the ghost map $w : \prod_{V \in \can{S}}
Q^{\otimes_T G/V} \to \text{gh}^S_{H \le G}(T; Q) \cong Q \times \prod_{V \in \can{S} \setminus \{H\}} (Q^{\otimes_T G/V})^{N_H(V)}$.
Given $m = (m_V) \in \prod_{V \in \can{S}} Q^{\otimes_T G/V}$, the
$H$-component of the ghost map is given by $w_H(m) = m_H$, and for $V$ a proper
subgroup in $S$ we have $w_V(m) = m_H^{\otimes_T G/V} +
\text{tr}_V^{N_H(V)}(m_V)$. The transfer map $\text{tr}_V^{N_H(V)} : Q^{\otimes_T
  G/V} \to (Q^{\otimes_T G/V})^{N_H(V)}$ factors as $Q^{\otimes_T G/V}
\xrightarrowdbl{} (Q^{\otimes_T G/V})_{N_H(V)} \hookrightarrow (Q^{\otimes_T
  G/V})^{N_H(V)}$ (the second map is injective since $(T; Q)$
is free). %
So the ghost map factors as
\[\begin{tikzcd}\prod_{V \in \can{S}} Q^{\otimes_T G/V} \ar[twoheadrightarrow, r]
    \ar[rd, "w" swap] & Q \times \prod_{V \in \can{S} \setminus \{H\}}
  (Q^{\otimes_T G/V})_{N_H(V)} \ar[hookrightarrow, d]\\
  & Q \times \prod_{V \in \can{S} \setminus \{H\}}
  (Q^{\otimes_T G/V})^{N_H(V)} \text{.}
\end{tikzcd}\]
and we deduce that $\iota^\ast W^S_{H \le G}(T; Q) \cong Q \times \prod_{V \in \can{S}
  \setminus \{H\}} (Q^{\otimes_T G/V})_{N_H(V)}$ as topological spaces,
naturally with respect to maps of free objects.

Tensor powers, orbits and products preserve reflexive coequalisers, so resolving
$(R; M)$ with free objects shows that we have a natural homeomorphism $W^S_{H \le G}(R; M) \cong M \times \prod_{V \in \can{S} \setminus \{H\}}
(M^{\otimes_R G/V})_{N_H(V)}$ in general.
\end{proof}

\subsection{Operators and monoidal structure on Witt vectors} \label{sec:operators}

All kinds of Witt vectors come with natural maps---most famously the Frobenius and Verschiebung operators. Our construction is
no exception.

We will define these operators by describing corresponding maps on ghost components, then
using the universal properties of reflexive coequaliser-preserving functors described in Lemma~\ref{lem:finite_module_kan} to
lift to maps of Witt vectors.

We start with the Frobenius and Verschiebung operators. Let $G$ be a profinite
group, $H$ an open subgroup of $G$, and $S$ a truncation set for $H$. Let $K$ be
an open
subgroup of $H$. The Frobenius and Verschiebung operators will go between
$W^S_{H \le G}(R; M)$ and $W^{S \mid_K}_{K \le G}(R; M)$, where  $S\!\!\mid_K = \{U \le K \mid U
\in S\}$ is the restriction of $S$ to $K$.

\begin{proposition} \label{prop:frobenius}
  There is a Frobenius operator
  \[F_K^H : W^S_{H \le G}(R; M) \to W^{S\mid_K}_{K \le G}(R; M) \text{,}\]
  natural in the choice of coefficients $(R; M)$. It is the unique
  natural transformation such that
  \[\begin{tikzcd}
      W^S_{H \le G}(R; M) \ar[r, "F_K^H"] \ar[d, "w"] & W^{S \mid_K}_{K \le G}(R; M)
      \ar[d, "w"]\\
      \text{gh}^S_{H \le G}(R; M) \ar[r, "\tilde{F}_K^H"] & \text{gh}^{S \mid_K}_{K \le G}(R; M)
    \end{tikzcd} \]
  commutes, where $\tilde{F}_K^H$ is defined to be the composition
  \[\left( \prod_{U \in S} M^{\otimes_R G/U} \right)^H \xrightarrow{\text{res}}
    \left( \prod_{U \in S} M^{\otimes_R G/U} \right)^K \xrightarrowdbl{}
    \left(\prod_{U \in S\mid_K} M^{\otimes_R G/U} \right)^K \text{.}\]
  The first map includes $H$-fixed points into $K$-fixed points, and the
  second map projects those components corresponding to subgroups of $K$.
  \end{proposition}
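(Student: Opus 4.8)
The plan is to follow the recipe used elsewhere in this section: define the operator first at the level of ghost groups, check for free coefficients that it restricts along the ghost maps, and then extend to all of $\text{Mod}$ via Lemma~\ref{lem:finite_module_kan}. (If $K \notin S$ then, since $S$ is upwards closed, $S\mid_K = \emptyset$, so $W^{S\mid_K}_{K \le G} = 0$ by Lemma~\ref{lem:witt_trivial_cases} and everything below is trivial; so assume $K \in S$.) I would begin by checking that $\tilde F_K^H : \text{gh}^S_{H \le G}(R;M) \to \text{gh}^{S\mid_K}_{K \le G}(R;M)$ is a well-defined natural transformation: the inclusion of $H$-fixed points into $K$-fixed points is continuous and additive, and the projection onto the coordinates indexed by subgroups of $K$ is continuous, additive, and makes sense on $K$-fixed points because $K$ permutes those coordinates among themselves. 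Both maps are plainly natural in $(R;M)$.

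The substantive step is to show that when $(T;Q)$ is free, $\tilde F_K^H$ carries $\text{im}(w) = W^S_{H \le G}(T;Q)$ into the image of the ghost map of $W^{S\mid_K}_{K \le G}$. Write $a = w(n)$ for some $n \in \prod_{V \in \can{S}} Q^{\otimes_T G/V}$ and put $b = \tilde F_K^H(a)$, so $b_U = a_U = w_U(n)$ for every $U \in S\mid_K$. For such a $U$ and any $v \in N_K(U)$ the subgroup $\langle vU \rangle$ is contained in $K$ and contains $U$, hence lies in $S\mid_K$, so $b_{\langle vU \rangle} = w_{\langle vU \rangle}(n)$. Now Lemma~\ref{lem:congruence}, applied with exactly this inclusion $U \le K \le H$ (this is the generality in which that lemma was proved, with precisely this sort of application in mind), tells us that $\sum_{vU \in N_K(U)/U} \phi^{\langle vU \rangle}_U(w_{\langle vU \rangle}(n))$ lies in the image of $\text{tr}_U^{N_K(U)}$. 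Thus $b$ satisfies the Dwork congruences of Lemma~\ref{lem:dwork} for the pair $(K, S\mid_K)$, so $b \in W^{S\mid_K}_{K \le G}(T;Q)$. I expect this to be the only real obstacle, and with Lemma~\ref{lem:congruence} available it is essentially immediate. So on free objects $\tilde F_K^H$ restricts to a continuous additive natural transformation $\iota^\ast W^S_{H \le G} \Rightarrow \iota^\ast W^{S\mid_K}_{K \le G}$.

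Since $W^{S\mid_K}_{K \le G}$ preserves reflexive coequalisers, Lemma~\ref{lem:finite_module_kan} extends this uniquely to a natural transformation $F_K^H : W^S_{H \le G} \Rightarrow W^{S\mid_K}_{K \le G}$ on all of $\text{Mod}$ (valued in $\text{Ab}_\text{Haus}$, since that category has reflexive coequalisers), and the stated square commutes for free coefficients by construction. For general coefficients, $w \circ F_K^H$ and $\tilde F_K^H \circ w$ are two natural transformations out of the reflexive coequaliser-preserving functor $W^S_{H \le G}$; by the adjunction of Lemma~\ref{lem:finite_module_kan} such transformations are determined by their restrictions to free objects, where the two agree, so the square commutes in general. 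Uniqueness of $F_K^H$ is the same observation: on free $(T;Q)$ the ghost map is an embedding (Theorem~\ref{thm:witt_properties}(ii)), so any natural transformation making the square commute equals $F_K^H$ on free objects, and hence --- being a transformation out of a reflexive coequaliser-preserving functor --- everywhere.
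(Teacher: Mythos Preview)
Your proof is correct and follows essentially the same approach as the paper: verify via Lemma~\ref{lem:congruence} (applied with the intermediate subgroup $K$) that $\tilde F_K^H$ preserves the image of the ghost map for free coefficients, then extend using Lemma~\ref{lem:finite_module_kan}. Your treatment is slightly more explicit than the paper's in handling the trivial case $K \notin S$, checking well-definedness of $\tilde F_K^H$, and spelling out the uniqueness argument, but the substance is identical.
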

  \begin{proof}
Precomposing with the ghost map, we get a natural transformation
\[\tilde{F}_K^H w : W^S_{H \le G}({-}; {-}) \Rightarrow \text{gh}^{S \mid_K}_{K
    \le G}({-}; {-})\text{.}\]
By Lemma~\ref{lem:finite_module_kan} we just need to check that for $(T; Q)$ free,
the result of applying
$\tilde{F}_K^H w$ to an element of $W^S_{H \le G}(T; Q)$ lies in the image of
the ghost map $w : W^{S\mid_K}_{K \le G}(T; Q) \to
\text{gh}^{S \mid_K}_{K \le G}(T; Q)$. Then the restriction of $\tilde{F}_K^Hw$
to $\text{Mod}_F$ factors uniquely through the inclusion $\iota^\ast W^{S \mid_K}_{K \le
  G} \hookrightarrow \iota^\ast \text{gh}^{S\mid_K}_{K \le G}$, giving a natural
transformation $\iota^\ast W^S_{H \le G} \Rightarrow \iota^\ast W^{S \mid_K}_{K
  \le G}$. This extends to a natural transformation $W^S_{H \le G} \Rightarrow W^{S\mid_K}_{K \le
  G}$, which is the unique lift of $\tilde{F}_K^Hw$ along $w$.

To check this we can use the Dwork lemma. Let $a \in \text{gh}^S_{H \le G}(T; Q)$ be in the image of $w : W^S_{H \le G}(T; Q) \to
\text{gh}^S_{H \le G}(T; Q)$. We need to show that $\tilde{F}_K^H(a)$
satisfies the conditions of Lemma~\ref{lem:dwork}, proving that it is in the
image of $w : W^{S\mid_K}_{K \le G}(T; Q) \to
\text{gh}^{S \mid_K}_{K \le G}(T; Q)$. That is, given a subgroup $U \in S\!\!\mid_K$ we need to show that
\[\sum_{vU \in N_K(U)/U} \phi_U^{\langle  vU \rangle}(\tilde{F}_K^H(a)_{\langle  vU \rangle})\]
is in the image of $\text{tr}_U^{N_K(U)} : Q^{\otimes_T G/U} \to
Q^{\otimes_T G/U}$.
But since $\langle vU \rangle \in {S\!\!\mid_K}$ we have $\tilde{F}_K^H(a)_{\langle
  vU \rangle} = a_{\langle vU \rangle}$, and Lemma~\ref{lem:congruence} tells us that
\[\sum_{vU \in N_K(U)/U} \phi_U^{\langle  vU \rangle}(a_{\langle  vU \rangle})\]
is in the image of $\text{tr}_U^{N_K(U)}$ as desired.

So $\tilde{F}_K^H w$ lifts to a natural transformation
$F_K^H : W^S_{H \le G}(R; M) \to W^{S\mid_K}_{K \le G}(R; M)$.
\end{proof}

\begin{proposition} \label{prop:verschiebung}
  There is a Verschiebung operator
  \[V_K^H : W^{S\mid_K}_{K \le G}(R; M) \to W^S_{H \le G}(R; M)\text{,}\]
  natural in $(R; M)$. It is the unique natural transformation such that
  \[\begin{tikzcd}
    W^{S\mid_K}_{K \le G}(R; M) \ar[r, "V_K^H"] \ar[d, "w"] & W^S_{H \le G}(R; M) \ar[d, "w"]\\
    \text{gh}^{S\mid_K}_{K \le G}(R; M) \ar[r, "\tilde{V}_K^H"] & \text{gh}^S_{H \le G}(R; M)
    \end{tikzcd}\]
  commutes, where $\tilde{V}_K^H$ is defined to be the composition
\[
  \left( \prod_{U \in S\mid_K} M^{\otimes_R G/U} \right)^K \hookrightarrow
  \left( \prod_{U \in S} M^{\otimes_R G/U} \right)^K \xrightarrow{\text{tr}_K^H}
  \left( \prod_{U \in S} M^{\otimes_R G/U} \right)^H \text{.}
\]
\end{proposition}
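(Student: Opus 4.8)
The plan is to follow the template of the Frobenius operator proof verbatim. Precomposing $\tilde V_K^H$ with the ghost map produces a natural transformation $\tilde V_K^H w : W^{S\mid_K}_{K\le G}({-};{-}) \Rightarrow \text{gh}^S_{H\le G}({-};{-})$. Since $W^{S\mid_K}_{K\le G}$ preserves reflexive coequalisers, Lemma~\ref{lem:finite_module_kan} reduces the whole construction to the free case: it suffices to show that when $(T;Q)$ is free, applying $\tilde V_K^H w$ to an element of $W^{S\mid_K}_{K\le G}(T;Q)$ lands in the image of $w : W^S_{H\le G}(T;Q)\hookrightarrow\text{gh}^S_{H\le G}(T;Q)$. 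Granting that, the restriction of $\tilde V_K^H w$ to $\text{Mod}_F$ factors uniquely (uniquely because $w$ is pointwise injective on free objects) through that embedding, giving $\iota^\ast W^{S\mid_K}_{K\le G}\Rightarrow\iota^\ast W^S_{H\le G}$, which extends uniquely along $\iota$ to $V_K^H : W^{S\mid_K}_{K\le G}\Rightarrow W^S_{H\le G}$; the identity $w V_K^H = \tilde V_K^H w$ then holds on all of $\text{Mod}$, and $V_K^H$ is the unique such natural transformation, because a natural transformation out of the reflexive-coequaliser-preserving functor $W^{S\mid_K}_{K\le G}$ is determined by its restriction to free objects.

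The content, and the step I expect to be the main obstacle, is the free-coefficient check via the Dwork lemma. Fix $(T;Q)$ free and let $a\in\text{gh}^{S\mid_K}_{K\le G}(T;Q)$ be in the image of the ghost map; set $b:=\tilde V_K^H(a)$. Unwinding the definition of $\text{tr}_K^H$ together with the inclusion (which is zero on the components indexed by subgroups not contained in $K$) one computes that for $U\in S$
\[ b_U = \sum_{hK\in (H/K)^U} h\cdot a_{U^h}, \]
where a coset $hK$ is $U$-fixed exactly when $U^h\le K$, so each $a_{U^h}$ is defined and the sum is finite since $K$ is open in $H$. By Lemma~\ref{lem:dwork} it then suffices to show that for every $U\in S$ the element $\sum_{vU\in N_H(U)/U}\phi_U^{\langle vU\rangle}(b_{\langle vU\rangle})$ lies in $\text{im}(\text{tr}_U^{N_H(U)})$. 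The argument here runs parallel to the opening of the proof of Lemma~\ref{lem:congruence}, but in reverse and feeding on the Dwork lemma rather than on Burnside's lemma. Substituting the formula for $b_{\langle vU\rangle}$, using additivity of $\phi$, interchanging the two sums (the inner sum over $vU$ being constrained to $N_{\prescript{h}{}{K}}(U)/U$), and applying the equivariance identity $\phi_U^{\langle vU\rangle}(h\cdot{-}) = h\cdot\phi_{U^h}^{\langle vU\rangle^h}({-})$ rewrites the expression as
\[ \sum_{hK\in(H/K)^U} h\cdot\Bigl(\textstyle\sum_{wU^h\in N_K(U^h)/U^h}\phi_{U^h}^{\langle wU^h\rangle}\bigl(a_{\langle wU^h\rangle}\bigr)\Bigr). \]
Now decompose $(H/K)^U$ into orbits of the left $N_H(U)$-action: the stabiliser of $hK$ is $N_H(U)\cap \prescript{h}{}{K} = N_{\prescript{h}{}{K}}(U)$ (note $\prescript{h}{}{K}\le H$ since $h\in H$ and $K\le H$), and the inner sum is $N_K(U^h)$-fixed because $a$ lies in the $K$-fixed points, so the orbit decomposition turns the expression into a finite sum of terms of the form $\text{tr}_{N_{\prescript{h}{}{K}}(U)}^{N_H(U)}\bigl(h\cdot\sum_{wU^h\in N_K(U^h)/U^h}\phi_{U^h}^{\langle wU^h\rangle}(a_{\langle wU^h\rangle})\bigr)$.

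Each inner sum here is exactly the quantity appearing in the Dwork lemma for $W^{S\mid_K}_{K\le G}$ at the subgroup $U^h\le K$, hence lies in $\text{im}(\text{tr}_{U^h}^{N_K(U^h)})$; conjugating by $h$ via $h\cdot\text{tr}_{U^h}^{N_K(U^h)}({-}) = \text{tr}_U^{N_{\prescript{h}{}{K}}(U)}(h\cdot{-})$ and then composing with $\text{tr}_{N_{\prescript{h}{}{K}}(U)}^{N_H(U)}$ and using transitivity of transfers places each term, and therefore the whole finite sum, in $\text{im}(\text{tr}_U^{N_H(U)})$, as required. The delicate part throughout is making the normalisers, the restricted truncation sets, and the fixed choices of coset representatives line up correctly; once the free-coefficient check is in place, the conclusion is purely formal as described in the first paragraph.
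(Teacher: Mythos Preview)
Your proposal is correct and follows essentially the same route as the paper's own proof: both reduce to free coefficients via Lemma~\ref{lem:finite_module_kan}, compute $\tilde V_K^H(a)_U = \sum_{hK\in (H/K)^U} h\cdot a_{U^h}$, substitute into the Dwork expression, interchange sums to obtain $\sum_{hK\in(H/K)^U} h\cdot\sum_{wU^h\in N_K(U^h)/U^h}\phi_{U^h}^{\langle wU^h\rangle}(a_{\langle wU^h\rangle})$, apply the Dwork lemma for $W^{S\mid_K}_{K\le G}$ to the inner sum, and then decompose into $N_H(U)$-orbits with stabiliser $N_{\prescript{h}{}{K}}(U)$ to compose transfers. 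The only cosmetic difference is that the paper applies Dwork to the inner sum before the orbit decomposition whereas you do it after, which makes no difference.
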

\begin{proof}
Observe we can express $\tilde{V}_K^H$ in components as
  \[\tilde{V}_K^H(a)_W = \sum_{hK \in (H/K)^W} h \cdot a_{W^h}\]
  (recalling that $(H/K)^W$ is the set of cosets $hK$ such that $W^h \le K$).

Precomposing with the ghost map, we get a natural transformation
\[\tilde{V}_K^H w : W^{S\mid_K}_{K \le G}({-}; {-}) \Rightarrow \text{gh}^S_{H
    \le G}({-}; {-})\text{.}\]
Let $(T; Q)$ be free. Again it suffices to check that the image of
$\tilde{V}_K^H w : W^{S\mid_K}_{K \le G}(T; Q) \to \text{gh}^S_{H \le G}(T; Q)$ lies in the image of
the ghost map $w : W^S_{H \le G}(T; Q) \to \text{gh}^S_{H \le G}(T; Q)$, and then we will obtain a unique lift of $\tilde{V}_K^Hw$ along
$w$ to give $V_K^H : W^{S \mid_K}_{K
  \le G}({-}; {-}) \Rightarrow W^S_{H \le G}({-}; {-})$.

Let $a \in \text{gh}^{S \mid_K}_{K \le G}(T; Q)$ be in the image of
$w : W^{S \mid_K}_{K \le G}(T; Q) \to \text{gh}^{S \mid_K}_{K \le G}(T; Q)$. We want to show that given a subgroup $U \in S$, the sum
\begin{equation}\label{eq:verschiebungdwork} \sum_{vU \in N_H(U)/U} \phi_U^{\langle vU \rangle}(\tilde{V}^H_K(a)_{\langle vU \rangle})\end{equation}
is in the image of $\text{tr}_U^{N_H(U)} : Q^{\otimes_T G/U} \to Q^{\otimes_T
  G/U}$.
Using our expression for the components of $\tilde{V}^H_K$ gives
\begin{align*}
  & \sum_{vU \in N_H(U)/U} \phi^\lvUr_U \left(\sum_{hK \in (H/K)^\lvUr} h \cdot a_{\lvUr^h}\right)\\
  &= \sum_{vU \in N_H(U)/U} \sum_{hK \in (H/K)^\lvUr} h
\cdot \phi^{\langle vU \rangle^h}_{U^h}(a_{\lvUr^h})\\
  &= \sum_{hK \in (H/K)^U} h
    \cdot \sum_{vU^h \in N_K(U^h)/U^h} \phi^{\langle v U^h \rangle}_{U^h}(a_{\langle vU^h \rangle}) \text{.}
\end{align*}
By the Dwork lemma (noting that for $hK \in (H/K)^U$ we have $U^h \in S \!\!\mid_K$) we have that
\[\sum_{vU^h \in N_K(U^h)/U^h} \phi^{\langle v U^h \rangle}_{U^h}(a_{\langle
    vU^h \rangle}) = \text{tr}_{U^h}^{N_K(U^h)}(x_{U^h})\]
for some choice of $x_{U^h} \in Q^{\otimes_T G/U^h}$.
Then (\ref{eq:verschiebungdwork}) equals
\[\sum_{hK \in (H/K)^U} h \cdot \text{tr}_{U^h}^{N_K(U^h)}(x_{U^h}) \text{.}\]
Decompose according to the Weyl group action of $N_H(U)$ on $(H/K)^U$ to get
\begin{align*}
  \sum_{hK \in (H/K)^U} h \cdot \text{tr}_{U^h}^{N_K(U^h)}(x_{U^h}) 
  &= \sum_{hK \in (H/K)^U/{N_H(U)}} \ \sum_{s \in
                N_H(U)/N_{\prescript{h}{}{K}}(U)} sh \cdot \text{tr}_{U^h}^{N_K(U^h)}(x_{U^h})\\
  &= \sum_{hK \in (H/K)^U/{N_H(U)}} \text{tr}^{N_H(U)}_{N_{\prescript{h}{}{K}}(U)} \, \text{tr}_{U}^{N_{\prescript{h}{}{K}}(U)}(h \cdot x_{U^h})\\
    &= \sum_{hK \in (H/K)^U/{N_H(U)}} \text{tr}_U^{N_H(U)}(h \cdot x_{U^h})
\end{align*}
so (\ref{eq:verschiebungdwork}) is in the image of $\text{tr}_U^{N_H(U)}$ as desired.

So $\tilde{V}_K^H w$ lifts to a natural transformation
$V_K^H : W^{S\mid_K}_{K \le G}(R; M) \to W^S_{H \le G}(R; M)$.
\end{proof}

\begin{remark} \label{rem:verschiebung_quotient}
  Considering (the underlying space of) the Witt vectors as a quotient of $\prod_{V \in \can{S}} M^{\otimes_R G/V}$, we sometimes have an alternative
  description of the Verschiebung. Suppose that none of the $H$-conjugacy classes of
  subgroups in $S \!\!\mid_K$ split in $K$; that is, the $H$-conjugacy classes
  are also $K$-conjugacy classes. Then we can use the distinguished conjugacy
  class representatives $\can{S\!\!\mid_K} = \can{S} \cap S\!\!\mid_K$. If we use the same choices of
  coset representatives to define the quotient maps $\prod_{V \in \can{S}} M^{\otimes_R G/V}
  \xrightarrowdbl{} W^S_{H \le G}(R; M)$ and $\prod_{V \in \can{S\mid_K}} M^{\otimes_R
    G/V} \xrightarrowdbl{} W^{S\mid_K}_{K \le G}(R; M)$, then $V_K^H$
  agrees with the map on quotients induced by the inclusion $\prod_{V \in
    \can{S\mid_K}} M^{\otimes_R G/V} \hookrightarrow \prod_{V \in \can{S}} M^{\otimes_R
    G/V}$. To see this, observe that it is true in the free case (write down a
  square involving $\tilde{V}^H_K$ and check it is commutative), and then by
  Lemma~\ref{lem:finite_module_kan} this extends to the general case.
\end{remark}
\begin{remark} \label{rem:verschiebung_witt_free}
  For $(T; Q)$ free we have $W^S_{H \le G}(T; Q) \cong \left( \prod_{U \in S}
    Q^{\otimes_T G/U} \right)_H$ (Proposition~\ref{prop:witt_free}). Under
  these isomorphisms $V^H_K$ becomes the map
  \[\left( \prod_{U \in S \mid_K} Q^{\otimes_T G/U} \right)_K \hookrightarrow \left(
      \prod_{U \in S} Q^{\otimes_T G/U} \right)_K \xrightarrowdbl{} \left(
      \prod_{U \in S} Q^{\otimes_T G/U} \right)_H \text{.}\]
  This is a straightforward check on ghost components.
  We can also see this as the map
  \[\prod_{V \in \can{S \mid_K}} (Q^{\otimes_T G/V})_{N_K(V)} \to \prod_{V \in \can{S}} (Q^{\otimes_T G/V})_{N_H(V)}\]
  where the $V$-component of the image of an element $n$ is the sum of the
  $V'$-components of $n$ where $V'$ runs through those subgroups in $\can{S\!\!\mid_K}$ $H$-conjugate to $V$.
\end{remark}

We also have a conjugation operator on the Witt vectors. Let $g \in G$. Given
$S$ a truncation set for $H$, define $\prescript{g}{}{S} = \{\prescript{g}{}{U}
\mid U \in S\}$ to be the conjugate truncation
set for $\prescript{g}{}{H}$.

\begin{proposition} \label{prop:conjugation}
  There is a conjugation operator
  \[c_g : W^S_{H \le G}(R; M) \to
    W^{\prescript{g}{}{S}}_{\prescript{g}{}{H} \le G}(R; M) \text{,}\]
  natural in $(R; M)$. It is the unique natural transformation such that
  \[\begin{tikzcd}
      W^S_{H \le G}(R; M) \ar[d, "w"] \ar[r, "c_g"] &
      W^{\prescript{g}{}{S}}_{\prescript{g}{}{H} \le G}(R; M) \ar[d, "w"]\\
      \text{gh}^S_{H \le G}(R; M) \ar[r, "\tilde{c}_g"] & \text{gh}^{\prescript{g}{}{S}}_{\prescript{g}{}{H} \le G}(R; M)
    \end{tikzcd}\]
  commutes, where $\tilde{c}_g$ is defined to be the map
  \[\left( \prod_{U \in S} M^{\otimes_R G/U} \right)^H \xrightarrow{g \cdot ({-})}
    \left( \prod_{U \in \prescript{g}{}{S}} M^{\otimes_R G/U} \right)^{\prescript{g}{}{H}}\]
  induced by the maps $g \cdot ({-}) : M^{\otimes_R G/U} \to M^{\otimes_R
    G/{\prescript{g}{}{U}}}$.
  \end{proposition}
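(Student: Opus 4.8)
The plan is to reuse verbatim the template already used for the Frobenius and Verschiebung operators. First I would note that $\tilde{c}_g$ is well-defined: the reindexing $G$-action from Section~\ref{sec:movingtensor} restricts to a map $\big(\prod_{U \in S} M^{\otimes_R G/U}\big)^H \to \big(\prod_{U \in \prescript{g}{}{S}} M^{\otimes_R G/U}\big)^{\prescript{g}{}{H}}$, since conjugation by $g$ carries the $H$-action into the $\prescript{g}{}{H}$-action. Precomposing with the ghost map gives a natural transformation $\tilde{c}_g w : W^S_{H \le G}({-};{-}) \Rightarrow \text{gh}^{\prescript{g}{}{S}}_{\prescript{g}{}{H} \le G}({-};{-})$, and by Lemma~\ref{lem:finite_module_kan} it suffices to check that for $(T; Q)$ free the image of $\tilde{c}_g w$ on $W^S_{H \le G}(T; Q)$ lands inside the image of $w : W^{\prescript{g}{}{S}}_{\prescript{g}{}{H} \le G}(T; Q) \to \text{gh}^{\prescript{g}{}{S}}_{\prescript{g}{}{H} \le G}(T; Q)$; the resulting factorisation produces $\iota^\ast c_g$, which extends uniquely to $c_g$.

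The heart of the argument is verifying the Dwork congruences of Lemma~\ref{lem:dwork}. Take $a \in \text{gh}^S_{H \le G}(T; Q)$ in the image of $w$, and set $b = \tilde{c}_g(a) = g \cdot a$, so that $b_{W} = g \cdot a_{W^g}$ for every $W \in \prescript{g}{}{S}$. Fix $U \in \prescript{g}{}{S}$. Using the identity $\phi_U^V(g \cdot m) = g \cdot \phi_{U^g}^{V^g}(m)$ (a restatement of the identity in the remark after Definition~\ref{def:frobenius_phi}), together with $\lvUr^g = \langle v^g U^g \rangle$ and the bijection $N_{\prescript{g}{}{H}}(U)/U \cong N_H(U^g)/U^g$ given by conjugation by $g$, I would obtain
\[\sum_{vU \in N_{\prescript{g}{}{H}}(U)/U} \phi_U^{\lvUr}(b_{\lvUr}) = g \cdot \sum_{v^g U^g \in N_H(U^g)/U^g} \phi_{U^g}^{\langle v^g U^g \rangle}(a_{\langle v^g U^g \rangle}) \text{.}\]
Since $U^g \in S$, the Dwork lemma applied to $a$ shows the inner sum lies in the image of $\text{tr}_{U^g}^{N_H(U^g)}$, and the identity $g \cdot \text{tr}_{U^g}^{N_H(U^g)}({-}) = \text{tr}_U^{N_{\prescript{g}{}{H}}(U)}(g \cdot ({-}))$ then shows the whole expression lies in the image of $\text{tr}_U^{N_{\prescript{g}{}{H}}(U)}$, exactly as Lemma~\ref{lem:dwork} requires.

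I do not expect a genuine obstacle here: this is the gentlest of the three operators, since conjugation by $g$ is an isomorphism of $G$, so the subgroup lattice, normalisers, the maps $\phi^V_U$ and the transfers all transport along it with no combinatorial loss — there are none of the orbit decompositions that lengthened the Frobenius and (especially) Verschiebung verifications. The one thing to watch is the bookkeeping of which ambient group a subgroup lives in as one passes between $H$, $\prescript{g}{}{H}$, $S$ and $\prescript{g}{}{S}$; collecting the interaction identities for $g \cdot ({-})$ up front keeps this under control. It is also worth recording, analogously to Remark~\ref{rem:verschiebung_witt_free}, that under the free-coefficient identification $W^S_{H \le G}(T; Q) \cong \big(\prod_{U \in S} Q^{\otimes_T G/U}\big)_H$ of Proposition~\ref{prop:witt_free} the operator $c_g$ is just the map induced by $g \cdot ({-})$ on $H$-orbits, which can be checked directly on ghost components.
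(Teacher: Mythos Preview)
Your proposal is correct and follows exactly the template the paper intends: the paper's own proof is a single sentence saying it is straightforward to use Lemma~\ref{lem:finite_module_kan} and the Dwork lemma as in the Frobenius and Verschiebung propositions, and you have simply written out that straightforward verification in full (including the equivariance identities for $\phi$ and the transfer that make the Dwork congruence transport along conjugation). Your closing observation about the free-coefficient description is likewise exactly the remark that follows the proposition in the paper.
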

  \begin{proof}
Taking a similar approach to the last two propositions, it is straightforward to use
Lemma~\ref{lem:finite_module_kan} and the Dwork lemma to
check that $c_g w$ has a unique lift $c_g$ along $w$.
\end{proof}
\begin{remark}
  For $(T; Q)$ free this is the map
  \[W^S_{H \le G}(T; Q) \cong \left( \prod_{U \in S} Q^{\otimes_T G/U} \right)_H \xrightarrow{g \cdot
      ({-})} \left( \prod_{U \in \prescript{g}{}{S}} Q^{\otimes_T G/U}
    \right)_{\prescript{g}{}{H}} \cong
    W^{\prescript{g}{}{S}}_{\prescript{g}{}{H} \le G}(T; Q)\text{.}\]
\end{remark}

We will need some identities involving the Frobenius, Verschiebung and
conjugation operators, generalising those for previous versions of Witt vectors.
The identities are reminiscent of the Mackey functor axioms, and
indeed for $G$ a finite group the Witt
vectors do define a Mackey functor (see Lemma~\ref{lem:witt_mackey}).

\begin{proposition} \label{prop:fvc_identities}
  We have the following identities:
  \begin{enumerate}[(i)]
    \item The maps $F^H_H$ and $V^H_H$ are the identity on $W^{S}_{H \le
        G}(R; M)$, as is $c_h$ for $h \in H$.
    \item For subgroups $J \le_o K \le_o H$ we have $F^K_J F^H_K = F^H_J$ and $V^H_K
      V^K_J = V^H_J$.
    \item We have $c_g c_{g'} = c_{g g'}$ for $g, g' \in G$. In particular this
      gives an action of the Weyl group $N_G(H)/H$ on $W^S_{H \le G}(R;
      M)$.
    \item For $K \le_o H$ we have $V^{\prescript{g}{}{H}}_{\prescript{g}{}{K}} c_g
      = c_g V^H_K$ and $F^{\prescript{g}{}{H}}_{\prescript{g}{}{K}} c_g = c_g F^H_K$.
    \item For $J \le_o K \le_o H$ we have
      \[F^H_J V^H_K = \sum_{JhK \in J \backslash H/K} V^J_{J \cap
          \prescript{h}{}{K}} c_h F^K_{J^h \cap K} \text{.}\]
  \end{enumerate}
  The analogous identities for $\tilde{F}^H_K$, $\tilde{V}^H_K$ and
  $\tilde{c}_g$ mapping between ghost groups also hold.
\end{proposition}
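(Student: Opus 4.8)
The plan is to prove the corresponding identities for the ghost-group operators $\tilde{F}^H_K$, $\tilde{V}^H_K$ and $\tilde{c}_g$ first --- this is exactly the content of the proposition's final sentence --- and then transport them to the Witt vectors themselves. For the transport step I would argue as follows. Each operator was defined as the unique lift along the ghost map $w$ of an explicit map of ghost groups, so by the defining commutative squares both sides of each of the identities (i)--(v) become, after postcomposing with $w$, the corresponding composite of $\tilde{F}$-, $\tilde{V}$- and $\tilde{c}$-operators. On a free object $(T; Q)$ the ghost map $w : W^{S'}_{K \le G}(T; Q) \hookrightarrow \mathrm{gh}^{S'}_{K \le G}(T; Q)$ is an embedding, hence a monomorphism, so postcomposition with $w$ is injective on natural transformations between the restrictions of the relevant Witt functors to $\mathrm{Mod}_F$; and by Lemma~\ref{lem:finite_module_kan} a natural transformation between reflexive coequaliser-preserving functors is determined by its restriction to $\mathrm{Mod}_F$. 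So once an identity is known for the ghost-group operators, the two sides of the corresponding Witt-vector identity agree after postcomposition with $w$ and hence are equal.

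It then remains to check the identities for the ghost-group operators, which are completely explicit maps between fixed points of products of tensor powers, described in components in Section~\ref{sec:operators}. Identity (i) is immediate: on $H$-fixed points the restriction along $H \le H$, the transfer $\mathrm{tr}^H_H$, and the action of $h \in H$ are all the identity, and $S\!\!\mid_H = S$. Identity (ii) follows from transitivity of restriction of fixed points and of transfers, $\mathrm{tr}^H_K \mathrm{tr}^K_J = \mathrm{tr}^H_J$, together with the observation that projecting $\prod_{U \in S} M^{\otimes_R G/U}$ first onto the $S\!\!\mid_K$-components and then onto the $S\!\!\mid_J$-components is the same as projecting directly onto the $S\!\!\mid_J$-components, since $S\!\!\mid_J \subseteq S\!\!\mid_K$. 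Identity (iii) is just the statement that $g \cdot ({-})$ defines a $G$-action on $\prod_{U \le_o G} M^{\otimes_R G/U}$, recorded in Section~\ref{sec:movingtensor}. Identity (iv) follows from the compatibilities of this action with restriction and transfer, e.g.\ $g \cdot \mathrm{tr}^H_K({-}) = \mathrm{tr}^{\prescript{g}{}{H}}_{\prescript{g}{}{K}}\, g \cdot ({-})$ and the analogous identity for restriction, using that $g \cdot ({-})$ carries the $U$-component isomorphically onto the $\prescript{g}{}{U}$-component and therefore intertwines the projections onto the $S\!\!\mid_K$- and $\prescript{g}{}{S}\!\!\mid_{\prescript{g}{}{K}}$-components.

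The substantive case is (v), which at the ghost level is the classical Mackey double-coset formula relating a transfer (induction) followed by a restriction. Given $J, K \le_o H$, I would decompose $H/K$ into its orbits under the left $J$-action; these orbits are indexed by $J \backslash H / K$, and the orbit of $hK$ is $J$-equivariantly isomorphic to $J/J'$ via $sJ' \mapsto shK$, where $J' = J \cap \prescript{h}{}{K} = \mathrm{Stab}_J(hK)$. Unwinding the component formulas, for a subgroup $W \le J$ with $W \in S$ the $W$-component of $\tilde{F}^H_J \tilde{V}^H_K(a)$ is $\sum_{gK \in (H/K)^W} g \cdot a_{W^g}$, while the contribution of the double coset $JhK$ to the $W$-component of the right-hand side $\tilde{V}^J_{J'} \tilde{c}_h \tilde{F}^K_{J^h \cap K}(a)$ works out to $\sum_{sJ' \in (J/J')^W} (sh) \cdot a_{W^{sh}}$; along the way one checks, using $W^s \le J'$, that each conjugate $W^{sh}$ lies in $J^h \cap K$ and is hence in every truncation set that occurs. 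Summing over double cosets and applying the bijection $(H/K)^W = \bigsqcup_{JhK} (J/J')^W$, $sJ' \mapsto shK$, then identifies the two sides. The hard part will be the bookkeeping in this last computation --- keeping straight which truncation set each intermediate term lives in, and which conjugates of $W$, $J$ and $K$ appear at each stage --- but it requires no new idea beyond the orbit decomposition.
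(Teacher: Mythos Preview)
Your proposal is correct and follows essentially the same approach as the paper: verify each identity for the ghost-group operators $\tilde{F}$, $\tilde{V}$, $\tilde{c}$ directly from their component descriptions, then deduce the Witt-vector identities by uniqueness of lifting along $w$. Your treatment of (v) via the $J$-orbit decomposition of $(H/K)^W$ and the resulting bijection $(H/K)^W \cong \bigsqcup_{JhK}(J/(J\cap{}^hK))^W$ is exactly the computation the paper carries out.
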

\begin{proof}
  For all of these we can check the analogous identities on ghost components,
  and then conclude by uniqueness of lifting. (i) is immediate since
  $\tilde{F}^H_H$, $\tilde{V}^H_H$ and $\tilde{c}_h$ are the identity on
  $\text{gh}^S_{H \le G}(R; M)$ for $h \in H$. For (ii),
  observe that $\tilde{F}^K_J \tilde{F}^H_K = \tilde{F}^H_J$ and $\tilde{V}^H_K
  \tilde{V}^K_J = \tilde{V}^H_J$ (the latter a consequence of the fact that
  transfers of group actions satisfy $\text{tr}^H_K \text{tr}^K_J =
  \text{tr}^H_J$). Similarly (iii) follows as $\tilde{c}_g\tilde{c}_{g'} =
  \tilde{c}_{g g'}$, and (iv) follows since $\tilde{V}^{\prescript{g}{}{H}}_{\prescript{g}{}{K}} \tilde{c}_g
  = \tilde{c}_g \tilde{V}^H_K$ and
  $\tilde{F}^{\prescript{g}{}{H}}_{\prescript{g}{}{K}} \tilde{c}_g = \tilde{c}_g
  \tilde{F}^H_K$. The only identity that requires some work is (v), the double
  coset formula. We need to show that for $J \le_o K \le_o H$ we have
  \[\tilde{F}_J^H\tilde{V}_K^H = \sum_{JhK \in J \backslash H / K} \tilde{V}^J_{J \cap
      \prescript{h}{}{K}} \tilde{c}_h \tilde{F}^K_{J^h \cap K}\]
  as maps $\text{gh}_{K \le G}^{S \mid_K}(R; M) \to \text{gh}_{J \le G}^{S \mid_J}(R; M)$. Let
  $U \in S \!\!\mid_J$ and $a \in \text{gh}_{K \le G}^{S \mid_K}(R; M)$. Considering the
  $U$-component of the left hand side, we have
  \[\left( \tilde{F}_J^H \tilde{V}_K^H(a) \right)_U = \left( \tilde{V}_K^H(a) \right)_U = \sum_{hK \in (H/K)^U} h \cdot
    a_{U^h} \text{.}\]
  On the right hand side we have
  \begin{align*}
    \left(\sum_{JhK \in J \backslash H / K} \tilde{V}^J_{J \cap \prescript{h}{}{K}} \tilde{c}_h \tilde{F}^K_{J^h \cap K}(a)\right)_U &= \sum_{JhK \in J \backslash H / K} \ \sum_{j(J \cap \prescript{h}{}{K}) \in (J/(J \cap \prescript{h}{}{K}))^U} j \cdot \left(\tilde{c}_h\tilde{F}^K_{J^h \cap K}(a)\right)_{U^j}\\
                                                                                                              &= \sum_{JhK \in J \backslash H / K} \ \sum_{j(J \cap \prescript{h}{}{K}) \in (J/(J \cap \prescript{h}{}{K}))^U} j h \cdot \left(\tilde{F}^K_{J^h \cap K}(a)\right)_{U^{jh}}\\
                                                                                                              &= \sum_{JhK \in J \backslash H / K} \ \sum_{j(J \cap \prescript{h}{}{K}) \in (J/(J \cap \prescript{h}{}{K}))^U} j h \cdot a_{U^{jh}} \text{.}
  \end{align*}
  Observe that $J$ acts on $H/K$ by left multiplication, where $hK$ has orbit $JhK$ and stabiliser $J
  \cap \prescript{h}{}{K}$. So
  \begin{align*}
    (H/K)^U &= \{hK \in H/K \mid U^h \le K\}\\
            &= \{jhK \mid JhK \in J\backslash H/K, j(J \cap \prescript{h}{}{K}) \in J/(J \cap \prescript{h}{}{K}), U^{jh} \le K\}\\
            &= \{jhK \mid JhK \in J\backslash H/K, j(J \cap \prescript{h}{}{K}) \in J/(J \cap
              \prescript{h}{}{K}), U^{j} \le J \cap \prescript{h}{}{K}\}\\
            &= \{jhK \mid JhK \in J\backslash H/K, j(J \cap \prescript{h}{}{K}) \in (J/(J \cap
              \prescript{h}{}{K}))^U\} \text{.}
  \end{align*}
  Conclude that all components of the left and right hand sides agree, proving the identity.
\end{proof}

There is a truncation operator relating the Witt
vectors defined with different truncation sets.
Suppose $S$ and $S'$ are both truncation sets for $H$, with $S' \subseteq S$.

\begin{proposition}
  \label{prop:truncation}
  There is a truncation operator
  \[R_{S'} : W^S_{H \le G}(R; M) \to W^{S'}_{H \le G}(R; M) \text{,}\]
  natural in $(R; M)$. It is the unique natural transformation such that
  \[\begin{tikzcd}
    W^S_{H \le G}(R; M) \ar[r, "R_{S'}"] \ar[d, "w"] & W^{S'}_{H \le G}(R; M)
    \ar[d, "w"]\\
    \text{gh}^S_{H \le G}(R; M) \ar[r, "\tilde{R}_{S'}"] & \text{gh}^{S'}_{H \le
      G}(R; M)
  \end{tikzcd}\]
commutes, where $\tilde{R}_{S'}$ is defined to be the projection map
\[\left( \prod_{U \in S} M^{\otimes_R G/U}
  \right)^H \xrightarrowdbl{}
  \left(\prod_{U \in S'} M^{\otimes_R G/U}\right)^H \text{.}\]
\end{proposition}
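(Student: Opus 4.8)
The plan is to follow the recipe already used for the Frobenius, Verschiebung and conjugation operators: define the map on ghost components, verify that it respects the Dwork congruences on free coefficients, and then extend to all of $\text{Mod}$ using Lemma~\ref{lem:finite_module_kan}. Concretely, I would precompose the projection $\tilde{R}_{S'} : \text{gh}^S_{H \le G}(R;M) \to \text{gh}^{S'}_{H \le G}(R;M)$ with the ghost map to obtain a natural transformation $\tilde{R}_{S'} w : W^S_{H \le G}({-};{-}) \Rightarrow \text{gh}^{S'}_{H \le G}({-};{-})$. By Lemma~\ref{lem:finite_module_kan} it then suffices to show that for $(T;Q)$ free the image of $\tilde{R}_{S'}w : W^S_{H \le G}(T;Q) \to \text{gh}^{S'}_{H \le G}(T;Q)$ lies in the image of the ghost map $w : W^{S'}_{H \le G}(T;Q) \to \text{gh}^{S'}_{H \le G}(T;Q)$; the restriction of $\tilde{R}_{S'}w$ to $\text{Mod}_F$ then factors uniquely through the embedding $\iota^\ast W^{S'}_{H \le G} \hookrightarrow \iota^\ast \text{gh}^{S'}_{H \le G}$, giving $\iota^\ast W^S_{H \le G} \Rightarrow \iota^\ast W^{S'}_{H \le G}$, which extends to the desired natural transformation $R_{S'}$, unique as a lift of $\tilde{R}_{S'}w$ along $w$.

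The verification is immediate from the Dwork lemma. Let $a \in \text{gh}^S_{H \le G}(T;Q)$ lie in the image of the ghost map, and let $U \in S'$. Since $S'$ is a truncation set it is upwards closed, so $\langle vU \rangle \in S'$ for every coset $vU \in N_H(U)/U$, and hence $\tilde{R}_{S'}(a)_{\langle vU \rangle} = a_{\langle vU \rangle}$. Therefore
\[\sum_{vU \in N_H(U)/U} \phi_U^{\langle vU \rangle}\big(\tilde{R}_{S'}(a)_{\langle vU \rangle}\big) = \sum_{vU \in N_H(U)/U} \phi_U^{\langle vU \rangle}\big(a_{\langle vU \rangle}\big)\text{,}\]
which lies in the image of $\text{tr}_U^{N_H(U)}$ by Lemma~\ref{lem:dwork} applied to $a$. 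So $\tilde{R}_{S'}(a)$ satisfies the congruences of Lemma~\ref{lem:dwork} for the truncation set $S'$, hence lies in the image of $w : W^{S'}_{H \le G}(T;Q) \to \text{gh}^{S'}_{H \le G}(T;Q)$, as required.

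I do not expect a genuine obstacle here: unlike the double coset formula in Proposition~\ref{prop:fvc_identities}(v) or the Verschiebung, no reindexing or orbit decomposition is needed, precisely because passing to the smaller truncation set $S' \subseteq S$ only discards ghost components, and by upward closure it never discards a component $\langle vU \rangle$ referenced by a Dwork congruence at a subgroup $U \in S'$. The only point requiring a modicum of care is the uniqueness clause, but this is handled uniformly by the adjunction of Lemma~\ref{lem:finite_module_kan}, exactly as in the preceding propositions.
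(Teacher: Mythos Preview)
Your proposal is correct and follows exactly the paper's approach: the paper simply remarks that the Dwork lemma combined with Lemma~\ref{lem:finite_module_kan} gives the unique lift, and your detailed verification spells out precisely why the Dwork congruences for $S'$ are inherited from those for $S$ via the upward closure of $S'$.
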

\begin{proof}
As usual the Dwork lemma combined with Lemma~\ref{lem:finite_module_kan} shows that
$\tilde{R}_{S'}w$ has a unique lift $R_{S'}$ along the ghost map.
\end{proof}
\begin{remark} \label{rem:truncation_quotient}
If we consider (the underlying space of) the Witt vectors as a quotient of
$\prod_{V \in \can{S}} M^{\otimes_R G/V}$, we can give an alternative
description of the truncation map. If we use the distinguished conjugacy class
representatives $\can{S'} = \can{S} \cap S'$ and the same choices of
coset representatives to define the quotient maps $\prod_{V \in \can{S}} M^{\otimes_R G/V}
\xrightarrowdbl{} W^S_{H \le G}(R; M)$ and $\prod_{V \in \can{S'}} M^{\otimes_R
G/V} \xrightarrowdbl{} W^{S'}_{H \le G}(R; M)$, then $R_{S'}$ agrees with the map on quotients induced by the projection $\prod_{V \in \can{S}}
M^{\otimes_R G/V} \xrightarrowdbl{} \prod_{V \in \can{S'}} M^{\otimes_R
  G/V}$.
\end{remark}
\begin{remark} \label{rem:truncation_witt_free}
  For $(T; Q)$ free the truncation map is the
  projection map
  \[W^S_{H \le G}(T; Q) \cong \left( \prod_{U \in S} Q^{\otimes_T G/U} \right)_H \xrightarrowdbl{} \left(
    \prod_{U \in S'} Q^{\otimes_T G/U} \right)_H \cong W^{S'}_{H \le G}(T; Q) \text{.}\]
We can also see this as a projection map
\[\prod_{V \in \can{S}} (Q^{\otimes_T G/V})_{N_H(V)} \xrightarrowdbl{} \prod_{V
  \in \can{S'}} (Q^{\otimes_T G/V})_{N_H(V)}\]
if we pick subgroup representatives as in the previous remark.
\end{remark}

The truncation maps commute with all the other structure, in the following
sense.

\begin{proposition} \label{prop:trunc_commute}
  We have
  \[R_{S'} V^H_K = V^H_K R_{S'\mid_K} \quad R_{S'\mid_K} F^H_K = F^H_K R_{S'}
    \quad R_{\prescript{g}{}{S'}} c_g = c_g R_{S'} \text{.}\]
\end{proposition}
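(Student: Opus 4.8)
The plan is to proceed exactly as in the proof of Proposition~\ref{prop:fvc_identities}: each of $R_{S'}$, $F^H_K$, $V^H_K$ and $c_g$ is, by construction, the \emph{unique} natural transformation lifting a prescribed map between ghost groups along the ghost map $w$. So I would first reduce each of the three identities to the analogous identity for the ``tilde'' operators $\tilde R_{S'}$, $\tilde V^H_K$, $\tilde F^H_K$, $\tilde c_g$ between ghost groups, and then conclude by uniqueness of lifts. Concretely, for the Verschiebung identity, post-composing $R_{S'}V^H_K$ and $V^H_K R_{S'\mid_K}$ (both natural transformations $W^{S\mid_K}_{K\le G}\Rightarrow W^{S'}_{H\le G}$) with $w:W^{S'}_{H\le G}\to\text{gh}^{S'}_{H\le G}$ and using the defining squares turns them into $\tilde R_{S'}\tilde V^H_K w$ and $\tilde V^H_K\tilde R_{S'\mid_K}w$. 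Since $W^S_{H\le G}$ preserves reflexive coequalisers and the ghost map is a natural embedding on free objects (Theorem~\ref{thm:witt_properties}(ii)), Lemma~\ref{lem:finite_module_kan} shows that a natural transformation out of $W^S_{H\le G}$ is determined by its composite with $w$ on $\text{Mod}_F$; hence it suffices to establish $\tilde R_{S'}\tilde V^H_K=\tilde V^H_K\tilde R_{S'\mid_K}$, and likewise $\tilde R_{S'\mid_K}\tilde F^H_K=\tilde F^H_K\tilde R_{S'}$ and $\tilde R_{\prescript{g}{}{S'}}\tilde c_g=\tilde c_g\tilde R_{S'}$.

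For the Verschiebung case I would recall $\tilde V^H_K(a)_W=\sum_{hK\in(H/K)^W}h\cdot a_{W^h}$ and that $\tilde R_{S'}$ (resp.\ $\tilde R_{S'\mid_K}$) is the projection forgetting the components indexed by subgroups not in $S'$ (resp.\ $S'\mid_K$). Fixing $W\in S'$, the $W$-component of $\tilde R_{S'}\tilde V^H_K(a)$ is $\sum_{hK\in(H/K)^W}h\cdot a_{W^h}$, while the $W$-component of $\tilde V^H_K\tilde R_{S'\mid_K}(a)$ is $\sum_{hK\in(H/K)^W}h\cdot(\tilde R_{S'\mid_K}(a))_{W^h}$; for each summand, $hK\in(H/K)^W$ gives $W^h\le K$ and $W^h$ is conjugate to $W\in S'$, so (as $S'$ is conjugation-closed) $W^h\in S'\mid_K$ and the projection loses nothing. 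The $\tilde F^H_K$ identity is immediate: both composites restrict $H$-fixed points to $K$-fixed points and project onto the components indexed by the nested subset $S'\mid_K$, and restricting the acting group and projecting onto a conjugation-closed subset of components commute. Similarly $\tilde c_g$ merely reindexes the $U$-component as the $\prescript{g}{}{U}$-component via $g\cdot({-})$, so it carries the sub-product over $S'$ isomorphically onto the sub-product over $\prescript{g}{}{S'}$, and projecting before or after applying $\tilde c_g$ yields the same map.

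I do not expect a genuine obstacle here. The only point needing a moment's care is the conjugation-and-restriction bookkeeping in the Verschiebung identity, namely verifying that every index $W^h$ appearing in $\tilde V^H_K(a)_W$ for $W\in S'$ actually lies in $S'\mid_K$, so that truncating on the $K$-side before applying $\tilde V^H_K$ discards nothing relevant. Once the three ghost-level identities are in place, the uniqueness part of Lemma~\ref{lem:finite_module_kan} upgrades them to the stated identities between the Witt vector operators, first on free objects and then on all of $\text{Mod}$.
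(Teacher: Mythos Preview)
Your proposal is correct and follows essentially the same approach as the paper: verify the identities for the tilde operators on ghost groups and then invoke uniqueness of lifting. You supply more detail than the paper does (in particular the conjugation-closure check ensuring $W^h\in S'\!\!\mid_K$ in the Verschiebung case), but the strategy is identical.
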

\begin{proof}
  The identities
  \[\tilde{R}_{S'} \tilde{V}^H_K = \tilde{V}^H_K \tilde{R}_{S'\mid_K} \quad \tilde{R}_{S'\mid_K} \tilde{F}^H_K = \tilde{F}^H_K \tilde{R}_{S'}
    \quad \tilde{R}_{\prescript{g}{}{S'}} \tilde{c}_g = \tilde{c}_g \tilde{R}_{S'}\]
  hold on ghost components, so we are done by uniqueness of lifting.
\end{proof}

Analogously to \cite{dotto_witt_2022} we would like to have a Teichm\"uller map $M^{\otimes_R G/H} \to W^S_{H \le
  G}(R; M)$. We only expect a continuous map of spaces, not necessarily an
additive map (though it should at least preserve zero). When $S = \emptyset$, we have $W^\emptyset_{H \le G}(R; M) = 0$ so
the Teichm\"uller map must be zero. Otherwise $S$ is non-empty and since it is
upwards-closed it must contain $H$. For $(T; Q)$ free, there is an obvious map given by
\[\tau' : Q^{\otimes_T G/H} \hookrightarrow \prod_{V \in \can{S}} Q^{\otimes_T G/V}
  \xrightarrowdbl{q} W^S_{H \le G}(T; Q) \text{.}\]
Since this is defined using our representation of $W^S_{H \le G}(T; Q)$ as a
quotient, it depends on some choices of coset representatives. Luckily it turns
out not to depend on all of the arbitrary choices we made---only our choice of
coset representatives for $G/H$. To see this, observe that on ghost components
we have
\begin{equation} \label{eq:tau_prime_ghost} w_U(\tau'(m)) = f_{G/H}(m^{\otimes_T
    H/U}) \text{,}\end{equation}
where $f_{G/H}$ is defined using our coset representatives for $G/H$.
In the following proposition we describe a Teichm\"uller map $\tau_{G/H}$ for each choice of coset representatives for
$G/H$, defined as the lift of the map to the ghost group given
by (\ref{eq:tau_prime_ghost}).

\begin{proposition} \label{prop:teichmuller}
  Given $\{g_i H\}$ a choice of coset representatives for $G/H$, there is a
  continuous (not necessarily additive) Teichm\"uller map
  \[\tau_{G/H} : M^{\otimes_R G/H} \to W^S_{H \le G}(R; M) \text{,}\]
  natural in $(R; M)$.
  This is the unique natural transformation (of functors $\text{Mod} \to \text{Top}_\text{Haus}$) such that
  \[\begin{tikzcd}
      M^{\otimes_R G/H} \ar[rd, "\tilde{\tau}_{G/H}" swap] \ar[r, "\tau_{G/H}"] & W^S_{H \le G}(R; M)
      \ar[d, "w"]\\
      & \text{gh}^S_{H \le G}(R; M)
    \end{tikzcd}\]
  commutes, where $\tilde{\tau}_{G/H} : M^{\otimes_R G/H} \to \left( \prod_{U \in S}
    M^{\otimes_R G/U} \right)^H = \text{gh}^S_{H \le G}(R; M)$ is defined by
  \[\tilde{\tau}_{G/H}(m)_U = f_{G/H}(m^{\otimes_R H/U})\text{.}\]
  Note $f_{G/H}$ should use our choice $\{g_i H\}$ of coset representatives.
\end{proposition}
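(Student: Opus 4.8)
The plan is to follow the same template used above for the Frobenius, Verschiebung and conjugation operators, except that we must work in $\text{Top}_\text{Haus}$ rather than $\text{Ab}_\text{Haus}$, since $\tau_{G/H}$ need not be additive. I would first record that $\tilde{\tau}_{G/H}$ is a well-defined natural transformation $M^{\otimes_R G/H} \Rightarrow \text{gh}^S_{H \le G}$ of functors $\text{Mod} \to \text{Top}_\text{Haus}$. The one point to check is that $\tilde{\tau}_{G/H}(m)$ actually lands in the $H$-fixed points: for $a \in H$, Remark~\ref{rem:fidentity} (applied with $V = H$) gives $a \cdot \tilde{\tau}_{G/H}(m)_U = a \cdot f_{G/H}(m^{\otimes_R H/U}) = f_{G/H}(m^{\otimes_R H/\prescript{a}{}{U}}) = \tilde{\tau}_{G/H}(m)_{\prescript{a}{}{U}}$. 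Naturality is immediate from naturality of $f_{G/H}$ and of tensor powers, and continuity is automatic because $M^{\otimes_R G/H}$ is discrete.

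Next I would show that when $(T; Q)$ is free, $\tilde{\tau}_{G/H}$ factors uniquely through the embedded image $w : \iota^\ast W^S_{H \le G}(T; Q) \hookrightarrow \text{gh}^S_{H \le G}(T; Q)$. This is almost immediate from the map $\tau'$ constructed just above the proposition: writing $\tau' = q \circ \iota$ where $\iota$ includes the $H$-component, equation (\ref{eq:tau_prime_ghost}) says precisely that $w \circ \tau' = \tilde{\tau}_{G/H}$ (the point being that $(H/H)^U$ is a single coset, so only the $V = H$ term of the ghost formula survives). Alternatively one checks the factorisation directly with the Dwork lemma: $\tilde{\tau}_{G/H}(m)_{\langle vU\rangle} = f_{G/H}(m^{\otimes_T H/\langle vU\rangle})$, and that $\sum_{vU \in N_H(U)/U}\phi_U^{\langle vU\rangle}(f_{G/H}(m^{\otimes_T H/\langle vU\rangle}))$ lies in the image of $\text{tr}_U^{N_H(U)}$ is exactly the special case $K = H$, $V = H$ of Lemma~\ref{lem:congruence} applied to the element whose only nonzero component is $m$ in degree $H$. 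Since $w$ is injective on $\iota^\ast W^S_{H \le G}(T; Q)$, this factorisation is unique, and — again using injectivity of $w$ together with naturality of $w$ and of $\tilde{\tau}_{G/H}$ — it is natural with respect to maps of free objects.

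Finally I would extend from free coefficients to all of $\text{Mod}$. The functor $(R; M) \mapsto M^{\otimes_R G/H}$ preserves reflexive coequalisers by Lemma~\ref{lem:tensorcoequaliser}, and $W^S_{H \le G}$ preserves them as a functor to $\text{Top}_\text{Haus}$ (the forgetful functor $\text{Ab}_\text{Haus} \to \text{Top}_\text{Haus}$ preserves reflexive coequalisers), so the $\text{Top}_\text{Haus}$-valued form of Lemma~\ref{lem:finite_module_kan} applies: the natural transformation $\iota^\ast(M^{\otimes_R G/H}) \Rightarrow \iota^\ast W^S_{H \le G}$ of the previous paragraph extends uniquely to $\tau_{G/H} : M^{\otimes_R G/H} \Rightarrow W^S_{H \le G}$. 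The identity $w\tau_{G/H} = \tilde{\tau}_{G/H}$ holds on free objects by construction, hence on all of $\text{Mod}$, since natural transformations out of a reflexive-coequaliser-preserving functor are determined by their restriction to free objects (Lemma~\ref{lem:finite_module_kan}); uniqueness of $\tau_{G/H}$ among natural transformations making the triangle commute follows the same way, because any such transformation agrees with $\tau_{G/H}$ on free objects by injectivity of $w$ there. There is no serious obstacle here: all the real content sits in the Dwork lemma and in Lemma~\ref{lem:finite_module_kan}. The only point needing care — and the thing most likely to trip one up — is remembering to run the extension step in $\text{Top}_\text{Haus}$ rather than $\text{Ab}_\text{Haus}$, because the Teichm\"uller map is genuinely not a group homomorphism.
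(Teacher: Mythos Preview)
Your proposal is correct and follows essentially the same route as the paper: use equation~(\ref{eq:tau_prime_ghost}) (equivalently the $\tau'$ construction) to see that $\tilde{\tau}_{G/H}$ lands in the image of the ghost map for free coefficients, then invoke Lemma~\ref{lem:finite_module_kan} in $\text{Top}_\text{Haus}$ to extend. Your extra verifications (that $\tilde{\tau}_{G/H}(m)$ lies in the $H$-fixed points, and the alternative Dwork-lemma check) are correct details that the paper leaves implicit.
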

\begin{proof}
Equation~\ref{eq:tau_prime_ghost}
shows that for $(T; Q)$ free, the image of
\[\tilde{\tau}_{G/H} : Q^{\otimes_T G/H} \to \text{gh}^S_{H \le G}(T; Q)\]
is contained in the image of the
ghost map (we see that this is true if we define the ghost map using the same choice of
coset representatives for $G/H$, but the image doesn't depend on this choice). So we get a natural transformation of functors
$\text{Mod}_F \to \text{Top}_\text{Haus}$ lifting the restriction of $\tilde{\tau}_{G/H}$
along the ghost map $w : W^S_{H \le G}(T; Q) \hookrightarrow \text{gh}^S_{H \le G}(T; Q)$.
As usual we deduce from Lemma~\ref{lem:finite_module_kan} that this natural
transformation extends to a unique natural transformation between reflexive
coequaliser-preserving functors $\text{Mod} \to \text{Top}_\text{Haus}$, and since tensor
powers and Witt vectors both preserve reflexive coequalisers this gives a unique
lift of $\tilde{\tau}_{G/H}$ along the ghost map.
\end{proof}

We record some properties of the Teichm\"uller map.
Fix a choice of coset representatives $\{g_iH\} = G/H$.

\begin{proposition} \label{prop:witt_teichmuller_properties}
  The map $\tau_{G/H}$ has the following properties:
  \begin{enumerate}[(i)]
  \item{
      We have $\tau_{G/H}(0) = 0$.
    }
  \item
    The map $R$ interacts well with $\tau_{G/H}$, in the sense that the diagram
    \[\begin{tikzcd}
        M^{\otimes_R G/H} \ar[r, "\tau_{G/H}"]
        \ar[rd, "\tau_{G/H}" swap] &
        W^S_{H \le G}(R; M)
        \ar[d, "R_{S'}"]\\
        & W^{S'}_{H \le G}(R; M)
      \end{tikzcd}\]
    commutes.
  \item
    The map $\tau_{G/H}$ is equivariant, in the sense that
    \[\tau_{G/\prescript{g}{}{H}}(g \cdot m) = c_g \tau_{G/H}(m)\]
    for any $g \in G$ (where we use the map $\tau_{G/\prescript{g}{}{H}}$
    corresponding to the coset representatives $G/\prescript{g}{}{H} = \{g_i g^{-1} (\prescript{g}{}{H})\}$).
  \item The map
    \[M^{\otimes_R G/H} \xrightarrow{\tau_{G/H}} W^{\{H\}}_{H \le G}(R; M)\]
    is a monoidal additive isomorphism, independent of the choice of coset representatives.
  \item
    Suppose we have coset representatives $\{g_i\}$ for $G/H$ and $\{h_j\}$ for
    $H/K$. Observe that $\{g_i h_j\}$ is a set of coset representatives for
    $G/K$. Then the diagram
    \[\begin{tikzcd}
        M^{\otimes_R G/H} \ar[r, "\tau_{G/H}"] \ar[d,
        "({-})^{\otimes_R H/K}"] &
        W^S_{H \le G}(R; M)
        \ar[dd, "F_K^H"]\\
        M^{\otimes_R G/H \times H/K} \ar[d, "f_{G/H}"]&\\
        M^{\otimes_R G/K} \ar[r, "\tau_{G/K}"] & W^{S \mid_K}_{K \le G}(R; M)
      \end{tikzcd}\]
    commutes (where $f_{G/H}$, $\tau_{G/H}$ and $\tau_{G/K}$ are defined using
    the above coset representatives).
  \end{enumerate}
\end{proposition}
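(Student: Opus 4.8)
The plan is to prove all five identities by the same two-step method used throughout Section~\ref{sec:operators}: first verify the corresponding statement on ghost components when the coefficients $(T;Q)$ are free, where the ghost map $w : W^S_{H \le G}(T;Q) \hookrightarrow \mathrm{gh}^S_{H \le G}(T;Q)$ is an embedding, and then invoke Lemma~\ref{lem:finite_module_kan} to extend to arbitrary $(R;M)$, using that tensor powers and the Witt vector functors all preserve reflexive coequalisers (and that the forgetful functor $\text{Ab}_\text{Haus} \to \text{Top}_\text{Haus}$ does too, so we may work in $\text{Top}_\text{Haus}$ for the non-additive statements). Throughout I will use that $w \circ \tau_{G/H}$ has $U$-component $m \mapsto f_{G/H}(m^{\otimes_R H/U})$, together with the explicit descriptions of $\tilde F^H_K$, $\tilde R_{S'}$ and $\tilde c_g$ recorded in Section~\ref{sec:operators}.

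For (i), since $\abs{H/U} \ge 1$ we have $0^{\otimes_R H/U} = 0$, so $\tilde\tau_{G/H}(0) = 0 = w(0)$, and injectivity of $w$ on free coefficients gives $\tau_{G/H}(0) = 0$ there; the general case follows by extension (or directly by naturality along the zero module). For (ii), $\tilde R_{S'}$ is the projection discarding the components indexed by $S \setminus S'$, so $\tilde R_{S'} \circ \tilde\tau_{G/H}$ agrees componentwise with $\tilde\tau_{G/H}$ for the truncation set $S'$; hence $w \circ R_{S'} \circ \tau_{G/H} = w \circ \tau_{G/H}$ and uniqueness of lifting gives the claim. For (iv), take $S = \{H\}$: by Lemma~\ref{lem:witt_trivial_cases} we have $W^{\{H\}}_{H \le G}(R;M) = M^{\otimes_R G/H}$ with the ghost map the identity, and the only ghost component is $U = H$, where $f_{G/H}(m^{\otimes_R H/H}) = m$ since the bijection $G/H \times H/H \cong G/H$ underlying $f_{G/H}$ is the canonical projection regardless of the chosen representatives. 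Thus $\tilde\tau_{G/H} = \text{id}$, so $\tau_{G/H}$ is the identity homomorphism of $M^{\otimes_R G/H}$, in particular an additive isomorphism independent of all choices.

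Parts (iii) and (v) each reduce, on ghost components, to a compatibility identity among the maps $f_{G/?}$ and the tensor-power maps. For (iii), comparing the $\prescript{g}{}{U}$-components of $w(c_g \tau_{G/H}(m)) = g \cdot \tilde\tau_{G/H}(m)$ and $w(\tau_{G/\prescript{g}{}{H}}(g \cdot m)) = \tilde\tau_{G/\prescript{g}{}{H}}(g \cdot m)$ reduces the claim to
\[g \cdot f_{G/H}(m^{\otimes_R H/U}) = f_{G/\prescript{g}{}{H}}\big((g \cdot m)^{\otimes_R \prescript{g}{}{H}/\prescript{g}{}{U}}\big)\text{,}\]
which holds by unwinding the definition of $f$ precisely when one uses the representatives $\{g_i g^{-1}(\prescript{g}{}{H})\}$ for $G/\prescript{g}{}{H}$, and is in the spirit of Remark~\ref{rem:fidentity} and the conjugation identity for the $\phi^V_U$. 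For (v), using that the only subgroups appearing are $U \le_o K$ with $U \in S$ and that $\tilde F^H_K$ is restriction followed by projection, comparing $U$-components reduces the claim to the transitivity identity
\[f_{G/H}(m^{\otimes_R H/U}) = f_{G/K}\big((f_{G/H}(m^{\otimes_R H/K}))^{\otimes_R K/U}\big)\text{,}\]
which I would check by tracking a pure tensor $m = \bigotimes_{aH} m_{aH}$ through both sides: each yields $\bigotimes_{g_i h_j s U \in G/U} m_{g_i H}$ once every coset of $G/U$ is written uniquely as $g_i h_j s U$ with $g_i, h_j$ the chosen representatives for $G/H$ and $H/K$ and $s$ running over representatives of $K/U$, which is exactly the compatibility built into the choice $\{g_i h_j K\}$ of representatives for $G/K$. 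I expect this bookkeeping with nested coset representatives in (v), and the analogous check in (iii), to be the only genuine work; everything else is formal, and Lemma~\ref{lem:finite_module_kan} then finishes each part.
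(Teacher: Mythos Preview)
Your proposal is correct and follows essentially the same approach as the paper: each identity is verified on ghost components (where everything is explicit), and then uniqueness of lifting along $w$ gives the result. The only small difference is in (i), where the paper argues purely by naturality along $(\mathbb{Z};0) \to (R;M)$ rather than via ghost components, but you mention this alternative as well.
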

\begin{proof}
  \begin{enumerate}[(i)]
  \item{
      This follows by naturality, since the map of modules $(\mathbb{Z}; 0) \to
      (R; M)$ gives a commutative diagram
      \[\begin{tikzcd}
          0^{\otimes_{\mathbb{Z}} G/H} \ar[r, "\tau_{G/H}"] \ar[d] & W^S_{H \le
            G}(\mathbb{Z}; 0) \ar[d]\\
          M^{\otimes_{\mathbb{Z}} G/H} \ar[r, "\tau_{G/H}"] & W^S_{H \le G}(R;
          M)
        \end{tikzcd}\]
      and $W^S_{H \le G}(\mathbb{Z}; 0) = 0$.
    }
  \item{
      Recall Proposition~\ref{prop:truncation}. The truncation map $R_{S'}$ is induced by the map $\tilde{R}_{S'}$ on
      ghost components, which is the projection map onto those components
      indexed by the new truncation set $S'$.
      Using this it is straightforward to check (ii) on ghost components.
    }
  \item{
      Let $m \in M^{\otimes_R G/H}$ and $g \in G$.
      Recall the properties of the conjugation operator from
      Proposition~\ref{prop:conjugation}. We check
      \[w_{\prescript{g}{}{U}}(c_g(\tau_{G/H}(m))) = g \cdot w_U(\tau_{G/H}(m)) =
        g \cdot f_{G/H}(m^{\otimes_R H/U})\]
      and
      \[w_{\prescript{g}{}{U}}(\tau_{G/\prescript{g}{}{H}}(g \cdot m)) =
        f_{G/\prescript{g}{}{H}}((g \cdot m)^{\otimes_R
          \prescript{g}{}{H}/{\prescript{g}{}{U}}}) \text{.}\]
      One can check that these are equal (when we use the appropriate choices of
      coset representatives), and so $c_g \cdot \tau_{G/H} =
      \tau_{G/\prescript{g}{}{H}}(g \cdot ({-}))$.
    }
  \item{
      Given $m
      \in M^{\otimes_R G/H}$ we have
      \[w_H(\tau_{G/H}(m)) = f_{G/H}(m^{\otimes_R H/H}) = m\]
      so $w \, \tau_{G/H} : M^{\otimes_R G/H} \to \text{gh}^{\{H\}}_{H \le G}(R; M)$ is
      the identity map, under the identification $\text{gh}^{\{H\}}_{H \le
        G}(R; M) \coloneqq (M^{\otimes_R G/H})^H = M^{\otimes_R G/H}$. But by
      Lemma~\ref{lem:witt_trivial_cases} we know that the ghost map $w : W^{\{H\}}_{H \le G}(R; M) \to
      \text{gh}^{\{H\}}_{H \le G}(R; M) = M^{\otimes_R G/H}$ is an additive
      isomorphism, so $\tau_{G/H}$ is the inverse
      isomorphism (and hence also independent of the choice of coset representatives).
    }
  \item{
      We want to verify the commutative diagram
      \[\begin{tikzcd}
        M^{\otimes_R G/H} \ar[r, "\tau_{G/H}"] \ar[d, "({-})^{\otimes_R H/K}"] & W^S_{H \le G}(R; M)
        \ar[dd, "F_K^H"]\\
        M^{\otimes_R G/H \times H/K} \ar[d, "f_{G/H}"]&\\
        M^{\otimes_R G/K} \ar[r, "\tau_{G/K}"] & W^{S \mid_K}_{K \le G}(R; M) \text{,}
      \end{tikzcd}\]
      where $\tau_{G/H}$ and $f_{G/H}$ are defined using the coset representatives
      $\{g_i H\}$ and $\tau_{G/K}$ is defined using $\{g_i h_j K\}$.
      Recall the properties of the Frobenius operator from
      Proposition~\ref{prop:frobenius}. To check the diagram commutes on ghost components,
      observe that for $U \in S \!\!\mid_K$ and $m \in M^{\otimes_R G/H}$ we have
      \[w_U(F^H_K \tau_{G/H}(m)) = w_U(\tau_{G/H}(m)) = f_{G/H}(m^{\otimes_R H/U})\]
      and
      \[w_U(\tau_{G/K}(f_{G/H}(m^{\otimes_R H/K}))) = f_{G/K}(f_{G/H}(m^{\otimes_R
          H/K})^{\otimes_R K/U}) \text{.}\]
      The choice of coset representatives is such that these are equal. \qedhere
    }
  \end{enumerate}
\end{proof}

  If we are working with a fixed choice $(T; Q)$ of free coefficients then
  we can use the isomorphism $W^S_{H \le G}(T; Q) \cong \left( \prod_{U \in S} Q^{\otimes_T G/U}
  \right)_H$ of Proposition~\ref{prop:witt_free} to get an alternative Teichm\"uller-like map.
\begin{definition}\label{def:alternative_teichmuller}
  For $(T; Q)$ free we define the alternative Teichm\"uller map
  \[\tau_{G/H}^f
  : Q^{\otimes_T G/H} \to \left( \prod_{U \in S} Q^{\otimes_T G/U}
  \right)_H \cong W^S_{H \le G}(T; Q)\]
  to be the inclusion of the $U=H$ component of the product (for $S$ non-empty, or zero otherwise).
\end{definition}

\begin{remark} \label{rem:alternative_teichmuller}
  This map has
  the advantage that it is additive and does not depend on a choice of coset
  representatives, but the disadvantage that it is not natural in the choice of
  coefficients (recall Remark~\ref{rem:witt_free_choices}).
  The ghost components are $w_U(\tau^f_{G/H}(m))
  = \phi^H_U(m)$. The map $\tau^f_{G/H}$ satisfies analogous properties to
  those we proved for $\tau_{G/H}$ in
  Proposition~\ref{prop:witt_teichmuller_properties}, by essentially the same proofs. Note part (v) becomes the
  identity $F^H_K(\tau^f_{G/H}(m)) = \tau^f_{G/K}(\phi^H_K(m))$, and no longer
  depends on choices of coset representatives. This will be key to the proof of
  Proposition~\ref{prop:untruncated_witt_monoidal}, see
  Remark~\ref{rem:untruncated_witt_monoidal}. We do not believe an analogue of
  this map has previously appeared in the literature; indeed it is primarily
  useful for avoiding the complexities of choosing coset representatives,
  which are new to this setting.
\end{remark}

Our expressions for the components of $\tilde{V}^H_K$ and $\tilde{\tau}_{G/H}$
are reminiscent of the formula for the ghost map in Definition~\ref{def:ghost_map}. Indeed we can use them to give
a more elegant expression for the ghost map.

\begin{lemma} \label{lem:ghost_via_operators}
  The ghost map $w : \prod_{V \in \can{S}} M^{\otimes_R G/V} \to \text{gh}^S_{H
    \le G}(R; M)$ can be defined as
  \[n \mapsto \sum_{V \in \can{S}} \tilde{V}^H_V \tilde{\tau}_{G/V}(n_V) \text{.}\]

  In particular this implies that the quotient map $q : \prod_{V
    \in \can{S}} M^{\otimes_R G/V} \to W^S_{H \le G}(R; M)$ is given by
  \[n \mapsto \sum_{V \in \can{S}} V^H_V \tau_{G/V}(n_V) \text{.}\]

  By a (possibly uncountably infinite) sum over $V \in \can{S}$, we mean the
  limit of the net sending a finite subset of $\can{S}$ to
  the sum indexed by that subset. This agrees with the treatment in
  \cite{dress_burnside_1988}.\footnote{See Remark~\ref{rem:g_infinite} in the
    next section for more
    discussion of how the infinite group case of our construction might connect to other work.}
\end{lemma}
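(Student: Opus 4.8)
The plan is to reduce everything to a computation on ghost components: first prove the reformulation of the ghost map $w$ itself, then deduce the statement about $q$ by passing through the ghost map (which is injective in the free case) and a free resolution.

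\emph{The ghost map.} Recall from Proposition~\ref{prop:verschiebung} that $\tilde{V}^H_V$ has components $\tilde{V}^H_V(a)_U = \sum_{hV \in (H/V)^U} h \cdot a_{U^h}$, and from Proposition~\ref{prop:teichmuller} that $\tilde{\tau}_{G/V}(m)_U = f_{G/V}(m^{\otimes_R V/U})$ for $U \le V$, where $\tilde{\tau}_{G/V}$ is taken with respect to the same choice of coset representatives for $G/V$ used in Definition~\ref{def:ghost_map}. Composing, the $U$-component of $\tilde{V}^H_V \tilde{\tau}_{G/V}(n_V)$ is $\sum_{hV \in (H/V)^U} h \cdot f_{G/V}(n_V^{\otimes_R V/U^h})$, which is exactly the $V$-summand of $w_U(n)$. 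Before summing over $V$ one checks that the relevant net converges: for a fixed open subgroup $U$, the element $\tilde{V}^H_V \tilde{\tau}_{G/V}(n_V)$ has nonzero $U$-component only when $U$ is subconjugate to $V$, and only finitely many $V \in \can{S}$ satisfy this (each such $V$ has index at most $\abs{H:U}$ in $H$); since $\text{gh}^S_{H \le G}(R; M)$ carries the product topology with discrete factors, the net $\Delta \mapsto \sum_{V \in \Delta} \tilde{V}^H_V \tilde{\tau}_{G/V}(n_V)$ over finite $\Delta \subseteq \can{S}$ stabilises in every coordinate, hence converges, with limit whose $U$-component is $\sum_{V \in \can{S}} \sum_{hV \in (H/V)^U} h \cdot f_{G/V}(n_V^{\otimes_R V/U^h}) = w_U(n)$. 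So $w = \sum_{V \in \can{S}} \tilde{V}^H_V \tilde{\tau}_{G/V}(({-})_V)$.

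\emph{The map $q$, free case.} For $(T; Q)$ free we have $W^S_{H \le G}(T; Q) = \text{im}(w) \subseteq \text{gh}^S_{H \le G}(T; Q)$ with the subspace topology, and the ghost map $w_{\text{Witt}}$ out of the Witt vectors is the subspace inclusion, in particular an embedding. For finite $\Delta \subseteq \can{S}$ put $s_\Delta = \sum_{V \in \Delta} V^H_V \tau_{G/V}(n_V)$; the defining relations $w_{\text{Witt}} \circ V^H_V = \tilde{V}^H_V \circ w_{\text{Witt}}$ and $w_{\text{Witt}} \circ \tau_{G/V} = \tilde{\tau}_{G/V}$ give $w_{\text{Witt}}(s_\Delta) = \sum_{V \in \Delta} \tilde{V}^H_V \tilde{\tau}_{G/V}(n_V)$, which by the previous paragraph converges to $w_{\text{Witt}}(q(n))$ (using $w = w_{\text{Witt}} \circ q$ on the product). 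Since $w_{\text{Witt}}$ is an embedding, the net $(s_\Delta)$ converges in $W^S_{H \le G}(T; Q)$ to $q(n)$, i.e.\ $q(n) = \sum_{V \in \can{S}} V^H_V \tau_{G/V}(n_V)$.

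\emph{General coefficients and the obstacle.} For arbitrary $(R; M)$, choose a free resolution $(\overline{T}; \overline{Q}) \rightrightarrows (T; Q) \xrightarrow{\epsilon} (R; M)$. The induced map $\epsilon_\ast : \prod_{V \in \can{S}} Q^{\otimes_T G/V} \to \prod_{V \in \can{S}} M^{\otimes_R G/V}$ is surjective, so lift $n$ to $\tilde{n}$ with $\epsilon_\ast(\tilde{n}) = n$. Naturality of $q$ gives $q(n) = W^S_{H \le G}(\epsilon)\big(q(\tilde{n})\big) = W^S_{H \le G}(\epsilon)\big(\lim_\Delta s_\Delta\big)$, with $s_\Delta$ the free-case net above; as $W^S_{H \le G}(\epsilon)$ is continuous it carries this convergent net to a convergent net with the same limit, and by naturality of $V^H_V$ and $\tau_{G/V}$ together with $\epsilon_\ast(\tilde{n}_V) = n_V$ its $\Delta$-term equals $\sum_{V \in \Delta} V^H_V \tau_{G/V}(n_V)$. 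Hence this net converges in $W^S_{H \le G}(R; M)$ with limit $q(n)$, which is the assertion (and this net-limit is precisely the one used in \cite{dress_burnside_1988}). The only real difficulty is the bookkeeping around the possibly infinite indexing set: the content is the component-by-component identification in the first step, and the passage to general coefficients is cleanest through this free-resolution/continuity argument rather than a uniqueness-of-extension argument, since the summands $V^H_V \tau_{G/V}$ are not additive.
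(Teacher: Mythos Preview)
Your proof is correct. The first part (the ghost map formula) is essentially identical to the paper's argument: both simply compose the component descriptions of $\tilde{V}^H_V$ and $\tilde{\tau}_{G/V}$ and observe that each coordinate involves only finitely many terms.

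For the quotient map the two arguments diverge. The paper first treats finite $S$: the map $n \mapsto \sum_V V^H_V \tau_{G/V}(n_V)$ is a natural transformation (of functors to $\text{Top}_\text{Haus}$) which, in the free case, agrees with $q$ because the Witt vector ghost map is injective there; Lemma~\ref{lem:finite_module_kan} then gives agreement for all coefficients. For infinite $S$ the paper approximates $n$ by the net of elements obtained by zeroing out all but finitely many components, and uses continuity of $q$ together with the finite case. You instead handle the free case for arbitrary $S$ in one stroke by exploiting that $w_{\text{Witt}}$ is an \emph{embedding} (not merely injective), which lets you pull back net convergence; you then pass to general coefficients via a free resolution and continuity of the induced map. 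This is a bit more uniform in that it avoids splitting into finite and infinite $S$.

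One small correction: your closing remark that non-additivity of $V^H_V \tau_{G/V}$ obstructs a uniqueness-of-extension argument is not right. Lemma~\ref{lem:finite_module_kan} is stated for any target category with reflexive coequalisers and applies perfectly well to natural transformations between functors $\text{Mod} \to \text{Top}_\text{Haus}$; the paper uses it exactly this way for the finite-sum case. Your free-resolution-plus-continuity route is a valid alternative, but the motivation you give for preferring it is misplaced.
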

\begin{proof}
  The claim about the ghost map is immediate from the descriptions of
  $\tilde{V}^H_V$ and $\tilde{\tau}_{G/V}$ in components given in
  Propositions~\ref{prop:verschiebung} and \ref{prop:teichmuller}. Note that
  each component is only non-zero for finitely many terms of the sum.

  For the claim about the quotient map, first suppose for simplicity that $S$ is
  finite. Since $V^H_V$ and $\tau_{G/V}$ are defined as lifts of $\tilde{V}^H_V$ and
  $\tilde{\tau}_{G/V}$, the natural map
  \begin{align}
    \prod_{V \in \can{S}} M^{\otimes_R G/V} &\to W^S_{H \le G}(R; M) \nonumber \\
    n &\mapsto \sum_{V \in \can{S}} V^H_V \tau_{G/V}(n_V) \label{eq:ghost_sum_map} \end{align} %
  factors the ghost map $\prod_{V \in \can{S}} M^{\otimes_R G/V} \to
  \text{gh}^S_{H \le G}(R; M)$ along the Witt vector ghost map $W^S_{H \le G}(R; M) \to
  \text{gh}^S_{H \le G}(R; M)$. When $(R; M)$ is free the Witt vector ghost map is
  injective, so (\ref{eq:ghost_sum_map}) must be the usual quotient map $q$. And then
  Lemma~\ref{lem:finite_module_kan} tells us that it must also be the usual quotient
  map for general coefficients.

  With a little care, the same statement holds for $S$ infinite.
  By definition the sum $\sum_{V \in \can{S}} V_V^H \tau_{G/V}(n_V)$ is the limit of the
  net in $W^S_{H \le G}(R; M)$ given by
  \begin{equation}\label{eq:sum_net}I \mapsto \sum_{V \in I} V^H_V \tau_{G/V}(n_V)\end{equation}
  for $I$ a finite subset of $\can{S}$. We have a net in $\prod_{V \in \can{S}} M^{\otimes_R
    G/V}$ taking $I$ to the element $(n'_V)_{V \in \can{S}}$ given by $n'_V =
  n_V$ for $V \in I$ and $n'_V = 0$ otherwise. The limit of this net is clearly
  $n$. By the same logic as above, the
  image of this net under $q$ is (\ref{eq:sum_net}), and so since $q$ is
  continuous, the limit of
  (\ref{eq:sum_net}) is $q(n)$ as desired.
\end{proof}

\begin{remark} \label{rem:free_V_tau}
  Similarly we see that for $(T; Q)$ free the isomorphism
  \[\left( \prod_{U \in S}
    Q^{\otimes_T G/U} \right)_H \cong W^S_{H \le
    G}(T; Q)\]
of Proposition~\ref{prop:witt_free} is given by
  \[n \mapsto \sum_{U \in S} V^H_U \tau^f_{G/U}(n_U) \text{.}\]
\end{remark}

\begin{remark}
  Let $n, n' \in \prod_{V \in \can{S}} M^{\otimes_R G/V}$ such that for each
  subgroup $V \in \can{S}$ at least one of $n_V$ and $n'_V$ is zero. Let $n + n'$ denote the
  pointwise sum of $n$ and $n'$. Then since $\tau_{G/V}(0) = 0$ we have
  \begin{align*} q(n) + q(n') &= \sum_{V \in \can{S}} V^H_V \tau_{G/V}(n_V) + \sum_{V \in \can{S}} V^H_V \tau_{G/V}(n'_V)\\
                              &= \sum_{V \in \can{S}} V^H_V \tau_{G/V}(n_V + n'_V)\\
    &= q(n+n') \in W^S_{H \le G}(R; M) \text{.}\end{align*}
  The difficulty of general Witt vector addition comes from the fact that the
  Teichm\"uller map isn't additive, so if $n_V$ and $n'_V$ are both non-zero
  then $V^H_V \tau_{G/V}(n_V) + V^H_V \tau_{G/V}(n'_V)$ is not simply $V^H_V \tau_{G/V}(n_V + n'_V)$.
\end{remark}

The usual Witt vectors of a commutative ring are themselves a commutative ring.
We have defined the $G$-typical Witt vectors with coefficients as merely a topological abelian group; but
there is a lax monoidal structure that generalises the multiplication of the
usual Witt vectors, described in the following proposition. We will further refine our understanding of the monoidal
structure in Section~\ref{sec:witt_mackey}, where we show that (for $G$ finite) the untruncated
$G$-typical Witt vectors with coefficients give a \textit{strong} symmetric monoidal functor to the category
of $G$-Mackey functors.

\newcommand{\tildestar}{\operatorname{\tilde{\star}}}

\begin{proposition} \label{prop:witt_monoidal}
There is an external product
\[\star : W^S_{H \le G}(R; M) \otimes_{\mathbb{Z}} W^S_{H \le G}(R'; M') \to W^S_{H \le G}(R
  \otimes_{\mathbb{Z}} R'; M \otimes_{\mathbb{Z}} M')\]
on the Witt vectors. This is the unique natural transformation such that
\[\begin{tikzcd}
  W^S_{H \le G}(R; M) \otimes_{\mathbb{Z}} W^S_{H \le G}(R'; M') \ar[r, "\star"]
  \ar[d, "w \otimes_{\mathbb{Z}} w"]
  &W^S_{H \le G}(R
  \otimes_{\mathbb{Z}} R'; M \otimes_{\mathbb{Z}} M') \ar[d, "w"]\\
  \text{gh}^S_{H \le G}(R; M) \otimes_{\mathbb{Z}} \text{gh}^S_{H \le G}(R'; M') \ar[r, "\tildestar"]
  &\text{gh}^S_{H \le G}(R
  \otimes_{\mathbb{Z}} R'; M \otimes_{\mathbb{Z}} M')
\end{tikzcd}\]
commutes, where
\[\tildestar : \text{gh}^S_{H \le G}(R; M) \otimes_\mathbb{Z}
\text{gh}^S_{H \le G}(R'; M') \to \text{gh}^S_{H \le G}(R \otimes_\mathbb{Z} R'; M \otimes_\mathbb{Z}
M')\]
is the map induced by the canonical shuffle isomorphisms
\[s : M^{\otimes_R G/U} \otimes_{\mathbb{Z}} M'^{\otimes_{R'} G/U} \xrightarrow{\cong} (M
\otimes_{\mathbb{Z}} M')^{\otimes_{R \otimes_{\mathbb{Z}} R'} G/U} \text{.}\]

Let $u : \mathbb{Z} \to W^S_{H \le G}(\mathbb{Z}; \mathbb{Z})$ be the
additive map defined by $1 \mapsto \tau_{G/H}(1)$. Then the functor $W^S_{H \le G} : \text{Mod} \to \text{Ab}_{\text{Haus}}$ is lax
symmetric monoidal via $\star$ and unit $u$ (and $\text{gh}^S_{H \le G}$ is lax
symmetric monoidal via $\tildestar$ and unit $wu$).

The external product satisfies a Frobenius reciprocity-type identity, and we
have a formula for the external product of images of Teichm\"uller maps:
\begin{enumerate}[(i)]
  \item Given $a \in W^{S \mid_K}_{K \le G}(R; M)$ and $a' \in W^S_{H \le G}(R';
    M')$, we have
    \[V^H_K(a) \star a' = V^H_K(a \star F^H_K(a'))\]
    (similarly the flipped version with the two factors reversed).
  \item Given $m \in M^{\otimes_R G/H}$ and $m' \in M'^{\otimes_{R'} G/H}$, we have
    \[\tau_{G/H}(m) \star \tau_{G/H}(m') = \tau_{G/H}(s(m \otimes_\mathbb{Z} m')) \text{.}\]
  \end{enumerate}
The analogous identities for $\tildestar$, $\tilde{V}^H_K$, $\tilde{F}^H_K$ and
$\tilde{\tau}_{G/H}$ also hold.
\end{proposition}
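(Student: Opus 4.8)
The plan is to follow the pattern used for the earlier operators: define the product first on ghost groups as $\tildestar$, check it is well defined and that the claimed ``analogous identities'' hold at the ghost level by direct computation on components, then verify over free coefficients that $\tildestar \circ (w \otimes_{\mathbb{Z}} w)$ takes values in the image of the ghost map, and finally invoke Lemma~\ref{lem:finite_module_kan} to lift and extend. One bookkeeping point is that $\star$ is a natural transformation between functors of \emph{two} module variables. Both the source $((R;M),(R';M')) \mapsto W^S_{H\le G}(R;M) \otimes_{\mathbb{Z}} W^S_{H\le G}(R';M')$ and the target $((R;M),(R';M')) \mapsto W^S_{H\le G}(R\otimes_{\mathbb{Z}}R';M\otimes_{\mathbb{Z}}M')$ preserve reflexive coequalisers in each variable separately ($\otimes_{\mathbb{Z}}$ is right exact, $\otimes_{\mathbb{Z}}$ of modules preserves reflexive coequalisers by Lemma~\ref{lem:tensorcoequaliser}, and $W^S_{H\le G}$ does by Theorem~\ref{thm:witt_properties}), so applying Lemma~\ref{lem:finite_module_kan} once in each variable shows that such a natural transformation is uniquely determined by its restriction to pairs of free objects and, conversely, any natural transformation defined there extends uniquely. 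Note that if $(T;Q)$ and $(T';Q')$ are free then so is $(T\otimes_{\mathbb{Z}}T';Q\otimes_{\mathbb{Z}}Q')$, with its evident generating sets. That $\tildestar$ is well defined is immediate: its $U$-component sends $\tilde a_U\otimes\tilde a'_U$ to $s(\tilde a_U\otimes\tilde a'_U)$, and since the shuffle isomorphism $s$ is equivariant for the $G$-actions permuting the index set $G/U$, it carries a product of $H$-fixed tuples into $H$-fixed elements.

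Next I would establish the ghost-level identities by unwinding components, using the formulas $\tilde V^K_U(\tilde a)_W = \sum_{hU\in(K/U)^W} h\cdot\tilde a_{W^h}$, $\tilde F^K_U(\tilde a)_W = \tilde a_W$ for $W\le U$, $\tilde\tau_{G/H}(m)_U = f_{G/H}(m^{\otimes_R H/U})$, and $\tilde\tau^f_{G/H}(m)_U = \phi^H_U(m)$. Associativity, symmetry and unitality of $\tildestar$ (with unit $wu$, using $(wu(1))_U = f_{G/H}(1^{\otimes H/U}) = 1$) reduce to the corresponding coherence of the canonical shuffle maps. Ghost Frobenius reciprocity $\tilde V^K_U(\tilde a)\,\tildestar\,\tilde b = \tilde V^K_U(\tilde a\,\tildestar\,\tilde F^K_U(\tilde b))$, together with the flipped version, follows because $(\tilde V^K_U(\tilde a)\,\tildestar\,\tilde b)_W = \sum_{hU\in(K/U)^W} s\big((h\cdot\tilde a_{W^h})\otimes\tilde b_W\big)$ while $\big(\tilde V^K_U(\tilde a\,\tildestar\,\tilde F^K_U\tilde b)\big)_W = \sum_{hU\in(K/U)^W} h\cdot s(\tilde a_{W^h}\otimes\tilde b_{W^h})$, and these agree since $s$ is equivariant and $h\cdot\tilde b_{W^h} = \tilde b_W$ by $K$-invariance of $\tilde b$. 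The ghost Teichm\"uller identity $\tilde\tau_{G/H}(m)\,\tildestar\,\tilde\tau_{G/H}(m') = \tilde\tau_{G/H}(s(m\otimes m'))$ (and its $\tilde\tau^f$-analogue) expresses the compatibility of $s$ with tensor powers and with the reindexings $f_{G/H}$ for a common choice of coset representatives; for the $\tilde\tau^f$ version the key point is the identity $\phi^U_W\circ s = s\circ(\phi^U_W\otimes\phi^U_W)$, valid because the Frobenius lift on $\mathbb{Z}[X\sqcup X']$ with its natural generating set restricts to $\phi\otimes\phi$. These are exactly the ``analogous identities for $\tildestar$, $\tilde V^H_K$, $\tilde F^H_K$ and $\tilde\tau_{G/H}$'' asserted in the proposition.

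The crux is showing that for free $(T;Q)$, $(T';Q')$ the composite $\tildestar\circ(w\otimes_{\mathbb{Z}}w)$ lands in $\text{im}\big(w : W^S_{H\le G}(T\otimes_{\mathbb{Z}}T';Q\otimes_{\mathbb{Z}}Q')\hookrightarrow\text{gh}^S_{H\le G}(T\otimes_{\mathbb{Z}}T';Q\otimes_{\mathbb{Z}}Q')\big)$. Reducing first to the case of finite $S$ (the image of the ghost map is the inverse limit over finite sub-truncation sets, since the Dwork conditions are each finitary, and $\tildestar$ is continuous), I would prove by induction on the number of subgroups in a truncation set $S'$ the statement: for every open $K\le H$, every truncation set $S'$ for $K$, and all $\tilde a\in\text{im}(w^{S'}_{K\le G})$ over $(T;Q)$ and $\tilde b\in\text{im}(w^{S'}_{K\le G})$ over $(T';Q')$, the element $\tildestar(\tilde a\otimes\tilde b)$ lies in $\text{im}(w^{S'}_{K\le G})$ over $(T\otimes_{\mathbb{Z}}T';Q\otimes_{\mathbb{Z}}Q')$. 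Writing $\tilde a = \sum_U\tilde V^K_U\tilde\tau^f_{G/U}(a_U)$ via Lemma~\ref{lem:ghost_via_operators} and Remark~\ref{rem:free_V_tau}, bilinearity reduces us to a single summand $\tildestar\big(\tilde V^K_U\tilde\tau^f_{G/U}(a_U)\otimes\tilde b\big)$. If $U\subsetneq K$, ghost Frobenius reciprocity rewrites this as $\tilde V^K_U\big(\tilde\tau^f_{G/U}(a_U)\,\tildestar\,\tilde F^K_U(\tilde b)\big)$; here $\tilde\tau^f_{G/U}(a_U) = w(\tau^f_{G/U}(a_U))$ and $\tilde F^K_U(\tilde b) = w(F^K_U(b))$ both lie in $\text{im}(w^{S'\mid_U}_{U\le G})$, and since $K\notin S'\mid_U$ the truncation set $S'\mid_U$ has strictly fewer subgroups than $S'$, so by the inductive hypothesis their $\tildestar$-product is $w(c)$ for some $c$, whence $\tilde V^K_U(w(c)) = w(V^K_U(c))\in\text{im}(w)$. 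If $U = K$, then $\tilde V^K_K$ is the identity, and expanding $\tilde b = \sum_{U'}\tilde V^K_{U'}\tilde\tau^f_{G/U'}(b_{U'})$, each term $\tildestar\big(\tilde\tau^f_{G/K}(a_K)\otimes\tilde V^K_{U'}\tilde\tau^f_{G/U'}(b_{U'})\big)$, by the flipped Frobenius reciprocity together with $\tilde F^K_{U'}\tilde\tau^f_{G/K} = \tilde\tau^f_{G/U'}\circ\phi^K_{U'}$ and the ghost Teichm\"uller identity, equals $\tilde V^K_{U'}\tilde\tau^f_{G/U'}\big(s(\phi^K_{U'}(a_K)\otimes b_{U'})\big) = w\big(V^K_{U'}\tau^f_{G/U'}(\cdots)\big)\in\text{im}(w)$; as $\text{im}(w)$ is a subgroup we may sum. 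The base cases ($S' = \emptyset$, where $W = 0$, and $S' = \{K\}$, where the ghost map is an isomorphism and so is $\tildestar$) are trivial. (Alternatively one could verify the Dwork congruences of Lemma~\ref{lem:dwork} for $\tildestar(\tilde a\otimes\tilde a')$ directly, using $\phi^U_W\circ s = s\circ(\phi^U_W\otimes\phi^U_W)$, but the reduction above is cleaner and simultaneously yields the $W$-level identities (i) and (ii).)

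Finally, Lemma~\ref{lem:finite_module_kan} (applied in two variables) produces the unique natural lift $\star$, first on pairs of free objects and then on all of $\text{Mod}$, characterised by the stated square; continuity is inherited from $\tildestar$ together with the fact that $w$ is a topological embedding on free coefficients. The lax symmetric monoidal structure of $W^S_{H\le G}$, with respect to $\otimes_{\mathbb{Z}}$ on $\text{Mod}$ and the evident structure on $\text{Ab}_\text{Haus}$, and the identities (i) and (ii), then follow by lift-and-extend: the corresponding statements hold for $\tildestar$, $\tilde V^H_K$, $\tilde F^H_K$, $\tilde\tau_{G/H}$ by the second paragraph, hence hold for $\star$ on free coefficients because the ghost map is injective there, and Lemma~\ref{lem:finite_module_kan} upgrades this to arbitrary coefficients. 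I expect the main obstacle to be the crux step: setting up the induction cleanly, and keeping the Verschiebung/Frobenius/Teichm\"uller rewrites compatible with the fixed choices of coset representatives used in the various $f_{G/\bullet}$ and Frobenius lifts $\phi^\bullet_\bullet$.
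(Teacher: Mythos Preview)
Your proposal is correct and follows the same overall pattern as the paper (reduce to free coefficients via Lemma~\ref{lem:finite_module_kan} in each variable, verify the ghost-level identities, show the product lands in $\text{im}(w)$, then lift), but the crux step is handled differently. The paper does not induct on the size of the truncation set; instead it takes a single pair $\tilde V^H_V\tilde\tau_{G/V}(q)\,\tildestar\,\tilde V^H_{V'}\tilde\tau_{G/V'}(q')$, applies ghost Frobenius reciprocity once, then expands $\tilde F^H_V\tilde V^H_{V'}$ via the double coset formula of Proposition~\ref{prop:fvc_identities}(v), and continues manipulating until the expression is explicitly a sum of terms $\tilde V^H_{V\cap\prescript{h}{}{V'}}\tilde\tau_{G/(V\cap\prescript{h}{}{V'})}(\cdots)$, each of which is visibly $w$ of something. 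Your induction sidesteps the double coset formula entirely: you peel off one Verschiebung at a time, use Frobenius reciprocity to push it outside, and invoke the inductive hypothesis on the strictly smaller truncation set $S'\!\!\mid_U$. This is cleaner and your use of $\tau^f$ (rather than $\tau$) neutralises the coset-representative worries you flag at the end, since $\tau^f$ is additive and choice-free. The trade-off is that the paper's direct computation yields an explicit closed formula for $V^H_V\tau_{G/V}(q)\star V^H_{V'}\tau_{G/V'}(q')$ as a double-coset sum, which it reuses later in proving that $\underline{W}_G$ is strong symmetric monoidal; your argument establishes existence of the lift without producing that formula.
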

\begin{proof}
We will write $\otimes$ for $\otimes_\mathbb{Z}$. As usual it suffices to show
that the lift
\[\star : W^S_{H \le G}(T; Q) \otimes W^S_{H \le G}(T'; Q') \to
  W^S_{H \le G}(T \otimes T'; Q \otimes Q')\]
exists for $(T; Q)$ and $(T'; Q')$ free (apply Lemma~\ref{lem:finite_module_kan} with $(R; M)$ fixed and with $(R'; M')$ fixed); similarly we only need to prove the identities in the free case.

Since $(T \otimes T'; Q \otimes Q')$ is free, $w : W^S_{H \le G}(T
\otimes T'; Q \otimes Q') \to \text{gh}^S_{H \le G}(T \otimes T'; Q \otimes
Q')$ is a subspace inclusion. So we just need to show that
the image of an element of $W^S_{H \le G}(T; Q) \otimes W^S_{H \le G}(T; Q)$ under $w \otimes w$ followed by $\tildestar$ is in the image of $w$, and then $\star$ is the unique lift of $\tildestar \circ (w \otimes w)$
along $w$.

By Lemma~\ref{lem:ghost_via_operators} it suffices to show that
$\tilde{V}^H_{V} \tilde{\tau}_{G/V}(q) \tildestar \tilde{V}^H_{V'} \tilde{\tau}_{G/V'}
(q')$ is in the image of $w$, for $V, V' \in \can{S}$, $q \in Q^{\otimes_T
  G/V}$ and $q' \in Q'^{\otimes_{T'} G/V'}$. To show this we will prove
the identities for $\tildestar$ analogous to those described for $\star$, and
then conclude the result.

First we want to check that for $a \in \text{gh}^S_{K \le G}(R; M)$ and $a' \in
\text{gh}^S_{H \le G}(R'; M')$ we have $\tilde{V}^H_K(a) \tildestar a' =
\tilde{V}^H_K(a \tildestar \tilde{F}^H_K(a'))$. Indeed
\begin{align*}
  (\tilde{V}^H_K(a) \tildestar a')_W &= s(\tilde{V}^H_K(a)_W \otimes a'_W)\\
                                     &= s\left( \sum_{hK \in (H/K)^W} (h \cdot a_{W^h}) \otimes
                                       a'_W \right)\\
                                     &=s\left( \sum_{hK \in (H/K)^W} h \cdot (a_{W^h} \otimes a'_{W^h}) \right)\\
                                     &= \sum_{hK \in (H/K)^W} h \cdot s(a_{W^h} \otimes a'_{W^h})\\
                                     &= \sum_{hK \in (H/K)^W} h \cdot s(a_{W^h} \otimes
                                       \tilde{F}^H_K(a')_{W^h})\\
                                     &= \tilde{V}^H_K(a \tildestar \tilde{F}^H_K(a'))_W
\end{align*}
as desired, where the third equality holds because $a'$ is fixed by $H$
and the fifth equality holds because $W^h \le K$ for $hK \in (H/K)^W$.

Next we need to check that for $m \in M^{\otimes_R G/H}$ and $m' \in
M'^{\otimes_{R'} G/H}$ we have $\tilde{\tau}_{G/H}(m) \tildestar
\tilde{\tau}_{G/H}(m') = \tilde{\tau}_{G/H}(s(m \otimes m'))$. Indeed we have
\begin{align*}(\tilde{\tau}_{G/H}(m) \tildestar \tilde{\tau}_{G/H}(m'))_W &=
                                                                            s(f_{G/H}(m^{\otimes_R H/W}) \otimes f_{G/H}(m'^{\otimes_{R'} H/W}))\\
                                                                          &= f_{G/H}(s(m \otimes m')^{\otimes_{R \otimes R'} H/W})\\
  &= \tilde{\tau}_{G/H}(s(m \otimes m'))_W \text{.}\end{align*}

We return to our earlier aim of showing that $\tilde{V}^H_{V}
\tilde{\tau}_{G/V}(q) \tildestar \tilde{V}^H_{V'} \tilde{\tau}_{G/V'} (q')$ is in the
image of the ghost map. Using the identities proved above and in
Propositions~\ref{prop:fvc_identities} and \ref{prop:witt_teichmuller_properties}, we can rewrite this expression in terms of the other
operators on ghost components, and since the other operators lift to the Witt
vectors we get a preimage under the ghost map. We have
\begingroup \allowdisplaybreaks
\begin{align*}\tilde{V}^H_{V} \tilde{\tau}_{G/V}(q) \tildestar \tilde{V}^H_{V'} \tilde{\tau}_{G/V'}(q')
  &= \tilde{V}^H_V \big(\tilde{\tau}_{G/V}(q) \tildestar \tilde{F}^H_V\tilde{V}^H_{V'} \tilde{\tau}_{G/V'}(q')\big)\\
  &= \tilde{V}^H_V \bigg(\tilde{\tau}_{G/V}(q) \tildestar \Big(\sum_{VhV' \in V\backslash H/V'} \tilde{V}^V_{V \cap \prescript{h}{}{V'}} \tilde{c}_h \tilde{F}^{V'}_{V^h \cap V'} \tilde{\tau}_{G/V'}(q')\Big)\bigg)\\
  &= \sum_{VhV' \in V\backslash H/V'} \tilde{V}^H_V \tilde{V}^V_{V \cap \prescript{h}{}{V'}} \big( \tilde{F}^V_{V \cap \prescript{h}{}{V'}} \tilde{\tau}_{G/V}(q) \tildestar \tilde{c}_h \tilde{F}^{V'}_{V^h \cap V'} \tilde{\tau}_{G/V'}(q')\big)\\
  &= \sum_{VhV' \in V\backslash H/V'} \tilde{V}^H_{V \cap \prescript{h}{}{V'}} \big( \tilde{F}^V_{V \cap \prescript{h}{}{V'}} \tilde{\tau}_{G/V}(q) \tildestar \tilde{F}^{\prescript{h}{}{V'}}_{V \cap \prescript{h}{}{V'}} \tilde{\tau}_{G/\prescript{h}{}{V'}}(h \cdot q')\big)\\
  &= \sum_{VhV' \in V\backslash H/V'} \tilde{V}^H_{V \cap \prescript{h}{}{V'}} \Big( \tilde{\tau}_{G/(V \cap \prescript{h}{}{V'})} f_{G/V}(q^{\otimes_R V/(V \cap \prescript{h}{}{V'})}) \\*
  &\hspace{8em} \tildestar \tilde{\tau}_{G/(V \cap \prescript{h}{}{V'})} f_{G/\prescript{h}{}{V'}}((h \cdot q')^{\otimes_{R'} \prescript{h}{}{V'}/(V \cap \prescript{h}{}{V'})})\Big)\\
  &= \sum_{VhV' \in V\backslash H/V'} \tilde{V}^H_{V \cap \prescript{h}{}{V'}} \tilde{\tau}_{G/(V \cap \prescript{h}{}{V'})} s \Big(f_{G/V}(q^{\otimes_R V/(V \cap \prescript{h}{}{V'})})\\*
  &\hspace{10em} \otimes f_{G/\prescript{h}{}{V'}}((h \cdot q')^{\otimes_{R'} \prescript{h}{}{V'}/(V \cap \prescript{h}{}{V'})})\Big)\\
  &= w\Bigg(\sum_{VhV' \in V\backslash H/V'} V^H_{V \cap \prescript{h}{}{V'}} \tau_{G/(V \cap \prescript{h}{}{V'})} s \Big(f_{G/V}(q^{\otimes_R V/(V \cap \prescript{h}{}{V'})})\\*
  &\hspace{10em} \otimes f_{G/\prescript{h}{}{V'}}((h \cdot q')^{\otimes_{R'} \prescript{h}{}{V'}/(V \cap \prescript{h}{}{V'})})\Big)\Bigg) \text{.}
\end{align*}
\endgroup

So the unique lift $\star$ of $\tildestar$ does exist. And since the identities
analogous to (i) and (ii) hold on ghost components, we conclude that (i) and
(ii) hold for $\star$.

Now we need to check that $\text{gh}^S_{H \le G}$ and $W^S_{H \le G}$ really are
lax symmetric
monoidal. Note $(wu)(1)_U
= 1$ for all $U \in S$. It is easy to check that the maps $\tildestar$ and
$wu$ make $\text{gh}^S_{H \le G}$ into a lax symmetric monoidal
functor (this follows from monoidal properties of
products and fixed points, together with the shuffle isomorphism). By uniqueness of lifting all the relevant symmetry, associativity and unitality
identities must also hold for $\star$ and $u$, so $W^S_{H \le G}$ is lax
symmetric monoidal.
\end{proof}
\begin{remark}
  When we work with free coefficients $(T; Q)$ and $(T'; Q')$ we similarly have
  \[\tau^f_{G/H}(q) \star \tau^f_{G/H}(q') = \tau^f_{G/H}(s(q \otimes q'))\]
  for $q \in Q^{\otimes_T G/H}$ and $q' \in Q'^{\otimes_{T'} G/H}$.
\end{remark}
\begin{remark} \label{rem:multiplication}
  As in \cite{dotto_witt_2022} (e.g.\ Corollary~1.28) this gives us more algebraic
  structure on the Witt vectors. Let $\mu_R : R
  \otimes R \to R$ be the multiplication map for $R$. Then $W^S_{H \le G}(R; R)$
  is a commutative ring with multiplication
  \[W^S_{H \le G}(R; R) \otimes W^S_{H \le G}(R; R) \xrightarrow{\star}
    W^S_{H \le G}(R \otimes R; R \otimes R) \xrightarrow{(\mu_R, \mu_R)_\ast}
    W^S_{H \le G}(R; R) \text{.}\]
  We will see in the next section that this recovers the ring structure on
  the $G$-typical Witt vectors of \cite{dress_burnside_1988}. Let $l_M : R
  \otimes M \to M$ be the $R$-module action map. Then $W^S_{H \le G}(R; M)$ is a $W^S_{H \le G}(R;
  R)$-module, with action
  \[W^S_{H \le G}(R; R) \otimes W^S_{H \le G}(R; M) \xrightarrow{\star}
    W^S_{H \le G}(R \otimes R; R \otimes M) \xrightarrow{(\mu_R, l_M)_{\ast}}
    W^S_{H \le G}(R; M) \text{.}\]
\end{remark}

\begin{proposition} \label{prop:fc_sym_mon}
The Frobenius and conjugation operators are monoidal. That is, for $m \in W^S_{H
  \le G}(R; M)$ and $m' \in W^S_{H \le G}(R'; M')$ we have
\[F^H_K(m \star m') = F^H_K(m) \star F^H_K(m') \quad c_g(m \star m') = c_g(m)
  \star c_g(m') \text{.}\]
\end{proposition}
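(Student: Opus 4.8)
The plan is to follow the pattern used throughout this section: first establish the analogous identities for the operators $\tilde F^H_K$, $\tilde c_g$ and $\tildestar$ on ghost groups, where each operator is given by an explicit component-wise formula, and then transport them to the Witt vectors. Since $\star$, $F^H_K$ and $c_g$ are by construction the unique natural lifts of $\tildestar$, $\tilde F^H_K$ and $\tilde c_g$ along the relevant ghost maps, and since for free coefficients the ghost map is an embedding (Theorem~\ref{thm:witt_properties}(ii)), it suffices to verify
\[\tilde F^H_K \circ \tildestar = \tildestar \circ (\tilde F^H_K \otimes_{\mathbb{Z}} \tilde F^H_K) \qquad\text{and}\qquad \tilde c_g \circ \tildestar = \tildestar \circ (\tilde c_g \otimes_{\mathbb{Z}} \tilde c_g)\]
and then invoke injectivity of the ghost map together with the extension principle of Lemma~\ref{lem:finite_module_kan}.

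First I would check the ghost-level identities on components. For the Frobenius, evaluate both sides on the $U$-component for $U \in S\mid_K$. Since $\tilde F^H_K$ merely restricts $H$-fixed points to $K$-fixed points and then discards the components not indexed by subgroups of $K$, we have $(\tilde F^H_K a)_U = a_U$; hence both sides return $s(a_U \otimes_{\mathbb{Z}} a'_U)$ and the identity is immediate. For the conjugation, evaluating on the $\prescript{g}{}{U}$-component turns the left-hand side into $g \cdot s(a_U \otimes_{\mathbb{Z}} a'_U)$ and the right-hand side into $s\bigl((g \cdot a_U) \otimes_{\mathbb{Z}} (g \cdot a'_U)\bigr)$, so the identity reduces to the statement that the shuffle isomorphism $s$ is equivariant for the maps $g \cdot ({-})$. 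This holds because both $s$ and $g \cdot ({-})$ are defined ``diagonally'' in the indexing set, with $g \cdot ({-})$ induced by the bijection $G/U \cong G/\prescript{g}{}{U}$, $aU \mapsto aUg^{-1}$.

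Next I would lift to free coefficients. Fix free $(T; Q)$ and $(T'; Q')$; then $(T \otimes_{\mathbb{Z}} T'; Q \otimes_{\mathbb{Z}} Q')$ is again free, so the ghost maps out of $W^{S\mid_K}_{K \le G}(T \otimes_{\mathbb{Z}} T'; Q \otimes_{\mathbb{Z}} Q')$ and $W^{\prescript{g}{}{S}}_{\prescript{g}{}{H} \le G}(T \otimes_{\mathbb{Z}} T'; Q \otimes_{\mathbb{Z}} Q')$ are injective. Composing the two sides of $F^H_K(m \star m') = F^H_K(m) \star F^H_K(m')$ with the ghost map and using $w F^H_K = \tilde F^H_K w$ and $w(m \star m') = \tildestar(w(m) \otimes_{\mathbb{Z}} w(m'))$ reduces the identity to the ghost-level Frobenius identity just proved; the conjugation case is verbatim with $c_g$ in place of $F^H_K$. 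Finally, to reach arbitrary coefficients I would argue exactly as in the proof of Proposition~\ref{prop:witt_monoidal}, treating the two variables in turn: each side of each identity is a natural transformation of $(R; M)$ between reflexive coequaliser-preserving functors (the domain because $W^S_{H \le G}$ is and $({-}) \otimes_{\mathbb{Z}} W^S_{H \le G}(T'; Q')$ is a left adjoint; the codomain because $W^{S\mid_K}_{K \le G}$ is and tensoring with a fixed free object preserves reflexive coequalisers in $\text{Mod}$), so Lemma~\ref{lem:finite_module_kan} promotes the identity from free $(R; M)$ to all $(R; M)$, and then one repeats the argument in the second variable. I do not expect any real obstacle here: the only slightly substantive point is the equivariance of the shuffle isomorphism in the conjugation case, together with the mild bookkeeping about which functors preserve reflexive coequalisers, both of which are routine.
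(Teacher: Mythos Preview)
Your proposal is correct and follows exactly the approach the paper uses: the paper's proof is the one-liner ``These are straightforward to check on ghost components,'' and you have simply written out in full the standard lifting pattern (verify the identity for $\tilde F^H_K$, $\tilde c_g$, $\tildestar$ componentwise, use injectivity of $w$ for free coefficients, then extend via Lemma~\ref{lem:finite_module_kan}) that the paper leaves implicit.
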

\begin{proof}
  These are straightforward to check on ghost components.
\end{proof}

\subsection{Relation to previous definitions}\label{sec:generalise}

We are now ready to check that our construction really does generalise and/or
overlap with those from prior work described in Section~\ref{sec:prior_work}.

First we will show that our construction generalises the Witt vectors with
coefficients from \cite{dotto_witt_2022} and \cite{dotto_witt_2025} (in the
case of a module over a commutative ring).
Let $R$ be a commutative ring and $M$ an $R$-module. Let us recall the definition
of the Hausdorff topological abelian group $W(R; M)$ of big Witt vectors with
coefficients in $M$ from \cite{dotto_witt_2022}, which simplifies slightly for
$R$ commutative. Define $\hat{T}(R; M)$ to be the
completed tensor algebra $\prod_{i \ge 0} M^{\otimes_R i}$, where we think of
elements as power series $a_0 + a_1 t + a_2 t^2 + \dotsb$ with $a_i \in
M^{\otimes_R i}$. Let $\hat{S}(R; M)$ be the multiplicative subgroup of
elements with constant term $a_0 = 1$. Then $W(R; M)$ is defined to be the
abelianisation $\hat{S}(R; M)^{\text{ab}}$.

\begin{proposition}\label{prop:generalise_dknp}
  Let $\hat{\mathbb{Z}}$ denote the profinite completion of the integers (considered as an additive group).
  We have an isomorphism of topological abelian groups
  \[W_{\hat{\mathbb{Z}}}(R; M) \cong W(R; M) \text{,}\]
  where the left hand side is our $\hat{\mathbb{Z}}$-typical Witt vectors with
  coefficients, and the right hand side is the big Witt vectors with
  coefficients of \cite{dotto_witt_2022}. This isomorphism respects the ghost
  maps out of each side, as well as the monoidal structure.
\end{proposition}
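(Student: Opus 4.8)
The plan is to reduce everything to the free case via Lemma~\ref{lem:finite_module_kan}. Our functor $W_{\hat{\mathbb{Z}}} = W^{S_0}_{\hat{\mathbb{Z}} \le \hat{\mathbb{Z}}}$ preserves reflexive coequalisers by construction, and it is shown in \cite{dotto_witt_2022} that $W(-;-)$ does too. The two ghost-group functors, being built from products, cyclic tensor powers and fixed points, also preserve reflexive coequalisers, and the ghost maps are extensions of their restrictions to $\text{Mod}_F$. Hence it suffices to produce, on the full subcategory $\text{Mod}_F$ of free objects, a natural isomorphism $W_{\hat{\mathbb{Z}}} \cong W$ compatible with the ghost maps (and with the external products); applying the left Kan extension of Lemma~\ref{lem:finite_module_kan} then propagates all of this to $\text{Mod}$, with the extended comparison map invertible because its composites with the inverse agree with identities on $\text{Mod}_F$ hence everywhere, and with the ghost-map squares still commuting by uniqueness of extensions.

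\textbf{Combinatorial dictionary.} Next I would record the translation between $\hat{\mathbb{Z}}$-subgroup data and integers. The open subgroups of $\hat{\mathbb{Z}}$ are the $n\hat{\mathbb{Z}}$ for $n \ge 1$, with $n\hat{\mathbb{Z}} \subseteq m\hat{\mathbb{Z}}$ iff $m \mid n$; since $\hat{\mathbb{Z}}$ is abelian every conjugacy class of subgroups is a singleton and every normaliser is the whole group, so a truncation set for $\hat{\mathbb{Z}}$ is exactly a divisor-closed set of positive integers and $S_0$ corresponds to all of $\mathbb{Z}_{\ge 1}$, recovering the untruncated big Witt vectors. I would fix the coset representatives $\{0,1,\dots,m-1\}$ for each $\hat{\mathbb{Z}}/m\hat{\mathbb{Z}} \cong \mathbb{Z}/m\mathbb{Z}$ — the ``correct'' choice referred to in Section~\ref{sec:ghost_map} — and write $C_n = \hat{\mathbb{Z}}/n\hat{\mathbb{Z}}$. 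Under the resulting identifications $M^{\otimes_R \hat{\mathbb{Z}}/n\hat{\mathbb{Z}}} \cong M^{\otimes_R n} \cong M^{\circledcirc_R n}$ (the second because $R$ is commutative), carrying the $C_n$-action to the cyclic permutation action, the ghost group $\text{gh}^{S_0}_{\hat{\mathbb{Z}}}(R;M) \cong \prod_{n \ge 1}(M^{\otimes_R n})^{C_n}$ is identified with the target of the ghost map of $W(R;M)$.

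\textbf{Matching the ghost maps (the main obstacle).} The crux is to verify that under this dictionary the ghost map of Definition~\ref{def:ghost_map} equals, on the nose, that of \cite{dotto_witt_2022}. In our formula the $n\hat{\mathbb{Z}}$-component of $w$ is a sum over subgroups $m\hat{\mathbb{Z}}$ with $m \mid n$; for such $m$ one has $(\hat{\mathbb{Z}}/m\hat{\mathbb{Z}})^{n\hat{\mathbb{Z}}} = C_m$, the term $f_{G/m\hat{\mathbb{Z}}}\bigl(a_{m\hat{\mathbb{Z}}}^{\otimes_R m\hat{\mathbb{Z}}/n\hat{\mathbb{Z}}}\bigr)$ is the $(n/m)$-fold cyclic tensor power of $a_{m\hat{\mathbb{Z}}}$ (invariant under the order-$(n/m)$ subgroup $C_{n/m} \le C_n$), and our chosen representatives of $C_m$ map to a set of representatives for $C_n/C_{n/m}$, so summing their translates is exactly $\text{tr}^{C_n}_{C_{n/m}}$. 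Setting $d=m$ this is precisely the $n$th ghost component of $W(R;M)$, the sum over divisors $d \mid n$ of the transfer along $C_{n/d} \le C_n$ of the $(n/d)$-fold cyclic power of the $d$th coordinate. Granting this, for free $(T;Q)$ both ghost maps are embeddings onto subgroups of the (now identified) ghost group — for ours by Theorem~\ref{thm:witt_properties}(ii) and the Dwork lemma, for $W$ by the analogous result of \cite{dotto_witt_2022} — and these subgroups coincide, being the images of the same map. This yields the natural isomorphism on $\text{Mod}_F$ respecting the ghost maps. I expect the bulk of the work, and the only real difficulty, to be this bookkeeping: pinning down the representatives and checking the transfer/cyclic-power combinatorics line up with \cite{dotto_witt_2022}.

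\textbf{Monoidal compatibility.} This is then automatic. On free objects $\star$ is, by Proposition~\ref{prop:witt_monoidal} and the parallel characterisation in \cite{dotto_witt_2022} for $W$, the unique lift along the injective ghost map of the map $\tilde{\star}$ induced by the shuffle isomorphisms $M^{\otimes_R G/U} \otimes_{\mathbb{Z}} M'^{\otimes_{R'} G/U} \xrightarrow{\cong} (M \otimes_{\mathbb{Z}} M')^{\otimes_{R \otimes_{\mathbb{Z}} R'} G/U}$; these shuffle maps agree under our identifications, so the two external products agree on $\text{Mod}_F$, as do the units (both equal to $\tau_{G/H}(1)$ in degree one). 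Extending along the Kan extension of Lemma~\ref{lem:finite_module_kan} then gives the asserted isomorphism $W_{\hat{\mathbb{Z}}}(R;M) \cong W(R;M)$ of functors $\text{Mod} \to \text{Ab}_\text{Haus}$, respecting the ghost maps and the lax symmetric monoidal structures.
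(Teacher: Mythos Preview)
Your approach is correct and is essentially the same as the paper's: reduce to free objects via preservation of reflexive coequalisers, identify the ghost groups, check the ghost formulas agree, and conclude because both constructions embed in the common ghost group with the same image. The one place where the paper is more explicit than you is in bridging the product $\prod_{i\ge 1} Q^{\otimes_T i}$ to the DKNP side: the paper writes down the surjection $\gamma\colon (n_i)\mapsto \prod_{i\ge 1}(1-n_it^i)$ into $\hat S(T;Q)^{\mathrm{ab}}=W(T;Q)$, cites DKNP's Proposition~1.12 for surjectivity, and then computes $\mathrm{tlog}\circ\gamma$ term by term to see it equals our $w$. Your phrase ``the $n$th ghost component of $W(R;M)$, the sum over divisors $d\mid n$ of the transfer of the $(n/d)$-fold cyclic power of the $d$th coordinate'' presupposes exactly this parametrisation of $W(T;Q)$ by $\prod_i Q^{\otimes_T i}$, so you are using $\gamma$ without naming it; the paper just makes that step visible.
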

\begin{proof}
  Let $(T; Q) \in \text{Mod}$ be free. We can write
  $W_{\hat{\mathbb{Z}}}(T; Q)$ as a quotient
  \[\prod_{i=1}^\infty Q^{\otimes_T i} \xrightarrowdbl{}
    W_{\hat{\mathbb{Z}}}(T; Q)\]
  where the index $i$ corresponds to the subgroup $i\hat{\mathbb{Z}} \le
  \hat{\mathbb{Z}}$ in our usual indexing. This quotient depends on a choice
  of coset representatives for each subgroup; we will use $\{0, 1, \dotsc,
  i-1\}$ as coset representatives for $\hat{\mathbb{Z}}/i \hat{\mathbb{Z}}$.

  Define a continuous map
  \begin{align*}\gamma : \prod_{i = 1}^\infty Q^{\otimes_T i} &\to \hat{S}(T; Q) \xrightarrowdbl{} \hat{S}(T; Q)^{\text{ab}} = W(T; Q)\\
    (n_i) &\mapsto \prod_{i=1}^\infty (1 - n_i t^i)
  \end{align*}
  We claim that the map $\gamma$ descends to an isomorphism of topological abelian groups $W_{\hat{\mathbb{Z}}}(T; Q) \to W(T; Q)$.
  By Proposition~1.12 of \cite{dotto_witt_2022}, any element of $\hat{S}(T; Q)$
  can be written in the form
  \[\prod_{i = 1}^\infty (1 - n_i t^i)\]
  so $\gamma$ is surjective.
  The analogue of the ghost map in \cite{dotto_witt_2022} is a continuous group
  homomorphism $\text{tlog} : W(T; Q) \to \prod_{i = 1}^\infty (Q^{\otimes_T
    i})^{C_i} \cong \text{gh}_{\hat{\mathbb{Z}}}(T; Q)$.
  For $x \in Q^{\otimes_T i}$, $\text{tlog}$ sends $1 - x t^i$ to
  \[\text{tr}_e^{C_i} x t^i +
  \text{tr}_{C_2}^{C_{2i}} x^{\otimes_T 2} t^{2i} + \text{tr}_{C_3}^{C_{3i}}
  x^{\otimes_T 3} t^{3i} + \dotsc\]
(where we use a power series notation for elements of
  the codomain). We see that the $j$th component of $\text{tlog}(\gamma(n))$ is
  \[\sum_{i \mid j} \text{tr}_{C_{j/i}}^{C_j} n_i^{\otimes_T j/i} \text{,}\]
  which agrees with our usual ghost map $w$ (note this relies on the choice of coset
  representatives we made earlier). So we have a commutative diagram
  \[\begin{tikzcd}
     \prod_{i = 1}^\infty Q^{\otimes_T i} \ar[r, "\gamma"] \ar[rd, "w" swap] & W(T; Q)
     \ar[d, "\text{tlog}"]\\
     & \prod_{i \ge 1} (Q^{\otimes_T i})^{C_i}
    \end{tikzcd}\]
  Since $(T; Q)$ is free, the topological group $W_{\hat{\mathbb{Z}}}(T; Q)$ is
  isomorphic to the image of $w$. But by Proposition~1.18 of
  \cite{dotto_witt_2022}, $W(T; Q)$ is isomorphic to the image of $\text{tlog}$.
  Since $\gamma$ is surjective, the images of $w$ and $\text{tlog}$ coincide,
  so we see that $\gamma$ descends to an
  isomorphism of topological groups $W_{\hat{\mathbb{Z}}}(T; Q) \cong W(T; Q)$.

  Since both sides preserve reflexive coequalisers (Proposition~1.14 of \cite{dotto_witt_2022}\footnote{In
    \cite{dotto_witt_2022} the authors work in the category of all bimodules over not
    necessarily commutative rings, whereas we work in the full subcategory spanned by modules
    over commutative rings; but the subcategory inclusion preserves reflexive
    coequalisers, since in both cases reflexive coequalisers can be
    computed by taking the coequaliser of the underlying sets.}), $\gamma$ extends
  to give a natural isomorphism for all choices of coefficients, proving the proposition.

  The external products are defined as lifts of the same maps on ghost components, so the monoidal
  structures agree.
\end{proof}

We also want to show that this isomorphism respects the Witt vector operators,
but need to be a little careful to make sure the domains and codomains line up.

The Verschiebung and Frobenius operators in \cite{dotto_witt_2022} are defined
between the Witt vectors with coefficients in $M$ and $M^{\otimes_R n}$:
\begin{align*}
  V_n &: W(R; M^{\otimes_R n}) \to W(R; M)\\
  F_n &: W(R; M) \to W(R; M^{\otimes_R n}) \text{.}
\end{align*}
The authors define a $C_n$ action on $W(R; M^{\otimes_R n})$ and maps
\[\tau_n : M^{\otimes_R n} \to W(R; M) \text{.}\]

The previous proposition gives us isomorphisms
\[\gamma_1 : W_{\hat{\mathbb{Z}}}(R; M) \cong W(R; M)\]
and
\[\gamma_2 : W_{\hat{\mathbb{Z}}}(R; M^{\otimes_R n}) \cong W(R; M^{\otimes_R n}) \text{.}\]
However our Frobenius and Verschiebung don't go between these groups. Instead,
we need to use the isomorphism
\begin{equation}\label{eq:delta_iso} \delta : W_{\hat{\mathbb{Z}}}(R; M^{\otimes_R n}) \cong W_{n\hat{\mathbb{Z}}}(R; M^{\otimes_R n}) \cong W_{n \hat{\mathbb{Z}} \le \hat{\mathbb{Z}}}(R; M)\end{equation}
where the first isomorphism holds since $\hat{\mathbb{Z}} \cong
n\hat{\mathbb{Z}}$ as additive groups, and the second isomorphism is defined
in Lemma~\ref{lem:HGHHiso}. The second isomorphism depends on a choice of coset
representatives for $\hat{\mathbb{Z}}/n \hat{\mathbb{Z}}$; we use the standard
choice $\{0, 1, \dotsc, n-1\}$.

\begin{proposition} \label{prop:generalise_dknp_operators}
  We can relate operators of the big Witt vectors with coefficients and our Witt vectors using these isomorphisms: we have
\begin{align*}
  V_n &= \gamma_1 V^{\hat{\mathbb{Z}}}_{n \hat{\mathbb{Z}}}\delta \gamma_2^{-1}\\
  F_n &= \gamma_2 \delta^{-1} F^{\hat{\mathbb{Z}}}_{n \hat{\mathbb{Z}}} \gamma_1^{-1}\\
  \tau_n &= \gamma_1 V^{\hat{\mathbb{Z}}}_{n \hat{\mathbb{Z}}} \tau_{n\hat{\mathbb{Z}}/\hat{\mathbb{Z}}}
\end{align*}
(where $\tau_{n\hat{\mathbb{Z}}/\hat{\mathbb{Z}}}$ is defined using the same
standard choice of coset representatives), and the $C_n$ action on $W(R; M^{\otimes_R n})$ agrees with the $C_n \cong
\hat{\mathbb{Z}}/n \hat{\mathbb{Z}}$ conjugation action on $W_{n \hat{\mathbb{Z}} \le
  \hat{\mathbb{Z}}}(R; M)$ under the isomorphism $\gamma_2 \delta^{-1}$.
\end{proposition}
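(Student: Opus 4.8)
The plan is to verify each of the three formulas by the same standard method used throughout Section~\ref{sec:operators}: reduce to free coefficients $(T;Q)$, where all the Witt vector groups embed in their ghost groups, and then check the identity on ghost components. Since the isomorphism $\gamma$ of Proposition~\ref{prop:generalise_dknp} was shown to respect the ghost maps (taking $\mathrm{tlog}$ to our ghost map $w$), and since $\delta$ is the ghost-map-respecting isomorphism of Lemma~\ref{lem:HGHHiso}, it suffices to check the analogous identities for the maps $\tilde V$, $\tilde F$, $\tilde\tau$ and the conjugation action on the ghost groups $\mathrm{gh}_{\hat{\mathbb Z}}$. Once the ghost-level identities hold and both sides of each claimed equation are lifts of those ghost-level maps along injective ghost maps (for free coefficients), the equations follow; Lemma~\ref{lem:finite_module_kan} then extends them to all coefficients.

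First I would fix the standard coset representatives $\{0,1,\dots,n-1\}$ for $\hat{\mathbb Z}/n\hat{\mathbb Z}$ throughout, so that $\delta$, $\gamma$, $\tau_{n\hat{\mathbb Z}/\hat{\mathbb Z}}$ and the ghost map $w$ are all defined compatibly. Next I would recall from \cite{dotto_witt_2022} the ghost-component formulas for $V_n$, $F_n$, $\tau_n$ and the $C_n$-action on $W(R;M^{\otimes_R n})$ under $\mathrm{tlog}$: $V_n$ on the $j$-th ghost component is a sum of transfers over the cosets of $\langle n\rangle$ fixing the relevant cyclic subgroup, $F_n$ is restriction composed with reindexing $j\mapsto nj$, and $\tau_n$ sends $m$ to the tuple with $j$-th component $\mathrm{tr}^{C_j}_{C_{j/n}}(m^{\otimes j/n})$ when $n\mid j$ and $0$ otherwise. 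Then I would transport these through $\gamma$ and $\delta$ and compare with the ghost-component formulas for $\tilde V^{\hat{\mathbb Z}}_{n\hat{\mathbb Z}}$, $\tilde F^{\hat{\mathbb Z}}_{n\hat{\mathbb Z}}$ and $\tilde\tau$ given in Propositions~\ref{prop:verschiebung}, the Frobenius proposition, and \ref{prop:teichmuller}; the index $j$ corresponds to the subgroup $j\hat{\mathbb Z}$, and the subgroup $\langle vU\rangle$-type sums in our formulas become the divisor sums appearing in \cite{dotto_witt_2022}. For the third formula one additionally uses Lemma~\ref{lem:ghost_via_operators}, which expresses $\tilde\tau$ composed with a Verschiebung as precisely the kind of transfer-of-a-tensor-power expression that $\mathrm{tlog}(\tau_n(m))$ produces.

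The main obstacle will be the bookkeeping in the isomorphism $\delta$: it is the composite of the "abstract" identification $\hat{\mathbb Z}\cong n\hat{\mathbb Z}$ with the change-of-ambient-group isomorphism of Lemma~\ref{lem:HGHHiso}, and one must track carefully how a ghost component indexed by a subgroup of $n\hat{\mathbb Z}$ gets relabelled, and how the chosen coset representatives $\{0,\dots,n-1\}$ enter through the map $f_{\hat{\mathbb Z}/n\hat{\mathbb Z}}$. Getting this matching exactly right — so that the transfers and tensor-power maps line up on the nose rather than merely up to an uncontrolled automorphism — is where all the care is needed; everything else is a routine comparison of explicit formulas. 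For the $C_n$-action statement, I would check that the generator of $C_n\cong\hat{\mathbb Z}/n\hat{\mathbb Z}$ acts on the ghost group $\mathrm{gh}_{n\hat{\mathbb Z}\le\hat{\mathbb Z}}(R;M)$ by the $g\cdot(-)$ maps of Section~\ref{sec:movingtensor}, and that under $\gamma_2\delta^{-1}$ this matches the $C_n$-action on $W(R;M^{\otimes_R n})$ from \cite{dotto_witt_2022}, again using that both are lifts of the same ghost-level action and that $\gamma_2$, $\delta$ respect ghost maps.
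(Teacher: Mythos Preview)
Your proposal is correct and follows essentially the same approach as the paper: both argue that all the operators in question are defined as (unique) lifts of explicit maps on ghost groups, so it suffices to check the identities on ghost components, with the only real subtlety being the bookkeeping of coset representatives in $\delta$. The paper's proof is terser but makes exactly the same points, including flagging the choice of coset representatives for $\hat{\mathbb{Z}}/n\hat{\mathbb{Z}}$ as the one place requiring care.
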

\begin{proof}
  In Section~\ref{sec:operators} we defined the various operators on the
  Witt vectors as the unique lifts of certain maps on the ghost groups. The
  isomorphism $\delta$ is similarly defined in terms of an isomorphism of ghost groups. In
  \cite{dotto_witt_2022} the authors also describe the compatibility of their operators
  with certain maps on ghost groups. Using this it is straightforward to check
  everything. %

  The only point of subtlety is that we needed to use the right choice of coset
  representatives for $\hat{\mathbb{Z}}/n\hat{\mathbb{Z}}$, such that both definitions internally use the isomorphism
  $M^{\otimes_R ni} \cong (M^{\otimes_R n})^{\otimes_R i}$ given by
  \[m_0 \otimes_R \dotsb \otimes_R m_{ni-1} \cong (m_0
  \otimes_R \dotsb \otimes_R m_{n-1}) \otimes_R \dotsb \otimes_R (m_{n(i-1)}
  \otimes_R \dotsb \otimes_R m_{ni-1})\]
  (see just before Proposition~1.24 of \cite{dotto_witt_2022}).
\end{proof}

There are also truncated versions of the big Witt vectors with coefficients.
In \cite{dotto_witt_2022} these are denoted by $W_S(R; M)$, where $S$
is a set of positive natural numbers closed under taking divisors. Note that when we
identify $i \in \mathbb{N}_{>0}$ with the subgroup $i\hat{\mathbb{Z}}$, such a
set $S$ corresponds precisely to a truncation set for $\hat{\mathbb{Z}}$ in
our terminology.

\begin{proposition} \label{prop:generalise_dknp_truncated}
  The isomorphism $W_{\hat{\mathbb{Z}}}(R; M) \cong W(R; M)$ induces an isomorphism of truncated Witt vectors
  \[W_{\hat{\mathbb{Z}}}^S(R; M) \cong W_S(R; M) \text{.}\]
  We abuse notation to let $S$ refer to a set of subgroups on the left and
  a set of positive natural numbers on the right, where $i
  \in \mathbb{N}_{>0}$ corresponds to $i\hat{\mathbb{Z}} \le \hat{\mathbb{Z}}$.
  This isomorphism respects the operators in the truncated setting.
\end{proposition}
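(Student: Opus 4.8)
The plan is to follow the proof of Proposition~\ref{prop:generalise_dknp} essentially verbatim, but with every object replaced by its $S$-truncation, establishing the isomorphism for free coefficients first and then extending by Lemma~\ref{lem:finite_module_kan}. Fix $(T; Q) \in \text{Mod}$ free. On our side, $W^S_{\hat{\mathbb{Z}}}(T; Q)$ is by construction the image of the truncated ghost map $w : \prod_{i \in S} Q^{\otimes_T i} \to \text{gh}^S_{\hat{\mathbb{Z}}}(T; Q)$ (using the coset representatives $\{0, 1, \dots, i-1\}$ for $\hat{\mathbb{Z}}/i\hat{\mathbb{Z}}$, as in Proposition~\ref{prop:generalise_dknp}), and the truncation operator $R_S$ is the restriction of the projection $\text{gh}_{\hat{\mathbb{Z}}}(T; Q) \twoheadrightarrow \text{gh}^S_{\hat{\mathbb{Z}}}(T; Q)$. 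On the DKNP side, $W_S(T; Q)$ is a quotient of $W(T; Q)$, and by the truncated analogue of Proposition~1.18 of \cite{dotto_witt_2022} it is identified with the image of a truncated $\text{tlog}$ landing in $\prod_{i \in S}(Q^{\otimes_T i})^{C_i}$; under our standard identification $\text{gh}^S_{\hat{\mathbb{Z}}}(T; Q) \cong \prod_{i \in S}(Q^{\otimes_T i})^{C_i}$ this is exactly the $S$-truncation of the ghost projection.

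Combining this with the commutative triangle relating $\gamma$, $\text{tlog}$ and $w$ from Proposition~\ref{prop:generalise_dknp}, projecting all ghost groups to their $S$-components gives a commutative diagram exhibiting $\gamma$ as compatible with the two truncations. Since $\gamma$ is surjective onto $\hat{S}(T; Q)$ (and hence its descent is surjective onto $W_S(T; Q)$), and since the truncated ghost map formulas coincide on the nose, the images of $\prod_{i \in S} Q^{\otimes_T i}$ inside $\text{gh}^S_{\hat{\mathbb{Z}}}(T; Q) \cong \prod_{i \in S}(Q^{\otimes_T i})^{C_i}$ under $w$ and under (truncated $\text{tlog}) \circ \gamma$ agree; as both $W^S_{\hat{\mathbb{Z}}}(T; Q)$ and $W_S(T; Q)$ are the quotient of $\prod_{i\in S} Q^{\otimes_T i}$ by the kernel of this common map, $\gamma$ descends to an isomorphism of topological abelian groups, compatible with both ghost maps and with the untruncated isomorphism of Proposition~\ref{prop:generalise_dknp} (which intertwines the two truncation maps, by the same computation).

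To pass to general coefficients, note that $W^S_{\hat{\mathbb{Z}}}$ preserves reflexive coequalisers by Theorem~\ref{thm:witt_properties}, while $W_S$ does so by the argument of Proposition~1.14 of \cite{dotto_witt_2022}; hence Lemma~\ref{lem:finite_module_kan} promotes the isomorphism on free objects to a natural isomorphism $W^S_{\hat{\mathbb{Z}}}(R; M) \cong W_S(R; M)$. For the operators, exactly as in Proposition~\ref{prop:generalise_dknp_operators}, the truncated $R_{S'}$, $F_n$, $V_n$, $\tau_n$ and the conjugation action are on both sides the unique lifts of explicit maps of (truncated) ghost groups that agree under the identification above, so uniqueness of lifting forces them to be intertwined. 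The main obstacle is the bookkeeping needed to see that the descended $\gamma$ is genuinely well-defined on the truncated quotient — that is, that the two truncation kernels correspond — which is precisely where consistency of the coset-representative choices matters, so that the ghost-component formulas match literally and not merely up to isomorphism; once that is arranged, everything reduces to the free case and the reflexive-coequaliser formalism, which is routine by this point.
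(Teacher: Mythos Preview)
Your proposal is correct and takes essentially the same approach as the paper: reduce to free coefficients via reflexive-coequaliser preservation, identify both truncated objects with the image of the same (truncated) ghost map inside $\text{gh}^S_{\hat{\mathbb{Z}}}(T;Q)$, and deduce compatibility with the operators by uniqueness of lifting. The paper phrases this slightly more economically by directly comparing the two quotient maps $R$ and $R_S$ out of the already-isomorphic untruncated objects (citing Lemma~1.41 of \cite{dotto_witt_2022} rather than a truncated analogue of Proposition~1.18), but the content is the same.
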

\begin{proof}
  We know that $W_{\hat{\mathbb{Z}}}^S(R; M)$ preserves reflexive coequalisers, and so
  does $W_S(R; M)$ (since $W(R; M)$ does, and $W_S(R; M)$ is defined as a
  quotient of $W(R; M)$).
  So in order to prove that the quotient maps $R : W(R; M) \xrightarrowdbl{} W_S(R; M)$
  and $R_S :
  W_{\hat{\mathbb{Z}}}(R; M) \xrightarrowdbl{} W^S_{\hat{\mathbb{Z}}}(R; M)$ are
  isomorphic, it suffices to consider the case of free coefficients.

  Indeed for $(T; Q)$ free, $W(T; Q)$ embeds in the ghost group $\prod_{i
    = 1}^{\infty} (Q^{\otimes_T i})^{C_i} \cong \text{gh}_{\hat{\mathbb{Z}}}(T; Q)$.
  The truncated Witt vectors $W_S(T; Q)$ embed in $\prod_{i \in S} (Q^{\otimes_T
    i})^{C_i} \cong \text{gh}^S_{\hat{\mathbb{Z}}}(T; Q)$, and by Lemma~1.41 of \cite{dotto_witt_2022}
  the quotient map $R : W(T; Q) \xrightarrowdbl{} W_S(T; Q)$ is the restriction of the
  projection map
  $\tilde{R}_S : \text{gh}_{\hat{\mathbb{Z}}}(T; Q) \xrightarrowdbl{}
  \text{gh}^S_{\hat{\mathbb{Z}}}(T; Q)$ (recall
  Proposition~\ref{prop:truncation} for the definition of $\tilde{R}_S$).
  But Proposition~\ref{prop:generalise_dknp} showed that the embedding of $W_{\hat{\mathbb{Z}}}(T; Q)$ into
  $\text{gh}_{\hat{\mathbb{Z}}}(T; Q)$ agrees with the embedding of $W(T; Q)$,
  and by Proposition~\ref{prop:truncation} the
  projection $\tilde{R}_S$ also restricts to give the quotient $R_S :
  W_{\hat{\mathbb{Z}}}(T; Q) \xrightarrowdbl{} W^S_{\hat{\mathbb{Z}}}(T; Q)$. So
  these two quotient maps are isomorphic.

  The isomorphism $W_{\hat{\mathbb{Z}}}^S(R; M) \cong W_S(R; M)$ respects the
  operators in the same manner as Proposition~\ref{prop:generalise_dknp_operators}, since in both cases the operators on untruncated Witt vectors descend to the quotient to give the operators on
  truncated Witt vectors. For example consider the Verschiebung in \cite{dotto_witt_2022}
  \[V_n : W_{S/n}(R; M^{\otimes_R n}) \to W_S(R; M)\]
  where $S/n \coloneqq \{k \in \mathbb{N}_{> 0} \mid nk\in S\}$. We have shown
  that we have isomorphisms $\gamma_1^S : W^S_{\hat{\mathbb{Z}}}(R; M) \cong
  W_S(R; M)$ and $\gamma_2^S: W_{\hat{\mathbb{Z}}}^{S/n}(R; M^{\otimes_R n})
  \cong W_{S/n}(R; M^{\otimes_R n})$. Analogous to Equation~\ref{eq:delta_iso} we
  have an isomorphism
  \[\delta^S : W^{S/n}_{\hat{\mathbb{Z}}}(R; M^{\otimes_R n}) \cong W^{S \mid_{n
     \hat{\mathbb{Z}}}}_{n \hat{\mathbb{Z}}}(R; M^{\otimes_R n}) \cong W_{n
   \hat{\mathbb{Z}} \le \hat{\mathbb{Z}}}^{S \mid_{n \hat{\mathbb{Z}}}}(R; M) \text{,}\]
since $S/n$ corresponds to the set of subgroups of $\hat{\mathbb{Z}}$ given by
$\{k \hat{\mathbb{Z}} \mid nk \hat{\mathbb{Z}}
\in S\}$, and on applying the multiplication-by-$n$ isomorphism $\hat{\mathbb{Z}} \cong
n\hat{\mathbb{Z}}$ we get the set of subgroups of $n\hat{\mathbb{Z}}$ given by
$\{k(n\hat{\mathbb{Z}}) \mid k(n\hat{\mathbb{Z}}) \in S\} = S\!\!\mid_{n \hat{\mathbb{Z}}}$.
Our Witt vectors have a Verschiebung map
\[V_{n\hat{\mathbb{Z}}}^{\hat{\mathbb{Z}}} : W_{n \hat{\mathbb{Z}} \le \hat{\mathbb{Z}}}^{S
    \mid_{n \hat{\mathbb{Z}}}}(R; M) \to W_{\hat{\mathbb{Z}}}^S(R; M) \text{.}\]
Since both the operators and all these isomorphisms commute with the
quotient maps from the untruncated Witt vectors, it follows from
Proposition~\ref{prop:generalise_dknp_operators} that
\[V_n = \gamma_1^S V^{\hat{\mathbb{Z}}}_{n\hat{\mathbb{Z}}} \delta^S
  (\gamma_2^S)^{-1} \text{.} \qedhere\]
\end{proof}

The $p$-typical Witt vectors with coefficients of \cite{dotto_witt_2025} can
be defined in terms of the truncated big Witt vectors with coefficients via
\[W_{n+1, p}(R; M) = W_{\{1, p, \dotsc, p^n\}}(R; M)\]
and
\[W_{\infty, p}(R; M) = W_{\{1, p, p^2, \dotsc\}}(R; M) \text{,}\]
and so by Proposition~\ref{prop:generalise_dknp_truncated} these can be recovered from our
truncated $\hat{\mathbb{Z}}$-typical Witt vectors with coefficients. However
it's worth observing that they also arise as untruncated $G$-typical Witt
vectors with coefficients.

\begin{proposition} \label{prop:generalise_dknp_p}
  We have
  \[W_{n+1, p}(R; M) \cong W_{C_{p^n}}(R; M)\]
  and
  \[W_{\infty, p}(R; M) \cong W_{\mathbb{Z}_p}(R; M) \text{,}\]
  where $\mathbb{Z}_p$ denotes the $p$-adic integers (considered as an additive group).
\end{proposition}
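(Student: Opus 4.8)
The plan is to reduce to the comparison with the big Witt vectors with coefficients established in Proposition~\ref{prop:generalise_dknp_truncated}, and then argue by uniqueness of reflexive-coequaliser-preserving extensions. By Proposition~\ref{prop:generalise_dknp_truncated}, together with the identifications $W_{n+1,p}(R;M) = W_{\{1,p,\dots,p^n\}}(R;M)$ and $W_{\infty,p}(R;M) = W_{\{1,p,p^2,\dots\}}(R;M)$ recalled above, we have natural isomorphisms $W_{n+1,p}(R;M) \cong W^{S}_{\hat{\mathbb{Z}}}(R;M)$ and $W_{\infty,p}(R;M)\cong W^{S_\infty}_{\hat{\mathbb{Z}}}(R;M)$, where $S=\{p^k\hat{\mathbb{Z}}\mid 0\le k\le n\}$ and $S_\infty=\{p^k\hat{\mathbb{Z}}\mid k\ge 0\}$ are the corresponding truncation sets for $\hat{\mathbb{Z}}$. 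So it suffices to produce natural isomorphisms $W^S_{\hat{\mathbb{Z}}}(R;M)\cong W_{C_{p^n}}(R;M)$ and $W^{S_\infty}_{\hat{\mathbb{Z}}}(R;M)\cong W_{\mathbb{Z}_p}(R;M)$.

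Both are instances of the following. Let $\pi\colon\hat{\mathbb{Z}}\twoheadrightarrow G'$ be a surjection of profinite groups onto an abelian group $G'$ --- for us $G'=C_{p^n}$ with $\ker\pi = p^n\hat{\mathbb{Z}}$, or $G'=\mathbb{Z}_p$ with $\ker\pi=\prod_{\ell\ne p}\mathbb{Z}_\ell$. Then $V\mapsto\pi^{-1}(V)$ is an order isomorphism from the open subgroups of $G'$ onto the truncation set $S_\pi\coloneqq\{W\le_o\hat{\mathbb{Z}}\mid W\supseteq\ker\pi\}$ of $\hat{\mathbb{Z}}$, and in our two cases $S_\pi$ equals $S$, respectively $S_\infty$ (explicitly, the subgroup $C_{p^k}\le C_{p^n}$ of order $p^k$ corresponds to $p^{n-k}\hat{\mathbb{Z}}$). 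For $U\le V$ open in $G'$, the map $\pi$ induces a $\hat{\mathbb{Z}}$-equivariant bijection $\hat{\mathbb{Z}}/\pi^{-1}(V)\cong G'/V$ (with $\hat{\mathbb{Z}}$ acting through $\pi$) and a bijection $\pi^{-1}(V)/\pi^{-1}(U)\cong V/U$; choosing coset representatives for $G'/V$ and transporting them along these bijections, all the data entering the two ghost maps match. Concretely these identifications carry $M^{\otimes_R G'/V}$ to $M^{\otimes_R\hat{\mathbb{Z}}/\pi^{-1}(V)}$, the $G'$-action to the $\hat{\mathbb{Z}}$-action, the maps $f_{G'/V}$ of Definition~\ref{def:f} to $f_{\hat{\mathbb{Z}}/\pi^{-1}(V)}$, the transfers to the transfers, and (for free coefficients) the Frobenius lifts $\phi^V_U$ to $\phi^{\pi^{-1}(V)}_{\pi^{-1}(U)}$, since both are built by raising generators to the power $\abs{V:U}=\abs{\pi^{-1}(V):\pi^{-1}(U)}$. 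Taking products over subgroups and $\hat{\mathbb{Z}}$-fixed points yields a natural isomorphism of ghost groups $\text{gh}_{G'}(R;M)\cong\text{gh}^{S_\pi}_{\hat{\mathbb{Z}}}(R;M)$ carrying the ghost map of Definition~\ref{def:ghost_map} for $W_{G'}$ to that for $W^{S_\pi}_{\hat{\mathbb{Z}}}$.

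To finish, note that for $(T;Q)$ free, Theorem~\ref{thm:witt_properties}(ii) identifies each of $W_{G'}(T;Q)$ and $W^{S_\pi}_{\hat{\mathbb{Z}}}(T;Q)$ with the image of its ghost map; since the ghost maps correspond under the isomorphism of ghost groups just constructed, so do their images --- alternatively, one checks directly, via the Dwork lemma~\ref{lem:dwork} and the choice-independence of $\text{im}(w)$ from Corollary~\ref{cor:ghost_image_subgroup_indep}, that the defining congruences match under $V\leftrightarrow\pi^{-1}(V)$. This gives a natural isomorphism of functors $\text{Mod}_F\to\text{Ab}_{\text{Haus}}$ between $\iota^\ast W_{G'}$ and $\iota^\ast W^{S_\pi}_{\hat{\mathbb{Z}}}$ compatible with the ghost maps; since both functors preserve reflexive coequalisers, Lemma~\ref{lem:finite_module_kan} extends it uniquely to a natural isomorphism $W_{C_{p^n}}(R;M)\cong W^S_{\hat{\mathbb{Z}}}(R;M)$ (resp.\ $W_{\mathbb{Z}_p}(R;M)\cong W^{S_\infty}_{\hat{\mathbb{Z}}}(R;M)$), natural in $(R;M)$, which composed with Proposition~\ref{prop:generalise_dknp_truncated} proves the claim. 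Since the operators $F$, $V$, $R$, $\star$ and the Teichm\"uller maps are by construction the unique lifts of the corresponding maps on ghost groups, and the latter correspond under our ghost-group isomorphism, the isomorphism automatically respects all of this structure.

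I expect the only real obstacle to be organisational: keeping the index reversal $C_{p^k}\leftrightarrow p^{n-k}\hat{\mathbb{Z}}$ straight, and verifying once and carefully that the coset-representative-dependent pieces of the ghost map ($f_{G/V}$ and, in the free case, $\phi^V_U$) genuinely transport along the bijections above. Everything after the ghost groups are matched is a routine appeal to Lemma~\ref{lem:finite_module_kan}, just as in the rest of Section~\ref{sec:generalise}.
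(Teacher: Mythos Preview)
Your proof is correct and follows essentially the same approach as the paper. Both proofs reduce via Proposition~\ref{prop:generalise_dknp_truncated} to showing $W^{S_\pi}_{\hat{\mathbb{Z}}}(R;M)\cong W_{G'}(R;M)$ for $G'$ a quotient of $\hat{\mathbb{Z}}$, and both argue by observing that the construction only depends on the quotients $\hat{\mathbb{Z}}/\pi^{-1}(V)\cong G'/V$; the paper states this dependence as a one-line observation, whereas you spell out the matching of ghost groups and ghost maps and invoke Lemma~\ref{lem:finite_module_kan} explicitly, but the substance is the same.
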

\begin{proof}
  We will prove the latter isomorphism; the former is similar. By
  Proposition~\ref{prop:generalise_dknp_truncated}, we already have
  $W_{\infty, p}(R; M) \cong W^S_{\hat{\mathbb{Z}}}(R; M)$ where $S
  = \{\hat{\mathbb{Z}}, p \hat{\mathbb{Z}}, p^2 \hat{\mathbb{Z}}, \dotsc\}$. So it suffices to
  check that $W_{\mathbb{Z}_p}(R; M) \cong W^S_{\hat{\mathbb{Z}}}(R; M)$. Let
  $H = \bigcap_{i \ge 0} p^i \hat{\mathbb{Z}}$. Considering the definition of
  $W^S_{\hat{\mathbb{Z}}}(R; M)$, it only depends on the group
  $\hat{\mathbb{Z}}$ via the quotients of
  $\hat{\mathbb{Z}}$ by subgroups in $S$, so will be left unchanged up to
  isomorphism if we replace $\hat{\mathbb{Z}}$ by $\hat{\mathbb{Z}}/H$ and $p^i
  \hat{\mathbb{Z}}$ by $p^i \hat{\mathbb{Z}}/H$.
  The Chinese remainder
  theorem gives an isomorphism
  \[\hat{\mathbb{Z}} \cong \prod_{q \text{ prime}} \mathbb{Z}_q \text{,}\]
  and noting that $p$ is invertible in $\mathbb{Z}_q$ for $q \ne p$ shows
  $H \cong \prod_{q \ne p} \mathbb{Z}_q$ and $\hat{\mathbb{Z}}/H \cong \mathbb{Z}_p$. Under this isomorphism
  $\{\hat{\mathbb{Z}}/H, p \hat{\mathbb{Z}}/H, p^2 \hat{\mathbb{Z}}/H, \dotsc\}$
  becomes $\{\mathbb{Z}_p, p \mathbb{Z}_p, p^2 \mathbb{Z}_p, \dotsc\}$, i.e.\ the
  set of all open subgroups of $\mathbb{Z}_p$. The proposition follows.
\end{proof}

Considering the case of a commutative ring $R$ seen as a module over itself, we
recover the ring of $G$-typical Witt vectors $W_G(R)$ of \cite{dress_burnside_1988}.

\begin{proposition}\label{prop:generalise_burnside}
  Let $S$ be the set of all open subgroups of a profinite group $G$, and let $\underline{S}$ be a set of conjugacy class representatives. By definition $W_G(R)$ has underlying set
  $\prod_{V \in \underline{S}} R$.

  Then the isomorphism of underlying sets
  \[W_{G}(R; R) \cong W_G(R)\]
  constructed in Lemma~\ref{lem:ring_case_set_iso} is in fact an isomorphism of
  commutative rings,
  where $W_G(R; R)$ has ring structure as described in
  Remark~\ref{rem:multiplication}. This isomorphism respects the Frobenius and Verschiebung operators.
\end{proposition}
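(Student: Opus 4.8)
\noindent\emph{Proof proposal.} The plan is to prove the claim first for torsion-free $R$, where the ghost maps on both sides are injective, and then bootstrap to an arbitrary $R$ using that every commutative ring is a ring quotient of a free one and that $W_G({-};{-})$ sends ring surjections to surjections.

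The decisive first step is to pin down the ghost data. Under the canonical identifications $R^{\otimes_R G/U}\cong R$, and hence $\text{gh}^{S}_{G\le G}(R;R)\cong\prod_{V\in\can{S}}R$ with the product ring structure, the shuffle isomorphism $s$ of Proposition~\ref{prop:witt_monoidal} becomes the identity of $R\otimes_{\mathbb{Z}}R$ on each component, so $\tildestar$ followed by $(\mu_R,\mu_R)_\ast$ is componentwise multiplication; together with the fact that the unit of Remark~\ref{rem:multiplication} has ghost image $(1,\dots,1)$ (since $(wu)(1)_U=1$), this shows that the ghost map $w\colon W_G(R;R)\to\prod_{V\in\can{S}}R$ is a ring homomorphism for the multiplication of Remark~\ref{rem:multiplication}. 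Next, under the set-level identification $\Psi\colon W_G(R;R)\xrightarrow{\,\cong\,}\prod_{V\in\can{S}}R\cong W_G(R)$ of Lemma~\ref{lem:ring_case_set_iso}, this $w$ coincides with the Dress--Siebeneicher ghost (``mark'') map $w_{\mathrm{DS}}$, because our ghost map of Definition~\ref{def:ghost_map} specialises to theirs on a commutative ring regarded as a module over itself. Finally, recall that $w_{\mathrm{DS}}$ is a ring homomorphism into the product ring $\prod_{V\in\can{S}}R$ and is injective for $R$ torsion-free (cf.\ \cite{dress_burnside_1988}).

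For torsion-free $R$ both $w$ and $w_{\mathrm{DS}}$ are injective, so each is a ring isomorphism onto its image; the two images agree because $\Psi$ is a bijection with $w_{\mathrm{DS}}\circ\Psi=w$. Hence $\Psi=w_{\mathrm{DS}}^{-1}\circ w$ is a composite of ring homomorphisms, so a ring isomorphism. In particular $\Psi_T$ is a ring isomorphism for every free commutative ring $T$, and it is natural in the ring since the quotient map $q$ of Theorem~\ref{thm:witt_properties} is. For a general $R$, pick a ring surjection $\pi\colon T\twoheadrightarrow R$ with $T$ free commutative; then $W_G(\pi;\pi)$ is surjective, because under the natural homeomorphism of Lemma~\ref{lem:ring_case_set_iso} it is just $\prod_{V\in\can{S}}\pi$. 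Lifting $a,b\in W_G(R;R)$ along $W_G(\pi;\pi)$ and using that $\Psi_T$ is a ring isomorphism, that $W_G(\pi;\pi)$ and $W_G(\pi)$ are ring homomorphisms, and naturality of $\Psi$, gives $\Psi_R(ab)=\Psi_R(a)\Psi_R(b)$ and $\Psi_R(1)=1$; being a bijection, $\Psi_R$ is a ring isomorphism. The same lifting argument, starting from the observation that the Dress--Siebeneicher Frobenius and Verschiebung act on ghost components by the same restriction and transfer formulas as the ghost-level $\tilde F^G_U$ and $\tilde V^G_U$ of Section~\ref{sec:operators} (so that, by injectivity of the ghost maps on free rings, $F^G_U$ and $V^G_U$ correspond to their operators under $\Psi$), shows that $\Psi$ respects the Frobenius and Verschiebung.

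The main obstacle is the first step: checking that the multiplication of Remark~\ref{rem:multiplication}, transported to $\prod_{V\in\can{S}}R$ along the ghost map, is genuinely componentwise multiplication and agrees with the Dress--Siebeneicher one --- i.e.\ that our $\star$ really is the classical Witt multiplication. Once this identification of the ghost data is secured, everything else is formal diagram-chasing using injectivity of the ghost maps on torsion-free rings together with the surjection-lifting trick.
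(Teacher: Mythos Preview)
Your proof is correct, and the core identification is the same as the paper's: both hinge on showing that the ghost map $w\colon W_G(R;R)\to\prod_{V\in\can{S}}R$ is a ring homomorphism for the multiplication of Remark~\ref{rem:multiplication} and coincides with the Dress--Siebeneicher ghost map under the set-level isomorphism of Lemma~\ref{lem:ring_case_set_iso}. Where you diverge is in how you conclude from this. The paper simply invokes the main theorem of \cite{dress_burnside_1988}, which says there is a \emph{unique} functor $\text{CRing}\to\text{CRing}$ with this underlying set and with the ghost map a ring homomorphism; since both $W_G(R;R)$ and $W_G(R)$ satisfy this, they coincide. You instead reconstruct the uniqueness argument by hand: use injectivity of the ghost maps on torsion-free rings to get the result there, then push it down along a surjection $T\twoheadrightarrow R$ from a free ring (using that $W_G(\pi;\pi)$ is surjective under the set-level homeomorphism). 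This is more self-contained but longer; the paper's citation of the Dress--Siebeneicher uniqueness theorem accomplishes the same thing in one line.

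For the Frobenius and Verschiebung the situation is similar: the paper observes that both sets of operators lift the same maps on ghost groups and invokes the uniqueness of such lifts via Lemma~\ref{lem:finite_module_kan}, whereas you again argue directly via injectivity on free rings plus lifting. One point you elide is that our $F^G_H$ and $V^G_H$ have codomain/domain $W_{H\le G}(R;R)$ rather than $W_H(R)$, so strictly speaking one needs the canonical isomorphism $W_{H\le G}(R;R)\cong W_H(R;R)$ of Lemma~\ref{lem:HGHHiso} (which is canonical here since $R^{\otimes_R G/H}\cong R$) to line up the domains before comparing; the paper makes this explicit.
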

\begin{proof}
  It is straightforward to check that the ghost maps for $W_G(R; R)$ and
  $W_G(R)$ agree, noting that the ghost map out of $W_G(R; R) \cong \prod_{V \in
    \can{S}} R$ is given explicitly by the formula in
  Definition~\ref{def:ghost_map}. Also note that the ring structure of
  Remark~\ref{rem:multiplication} makes the ghost components into ring homomorphisms.
  By the main theorem of \cite{dress_burnside_1988} %
  this uniquely defines the ring structure on values of the functor $W_G({-}) : \text{CRing} \to \text{CRing}$, so the two constructions are isomorphic.

  Let us show that the isomorphism respects the Frobenius and Verschiebung operators in a
  similar manner to Proposition~\ref{prop:generalise_dknp_operators}.
  Lemma~\ref{lem:HGHHiso} gives an isomorphism
  \[\delta : W_{H \le G}(R; R) \cong W_H(R; R^{\otimes_R G/H}) \cong W_H(R; R) \text{.}\]
  Since $R^{\otimes_R G/H \times H/U}$ and $R^{\otimes_R G/U}$ are both
  canonically isomorphic to $R$, the isomorphism $\delta$ is also canonical, so
  we can consider our Frobenius and Verschiebung as maps
  \[F_H^G : W_G(R; R) \to W_H(R; R) \quad \quad V_H^G : W_H(R; R) \to W_G(R; R) \text{.}\]
  By Lemma~\ref{lem:finite_module_kan} these are the unique lifts of certain
  maps on ghost components. But the Frobenius $f_H$ and Verschiebung $v_H$ of
  \cite{dress_burnside_1988} are lifts of the same maps (see (2.10.4)' in
  \cite{dress_burnside_1988} and the
  preceding note, describing how induction and restriction between Burnside
  rings interact with the ghost map). %
\end{proof}

Recall the completed Burnside ring $\hat{\Omega}(G)$ of a profinite group $G$ from \cite{dress_burnside_1988}, defined to
be the Grothendieck ring of those discrete $G$-spaces $X$ for which the set
$X^U$ of $U$-fixed points is finite for every open subgroup $U$ of $G$.

\begin{corollary} \label{cor:witt_z_burnside}
  We deduce $W_{H \le G}(\mathbb{Z}; \mathbb{Z}) \cong \hat{\Omega}(H)$ is the
  completed Burnside ring of $H$.
\end{corollary}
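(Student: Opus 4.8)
The plan is to deduce this directly from Proposition~\ref{prop:generalise_burnside} together with the defining property of the Dress--Siebeneicher ring $W_H(\mathbb{Z})$, so the ``proof'' is essentially a chain of isomorphisms that have already been constructed. First I would reduce to the case $H = G$: in the situation $M = R$, the isomorphism of Lemma~\ref{lem:HGHHiso} becomes canonical, because $R^{\otimes_R G/H \times H/U}$ and $R^{\otimes_R G/U}$ are both canonically isomorphic to $R$ (this is exactly the observation used in the proof of Proposition~\ref{prop:generalise_burnside}). This yields a canonical isomorphism $W_{H \le G}(\mathbb{Z}; \mathbb{Z}) \cong W_{H \le H}(\mathbb{Z}; \mathbb{Z}) = W_H(\mathbb{Z}; \mathbb{Z})$, compatible with the ring structure of Remark~\ref{rem:multiplication} and with the Frobenius and Verschiebung operators.

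Next I would apply Proposition~\ref{prop:generalise_burnside} with base ring $R = \mathbb{Z}$ and group $H$ (legitimate since $H$, being open in the profinite group $G$, is itself profinite): this gives a ring isomorphism $W_H(\mathbb{Z}; \mathbb{Z}) \cong W_H(\mathbb{Z})$, again respecting the Frobenius and Verschiebung. Finally I would recall from \cite{dress_burnside_1988} that $W_H(\mathbb{Z})$ is \emph{by definition} the Burnside ring $\Omega(H)$ when $H$ is finite, and the appropriate completed Burnside ring $\hat\Omega(H)$ when $H$ is infinite; composing the isomorphisms above then produces $W_{H \le G}(\mathbb{Z}; \mathbb{Z}) \cong \hat\Omega(H)$ (which reduces to $\Omega(H)$ in the finite case).

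There is no real obstacle here, since each step is an already-established isomorphism. The only point requiring a word of care is matching our conventions: one should note that the completed Burnside ring referenced in the statement is precisely the completed version of $W_H(\mathbb{Z})$ introduced in \cite{dress_burnside_1988} (taking the limit over finite quotients $W_{H/N}(\mathbb{Z}) = \Omega(H/N)$), so no extra identification is needed. If one wished, this is also a clean place to remark that under this identification the Witt-vector Frobenius, Verschiebung and conjugation operators correspond to the usual restriction, induction and conjugation maps between Burnside rings, which follows from the operator-compatibility already recorded in Proposition~\ref{prop:generalise_burnside}.
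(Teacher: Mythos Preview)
Your proposal is correct and follows essentially the same route as the paper: reduce $W_{H \le G}(\mathbb{Z};\mathbb{Z})$ to $W_H(\mathbb{Z};\mathbb{Z})$ via the canonical form of Lemma~\ref{lem:HGHHiso} when $M=R$, apply Proposition~\ref{prop:generalise_burnside} to identify this with $W_H(\mathbb{Z})$, and then invoke the Dress--Siebeneicher definition $W_H(\mathbb{Z})\cong\hat\Omega(H)$. The paper's proof is just a terser version of exactly this chain.
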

\begin{proof}
  We have $W_{H \le G}(\mathbb{Z}; \mathbb{Z}) \cong W_H(\mathbb{Z}; \mathbb{Z})
  \cong W_H(\mathbb{Z})$, and in \cite{dress_burnside_1988}
  we see essentially by definition that $W_H(\mathbb{Z}) \cong \hat{\Omega}(H)$ is the completed the Burnside ring.
\end{proof}

For completeness, we record that a special case of the Witt vectors with coefficients gives the usual Witt vectors of a ring.

\begin{proposition}
  We recover the usual ring of truncated big Witt vectors as defined in
  \cite{hesselholt_big_2015}, via
  \[W_S(R) \cong W^S_{\hat{\mathbb{Z}}}(R; R) \text{.}\]
  Similarly, for the $p$-typical Witt vectors we
  have
  \[W_{\infty, p}(R) \cong W_{\mathbb{Z}_p}(R; R)\]
  and for the $n$-truncated $p$-typical Witt vectors we have
  \[W_{n+1, p}(R) \cong W_{C_{p^{n}}}(R; R) \text{.}\]
  These isomorphisms respect the Frobenius and Verschiebung operators.
\end{proposition}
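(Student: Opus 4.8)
The plan is to assemble this statement from the comparison results already established, since the real work of matching ghost maps has been done in the preceding propositions. For the first isomorphism, recall from \cite{dotto_witt_2022} that for $R$ commutative there is a ring isomorphism $W_S(R; R) \cong W_S(R)$, compatible with the ghost maps and with the multiplicative structure on $W_S(R; R)$ of that paper. Composing with the isomorphism $W^S_{\hat{\mathbb{Z}}}(R; M) \cong W_S(R; M)$ of Proposition~\ref{prop:generalise_dknp_truncated}, which by Proposition~\ref{prop:generalise_dknp} respects both the ghost maps and the monoidal structure, yields a natural isomorphism $W^S_{\hat{\mathbb{Z}}}(R; R) \cong W_S(R)$ of abelian groups. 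To see that it is a ring isomorphism, note that the ring structure on the left coming from Remark~\ref{rem:multiplication} is built from the external product $\star$ and the multiplication $\mu_R$, and the ring structure on $W_S(R; R)$ is built in exactly the same way from the external product of \cite{dotto_witt_2022}; since the two external products agree under the isomorphism, so do the induced ring structures.

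Alternatively one can argue directly via ghost maps, mirroring the proof of Proposition~\ref{prop:generalise_burnside}. Because $\hat{\mathbb{Z}}$ is abelian we have $\can{S} = S$, and Lemma~\ref{lem:ring_case_set_iso} identifies the underlying space of $W^S_{\hat{\mathbb{Z}}}(R; R)$ with $\prod_{i \in S} R$, which is the underlying set of $W_S(R)$ from \cite{hesselholt_big_2015}. Under this identification the ghost map of Definition~\ref{def:ghost_map} becomes the classical ghost map, and the ring structure of Remark~\ref{rem:multiplication} makes each ghost component a ring homomorphism; for a free ring $T$ the ghost map is injective, so this ring structure is forced to coincide with that of $W_S(T)$. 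Since $R \mapsto W^S_{\hat{\mathbb{Z}}}(R; R)$ and $R \mapsto W_S(R)$ are both reflexive coequaliser-preserving functors $\text{CRing} \to \text{CRing}$ agreeing on free rings, Remark~\ref{rem:finite_module_kan} extends this to a ring isomorphism for all $R$.

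For the $p$-typical statements, combine the first part with Proposition~\ref{prop:generalise_dknp_p}: putting $M = R$ there gives $W_{C_{p^n}}(R; R) \cong W_{n+1, p}(R; R)$ and $W_{\mathbb{Z}_p}(R; R) \cong W_{\infty, p}(R; R)$, and the classical identifications $W_{n+1, p}(R; R) \cong W_{n+1, p}(R)$, $W_{\infty, p}(R; R) \cong W_{\infty, p}(R)$ from \cite{dotto_witt_2022} then finish the job. Equivalently, $W_{\mathbb{Z}_p}(R; R) \cong W^S_{\hat{\mathbb{Z}}}(R; R) \cong W_S(R) = W_{\infty, p}(R)$ for $S$ the truncation set of $p$-power subgroups, using the first part directly; and similarly in the truncated case.

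Finally, the compatibility with Frobenius and Verschiebung is handled exactly as in Proposition~\ref{prop:generalise_burnside}. When $M = R$ the tensor powers $R^{\otimes_R G/H}$ are canonically isomorphic to $R$, so the isomorphism $\delta$ of Proposition~\ref{prop:generalise_dknp_operators} (equivalently of Lemma~\ref{lem:HGHHiso}) becomes canonical and our operators $F^H_K$, $V^H_K$ can be read directly as maps between the rings $W_?(R)$. Being defined as the unique lifts of explicit maps on ghost components (Section~\ref{sec:operators}), and those maps on ghost components agreeing with the ones underlying the classical Frobenius and Verschiebung, the operators coincide in the free case by injectivity of the ghost map, hence for all $R$ by Remark~\ref{rem:finite_module_kan}. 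I expect the only point that needs any care is this last bit of bookkeeping with the canonical identifications $R^{\otimes_R G/H} \cong R$; it uses no new ideas beyond those already carried out for the $G$-typical ring case, so there is no genuine obstacle.
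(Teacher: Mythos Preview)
Your proposal is correct and matches the paper's own proof, which simply notes that these are special cases of Proposition~\ref{prop:generalise_dknp_truncated} and Proposition~\ref{prop:generalise_dknp_p}, or alternatively follow from the standard uniqueness argument analogous to Proposition~\ref{prop:generalise_burnside}. You have elaborated both of these routes in more detail than the paper does, but the content is the same.
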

\begin{proof}
  These are special cases of Proposition~\ref{prop:generalise_dknp_truncated}
  and Proposition~\ref{prop:generalise_dknp_p}, or follows from standard
  uniqueness results analogously to
  Proposition~\ref{prop:generalise_burnside}.
\end{proof}

\section{Isotropy separation and Mackey functors} \label{sec:isotropy_and_mackey}

The operators of the usual $p$-typical Witt vectors satisfy various identities and exact
sequences. In this section we will consider the
analogues for our construction, and see how these recall the structure of equivariant stable homotopy groups.

We will see later that when $X$ is a connective spectrum we have $\pi_0(N_{\{e\}}^G X)^{\Phi V} \cong
\pi_0(X^{\wedge G/V}) \cong (\pi_0 X)^{\otimes_{\mathbb{Z}} G/V}$.
One perspective on the Witt vector computation is that it's an attempt to
recover information about the norm based on the geometric fixed points---indeed in
some sense $W_G(\mathbb{Z}; \pi_0 X)$ is built from copies of $(\pi_0
X)^{\otimes_\mathbb{Z} G/V}$ for varying $V$. In order
to gain some leverage on how a spectrum is related to its geometric fixed
points we use a technique called isotropy separation. After recalling isotropy
separation of spectra and the corresponding exact sequence of zeroth homotopy groups, we describe analogous ideas for studying Mackey functors. We then prove that the Witt
vectors together with their operators define Mackey functors, and find an
exact sequence relating different truncations, letting us understand the ``isotropy
separation'' of these Mackey functors.

We also continue our study of the monoidal structure of the Witt vectors,
showing that the untruncated Witt vectors give a strong symmetric monoidal
functor $\text{Mod} \to \text{Mack}_G(\text{Ab})$.

\subsection{Isotropy separation of spectra}

In this section we recall the technique of isotropy separation in order to
establish notation and collect together the required results. All of these results
appear in or can be easily derived from standard references in equivariant
stable homotopy theory \cite{mandell_equivariant_2002, hill_nonexistence_2016, schwede_lectures_2023}.

Let $G$ be a finite group, $H$ a subgroup of $G$ and $S$ a truncation set for $H$. Define a family of subgroups
\[\mathcal{F}(S) = \{U \le H \mid U \not\in S\}\text{.}\]
Recall that the classifying $H$-space
\[E \mathcal{F}(S)\]
has $(E \mathcal{F}(S))^U$ contractible for $U \in \mathcal{F}(S)$ and empty for $U
\not\in \mathcal{F}(S)$. Defining
\[{\tilde{E} \mathcal{F}(S)}\]
to be the
cofibre of the based map $E \mathcal{F}(S)_{+} \to S^0$, we see that
$(\tilde{E} \mathcal{F}(S))^U$ is homotopy equivalent to $S^0$ for $U \in S$ and
contractible otherwise. The cofibre sequence
\[E \mathcal{F}(S)_{+}
  \to S^0 \to {\tilde{E} \mathcal{F}(S)}\]
is called the isotropy separation sequence.

Let $Y$ be an $H$-spectrum. One way to define the geometric fixed points is as
follows.

\begin{definition}[Geometric fixed points]
  The geometric fixed points $Y^{\Phi H}$ of $Y$ are defined to be the spectrum
  \[(Y \wedge {\tilde{E} \mathcal{F}(\{H\})})^H \text{.}\]
\end{definition}

Given truncation sets $S' \subseteq S$ there is a canonical map $\tilde{E}
\mathcal{F}(S) \to {\tilde{E} \mathcal{F}(S')}$, constructed by applying
$\tilde{E}\mathcal{F}(S) \wedge {(-)}$ to the map
$S^0 \to \tilde{E} \mathcal{F}(S')$ from the isotropy separation sequence (note
$\tilde{E}\mathcal{F}(S) \wedge \tilde{E}\mathcal{F}(S') \simeq
\tilde{E}\mathcal{F}(S')$ and $\tilde{E}\mathcal{F}(S)$ is an idempotent).

\begin{definition}[Truncation map] \label{def:truncation}
The canonical map $\tilde{E}\mathcal{F}(S) \to \tilde{E}\mathcal{F}(S')$ induces
a map of spectra
\[R_{S'} : Y \wedge \tilde{E}\mathcal{F}(S) \to Y \wedge
  \tilde{E}\mathcal{F}(S') \text{.}\]
This induces maps of fixed points and maps of equivariant homotopy groups, which we will also denote $R_{S'}$.
\end{definition}

Note that given $S'' \subseteq S' \subseteq S$ we have that $R_{S''}R_{S'} = R_{S''}$
as maps $Y \wedge \tilde{E}\mathcal{F}(S) \to Y \wedge \tilde{E}\mathcal{F}(S'')$,
since the composition ${\tilde{E} \mathcal{F}(S)} \to \tilde{E} \mathcal{F}(S') \to {\tilde{E} \mathcal{F}(S'')}$ is the same as the
canonical map ${\tilde{E} \mathcal{F}(S)} \to \tilde{E} \mathcal{F}(S'')$.

Now suppose $Y$ is a $G$-spectrum but $S$ is a truncation set for the
subgroup $H$.
The $H$-spectrum $Y \wedge \tilde{E}\mathcal{F}(S)$ comes
equipped with a conjugation action of $H$ on fixed point spectra. However the action of $G$ on
$Y$ also induces more general conjugation maps $c_g$ for all $g \in G$.

\begin{definition}[Conjugation map] \label{def:conjugation}
  There is a conjugation map $c_g$ defined by the composite
  \[(Y \wedge \tilde{E}\mathcal{F}(S))^H \cong (c_g^\ast Y \wedge
    \tilde{E}\mathcal{F}(\prescript{g}{}{S}))^{\prescript{g}{}{H}} \cong
    (Y \wedge \tilde{E}\mathcal{F}(\prescript{g}{}{S}))^{\prescript{g}{}{H}}\]
  where the first isomorphism comes from the inner automorphism $c_g : G
  \to G$ given by conjugation by $g$, and the second isomorphism is induced by the
  left action of $g$ on $Y$.
\end{definition}

These
interact as you would expect with transfers, restrictions and truncation maps: we have $\text{tr}^{\prescript{g}{}{L}}_{\prescript{g}{}{K}} \,
c_g = c_g \, \text{tr}^L_K$,
$\text{res}^{\prescript{g}{}{L}}_{\prescript{g}{}{K}} \, c_g = c_g \, \text{res}^L_K$ and
$R_{\prescript{g}{}{S'}} \, c_g = c_g \, R_{S'}$.

We will need to study the homotopy groups of $(Y \wedge \tilde{E} \mathcal{F}(S))^H$. When $Y$ is connective, we can use the
isotropy separation sequence to obtain an exact sequence of zeroth homotopy groups.

\begin{lemma} \label{lem:isotropy}
  Let $K$ be a subgroup of $H$. Starting with the $H$-truncation set $S$, we get an $H$-truncation set
  \[S \setminus K \coloneqq \{U \in S \mid \text{$U$ is not subconjugate to $K$}\} \text{,}\]
  and a $K$-truncation set
  \[S\!\!\mid_K \coloneqq \{U \in S \mid U \le K\}\text{.}\]
  Then for $Y$ a connective $H$-spectrum, we get an exact sequence of homotopy groups
  \[\pi^K_0(Y \wedge {\tilde{E} \mathcal{F}(S \!\!\mid_K)}) \xrightarrow{\text{tr}_K^H} \pi^H_0(Y \wedge
    {\tilde{E} \mathcal{F}(S)}) \xrightarrow{R_{S \setminus K}} \pi^H_0(Y \wedge \tilde{E} \mathcal{F}(S \setminus K)) \to 0 \text{.}\]
\end{lemma}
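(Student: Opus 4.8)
The plan is to realize the three-term sequence as a fragment of the long exact homotopy sequence of an isotropy-separation cofibre sequence built from a suitable nested pair of families. I would first identify the families involved: writing $\mathcal{C}$ for the family of open subgroups of $H$ subconjugate to $K$, one checks directly that $\mathcal{F}(S\setminus K)=\mathcal{F}(S)\cup\mathcal{C}$. Using the standard identities $\tilde{E}(\mathcal{F}_1\cup\mathcal{F}_2)\simeq\tilde{E}\mathcal{F}_1\wedge\tilde{E}\mathcal{F}_2$ and $\text{res}^H_K\tilde{E}\mathcal{F}(S)\simeq\tilde{E}\mathcal{F}(S\mid_K)$ (each holding because both sides are $\tilde{E}$ of the same family, with the same fixed-point data), I would smash the isotropy separation sequence $E\mathcal{C}_+\to S^0\to\tilde{E}\mathcal{C}$ with $Y\wedge\tilde{E}\mathcal{F}(S)$ to obtain a cofibre sequence of $H$-spectra
\[ P\xrightarrow{\ \alpha\ }Y\wedge\tilde{E}\mathcal{F}(S)\xrightarrow{\ R_{S\setminus K}\ }Y\wedge\tilde{E}\mathcal{F}(S\setminus K),\qquad P:=Y\wedge\tilde{E}\mathcal{F}(S)\wedge E\mathcal{C}_+, \]
once one checks that the second map is homotopic to the truncation map of Definition~\ref{def:truncation} (any two $H$-maps $\tilde{E}\mathcal{F}(S)\to\tilde{E}\mathcal{F}(S\setminus K)$ agree up to homotopy, the target being $\tilde{E}$ of a family). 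Taking $H$-fixed points and passing to the long exact sequence of homotopy groups then drives the argument.

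Surjectivity of $R_{S\setminus K}$ on $\pi_0^H$ follows as soon as one knows $P$ is connective: $P$ is $Y$ smashed with the (suspension spectrum of the) based $H$-CW complex $\tilde{E}\mathcal{F}(S)\wedge E\mathcal{C}_+$, and smashing a connective $H$-spectrum with a based $H$-CW complex preserves connectivity, since it is built cell by cell from pieces $\text{Ind}^H_U$ of connective spectra and induction preserves connectivity. Hence $\pi^H_{-1}P=0$, the long exact sequence gives surjectivity, and it also identifies $\ker(R_{S\setminus K})_\ast$ with $\operatorname{im}(\alpha_\ast)$; so the whole lemma reduces to proving $\operatorname{im}(\alpha_\ast)=\operatorname{im}(\text{tr}^H_K)$.

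For that identification I would choose an $H$-CW model of $E\mathcal{C}$ whose $0$-skeleton is the single orbit $H/K$ and whose remaining cells have orbit type $H/U$ with $U$ subconjugate to $K$ (possible because $(E\mathcal{C})^K\neq\emptyset$). Smashing the skeletal filtration with $Y\wedge\tilde{E}\mathcal{F}(S)$ exhibits $P$ as a filtered colimit of subspectra whose successive quotients, for $n\ge 1$, are wedges of $n$-fold suspensions of spectra of the form $\text{Ind}^H_U\text{res}^H_U(Y\wedge\tilde{E}\mathcal{F}(S))$, all connective; hence $\pi_0^H$ of the bottom stage $\text{Ind}^H_K\text{res}^H_K(Y\wedge\tilde{E}\mathcal{F}(S))$ surjects onto $\pi_0^H P$. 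The composite of the bottom-stage inclusion with $\alpha$ is the counit $\text{Ind}^H_K\text{res}^H_K(Y\wedge\tilde{E}\mathcal{F}(S))\to Y\wedge\tilde{E}\mathcal{F}(S)$ (since $H/K\hookrightarrow E\mathcal{C}$ followed by the collapse $E\mathcal{C}\to S^0$ is the collapse $H/K\to S^0$), and under the Wirthm\"uller isomorphism $\pi_0^H\text{Ind}^H_K(-)\cong\pi_0^K(-)$ together with $\text{res}^H_K(Y\wedge\tilde{E}\mathcal{F}(S))\simeq\text{res}^H_K Y\wedge\tilde{E}\mathcal{F}(S\mid_K)$ this counit induces exactly the transfer $\text{tr}^H_K$. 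Therefore $\operatorname{im}(\alpha_\ast)=\operatorname{im}(\text{tr}^H_K)$, and feeding this into the long exact sequence produces the exact sequence of the lemma.

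I expect the image computation to be the main obstacle. One must fix a model of $E\mathcal{C}$ with the right cell structure, then check that only the bottom stage contributes to $\pi_0^H P$ (which is exactly where connectivity of $Y$ enters) and that this contribution is precisely the transfer. If one prefers not to insist that the $0$-skeleton of $E\mathcal{C}$ be a single orbit, the same argument still runs with a $0$-skeleton that is a disjoint union of orbits $H/U$ with $U$ subconjugate to $K$, at the cost of invoking transitivity of transfers and their compatibility with conjugation to see that each such contribution already lies in $\operatorname{im}(\text{tr}^H_K)$. Everything else is formal manipulation of isotropy separation sequences and standard change-of-groups facts (projection formula, Wirthm\"uller isomorphism).
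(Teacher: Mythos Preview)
Your proof is correct and follows essentially the same approach as the paper: both smash the isotropy separation sequence for the family $\mathcal{C}$ of subgroups subconjugate to $K$ with $Y\wedge\tilde{E}\mathcal{F}(S)$ and read off the exact sequence from the resulting cofibre sequence. The only difference is in identifying the image of the first map---the paper computes $\pi_0^H(Y\wedge\tilde{E}\mathcal{F}(S)\wedge E\mathcal{C}_+)$ as a colimit over the orbit category $\mathcal{O}_{K,H}$ (citing L\"uck) and uses that $H/K$ is terminal there, whereas you use a skeletal filtration of $E\mathcal{C}$ with $H/K$ as $0$-skeleton; both are standard ways of encoding that $K$ is maximal in $\mathcal{C}$.
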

\begin{proof}
  Given a subgroup $K \le H$, define
  \[\mathcal{F}(K, H) = \{U \le H \mid \text{$U$ is subconjugate to $K$}\} \text{.}\]

  Isotropy separation gives a cofibre sequence of $H$-spectra
  \[
    (Y \wedge {\tilde{E} \mathcal{F}(S)} \wedge E \mathcal{F}(K, H)_{+})^H \to
    (Y \wedge {\tilde{E} \mathcal{F}(S)})^H \to
    (Y \wedge {\tilde{E} \mathcal{F}(S)} \wedge \tilde{E} \mathcal{F}(K, H))^H \text{.}
  \]
  Considering fixed point spaces shows that
  \[{\tilde{E} \mathcal{F}(S)} \wedge \tilde{E} \mathcal{F}(K, H) \simeq
    {\tilde{E} \mathcal{F}(S \setminus K)} \text{,}\]
  so we get a cofibre sequence
  \[(Y \wedge {\tilde{E} \mathcal{F}(S)}) \wedge E \mathcal{F}(K, H)_{+})^H \to
    (Y \wedge {\tilde{E} \mathcal{F}(S)})^H \xrightarrow{R_{S \setminus K}} (Y \wedge
    {\tilde{E} \mathcal{F}(S \setminus K)})^H \text{.}\]

  When $Y$ is connective, the associated long exact sequence of homotopy groups ends with
  \[\pi^H_0(Y \wedge {\tilde{E} \mathcal{F}(S)} \wedge E \mathcal{F}(K, H)_{+}) \to
    \pi^H_0(Y \wedge {\tilde{E} \mathcal{F}(S)})
    \xrightarrow{R_{S \setminus K}} \pi^H_0(Y \wedge {\tilde{E} \mathcal{F}(S \setminus K)}) \to 0\]
  Let $\mathcal{O}_{K, H}$ denote the full subcategory of the orbit category
  $\mathcal{O}_H$ with objects $H/J$ for $J \in \mathcal{F}(K, H)$.
  We can compute
  \begin{align*}\pi^H_0(Y \wedge {\tilde{E} \mathcal{F}(S)} \wedge E \mathcal{F}(K,
    H)_{+}) &\cong \underset{H/J \in \mathcal{O}_{K, H}}{\text{colim}}\ \pi_0^H(Y
    \wedge {\tilde{E} \mathcal{F}(S)}
    \wedge H/J)\\
    &\cong \underset{H/J \in \mathcal{O}_{K, H}}{\text{colim}}\ \pi^J_0(Y \wedge
    {\tilde{E} \mathcal{F}(S)})\end{align*}
  where the first isomorphism comes from using the model of $E\mathcal{F}(K,
  H)_{+}$ given by Lemma~2.2 of \cite{luck_completion_2001}, and the second
  isomorphism uses the Wirthm\"uller isomorphism.
  Since $H/K$ is a weakly terminal object of $\mathcal{O}_{K, H}$, we see that $\pi^H_0(Y
  \wedge {\tilde{E} \mathcal{F}(S)} \wedge E \mathcal{F}(K, H)_{+})$ is a
  quotient of $\pi^K_0(Y \wedge {\tilde{E} \mathcal{F}(S)}) \cong \pi^K_0(Y
  \wedge {\tilde{E} \mathcal{F}(S \!\!\mid_K)})$.

  So we get the desired exact sequence. One can check that the %
  resulting map $\pi^K_0(Y \wedge {\tilde{E} \mathcal{F}(S \!\!\mid_K)}) \to \pi^H_0(Y \wedge
  {\tilde{E} \mathcal{F}(S)})$ is the usual transfer coming from the Mackey
  functor structure on equivariant stable homotopy groups.%
\end{proof}
\begin{remark} \label{rem:isotropy_orbits_form}
Note that for $h \in N_H(K)$ we have $\text{tr}^H_K c_h = c_h \text{tr}^H_K = \text{tr}^H_K$, since $c_h$ is the identity on $\pi^H_0(Y \wedge \tilde{E}F(S))$. So if we wish we
can replace the first term of the sequence with $\left(\pi^K_0(Y \wedge \tilde{E}\mathcal{F}(S\!\!\mid_K))\right)_{N_H(K)}$, where orbits are taken with respect to the
Weyl group action induced by the conjugation maps.
\end{remark}

\subsection{Isotropy separation of Mackey functors} \label{sec:mackey_isotropy}

Recall that a $G$-Mackey functor $\underline{M}$ can be defined as an assignment of an abelian
group $M(H)$ to each transitive $G$-set $G/H$, together with maps
$\text{tr}_K^H$, $\text{res}_K^H$, $c_g$ between these groups. These maps
satisfy certain axioms; see standard references \cite{bouc_green_1997,
  webb_guide_2000} for the axioms and other ways to define Mackey functors.\footnote{Note
  Webb writes $I^H_K$ for $\text{tr}^H_K$ and $R^H_K$ for $\text{res}^H_K$.} We
will work with the box product of Mackey functors defined by Lewis
in \cite{lewis_theory_nodate}, see also \cite{luca_algebra_1996} for descriptions
of the box product in terms of other definitions of Mackey functors.

Given a $G$-spectrum
$Y$ the assignment $G/H \mapsto \pi^H_0(Y)$ has a canonical Mackey functor
structure, and we denote the resulting Mackey functor by $\underline{\pi}_0(Y)$.
As a result, the isotropy separation techniques we discussed for spectra have
purely algebraic analogues for Mackey functors. The contents of this subsection
is well-known, but we recall it in detail in order to introduce notation compatible with the rest of the paper.

\begin{definition} \label{def:mackey_truncation}
  Let $\underline{M}$ be a $G$-Mackey functor and $S$ a truncation set for $H$.
  We define the $S$-truncation of $\underline{M}$ to be the $H$-Mackey functor
  \[\underline{M}^S \coloneqq \underline{M} \Osq \underline{\pi}_0(\Sigma^\infty
    \tilde{E}\mathcal{F}(S))\]
  where $\Osq$ denotes the box product of Mackey functors and we implicitly
  take the underlying $H$-Mackey functor of $\underline{M}$.
\end{definition}

\begin{remark}
  For $K \le H$, the underlying $K$-Mackey functor of $\underline{\pi}_0(\Sigma^\infty
  \tilde{E}\mathcal{F}(S))$ is isomorphic to $\underline{\pi}_0(\Sigma^\infty
  \tilde{E}\mathcal{F}(S \!\!\mid_K))$, so we have
  \[M^S(K) \cong M^{S \mid_K}(K) \text{.}\]
\end{remark}

\begin{remark} \label{rem:truncations_agree_spectra}
  A smash product of connective spectra corresponds to a box product on zeroth
  homotopy. So for any connective $G$-spectrum $Y$ we have
  \[
    (\underline{\pi}_0 Y)^S \coloneqq
    \underline{\pi}_0(Y) \Osq \underline{\pi}_0(\Sigma^{\infty} \widetilde{E}\mathcal{F}(S)) \cong
    \underline{\pi}_0(Y \wedge \widetilde{E}\mathcal{F}(S))
    \text{.}\]
  In particular we have
  \[\underline{M}^S \cong \underline{\pi}_0(H\underline{M} \wedge \widetilde{E}\mathcal{F}(S))\]
  where $H\underline{M}$ is the equivariant Eilenberg-MacLane spectrum.
\end{remark}

\begin{remark} \label{rem:truncation_box_with_witt}
  We can use tom Dieck splitting to compute $\underline{\pi}_0(\Sigma^\infty
  \tilde{E}\mathcal{F}(S))$.
  When $S$ is the set of all subgroups of $G$, we can model
  $\tilde{E}\mathcal{F}(S)$ by $S^0$ and so $\underline{\pi}_0(\Sigma^\infty
  \tilde{E}\mathcal{F}(S))$ is the Burnside Mackey functor; in general
  $\underline{\pi}_0(\Sigma^\infty \tilde{E}\mathcal{F}(S))$ is a quotient of
  the Burnside Mackey functor, where $\pi^K_0(\Sigma^\infty
  \tilde{E}\mathcal{F}(S))$ is the free abelian group whose generators
  correspond to $K$-conjugacy classes of subgroups of $K$ that are in $S$.
  This Mackey functor $\underline{\pi}_0 (\Sigma^\infty \tilde{E}\mathcal{F}(S))$ can alternatively be written as
  \[H/K \mapsto W^{S \mid_K}_{K \le H}(\mathbb{Z}; \mathbb{Z})\]
  with Mackey functor structure as we will describe in
  Section~\ref{sec:witt_mackey}.
\end{remark}

\begin{remark}\label{rem:mackey_truncation_conjugation}
We have truncation maps $R_{S'} : \underline{M}^S \to \underline{M}^{S'}$ and
conjugation maps $c_g : M^S(H) \to
M^{\prescript{g}{}{S}}(\prescript{g}{}{H})$ defined by applying
$\pi_0$ to the truncation and conjugation maps defined for spectra in
Definitions \ref{def:truncation} and \ref{def:conjugation}. Note when $g \in H$
these conjugation maps agree with the conjugation maps that are part of the
Mackey functor structure of $\underline{M}^S$.
\end{remark}

This construction gives a notion of geometric fixed points of a Mackey
functor.

\begin{definition}
  We define the $H$-geometric fixed points of a Mackey functor $\underline{M}$ by
  \[M^{\Phi H} \coloneqq M^{\{H\}}(H) \text{.}\]
\end{definition}

A special case of Lemma~\ref{lem:isotropy} gives us an exact sequence.

\begin{lemma} \label{lem:mackey_truncation_exact}
  Given $\underline{M}$ a $G$-Mackey functor, $S$ a truncation set for $H$ and
  $K \le H$, we have an exact sequence
  \[M^{S \mid_K}(K) \xrightarrow{\text{tr}^H_K} M^{S}(H) \xrightarrow{R_{S \setminus K}} M^{S \setminus K}(H) \to 0 \text{.}\]
\end{lemma}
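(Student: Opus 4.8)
The plan is to obtain this as the special case $Y = H\underline{M}$ of Lemma~\ref{lem:isotropy}, where $H\underline{M}$ denotes the $H$-equivariant Eilenberg--MacLane spectrum of the underlying $H$-Mackey functor of $\underline{M}$. Since $H\underline{M}$ is connective, Lemma~\ref{lem:isotropy} directly supplies an exact sequence
\[\pi^K_0(H\underline{M} \wedge \tilde{E}\mathcal{F}(S\mid_K)) \xrightarrow{\text{tr}^H_K} \pi^H_0(H\underline{M}\wedge\tilde{E}\mathcal{F}(S)) \xrightarrow{R_{S\setminus K}} \pi^H_0(H\underline{M}\wedge\tilde{E}\mathcal{F}(S\setminus K)) \to 0 \text{.}\]

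The first thing to do is to rewrite each term using Remark~\ref{rem:truncations_agree_spectra}, which identifies $\pi^H_0(H\underline{M}\wedge\tilde{E}\mathcal{F}(S)) \cong M^S(H)$ and $\pi^H_0(H\underline{M}\wedge\tilde{E}\mathcal{F}(S\setminus K)) \cong M^{S\setminus K}(H)$. For the first term one applies the same remark to the restriction $\text{res}^H_K H\underline{M}$, which is the Eilenberg--MacLane spectrum of the underlying $K$-Mackey functor of $\underline{M}$, to get $\pi^K_0(H\underline{M}\wedge\tilde{E}\mathcal{F}(S\mid_K)) \cong (\text{res}^H_K\underline{M})^{S\mid_K}(K) = M^{S\mid_K}(K)$. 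Substituting these identifications into the sequence above gives exactly the stated three-term sequence.

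The remaining point --- and essentially the only content beyond invoking Lemma~\ref{lem:isotropy} --- is to check that under these identifications the spectrum-level maps become the Mackey-functor operations $\text{tr}^H_K$ and $R_{S\setminus K}$ of Section~\ref{sec:mackey_isotropy}. For $R_{S\setminus K}$ this is immediate, since on both sides it is induced by the same canonical map $\tilde{E}\mathcal{F}(S)\to\tilde{E}\mathcal{F}(S\setminus K)$ and the isomorphisms of Remark~\ref{rem:truncations_agree_spectra} are natural in the spectrum. For the transfer, the last sentence of the proof of Lemma~\ref{lem:isotropy} already records that the first map in its exact sequence is the Mackey-functor transfer on equivariant homotopy groups, which corresponds under the identification to the transfer $\text{tr}^H_K$ between the truncated Mackey functors. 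I expect this bookkeeping to be routine; there is no substantive obstacle, since all of the homotopy-theoretic work has been isolated into Lemma~\ref{lem:isotropy} and Remark~\ref{rem:truncations_agree_spectra}.
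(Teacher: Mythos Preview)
Your proposal is correct and follows exactly the paper's approach: the paper's proof consists of the single sentence ``Apply Lemma~\ref{lem:isotropy} to the Eilenberg-MacLane spectrum $H\underline{M}$.'' Your write-up simply unpacks this, spelling out the identifications via Remark~\ref{rem:truncations_agree_spectra} and the matching of the maps, all of which the paper leaves implicit.
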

\begin{proof}
  Apply Lemma~\ref{lem:isotropy} to the Eilenberg-MacLane spectrum $H\underline{M}$.
\end{proof}

A map of $G$-spectra is a weak equivalence iff it induces weak equivalences on
all geometric fixed point spectra. We have a partial analogue for Mackey
functors: a map of Mackey functors is surjective if it induces surjections on all geometric
fixed points.

\begin{lemma} \label{lem:mackey_isotropy_surjective}
  Let $\alpha : \underline{M} \to \underline{N}$ be a map of $G$-Mackey functors,
  and $S$ a truncation set for $H$. Suppose that $\alpha$
  induces a surjection on geometric fixed points $M^{\Phi K} \to
  N^{\Phi K}$ for all $K \in S$. Then $\alpha$ induces a surjection
  $\underline{M}^S \to \underline{N}^S$.\
\end{lemma}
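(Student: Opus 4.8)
The plan is to reduce to a surjectivity statement at the top level and then induct on the size of the truncation set (which is finite, since $G$ is). Concretely, it suffices to prove the statement $(\ast)$: for every subgroup $H'\le G$ and every truncation set $S'$ for $H'$ such that $\alpha$ induces a surjection $M^{\Phi K}\to N^{\Phi K}$ for all $K\in S'$, the map $\alpha\colon M^{S'}(H')\to N^{S'}(H')$ is surjective. Granting $(\ast)$, I apply it to each subgroup $L\le H$ with the truncation set $S\mid_L$: since $S\mid_L\subseteq S$ the hypothesis of $(\ast)$ is inherited, and the natural isomorphism $\underline{M}^S(L)\cong \underline{M}^{S\mid_L}(L)$ from the remark following Definition~\ref{def:mackey_truncation} identifies $\alpha\colon M^S(L)\to N^S(L)$ with $\alpha\colon M^{S\mid_L}(L)\to N^{S\mid_L}(L)$. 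Thus $\alpha$ is surjective at every level $L\le H$, i.e.\ a surjection of Mackey functors.

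To prove $(\ast)$ I would induct on $|S'|$. If $S'=\emptyset$ then $\underline{M}^{\emptyset}=0=\underline{N}^{\emptyset}$ and there is nothing to check. Otherwise choose a subgroup $K$ that is minimal in $S'$ with respect to inclusion; then $S'\mid_K=\{K\}$, so $M^{S'\mid_K}(K)=M^{\{K\}}(K)=M^{\Phi K}$ and likewise for $N$. Moreover $S'\setminus K$ (as in Lemma~\ref{lem:mackey_truncation_exact}) is again a truncation set for $H'$, with $K\notin S'\setminus K$ and hence $|S'\setminus K|<|S'|$, and with $S'\setminus K\subseteq S'$. Applying Lemma~\ref{lem:mackey_truncation_exact} to $\underline{M}$ and to $\underline{N}$ with the subgroup $K\le H'$, and using naturality of the transfer and truncation maps in $\alpha$, yields a commutative diagram with exact rows
\[\begin{tikzcd}
  M^{\Phi K} \ar[r, "\text{tr}^{H'}_K"] \ar[d, "\alpha"] & M^{S'}(H') \ar[r, "R_{S'\setminus K}"] \ar[d, "\alpha"] & M^{S'\setminus K}(H') \ar[r] \ar[d, "\alpha"] & 0\\
  N^{\Phi K} \ar[r, "\text{tr}^{H'}_K"] & N^{S'}(H') \ar[r, "R_{S'\setminus K}"] & N^{S'\setminus K}(H') \ar[r] & 0
\end{tikzcd}\]
The left-hand vertical map is surjective by hypothesis (as $K\in S'$), and the right-hand vertical map is surjective by the inductive hypothesis applied to the smaller truncation set $S'\setminus K$, whose geometric-fixed-point hypotheses hold because $S'\setminus K\subseteq S'$.

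It then remains to run the standard ``four lemma'' chase to conclude that the middle vertical map is surjective: given $y\in N^{S'}(H')$, lift $R_{S'\setminus K}(y)$ through the surjection $\alpha$ and then through the surjection $R_{S'\setminus K}$ to obtain $x\in M^{S'}(H')$ with $R_{S'\setminus K}(\alpha(x))=R_{S'\setminus K}(y)$; by exactness of the bottom row, $y-\alpha(x)=\text{tr}^{H'}_K(n)$ for some $n\in N^{\Phi K}$; write $n=\alpha(m)$ using surjectivity on geometric fixed points, so that $y=\alpha\bigl(x+\text{tr}^{H'}_K(m)\bigr)$. This completes the induction and hence the proof. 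The only genuine input is the exact sequence of Lemma~\ref{lem:mackey_truncation_exact} together with the observation that a minimal subgroup $K\in S'$ contributes exactly the geometric fixed point group $M^{\Phi K}$; everything else is formal, so I do not anticipate a real obstacle — the one place to take care is the passage between $\underline{M}^S(L)$ and $\underline{M}^{S\mid_L}(L)$ in the reduction step.
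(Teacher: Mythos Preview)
Your proof is correct and follows essentially the same approach as the paper: reduce to surjectivity at the top level, induct on $|S'|$, and use the exact sequence of Lemma~\ref{lem:mackey_truncation_exact} together with the four lemma. Your choice of a \emph{minimal} $K\in S'$ is a mild simplification over the paper's version (which takes any proper $K\in S$ and invokes the inductive hypothesis on both $S\!\!\mid_K$ and $S\setminus K$), since it makes the left-hand term of the exact sequence equal to $M^{\Phi K}$ directly.
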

\begin{proof}
  It suffices to prove that under these hypotheses $\alpha : M^S(H) \to N^S(H)$
  is surjective (then for any $K \in S$, applying the same
  argument to the truncation set $S \!\!\mid_K$ shows that $\alpha: M^{S}(K) \to N^S(K)$
  is also surjective). We
  proceed by induction over the size of $S$, letting $H$ vary.

  When $S = \emptyset$ then
  $\underline{M}^S = \underline{N}^S = 0$. When $\abs{S} = 1$ we must have $S =
  \{H\}$. By definition $M^S(H) =
  M^{\Phi H}$ and by assumption $\alpha : M^{\Phi H} \to N^{\Phi H}$ is
  surjective.

  So suppose $\abs{S} > 1$, and that the statement is true for all smaller
  truncation sets. There must be some proper subgroup $K \le H$ with $K \in S$.
  Then $\alpha$ induces a map of exact sequences
  \[\begin{tikzcd}
    M^{S \mid_K}(K) \ar[r] \ar[d] &  M^{S}(H) \ar[r] \ar[d] & M^{S \setminus
      K}(H) \ar[r] \ar[d] & 0 \ar[d] \\
    N^{S \mid_K}(K) \ar[r] &  N^{S}(H) \ar[r] & N^{S \setminus K}(H) \ar[r] & 0 \text{.}
  \end{tikzcd}\]
By induction the maps $M^{S \mid_K}(K) \to N^{S \mid_K}(K)$ and $M^{S \setminus
  K}(H) \to N^{S \setminus K}(H)$ are surjective, since both involve truncation
sets that are strictly smaller than $S$. The rightmost vertical
map $0 \to 0$ is injective, so by the four lemma we deduce that the map $M^S(H) \to
N^S(H)$ is surjective. Hence the induction holds.
\end{proof}

\subsection{Mackey structure of Witt vectors} \label{sec:witt_mackey}

We begin by observing that the ghost groups and the Witt vectors form Mackey
functors. We also recall that the truncation and more general conjugation operators interact
well with this structure.

\begin{lemma} \label{lem:witt_mackey}
  Let $G$ be a finite group, $H$ a subgroup and $S$ a truncation set for $H$.
  The assignment
  \[H/K \mapsto \text{gh}^{S \mid_K}_{K \le H}(R; M)\]
  together with the $\tilde{V}$, $\tilde{F}$ and $\tilde{c}_g$ operators (as
  defined in Section~\ref{sec:operators}) gives an
  $H$-Mackey functor. In our standard notation for Mackey functor
  operations, $\tilde{V}$ corresponds to transfer $\text{tr}$, $\tilde{F}$ corresponds to
  restriction $\text{res}$ and $\tilde{c}_h$ corresponds to conjugation $c_h$.

  Similarly, for the Witt vectors
  \[H/K \mapsto W^{S \mid_K}_{K \le H}(R; M)\]
  together with the Verschiebung, Frobenius and conjugation operators is a
  Mackey functor. The ghost map $w$ is a map of Mackey functors.
\end{lemma}
\begin{proof}
  All the relevant identities were proved in
  Proposition~\ref{prop:fvc_identities}. Since the operators on Witt vectors
  were defined as lifts of the corresponding maps on the ghost group, the ghost
  map commutes with the Mackey functor structure.
\end{proof}

\begin{definition} \label{def:witt_mackey}
  Let $\underline{W}^S_G(R; M)$ denote the $H$-Mackey functor $H/K \mapsto W^{S \mid_K}_{K \le
    G}(R; M)$ with structure maps as defined in the above lemma, and let $\underline{\text{gh}}^S_G(R; M)$ denote $H/K \mapsto
  \text{gh}^{S \mid_K}_{K \le G}(R; M)$.
\end{definition}

\begin{remark} \label{rem:g_infinite}
Note all the identities hold for arbitrary $G$; we restrict to $G$ finite
  simply because Mackey functors are generally only defined for finite groups. We
  expect that when $G$ is infinite we get a $G$-Mackey profunctor in the sense
  of Kaledin \cite{kaledin_mackey_2022}, but we have not studied this further. Mackey
  profunctors are the homotopy groups of quasifinitely genuine $G$-spectra as
  described in \cite{krause_polygonic_2023}, so one would expect that our Witt vectors
  would compute the zeroth homotopy of a quasifinitely genuine $G$-spectrum
  version of the norm.
\end{remark}

\begin{remark}
  We already observed in Corollary~\ref{cor:witt_z_burnside} that $W_{H \le G}(\mathbb{Z};
  \mathbb{Z})$ is (for $G$ finite) the Burnside ring of $H$. In fact $\underline{W}_G(\mathbb{Z};
  \mathbb{Z})$ is the Burnside Mackey functor. More generally, let $Y$ be a finite
  set, then we can describe $\underline{W}_G(\mathbb{Z}; \mathbb{Z}(Y))$. Recall an alternative characterisation of Mackey functors: let the Burnside category
  $\mathcal{A}^G$ be the category of finite $G$-sets with morphisms given by
  equivalence classes of spans of $G$-equivariant maps, then a Mackey functor is
  an additive functor $\mathcal{A}^G \to \text{Ab}$. %
  Proposition~\ref{prop:witt_free} shows $W^S_{H \le G}(\mathbb{Z}; \mathbb{Z}(Y)) \cong \prod_{V \in \can{S}} \mathbb{Z}((Y^{\times
    G/V})_{N_H(V)})$ and describes the ghost map. Using this we can check that
  $\underline{W}_G(\mathbb{Z}; \mathbb{Z}(Y))$ is the Mackey functor
  represented by the $G$-set $Y^{\times G} \in \mathcal{A}^G$.
\end{remark}

By  Proposition~\ref{prop:trunc_commute} the Witt vector truncation maps
assemble into maps of Mackey functors. This gives a clash of notation between
the Witt vector truncation map $R_S$ as defined in
Proposition~\ref{prop:truncation} and the
Mackey functor truncation map $R_S$ as described in
Remark~\ref{rem:mackey_truncation_conjugation}. Similarly if $S$ is an
$H$-truncation set and $g \notin H$ then we have a clash in
notation between the Witt vector conjugation maps $c_g$ as defined in
Proposition~\ref{prop:conjugation} and the conjugation maps $c_g$
defined between truncated Mackey functors in Remark~\ref{rem:mackey_truncation_conjugation}.
In fact we will show that the truncated Witt vectors agree with Mackey functor
truncation and the truncation and conjugation maps are the same. We start by proving that the truncated Witt vectors satisfy the
usual exact sequences (here $G$ is not necessarily finite).

\begin{lemma} \label{lem:witt_exact}
  Let $K \le_o H \le_o G$, and let $S$ be a truncation set for $H$. Then
  \[W_{K \le G}^{S\mid_K}(R; M)_{N_H(K)} \xrightarrow{V_K^H} W_{H \le G}^S(R; M) \xrightarrow{R_{S \setminus K}} W_{H \le G}^{S \setminus K}(R; M) \to 0\]
  is an exact sequence of Hausdorff topological abelian groups, by which we mean that $\text{im}(V^H_K)
  = \text{ker}(R_{S \setminus K})$ and $R_{S/K}$ is a quotient map. Recall we use $S \!\setminus\! K$ to denote $\{U \in S \mid \text{$U$ is
    not subconjugate to $K$}\}$. Note that $V^H_K$ factors through the group of
  Weyl group orbits as in Remark~\ref{rem:isotropy_orbits_form}.
\end{lemma}
\begin{proof}

  A reflexive coequaliser of right exact sequences in $\text{Ab}_\text{Haus}$ is right
  exact (certainly this is true on underlying abelian groups, and it also holds in
  $\text{Ab}_\text{Haus}$ since reflexive coequalisers preserve quotient maps in
  the sense of Lemma~\ref{lem:space_coeq_quotient} and Corollary~\ref{cor:abgroup_coeq_quotient}).
  So considering a free resolution for $(R; M)$ shows that it suffices to consider the case of free
  coefficients $(T; Q)$. Using the computation of the Witt vectors with free
  coefficients in Proposition~\ref{prop:witt_free}, the sequence becomes
  \[\left( \prod_{V \in \can{S\mid_K}} (Q^{\otimes_T G/V})_{N_K(V)} \right)_{N_H(K)}
    \to \prod_{V \in \can{S}} (Q^{\otimes_T G/U})_{N_H(V)} \to \prod_{V \in
      \can{S \setminus K}} (Q^{\otimes_T G/V})_{N_H(V)} \to 0 \text{.}\]
  Under this isomorphism the Verschiebung and truncation maps become maps
  we described in Remarks~\ref{rem:verschiebung_witt_free} and
  \ref{rem:truncation_witt_free}. The truncation is the obvious projection map,
  and so is a topological quotient. An element $n$ is in the image of the
  Verschiebung iff $n_V$ is only non-zero for $V$ subconjugate to $K$, which
  holds precisely when it is in the
  kernel of the truncation map. So the sequence is exact.
\end{proof}

The exact sequence is most useful in the following special case.

\begin{corollary} \label{cor:witt_exact_minimal}
  When $K$ is a minimal element of $S$ (i.e.\ $S$ does not
  contain any subgroup strictly subconjugate to $K$) then we have an exact sequence
  \[\left(M^{\otimes_R G/K}\right)_{N_H(K)} \xrightarrow{V_K^H \tau_{G/K}} W_{H \le G}^S(R; M) \xrightarrow{R_{S \setminus K}} W_{H \le G}^{S \setminus K}(R; M) \to 0\]
\end{corollary}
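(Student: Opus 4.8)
The plan is to deduce this directly from Lemma~\ref{lem:witt_exact} by identifying the leftmost term. First I would use the minimality hypothesis to pin down the restricted truncation set: if $U \le K$ with $U \in S$ and $U \neq K$, then $U$ is a proper subgroup of $K$, hence strictly subconjugate to $K$ (a conjugate of $K$ contained in $K$ must equal $K$ by order considerations), contradicting the assumption that $S$ contains no subgroup strictly subconjugate to $K$. Therefore $S\!\!\mid_K = \{U \le K \mid U \in S\} = \{K\}$.

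Next I would invoke the computations already in hand for the one-element truncation set. By Lemma~\ref{lem:witt_trivial_cases} the ghost map identifies $W^{\{K\}}_{K \le G}(R; M)$ with $M^{\otimes_R G/K}$, and by Proposition~\ref{prop:witt_teichmuller_properties}(iv) the Teichm\"uller map $\tau_{G/K} : M^{\otimes_R G/K} \to W^{\{K\}}_{K \le G}(R; M)$ is precisely the inverse additive isomorphism, independent of any choice of coset representatives. Since $\prescript{g}{}{K} = K$ for $g \in N_H(K)$, Proposition~\ref{prop:witt_teichmuller_properties}(iii) says $\tau_{G/K}$ is equivariant for the Weyl group $N_H(K)/K$, so it descends to an isomorphism of Hausdorff topological abelian groups
\[(M^{\otimes_R G/K})_{N_H(K)} \xrightarrow{\ \cong\ } W^{\{K\}}_{K \le G}(R; M)_{N_H(K)} = W^{S\mid_K}_{K \le G}(R; M)_{N_H(K)} \text{.}\]

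Finally I would substitute this isomorphism into the right exact sequence of Lemma~\ref{lem:witt_exact},
\[W^{S\mid_K}_{K \le G}(R; M)_{N_H(K)} \xrightarrow{V^H_K} W^S_{H \le G}(R; M) \xrightarrow{R_{S \setminus K}} W^{S \setminus K}_{H \le G}(R; M) \to 0 \text{,}\]
precomposing the Verschiebung with $\tau_{G/K}$; the composite $V^H_K \circ \tau_{G/K}$ is exactly the map appearing in the statement. This gives the claimed sequence, with exactness and the fact that $R_{S \setminus K}$ is a quotient map both already contained in Lemma~\ref{lem:witt_exact}. I do not expect any real obstacle here: the only things requiring attention are the combinatorial identification $S\!\!\mid_K = \{K\}$ and the verification that $\tau_{G/K}$ is genuinely the isomorphism sitting over the source of $V^H_K$ (including its $N_H(K)$-equivariance), and both follow immediately from results established earlier in the paper.
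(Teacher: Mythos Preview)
Your proposal is correct and follows essentially the same approach as the paper's own proof, which is a terse two-sentence argument noting that $S\!\!\mid_K = \{K\}$ and that $\tau_{G/K} : M^{\otimes_R G/K} \to W^{\{K\}}_{K \le G}(R; M)$ is an additive isomorphism. You have simply filled in the details the paper leaves implicit, including the justification of $S\!\!\mid_K = \{K\}$ from minimality and the $N_H(K)$-equivariance of $\tau_{G/K}$.
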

\begin{proof}
  Note $S\!\!\mid_K = \{K\}$, and we have an additive isomorphism $\tau_{G/K} :
  M^{\otimes_R G/K} \to W^{\{K\}}_{K \le G}(R; M)$.
\end{proof}

The exact sequence is enough to prove that truncation of Witt vectors agrees with truncation
of Mackey functors, as explained in the following lemma.

\begin{lemma} \label{lem:witt_mackey_truncation}
  Let $G$ finite, $H \le G$ and $S$ a truncation set for $H$. Then the
  $H$-Mackey functor $\underline{W}^{S}_{G}(R; M)$ is the $S$-truncation (as
  defined in Definition~\ref{def:mackey_truncation}) of the $G$-Mackey functor $\underline{W}_G(R; M)$.
  The Witt vector truncation maps $R_{S}$ as defined in Proposition~\ref{prop:truncation} agree with those
  corresponding to truncation of Mackey functors from
  Remark~\ref{rem:mackey_truncation_conjugation}, and the conjugation maps $c_g$
  as defined in Proposition~\ref{prop:conjugation} agree with those of Remark~\ref{rem:mackey_truncation_conjugation}.
\end{lemma}
\begin{proof}
We have $W_{K \le G}(R; M) \coloneqq W^{S_0 \mid_K}_{K \le G}(R; M)$ when $S_0$ is the set of all
subgroups of $H$. The exact sequence of Lemma~\ref{lem:mackey_truncation_exact} is
enough to compute the $S$-truncation of a Mackey functor as a quotient of the
untruncated version. Since the truncated Witt
vectors satisfy analogous exact sequences, we see inductively that the two
notions of truncation must agree.

More precisely, we use the exact sequences to prove by induction on the number of subgroups omitted from
a $K$-truncation set $S'$ (letting both $S'$ and $K$ vary) that the quotients $W_{K \le G}(R; M) \xrightarrow{R_{S'}} W^{S'}_{K \le G}(R; M)$ and $W_{K \le G}(R; M) = \underline{W}_G(R;
M)(K) \xrightarrow{R_{S'}} (\underline{W}_G(R; M))^{S'}(K)$ are isomorphic.
Note for the first quotient we use the Witt truncation map defined in
Section~\ref{sec:operators}, and for the second we use the Mackey truncation map
defined in Section~\ref{sec:mackey_isotropy}. For both notions of truncation
the untruncated operators descend to give the Mackey structure on the quotient, and so
all the Mackey structure also agrees.

By Proposition~\ref{prop:trunc_commute} the Witt vector conjugation operators
commute with truncation, and similarly for the Mackey functor conjugation operators.
The conjugation operators agree on untruncated Witt vectors by the definition of
the Mackey structure on the Witt vectors, so they must also agree on the
truncations (seen as quotients of the untruncated Witt vectors).
\end{proof}

\begin{remark}
The $S$-truncation of the ghost group Mackey functor $G/K \mapsto \text{gh}_{K
  \le G}(R; M)$ does not in general agree with the Mackey functor $H/K \mapsto \text{gh}^{S
  \mid_K}_{K \le G}(R; M)$. To see this, consider the analogue of the
sequence in Lemma~\ref{lem:witt_exact} for the ghost group with $\tilde{V}^H_K$
and $\tilde{R}_{S \setminus K}$; in the case where $K$ is minimal in $S$, the
sequence is exact iff $\text{tr}_{K}^{N_H(K)} : (M^{\otimes_R G/K})^K \to
(M^{\otimes_R G/K})^{N_H(K)}$ is surjective, which may not be true.
\end{remark}

\begin{remark}
  By Remark~\ref{rem:truncation_box_with_witt} we have
  \[\underline{W}^S_G(R; M) \cong \underline{W}^S_G(\mathbb{Z}; \mathbb{Z})
    \Osq \underline{W}_G(R; M) \text{.}\]
\end{remark}

Suppose $T$ is a ring whose additive group is free, and let $Q = T(Y)$ be a free
$T$-module. Let $S$ be a finite truncation set for $H$. Then $\text{gh}^S_{H \le
  G}(T; Q)$ is free abelian, so the subgroup $\text{im}(w) \cong W^S_{H \le G}(T;
Q)$ must also be free abelian. We can show that under these conditions
the exact sequence of Corollary~\ref{cor:witt_exact_minimal} is a split short exact sequence, and use this
to describe a basis (generalising Proposition~1.14 in \cite{dotto_witt_2025}).
Note when $(T; Q)$ is free we already have a computation of $W^S_{H \le G}(T;
Q)$ in Proposition~\ref{prop:witt_free}, but the following lemma gives a
slightly different isomorphism.

\begin{lemma} \label{lem:witt_ab_group_free_coeff}
  Let $T$ be a ring whose additive
  group is free, and let $Q = T(Y)$ be a free $T$-module.
  Let $S$ be a finite
  truncation set for $H$. Then we have an
  isomorphism of abelian groups
  \[W^S_{H \le G}(T; Q) \cong \bigoplus_{V \in \can{S}} \left( Q^{\otimes_T G/V} \right)_{N_H(V)} \text{.}\]
  Observe that $\left(Q^{\otimes_T G/V}\right)_{N_H(V)} \cong T\left((Y^{\times
      G/V})_{N_H(V)}\right)$ is free as an abelian group.

  Moreover we can describe a basis somewhat explicitly. Let $\{x_i\}$ be
  elements of $Q^{\otimes_T G/K}$ that represent a basis for
  $\left( Q^{\otimes_T G/K} \right)_{N_H(K)}$. Then under the isomorphism the corresponding component
  of $W^S_{H \le G}(T; Q)$ has basis $\{V^H_K \tau_{G/K}(x_i)\}$. %
\end{lemma}
\begin{proof}
  We proceed by induction on the size of $S$. If $S = \emptyset$ then the claim
  is immediate.  Otherwise let $K$ be a minimal element of
  $S$.
  By Corollary~\ref{cor:witt_exact_minimal} we have an exact sequence
  \[\left(Q^{\otimes_T G/K}\right)_{N_H(K)} \xrightarrow{V^H_K \tau_{G/K}} W^S_{H \le G}(T; Q)
    \xrightarrow{R_{S \setminus K}} W^{S \setminus K}_{H \le G}(T; Q) \to 0 \text{.}\]
  We can compute $w_K(V^H_K \tau_{G/K}(a)) =
  \text{tr}^{N_H(K)}_K a$. The map $\text{tr}^{N_H(K)}_K :
  \left( Q^{\otimes_T G/K}\right)_{N_H(K)} \to \left( Q^{\otimes_T G/K}
  \right)^{N_H(K)}$ is injective (as $Q$ is free over a torsion-free ring), so
  the sequence is exact on the left, giving a short exact sequence
  \[ 0 \to \left( Q^{\otimes_T G/K}\right)_{N_H(K)}
    \xrightarrow{V^H_K \tau_{G/K}} W^S_{H \le G}(T; Q)
    \xrightarrow{R_{S \setminus K}} W^{S \setminus K}_{H \le G}(T; Q) \to 0 \text{.}\]
  We know $W^{S
    \setminus K}_{H \le G}(T; Q)$ is free abelian, so the sequence splits
  and by induction we have an isomorphism of abelian groups
  \[W^S_{H \le G}(T; Q) \cong W^{S \setminus K}_{H \le G}(T; Q) \oplus \left(
      Q^{\otimes_T G/K}\right)_{N_H(K)} \cong \prod_{V \in \can{S}} \left( Q^{\otimes_T
    G/V} \right)_{N_H(V)} \text{.}\]

  To see the statement about the basis, note that $R_{S \setminus K}$
  commutes with $V^H_V \tau_{G/V}$ so we can choose our splittings of short
  exact sequences such that there is a basis as described.
\end{proof}

\begin{remark}
  Note the formula for the basis is the same as that of
  Proposition~\ref{prop:witt_free} and Remark~\ref{rem:free_V_tau} except with $\tau^f$ replaced with $\tau$.
\end{remark}

  Earlier we saw that $W^S_{H \le G} : \text{Mod} \to \text{Ab}$ is lax symmetric
  monoidal. We now have a Mackey functor-valued functor $\underline{W}^S_G$, and
  in the following proposition we prove that it is still lax symmetric monoidal.
  In fact we will subsequently show that the untruncated Witt vector functor
  $\underline{W}_G$ is strong symmetric monoidal.

\begin{proposition}
  The functors $\underline{W}^S_{G} : \text{Mod} \to \text{Mack}_H(\text{Ab})$ and $\underline{gh}^S_{G} : \text{Mod} \to \text{Mack}_H(\text{Ab})$ are lax
  symmetric monoidal with respect to the box product of Mackey functors.
\end{proposition}
\begin{proof}
  We will discuss the case of $\underline{W}^S_G$, and note at the end how
  $\underline{\text{gh}}^S_G$ differs. Maps $\underline{M} \Osq \underline{M}' \to \underline{N}$ out of a box
  product of $H$-Mackey functors correspond to pairings $(\underline{M}, \underline{M}') \to \underline{N}$ in the sense of
  \cite{lewis_theory_nodate} Proposition~1.4, %
  i.e.\ collections of maps $M(K) \otimes_{\mathbb{Z}} M'(K)
  \to N(K)$ for all $K \le H$ satisfying certain identities. The external product introduced in Proposition~\ref{prop:witt_monoidal} gives maps
  \[\star : W^{S \mid_K}_{K \le G}(R; M) \otimes_\mathbb{Z} W^{S \mid_K}_{K \le G}(R'; M') \to
    W^{S \mid_K}_{K \le G}(R
    \otimes_\mathbb{Z} R'; M \otimes_\mathbb{Z} M')\]
  for all $K \le H$. The external product satisfies a Frobenius reciprocity-type relation, and the Frobenius and conjugation operators are symmetric monoidal (identity (i) of Proposition~\ref{prop:witt_monoidal} and Proposition~\ref{prop:fc_sym_mon}).
  Together these precisely show that the collection of external product maps gives a pairing of Mackey
  functors $(\underline{W}^S_G(R; M),
  \underline{W}^S_G(R'; M')) \to \underline{W}^S_G(R \otimes_\mathbb{Z} R'; M \otimes_\mathbb{Z} M')$
  and so corresponds to a map of Mackey functors
  \[\mu : \underline{W}^S_G(R; M) \Osq \underline{W}^S_G(R'; M') \to
    \underline{W}^S_G(R \otimes_\mathbb{Z} R'; M \otimes_\mathbb{Z} M') \text{.}\]

  The monoidal unit of $\text{Mack}_H(\text{Ab})$ is the Burnside Mackey functor
  $\underline{\Omega} : H/K \mapsto \Omega(K)$. By Corollary~\ref{cor:witt_z_burnside} this is
  precisely the underlying $H$-Mackey functor of the $G$-Mackey functor $\underline{W}_G(\mathbb{Z}; \mathbb{Z})$. Using the above product
  and unit $R_S : \underline{W}_G(\mathbb{Z}; \mathbb{Z}) \to \underline{W}^S_G(\mathbb{Z};
  \mathbb{Z})$, we check that $\underline{W}^S_G$
  is a lax symmetric monoidal functor. Symmetry and associativity follow from
  symmetry and associativity of the external product $\star$. %
  Considering the Mackey functor definition of truncation $R_S$ we see that it is monoidal,
  so the map $R_S : W_{K \le G}(\mathbb{Z};
  \mathbb{Z}) \to W^{S \mid_K}_{K \le G}(\mathbb{Z}; \mathbb{Z})$ (for $K \le
  H$) is a ring homomorphism and in particular preserves the unit.
  Hence unitality also holds.

  The case of $\underline{\text{gh}}^S_G$ is essentially the same, using the
  external product $\tildestar$ on ghost groups. We must now take unit $w R_S : \underline{\Omega} \cong
  \underline{W}_G(\mathbb{Z}; \mathbb{Z}) \to \underline{\text{gh}}^S_{G}(\mathbb{Z}; \mathbb{Z})$.
\end{proof}

\begin{remark}
  Analogously to Remark~\ref{rem:multiplication} we find that for $R$ a
  commutative ring, $\underline{W}^S_G(R; R)$ is a commutative Green
  functor and $\underline{W}^S_G(R; M)$ is a $\underline{W}^S_G(R; R)$-module.
\end{remark}

\begin{proposition} \label{prop:untruncated_witt_monoidal}
  The untruncated Witt vector functor
  \[\underline{W}_G : \text{Mod} \to \text{Mack}_G(\text{Ab})\]
  is strong symmetric monoidal.
\end{proposition}
\begin{proof}
  Let $\otimes$ denote $\otimes_{\mathbb{Z}}$. We already saw that the unit map $\underline{\Omega} \cong
  \underline{W}_G(\mathbb{Z}; \mathbb{Z})$ is an isomorphism. It remains to
  check that the map
  \[\mu : \underline{W}_G(R; M) \Osq \underline{W}_G(R'; M') \to
    \underline{W}_G(R \otimes R'; M \otimes M')\]
  is an isomorphism.

  First we reduce to the case of free coefficients. Note that reflexive
  coequalisers of Mackey functors are evaluated pointwise (since
  a reflexive coequaliser of additive functors $\mathcal{A}_G \to \text{Ab}$
  remains additive). Hence $\underline{W}_G$ preserves reflexive
  coequalisers. The box product of Mackey functors and the monoidal product in
  $\text{Mod}$ both preserve reflexive coequalisers, so taking free resolutions
  for $(R; M)$ and $(R'; M')$ shows that it suffices to show that $\mu$ is an
  isomorphism for free coefficients $(T; Q)$ and $(T'; Q')$. This lets us make
  use of the computation of the Witt vectors with free coefficients from Proposition~\ref{prop:witt_free}.

  Fix a subgroup $H \le G$. We will exhibit a section $\sigma_H$ for the
  $H$-component $\mu_H$ of $\mu$, then show that $\sigma_H$ is surjective and so conclude that $\mu_H$ is an isomorphism.

  We previously used a universal property of the box product: maps $\underline{M} \Osq
  \underline{M}' \to \underline{N}$ out of
  a box product correspond to pairing maps $(\underline{M}, \underline{M}') \to \underline{N}$ of Mackey functors. This
  leads to a description of the values of the box product as an abelian group modulo relations (see \cite{luca_algebra_1996} Section~3.1). We have
  \[(\underline{W}_G(T; Q) \Osq \underline{W}_G(T'; Q'))(H) \cong \left( \bigoplus_{K
        \le H} W_{K \le G}(T; Q) \otimes W_{K \le G}(T'; Q') \right) \!\raisebox{-.5em}{$\Big/$} \raisebox{-.65em}{$\sim$}\]
  where the equivalence relation is generated by
  \begin{alignat*}{3}
    V^K_L(n) \otimes n' &\sim n \otimes F^K_L(n') &&\text{for } n \in W_{L \le
                                                     G}(T; Q),\  n' \in W_{K \le G}(T'; Q'),\\
    n \otimes V^K_L(n') &\sim F^K_L(n') \otimes n' &&\text{for } n \in W_{K \le
                                                      G}(T; Q),\  n' \in W_{L \le G}(T'; Q'),\\
    n \otimes n' &\sim c_h(n) \otimes c_h(n') \quad &&\text{for } n \in
                                                            W_{K \le G}(T; Q),\
                                                       n' \in W_{K \le G}(T';
                                                       Q'),\  h \in H.
  \end{alignat*}
  For $n \in W_{K \le G}(T; Q)$ and $n' \in W_{K \le G}(T'; Q')$ we denote the equivalence class of $n
  \otimes n'$ in $(\underline{W}_G(T; Q) \Osq \underline{W}_G(T'; Q'))(H)$ by $n
  \Osq_H n'$.

  We define the map $\sigma_H$ by
  \[\begin{tikzcd} W_{H \le G}(T \otimes T'; Q \otimes Q') \ar[r,
      "\sigma_H"] \ar[d, "\cong"] & (\underline{W}_G(T; Q) \Osq \underline{W}_G(T'; Q'))(H)\\
    \left( \displaystyle\bigoplus_{K \le H} (Q \otimes Q')^{\otimes_{T \otimes T'} G/K}
    \right)_H \ar[r, "s^{-1}", "\cong"'] & \left( \displaystyle\bigoplus_{K \le H} Q^{\otimes_T G/K} \otimes
      Q'^{\otimes_{T'} G/K} \right)_H \ar[u, "\tau^f \Osq_H \tau^f"']\end{tikzcd} \]
  where the lower map is induced by the inverse shuffle maps $s^{-1} : (Q
  \otimes Q')^{\otimes_{T \otimes T'} G/K} \cong Q^{\otimes_T G/K} \otimes
  Q'^{\otimes_{T'} G/K}$ and the right vertical map takes $q \otimes q' \in
  Q^{\otimes_T G/K} \otimes Q'^{\otimes_{T'} G/K}$ to $\tau^f_{G/K}(q) \Osq_H
  \tau^f_{G/K}(q')$. The last map is well-defined as a map out of the group of
  $H$-orbits since the action of $h \in H$ takes $q \otimes q'$ to $(h \cdot q)
  \otimes (h \cdot q') \in Q^{\otimes_T G/\prescript{h}{}{K}} \otimes
  Q'^{\otimes_{T'} G/\prescript{h}{}{K}}$, and $\tau^f_{G/\prescript{h}{}{K}}(h
  \cdot q) \Osq_H \tau^f_{G/\prescript{h}{}{K}}(h \cdot q') = c_h(\tau^f_{G/K}(q))
  \Osq_H c_h(\tau^f_{G/K}(q')) = \tau^f_{G/K}(q) \Osq_H \tau^f_{G/K}(q')$.

  Given $n \in W_{K \le G}(T; Q)$ and $n' \in W_{K \le G}(T'; Q')$, the map
  $\mu_H$ is defined by
  \[n \Osq_H n' \mapsto V_K^H(n \star n') \text{.}\]
  The group $W_{H \le G}(T \otimes T'; Q \otimes Q') \cong \left(
    \bigoplus_{K \le H} (Q \otimes Q')^{\otimes_{T \otimes T'} G/K} \right)_H$
  is generated by elements of the form $V^H_K \tau^f_{G/K} s(q \otimes q')$ for
  $q \in Q^{\otimes_T G/K}$, $q' \in Q'^{\otimes_{T'} G/K}$ (i.e. the image of
  $s(q \otimes q') \in (Q \otimes Q')^{\otimes_{T \otimes T'} G/K}$ under the inclusion map). We have
  \begin{align*}\mu_H(\sigma_H(V^H_K \tau^f_{G/K} s(q \otimes q'))) &= \mu_H(\tau^f_{G/K}(q) \Osq_H \tau^f_{G/K}(q'))\\
    &= V^H_K(\tau^f_{G/K}(q) \star \tau^f_{G/K}(q'))\\
                                                                    &= V^H_K \tau^f_{G/K} s(q \otimes q')
  \end{align*}
  so $\sigma_H$ is a section for $\mu_H$ as desired.

  Next we show that $\sigma_H$ is surjective. We know $(\underline{W}_G(T; Q)
  \Osq \underline{W}_G(T'; Q'))(H)$ is generated by elements of the form $n \Osq_H
  n'$ for $K \le H$, $n \in W_{K \le G}(T; Q)$ and $n' \in W_{K \le G}(T'; Q')$. Going
  further, it is generated by elements of the form $V^K_L \tau^f_{G/L}(q) \Osq_H
  V^K_{L'} \tau^f_{G/L'}(q')$ for $L, L' \le K$, $q \in Q^{\otimes_T G/L}$
  and $q' \in Q'^{\otimes_{T'} G/L'}$. But by a similar calculation to the key
  part of the
  proof of Proposition~\ref{prop:witt_monoidal} (in particular here using the
  $\tau^f$ version of Proposition~\ref{prop:witt_teichmuller_properties}~(v), see Remark~\ref{rem:alternative_teichmuller}) we have
  \begin{align*}
    V^K_L \tau^f_{G/L}(q) \Osq_H V^K_{L'} \tau^f_{G/L'}(q') &= \sum_{LkL' \in L\backslash K/L'} F^L_{L \cap \prescript{k}{}{L'}} \tau^f_{G/L}(q) \Osq_H c_k F^{L'}_{L^k \cap L'} \tau^f_{G/L'}(q')\\
    &= \sum_{LkL' \in L\backslash K/L'} \Big( \tau^f_{G/(L \cap
      \prescript{k}{}{L'})}\phi^L_{L \cap \prescript{k}{}{L'}}(q)\\
    &\hspace{6em} \Osq_H \tau^f_{G/
   (L \cap \prescript{k}{}{L'})}\phi^{\prescript{k}{}{L'}}_{L \cap
   \prescript{k}{}{L'}}(k \cdot q') \Big)\end{align*}
so in fact $(\underline{W}_G(T; Q) \Osq \underline{W}_G(T'; Q'))(H)$ is
generated just by box products of images of $\tau^f$; that is, by elements of the form
\[\tau^f_{G/K}(q) \Osq_H \tau^f_{G/K}(q') =
  \sigma_H(V^H_K \tau^f_{G/K} s(q \otimes q'))\]
for $K \le H$, $q \in Q^{\otimes_T G/K}$ and $q' \in Q'^{\otimes_{T'} G/K}$. So $\sigma_H$ is surjective.

We conclude that $\mu_H$ is an isomorphism with inverse $\sigma_H$ (hence the
maps $\sigma_H$ assemble into an inverse $\sigma$ for the map of Mackey
functors $\mu$). So $\underline{W}_G$ is strong monoidal.
\end{proof}

\begin{remark} \label{rem:untruncated_witt_monoidal}
  This proof was one of our main motivations in introducing the alternative
  Teichm\"uller map $\tau^f$ of Definition~\ref{def:alternative_teichmuller}. One
  could try to write a similar proof using
  Lemma~\ref{lem:witt_ab_group_free_coeff} and $\tau$ instead of
  Proposition~\ref{prop:witt_free} and $\tau^f$, but in order to apply
  Proposition~\ref{prop:witt_teichmuller_properties}~(v) one needs to make
  compatible choices of coset representatives for $G/H$ and $G/K$, and we do not
  believe it is always possible to make choices of coset representatives when defining $\sigma_H$ such that they are compatible for all pairs of subgroups required in the proof.
\end{remark}
\begin{remark}
  In the truncated case the map
  \[\mu : \underline{W}^S_G(R; M) \Osq \underline{W}^S_G(R'; M') \to
    \underline{W}^S_G(R \otimes R'; M \otimes M')\]
  is still an isomorphism. However the unit map
  \[R_S : \underline{\Omega} \cong \underline{W}_G(\mathbb{Z}; \mathbb{Z}) \to
    \underline{W}^S_G(\mathbb{Z}; \mathbb{Z})\]
  is not an isomorphism, so $\underline{W}^S_G$ is not strong monoidal as a
  functor to $\text{Mack}_H(\text{Ab})$.
  The fact that $\mu$ is an isomorphism can be proved by essentially the same method as
  above. Alternatively, once we have
  Lemma~\ref{lem:witt_mackey_truncation} proving that
  $\underline{W}^S_G(R; M)$ is the $S$-truncation of the Mackey functor
  $\underline{W}_G(R; M)$ then this follows from the untruncated case.
\end{remark}

\begin{corollary} \label{cor:witt_r_m_z_m}
  We have an isomorphism of $G$-Mackey functors
  \[\underline{W}_G(R; M) \cong \underline{W}_G(R; R)
    \Osq_{\underline{W}_G(\mathbb{Z}; R)} \underline{W}_G(\mathbb{Z}; M) \text{.}\]
  Note $(\mathbb{Z}; R)$ is a commutative monoid in $\text{Mod}$ and $(R; R)$ and
  $(\mathbb{Z}; M)$ are $(\mathbb{Z}; R)$-modules; so
  $\underline{W}_G(\mathbb{Z}; R)$ is a commutative Green functor and $\underline{W}_G(R; R)$ and $\underline{W}_G(\mathbb{Z};
  M)$ are $\underline{W}_G(\mathbb{Z}; R)$-modules.
\end{corollary}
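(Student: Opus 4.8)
The plan is to realise the object $(R; M)$ of $\text{Mod}$ as a relative tensor product over the commutative monoid $(\mathbb{Z}; R)$, presented by an explicit reflexive coequaliser, and then transport this presentation along the strong symmetric monoidal functor $\underline{W}_G$, using that it carries $\otimes_{\mathbb{Z}}$ to $\Osq$, carries the unit $(\mathbb{Z}; \mathbb{Z})$ to $\underline{\Omega}$, and preserves reflexive coequalisers.

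First I would write down the diagram
\[(R; R) \otimes_{\mathbb{Z}} (\mathbb{Z}; R) \otimes_{\mathbb{Z}} (\mathbb{Z}; M) \rightrightarrows (R; R) \otimes_{\mathbb{Z}} (\mathbb{Z}; M) \xrightarrow{\epsilon} (R; M)\]
in $\text{Mod}$, in which the two parallel maps are induced respectively by the $(\mathbb{Z}; R)$-action on $(R; R)$ (which, as $(\mathbb{Z}; R)$ is commutative, makes $(R; R)$ a bimodule) and by the $(\mathbb{Z}; R)$-action on $(\mathbb{Z}; M)$, and the common section inserts the unit of $(\mathbb{Z}; R)$. Unwinding, the middle object is $(R; R \otimes_{\mathbb{Z}} R \otimes_{\mathbb{Z}} M)$, the right-hand object of the pair is $(R; R \otimes_{\mathbb{Z}} M)$, all ring components are the identity on $R$, and on module components the coequaliser is $R \otimes_{\mathbb{Z}} M$ modulo the relations $r_1 r_2 \otimes m \sim r_1 \otimes r_2 m$, that is $R \otimes_R M \cong M$; so $\epsilon$ exhibits this coequaliser as $(R; M)$. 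This is precisely the standard presentation of the relative tensor product $(R; R) \otimes_{(\mathbb{Z}; R)} (\mathbb{Z}; M)$.

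Next I would apply $\underline{W}_G$. Since $\underline{W}_G$ preserves reflexive coequalisers (reflexive coequalisers of Mackey functors being computed pointwise and each $W^{S}_{H \le G}$ preserving them) and is strong monoidal, applying it to the above diagram and rewriting the triple and double tensor products as triple and double box products yields a reflexive coequaliser
\[\underline{W}_G(R; R) \Osq \underline{W}_G(\mathbb{Z}; R) \Osq \underline{W}_G(\mathbb{Z}; M) \rightrightarrows \underline{W}_G(R; R) \Osq \underline{W}_G(\mathbb{Z}; M) \to \underline{W}_G(R; M) \text{.}\]
A strong monoidal functor sends commutative monoids to commutative monoids and their modules to modules, so $\underline{W}_G(\mathbb{Z}; R)$ is a commutative Green functor and the two parallel maps here are exactly those induced by the resulting $\underline{W}_G(\mathbb{Z}; R)$-module structures on $\underline{W}_G(R; R)$ and $\underline{W}_G(\mathbb{Z}; M)$, which are the structures named in the statement. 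As $\underline{W}_G(R; R) \Osq_{\underline{W}_G(\mathbb{Z}; R)} \underline{W}_G(\mathbb{Z}; M)$ is by construction the coequaliser of precisely this pair of maps, the claimed isomorphism of $G$-Mackey functors follows.

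The one point that will require genuine care is checking that the $(\mathbb{Z}; R)$-module structure maps on $(R; R)$ and $(\mathbb{Z}; M)$ in $\text{Mod}$, once pushed through the monoidal structure isomorphisms of $\underline{W}_G$, coincide with the Green-functor and module structures on $\underline{W}_G(R; R)$ and $\underline{W}_G(\mathbb{Z}; M)$ coming from Remark~\ref{rem:multiplication} and the discussion around it; this is a coherence diagram chase. Everything else is the formal observation that a strong monoidal, reflexive-coequaliser-preserving functor carries relative tensor products to relative tensor products.
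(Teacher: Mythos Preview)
Your proposal is correct and follows essentially the same argument as the paper: realise $(R; M)$ as the relative tensor product $(R; R) \otimes_{(\mathbb{Z}; R)} (\mathbb{Z}; M)$ via the standard bar-type reflexive coequaliser in $\text{Mod}$, then apply the strong monoidal, reflexive-coequaliser-preserving functor $\underline{W}_G$. The coherence concern you flag at the end is handled automatically by the general fact that a strong monoidal functor sends monoids and their modules to monoids and modules compatibly, so no extra work is needed there.
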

\begin{proof}
  There is a reflexive coequaliser
  \[(R \otimes \mathbb{Z} \otimes \mathbb{Z}; R \otimes R
    \otimes M) \, \substack{\xrightarrow{}\\[-0.2em]
      \xleftarrow{\makebox[0ex]{}}\\[-0.2em] \xrightarrow[]{}} \, (R \otimes
    \mathbb{Z}; R \otimes R) \to (R; M)\]
  witnessing the relative tensor product $(R; R) \otimes_{(\mathbb{Z}; R)}
  (\mathbb{Z}; M) \cong (R; M)$. Applying the functor $\underline{W}_G$ gives
  a reflexive coequaliser
  \[\underline{W}_G(R \otimes \mathbb{Z} \otimes \mathbb{Z}; R \otimes R
    \otimes \mathbb{M}) \, \substack{\xrightarrow{}\\[-0.2em]
      \xleftarrow{\makebox[0ex]{}}\\[-0.2em] \xrightarrow[]{}} \, \underline{W}_G(R \otimes
    \mathbb{Z}; R \otimes R) \to \underline{W}_G(R; M) \text{.}\]
  Since $\underline{W}_G$ sends tensor products in $\text{Mod}$ to box
  products in $\text{Mack}_G(\text{Ab})$, we get a reflexive
  coequaliser witnessing the relative box product
  \[\underline{W}_G(R; R)
  \Osq_{\underline{W}_G(\mathbb{Z}; R)} \underline{W}_G(\mathbb{Z}; M) \cong \underline{W}_G(R;
  M)\text{.}\qedhere\]
\end{proof}

\section{The norm}  \label{sec:norm}

From now on, $G$ will always be a finite group.
In this section we will analyse the group $\pi^H_0(N_{\{e\}}^G(X) \wedge
\tilde{E} \mathcal{F}(S))$ for $X$ a connective spectrum, $H \le G$ and $S$ a
truncation set for $H$, in particular constructing a
Teichm\"uller map that behaves analogously to that for the Witt vectors.
We will use this to prove the main result,
\[\pi^H_0(N_{\{e\}}^G(X) \wedge {\tilde{E} \mathcal{F}(S)}) \cong W_{H \le G}^{S}(\mathbb{Z}; \pi_0(X)) \text{.}\]

\subsection{Technical details of the construction} \label{sec:norm_technical}

We begin by recalling the definition of the norm and commenting on some
technical matters. The Hill-Hopkins-Ravenel norm
$N^G_H({-})$, as described in \cite{hill_nonexistence_2016}, is a functor
from orthogonal $H$-spectra to orthogonal $G$-spectra. The authors do not distinguish
notationally between the underived version of the functor (which can be more easily described at the
point-set level) and the left derived version (which is homotopically meaningful). We will largely work with the derived version, which we
denote $N^G_H$, but at some points it will be
helpful to distinguish the underived version, which we will denote $\tilde{N}^G_H$.

We are only interested in the norm when
$H = \{e\}$ is the trivial subgroup, and this case is particularly
easy to describe.
Given an orthogonal spectrum $X$, the underlying spectrum of
$\tilde{N}^G_{\{e\}}X$
is the $\abs{G}$-fold smash product $X^{\wedge \abs{G}}$. The group $G$ acts on this
by permuting the factors: the action of $g \in G$ sends the factor
indexed by $g'$ to that indexed by $g' g^{-1}$. This gives an orthogonal
$G$-spectrum. We can then define the derived functor
\[N_{\{e\}}^G X \coloneqq \tilde{N}^G_{\{e\}} QX\]
where $Q$ is a cofibrant replacement functor on the category $\text{Sp}$ of
orthogonal spectra with the stable model structure.

For a space $X$, the $H$-fixed points of the $G$-space $X^{\wedge \abs{G}}$ are homeomorphic
to $X^{\wedge \abs{G/H}}$. The geometric fixed points of the norm behave
similarly. By Proposition~2.57 of \cite{hill_nonexistence_2016} and the remarks following that,
the spectrum $(\tilde{N}_{\{e\}}^G X)^{\Phi G}$ is weakly equivalent to $X$ when
$X$ is cofibrant. More generally we have the following.

\begin{lemma} \label{lem:H_geo_fixed_of_G_norm}
  Let $H$ be a subgroup of $G$ and $X$ a cofibrant spectrum. Then we have a canonical weak equivalence
  \[(\tilde{N}^G_{\{e\}}X)^{\Phi H} \simeq X^{\wedge G/H} \text{.}\]
\end{lemma}
\begin{proof}
  Make a choice of coset
  representatives for $G/H$. Using this choice gives us an isomorphism between $G$
  considered as an $H$-set and a disjoint union of $G/H$ copies of the $H$-set $H$;
  accordingly from the definition we see that the underlying $H$-spectrum of $\tilde{N}^G_{\{e\}}X$
  is isomorphic to $(\tilde{N}^H_{\{e\}} X)^{\wedge G/H}$. Then we have
  \[(\tilde{N}_{\{e\}}^GX)^{\Phi H} \simeq ((\tilde{N}_{\{e\}}^H X)^{\wedge G/H})^{\Phi
      H} \simeq ((\tilde{N}_{\{e\}}^HX)^{\Phi H})^{\wedge G/H} \simeq X^{\wedge
      G/H} \text{.}\]

  It remains to show that this weak equivalence is
  canonical; that is, that it does not depend on the choice of coset
  representatives that we made at the start of the proof. %
To do so we give an alternative construction of the weak equivalence following Proposition~B.209 of
    \cite{hill_nonexistence_2016}. Let us sketch the proof here, though to be
    fully rigorous requires a more detailed analysis of point-set models. First
    observe that if $X$ is a space, we have a canonical weak equivalence
    \[(N_{\{e\}}^G(\Sigma^\infty X))^{\phi H} \simeq \Sigma^\infty ((X^{\wedge
      G})^H) \simeq \Sigma^\infty (X^{\wedge G/H}) \simeq (\Sigma^\infty
    X)^{\wedge G/H} \text{,}\]
  For an inner
  product space $V$, we have a canonical weak equivalence
  \[(N_{\{e\}}^G S^{-V})^{\Phi H} =
    (S^{-\text{ind}_{\{e\}}^G V})^{\Phi H} \simeq (S^{-V})^{\wedge
      G/H}\text{.}\]%
    Since the norm and the geometric fixed point construction have good monoidal
    properties, we can use the tautological presentation of a spectrum to extend
    to a general canonical weak equivalence $({N}_{\{e\}}^GX)^{\Phi H} \simeq
    X^{\wedge G/H}$. This must be the same weak equivalence that we constructed
    earlier, since they agree for suspension spectra and both have good monoidal properties. But this new description didn't depend on any arbitrary choice of coset representatives.
\end{proof}

For $X$ any connective spectrum we deduce
  \[\pi_0(N_{\{e\}}^GX)^{\Phi H} \cong \pi_0((QX)^{\wedge G/H}) \cong (\pi_0 X)^{\otimes_{\mathbb{Z}} G/H} \text{.}\]

Since it is built from the smash product, the underived norm $\tilde{N}^G_{\{e\}}$ is a
strong symmetric monoidal functor. We get a weak equivalence
\[N^G_{\{e\}}(X \wedge X') \simeq N^G_{\{e\}}(X) \wedge N^G_{\{e\}}(X') \text{,}\]
and so (since $\tilde{E}\mathcal{F}(S)$ is idempotent) $X \mapsto \underline{\pi}_0(N^G_{\{e\}}(X) \wedge
\tilde{E}\mathcal{F}(S))$ is a strong symmetric monoidal functor $\text{Sp} \to \text{Mack}_G(\text{Ab})$.

\subsection{Reduction to Eilenberg-MacLane spectra}

We first show that $\underline{\pi}_0(N_{\{e\}}^G(X) \wedge {\tilde{E}
  \mathcal{F}(S)})$ only depends on $X$ through $\pi_0 X$. This means we can
reduce to the case where $X = HM$ is an Eilenberg-MacLane spectrum, for $M$ some
abelian group.
Similar results appear in \cite{ullman_symmetric_2013} (for example
Lemma~3.1), but we include a different proof here.

We will need to be quite careful about cofibrant replacements in the following lemmas, since we will later use a cofibrant replacement in the category of simplicial orthogonal spectra and this is not the same as levelwise applying the cofibrant replacement $Q$.

\begin{lemma}
  Let $X$ and $X'$ be connective cofibrant spectra and $f : X \to X'$ a $1$-connected
  map (that is, $f$ induces an isomorphism on $\pi_0$ and an epimorphism on
  $\pi_1$). Then the map
  \[\tilde{N}_{\{e\}}^G(f) : \tilde{N}_{\{e\}}^G(X) \to \tilde{N}_{\{e\}}^G(X')\]
  is $1$-connected (that
  is, it induces an
  isomorphism on all zeroth equivariant stable homotopy groups and an
  epimorphism on all first equivariant stable homotopy groups).
\end{lemma}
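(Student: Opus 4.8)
The plan is to analyze the norm $\tilde{N}_{\{e\}}^G(X) = X^{\wedge |G|}$ through its filtration by smash powers, using the fact that a $1$-connected map of connective spectra has cofibre that is $1$-connected. First I would observe that since $f : X \to X'$ is $1$-connected and $X, X'$ are cofibrant, we may factor $f$ (up to homotopy) and control the cofibre $C = \mathrm{cofib}(f)$, which is $1$-connected, i.e.\ $\pi_i C = 0$ for $i \le 1$ so $C$ is $2$-connected in the sense that its bottom homotopy is in degree $\ge 2$. The key tool is that smashing a $2$-connected spectrum with a connective spectrum yields a $2$-connected spectrum, and that $N^G_{\{e\}}$ of a smash power filtration lets us compare $X^{\wedge |G|}$ and $X'^{\wedge |G|}$ one factor at a time.

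More concretely, I would consider the $|G|$ factors and interpolate: write $Y_j = X'^{\wedge j} \wedge X^{\wedge (|G|-j)}$ for $0 \le j \le |G|$, so $Y_0 = X^{\wedge |G|}$ and $Y_{|G|} = X'^{\wedge |G|}$, with maps $Y_j \to Y_{j+1}$ induced by $f$ in one slot. The cofibre of $Y_j \to Y_{j+1}$ is $X'^{\wedge j} \wedge C \wedge X^{\wedge(|G|-j-1)}$, which is $2$-connected (being a smash product of connective spectra with one $2$-connected factor), hence its non-equivariant homotopy vanishes in degrees $\le 1$. However, the subtlety is that we need equivariant statements about $\tilde{N}_{\{e\}}^G$, not just the underlying spectrum. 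The clean way is to note that $(\tilde{N}_{\{e\}}^G Z)$ restricted to any subgroup $H$ is $Z^{\wedge G/H}$ (non-equivariantly, after a choice of coset representatives), and that the equivariant homotopy groups $\pi^H_0$ and $\pi^H_1$ of an orthogonal $G$-spectrum built from such smash powers can be computed via a suitable (tom Dieck-style or cellular) filtration whose associated graded involves the geometric fixed points $(\tilde{N}_{\{e\}}^G Z)^{\Phi K} \simeq Z^{\wedge G/K}$. Since each $Z^{\wedge G/K}$ for $Z$ a cofibre as above is $2$-connected, the contributions to $\pi^H_0$ and $\pi^H_1$ vanish, so $\tilde{N}^G_{\{e\}}(Y_j) \to \tilde{N}^G_{\{e\}}(Y_{j+1})$ is $1$-connected on all equivariant homotopy; composing over $j$ gives the result.

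The cleanest rigorous route — and the one I would actually write — is: (1) reduce to showing $\tilde{N}_{\{e\}}^G$ takes a $2$-connected cofibrant spectrum to a $1$-connected map from a point (i.e.\ if $C$ is $2$-connected then the smash powers interpolating identity maps have $2$-connected cofibres, and these feed into $\tilde N$), and (2) prove that if $Z$ is a connective cofibrant spectrum that is $2$-connected then $\pi^H_i(\tilde{N}^G_{\{e\}}(\mathrm{cofib}))$ vanishes for $i \le 1$ using the isotropy separation / geometric fixed point decomposition: $\pi^H_i$ of a $G$-spectrum is built (via the filtration by families, exactly as in Lemma~\ref{lem:isotropy}) from $\pi^K_i$ of geometric fixed points for $K \le H$, and $(\tilde{N}^G_{\{e\}}(W))^{\Phi K} \simeq W^{\wedge G/K}$ is $2$-connected whenever $W$ is. Actually the argument is best phrased via cofibre sequences of $G$-spectra: the maps $\tilde{N}^G_{\{e\}}(Y_j) \to \tilde{N}^G_{\{e\}}(Y_{j+1})$ have cofibres whose geometric fixed points are all $2$-connected, hence (by induction up the isotropy filtration, using the exact sequences of Lemma~\ref{lem:isotropy} applied to $\pi^H_0$ and $\pi^H_1$) these cofibres have vanishing $\pi^H_0$ and $\pi^H_1$, so each map is $1$-connected equivariantly, and $\tilde{N}^G_{\{e\}}(f)$ is the composite.

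The main obstacle is handling the fact that $\tilde{N}^G_{\{e\}}$ is neither exact nor additive — it does not commute with cofibre sequences — so one genuinely must use the interpolation $Y_0 \to Y_1 \to \dotsb \to Y_{|G|}$ rather than directly analyzing $\mathrm{cofib}(\tilde N(f))$; and then one must verify that the geometric fixed points of the cofibres of the $Y_j$-maps are what one expects, which requires care with the compatibility $(\tilde N^G_{\{e\}} W)^{\Phi K} \simeq W^{\wedge G/K}$ and with cofibrancy (each $Y_j$ must be cofibrant, or one must apply $Q$ compatibly, which is why the lemma's hypotheses insist $X, X'$ are cofibrant). I would handle the cofibrancy bookkeeping by choosing the factorization of $f$ so that all the $Y_j$ and the relevant cofibres are cofibrant, invoking that smash products of cofibrant orthogonal spectra are cofibrant and that $\tilde N^G_{\{e\}}$ preserves cofibrant objects.
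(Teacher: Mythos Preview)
Your proposal has a genuine gap: the interpolating spectra $Y_j = X'^{\wedge j} \wedge X^{\wedge(|G|-j)}$ are not $G$-spectra. The $G$-action on $\tilde{N}^G_{\{e\}}(X) = X^{\wedge G}$ permutes the factors, so as soon as $0 < j < |G|$ there is no compatible $G$-action on $Y_j$, and the maps $Y_j \to Y_{j+1}$ exist only non-equivariantly. Consequently there is no sense in which one can take $\pi^H_*$ of these intermediate objects or their cofibres, and expressions such as ``$\tilde{N}^G_{\{e\}}(Y_j)$'' do not typecheck (the $Y_j$ are already $|G|$-fold smash powers, not inputs to the norm). A $G$-equivariant filtration of $\tilde{N}^G_{\{e\}}(X')$ relative to $\tilde{N}^G_{\{e\}}(X)$ does exist after replacing $f$ by a cofibration, but its associated graded pieces are induced spectra indexed by $G$-orbits on subsets of $G$, not the simple $Y_j$'s you wrote down; setting this up correctly is substantially more work than you indicate.

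The paper avoids this entirely by a much shorter argument that uses the same isotropy-separation idea you identified but applies it to the right object. One takes the fibre $Y = \operatorname{fib}(\tilde{N}^G_{\{e\}}(X) \to \tilde{N}^G_{\{e\}}(X'))$, which \emph{is} a $G$-spectrum, and uses that geometric fixed points commute with fibres to get $Y^{\Phi H} \simeq \operatorname{fib}(X^{\wedge G/H} \to X'^{\wedge G/H})$. Since a smash product of $1$-connected maps between connective spectra is $1$-connected, each $Y^{\Phi H}$ has $\pi_0 = 0$. Then Lemma~\ref{lem:mackey_isotropy_surjective} (applied to the map $\underline{\pi}_0(Y) \to 0$) immediately gives $\underline{\pi}_0(Y) = 0$. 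No filtration of the norm is needed; the entire argument is a direct computation of geometric fixed points of a single fibre.
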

\begin{proof}
  Let $Y = \text{fib}(\tilde{N}_{\{e\}}^G(X) \to \tilde{N}_{\{e\}}^G(X'))$. Since taking genuine fixed points
  commutes with fibres, it suffices to show that $Y^H$ is $0$-connected for all $H \le
  G$, i.e.\ $\pi_i \, Y^H = 0$ for $i \le 0$.

  We have
  \[Y^{\Phi H} \simeq \text{fib}(\tilde{N}_{\{e\}}^G(X)^{\Phi H} \to \tilde{N}_{\{e\}}^G(X')^{\Phi H}) \simeq
    \text{fib}(X^{\wedge G/H} \to X'^{\wedge G/H}) \text{.}\] %
  Since a smash product of $1$-connected maps between connective spectra remains
  $1$-connected, we deduce that $Y^{\Phi H}$ is $0$-connected. By an isotropy
  separation argument we conclude that
  $Y$ is connective, i.e.\ all negative homotopy groups are zero. Moreover the map of Mackey functors $0 \to \underline{\pi}_0(Y)$ induces
  surjections on geometric fixed points, so by Lemma~\ref{lem:mackey_isotropy_surjective} the map of Mackey functors is
  itself surjective, hence we also have $\underline{\pi}_0(Y) = 0$.
\end{proof}

\begin{corollary} \label{cor:replace_with_em}
  For any connective cofibrant spectrum $X$ we have
  \[\pi^H_0(\tilde{N}^G_{\{e\}}(X) \wedge {\tilde{E}\mathcal{F}(S)}) \cong \pi^H_0(N_{\{e\}}^G(H\pi_0
    (X)) \wedge {\tilde{E}\mathcal{F}(S)}) \text{,}\]
  where note we use the underived norm functor on the left but the derived norm on the right.
\end{corollary}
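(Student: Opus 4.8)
The plan is to deduce this directly from the preceding lemma, after replacing the Postnikov truncation $X \to H\pi_0 X$ by a $1$-connected map between \emph{cofibrant} connective spectra (which is what the lemma requires). First I would arrange, by factoring $\emptyset \to H\pi_0 X$, that the cofibrant replacement $Q(H\pi_0 X) \to H\pi_0 X$ is an acyclic fibration. Since $X$ is cofibrant, the lifting axiom of the stable model structure on $\text{Sp}$ produces a map $\tilde\eta \colon X \to Q(H\pi_0 X)$ whose composite with $Q(H\pi_0 X) \to H\pi_0 X$ is the Postnikov truncation. The truncation is an isomorphism on $\pi_0$ and $\pi_n(H\pi_0 X) = 0$ for $n \neq 0$, so two-out-of-three on homotopy groups shows $\tilde\eta$ is an isomorphism on $\pi_0$ and (trivially) an epimorphism on $\pi_1$; that is, $\tilde\eta$ is a $1$-connected map of connective cofibrant spectra. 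Since by definition $N_{\{e\}}^G(H\pi_0 X) = \tilde N^G_{\{e\}}(Q(H\pi_0 X))$, it now suffices to compare the underived norms of $X$ and of $Q(H\pi_0 X)$.

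Applying the preceding lemma to $\tilde\eta$ shows that $\tilde N^G_{\{e\}}(\tilde\eta) \colon \tilde N^G_{\{e\}}(X) \to \tilde N^G_{\{e\}}(Q(H\pi_0 X))$ induces an isomorphism on $\pi^K_0$ and an epimorphism on $\pi^K_1$ for every $K \le G$. Both source and target are connective $G$-spectra (the norm of a connective spectrum has connective underlying spectrum $X^{\wedge |G|}$ and connective geometric fixed points $X^{\wedge G/K}$). Let $C$ be the cofibre of $\tilde N^G_{\{e\}}(\tilde\eta)$. The long exact sequence of equivariant homotopy groups, together with connectivity of the source, forces $\pi^K_n(C) = 0$ for all $K \le G$ and all $n \le 0$; in other words $C$ lies in $\tau_{\ge 1}\text{Sp}^G$.

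Finally I would restrict everything to $H$-spectra and smash the cofibre sequence $\tilde N^G_{\{e\}}(X) \to \tilde N^G_{\{e\}}(Q(H\pi_0 X)) \to C$ with $\tilde E\mathcal F(S)$, obtaining again a cofibre sequence. The spectrum $\Sigma^\infty \tilde E\mathcal F(S)$ is connective, since $\tilde E\mathcal F(S)$ has the homotopy type of a based $H$-CW complex with cells in non-negative dimensions (its fixed point spaces are each $S^0$ or contractible). As the connective part of the Postnikov $t$-structure satisfies $\tau_{\ge 1}\text{Sp}^H \wedge \tau_{\ge 0}\text{Sp}^H \subseteq \tau_{\ge 1}\text{Sp}^H$ — checked on the generators $\Sigma^\infty_+ H/K$ using that a product of $H$-sets is an $H$-set — the spectrum $C \wedge \tilde E\mathcal F(S)$ still has $\pi^K_n = 0$ for $n \le 0$. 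Its long exact sequence then gives the desired isomorphism $\pi^H_0(\tilde N^G_{\{e\}}(X) \wedge \tilde E\mathcal F(S)) \cong \pi^H_0(N_{\{e\}}^G(H\pi_0 X) \wedge \tilde E\mathcal F(S))$. The only step that needs genuine care is this last connectivity estimate for the smash product of genuine $G$-spectra; everything else is formal once the preceding lemma and the lifting in $\text{Sp}$ are in place.
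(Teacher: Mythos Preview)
Your proof is correct and follows essentially the same route as the paper: lift the Postnikov map to one between cofibrant connective spectra, apply the preceding lemma, and then pass to the smash with $\tilde{E}\mathcal{F}(S)$. The only difference is in this last step: the paper invokes Remark~\ref{rem:truncations_agree_spectra} to say that $\underline{\pi}_0(Y \wedge \tilde{E}\mathcal{F}(S))$ depends only on the Mackey functor $\underline{\pi}_0(Y)$ for connective $Y$, whereas you argue directly that the cofibre $C$ stays $1$-connective after smashing with the connective spectrum $\Sigma^\infty\tilde{E}\mathcal{F}(S)$. Your version is more self-contained; the paper's is shorter given the truncation machinery already set up in Section~\ref{sec:mackey_isotropy}. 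One small wording point: where you say ``together with connectivity of the source'', you in fact need (and have already established) connectivity of both source and target to conclude $\pi^K_n(C) = 0$ for $n \le 0$.
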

\begin{proof}
  The canonical map $X \to H \pi_0(X)$ is $1$-connected. Using the lemma we
  get an isomorphism of Mackey functors
  \[\underline{\pi}_0(\tilde{N}_{\{e\}}^G(X)) \cong \underline{\pi}_0(\tilde{N}_{\{e\}}^G(QX)) \cong \underline{\pi}_0(\tilde{N}_{\{e\}}^G(QH \pi_0(X))) =
    \underline{\pi}_0(N_{\{e\}}^G(H \pi_0(X))) \text{,}\]
  and hence an isomorphism of all their truncations. By
  Remark~\ref{rem:truncations_agree_spectra} the statement follows.
\end{proof}

So it suffices to consider the case of Eilenberg-MacLane spectra. That is, we want to
analyse the functor $M \mapsto \pi^H_0(N_{\{e\}}^G(HM)
\wedge \tilde{E}\mathcal{F}(S))$. We can now use the tools that we developed
in Section~\ref{sec:witt_final_definition}: we will show that this functor
preserves reflexive coequalisers of abelian groups, and then it is enough to only analyse what
happens for free abelian groups.

The proof of the following lemma is analogous to that of \cite{dotto_witt_2025}
Proposition~2.11~(vii), %
and the same result appears as Lemma~3.4 of \cite{ullman_symmetric_2013} with a
different proof.

\begin{lemma} \label{lem:norm_refl_coeq}
  The functor $\text{Ab} \to \text{Ab}$ given by
  \[M \mapsto \pi^H_0(N_{\{e\}}^G(HM) \wedge {\tilde{E} \mathcal{F}(S)})\]
  preserves reflexive coequalisers.
\end{lemma}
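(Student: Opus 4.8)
The plan is to use a simplicial (bar) resolution of $M$ by free abelian groups together with the fact that $N^G_{\{e\}}$ of an Eilenberg--MacLane spectrum of a simplicial abelian group can be computed via the geometric realisation. First I would observe that a reflexive coequaliser in $\text{Ab}$ is the coequaliser of the underlying sets with the quotient group structure (as recalled in Section~\ref{sec:preliminaries}), and that such a coequaliser can be computed as the geometric realisation of the $2$-truncated simplicial abelian group it presents, or more robustly, that it suffices to check the functor commutes with the specific reflexive coequaliser $M_1 \rightrightarrows M_0 \twoheadrightarrow M$ by producing the realisation. So I would instead prove the slightly cleaner statement that for a simplicial abelian group $M_\bullet$ with realisation (i.e.\ associated complex in degree $0$) $M$, the natural map
\[\abs{[n] \mapsto \pi^H_0(N^G_{\{e\}}(HM_n) \wedge \tilde E\mathcal{F}(S))} \to \pi^H_0(N^G_{\{e\}}(HM) \wedge \tilde E\mathcal{F}(S))\]
is an isomorphism, and then specialise to the $2$-skeleton of a reflexive coequaliser diagram.

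The key steps, in order: (1) Replace $HM$ by the realisation $\abs{HM_\bullet} \simeq HM$ of the levelwise Eilenberg--MacLane spectrum of a simplicial resolution $M_\bullet \to M$ (e.g.\ the bar resolution by free abelian groups, or just the given reflexive pair); choose a cofibrant replacement $Q\abs{HM_\bullet}$ compatibly, being careful as the lemma's preamble warns that levelwise $Q$ is not the same as a cofibrant replacement in simplicial spectra --- here one uses a Reedy-cofibrant replacement so that $N^G_{\{e\}}$ applied levelwise realises correctly. (2) Use that the underived norm $\tilde N^G_{\{e\}}$ is a strong symmetric monoidal left adjoint-type construction built from the smash product, hence commutes with geometric realisation of simplicial spectra: $\tilde N^G_{\{e\}}\abs{Z_\bullet} \cong \abs{\tilde N^G_{\{e\}} Z_\bullet}$ (this is essentially that $({-})^{\wedge \abs{G}}$ commutes with realisation, with the $G$-action realised levelwise). (3) Smash with $\tilde E\mathcal{F}(S)$, which also commutes with realisation. (4) Now take $\pi^H_0$: since all the spectra $N^G_{\{e\}}(HM_n) \wedge \tilde E\mathcal{F}(S)$ are connective (the norm of a connective spectrum is connective, and smashing with $\tilde E\mathcal{F}(S)$ preserves connectivity), the spectral sequence for the homotopy groups of a realisation of a simplicial connective spectrum gives $\pi^H_0$ of the realisation as the coequaliser $\pi^H_0(\text{level }1) \rightrightarrows \pi^H_0(\text{level }0) \to \pi^H_0(\abs{-})$ --- i.e.\ $\pi^H_0$ of a realisation of connective objects is the coequaliser of the $\pi^H_0$ of the first two levels. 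Combining, $\pi^H_0(N^G_{\{e\}}(HM) \wedge \tilde E\mathcal{F}(S))$ is the coequaliser of $\pi^H_0(N^G_{\{e\}}(HM_1) \wedge \tilde E\mathcal{F}(S)) \rightrightarrows \pi^H_0(N^G_{\{e\}}(HM_0) \wedge \tilde E\mathcal{F}(S))$, which is exactly the statement that the functor preserves the reflexive coequaliser.

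The main obstacle I expect is step (1)--(2): making precise that $N^G_{\{e\}}$ (the \emph{derived} norm) commutes with geometric realisation, given the subtlety flagged in the preamble that a cofibrant replacement in simplicial orthogonal spectra differs from applying $Q$ levelwise. The cleanest route is to work with the underived $\tilde N^G_{\{e\}}$ (which visibly commutes with realisation since it is built from $({-})^{\wedge\abs{G}}$ and permutation of smash factors, and smash products commute with realisation), choose a Reedy-cofibrant simplicial spectrum $X_\bullet$ with $\abs{X_\bullet}$ cofibrant and $\abs{X_\bullet} \simeq HM$ and each $X_n$ cofibrant with $\pi_0 X_n = M_n$, and then invoke Corollary~\ref{cor:replace_with_em} levelwise to identify $\underline{\pi}_0(\tilde N^G_{\{e\}}X_n)$ with $\underline{\pi}_0(N^G_{\{e\}}(HM_n))$. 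A secondary point to handle carefully is the connectivity/convergence of the realisation spectral sequence for genuine fixed points --- but since everything in sight is connective (geometric fixed points of the norm of a connective spectrum are connective, so by Lemma~\ref{lem:mackey_isotropy_surjective}-type reasoning or directly, all the fixed-point spectra are connective) this is standard and the $E^2$-page in total degree $0$ is concentrated so as to give precisely the coequaliser.
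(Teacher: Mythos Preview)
Your proposal is correct and follows essentially the same route as the paper: extend the reflexive pair to a simplicial abelian group, take a cofibrant replacement of the resulting simplicial spectrum so that the underived norm $\tilde N^G_{\{e\}}$ commutes with geometric realisation and the realisation is cofibrant, then use connectivity to identify $\pi_0^H$ of the realisation with the coequaliser of the bottom two levels, invoking Corollary~\ref{cor:replace_with_em} levelwise. The only cosmetic difference is that the paper uses the projective model structure on simplicial orthogonal spectra rather than Reedy, but either choice yields levelwise cofibrant objects with cofibrant realisation, so the argument goes through identically.
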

\begin{proof}
  Suppose we have a reflexive coequaliser in $\text{Ab}$,
  \[M_1 \, \substack{\xrightarrow{}\\[-0.2em]
      \xleftarrow{}\\[-0.2em] \xrightarrow[]{}} \, M_0 \xrightarrowdbl{} M
    \text{.}\]
  The diagram $M_1 \, \substack{\xrightarrow{}\\[-0.2em] \xleftarrow{}\\[-0.2em]
    \xrightarrow[]{}} \, M_0$ describes a presheaf of abelian groups on the truncated simplex
  category $\Delta_{\le 1}$. Right Kan extension along the subcategory
  inclusion $\Delta_{\le 1}^{\text{op}} \hookrightarrow \Delta^{\text{op}}$ gives
  an extension to a simplicial abelian group $M_{\bullet}$, and so taking
  Eilenberg-MacLane spectra levelwise gives a simplicial %
  spectrum $HM_{\bullet}$. The stable model structure on orthogonal spectra is
  cofibrantly generated, so there is a projective model structure on simplicial
  orthogonal spectra; let $\widehat{HM}_\bullet$ be the cofibrant replacement of
  $HM_\bullet$ in this model structure. Geometric realisation
  is left Quillen so $\abs*{\widehat{HM}_\bullet}$ is cofibrant. Also note that
  $\widehat{HM}_\bullet$ is levelwise cofibrant, and is levelwise weakly
  equivalent to $HM_\bullet$.

  The zeroth homotopy group of a geometric realisation %
  is the reflexive
  coequaliser of the zeroth homotopy groups of the last two terms. %
  Applying this
  to the simplicial spectrum $\tilde{N}_{\{e\}}^G(\widehat{HM}_\bullet)
  \wedge {\tilde{E} \mathcal{F}(S)}$ gives a reflexive coequaliser diagram
  \[\pi^H_0(\tilde{N}_{\{e\}}^G(\widehat{HM}_1) \wedge {\tilde{E} \mathcal{F}(S)}) \, \substack{\xrightarrow{}\\[-0.2em]
      \xleftarrow{}\\[-0.2em] \xrightarrow[]{}} \, \pi^H_0(\tilde{N}_{\{e\}}^G(\widehat{HM}_0) \wedge
    {\tilde{E} \mathcal{F}(S)}) \xrightarrowdbl{} \pi^H_0(\abs*{\tilde{N}_{\{e\}}^G(\widehat{HM}_\bullet)
      \wedge {\tilde{E} \mathcal{F}(S)}})
    \text{.}\]

  Geometric realisation commutes with smash products and smash powers up to
  isomorphism of orthogonal spectra, so it commutes with the non-derived Hill-Hopkins-Ravenel
  norm $\tilde{N}^G_{\{e\}}$ up to isomorphism of orthogonal $G$-spectra. This
  gives
  \[\pi^H_0(\abs*{\tilde{N}_{\{e\}}^G(\widehat{HM}_\bullet)
      \wedge {\tilde{E} \mathcal{F}(S)}}) \cong \pi^H_0 (\tilde{N}_{\{e\}}^G
    (\abs*{\widehat{HM}_{\bullet}}) \wedge {\tilde{E} \mathcal{F}(S)}) \text{.}\]
  Observe that $\pi_0 \abs*{\widehat{HM}_{\bullet}} \cong \text{coeq}(\pi_0(\widehat{HM}_1) \, \substack{\xrightarrow{}\\[-0.2em] \xleftarrow{}\\[-0.2em]
    \xrightarrow[]{}} \, \pi_0(\widehat{HM}_0)) \cong \text{coeq}(M_1 \, \substack{\xrightarrow{}\\[-0.2em] \xleftarrow{}\\[-0.2em]
    \xrightarrow[]{}} \, M_0) \cong M$. So using
  Corollary~\ref{cor:replace_with_em} we have
  \[\pi^H_0(\tilde{N}_{\{e\}}^G \abs*{\widehat{HM}_{\bullet}} \wedge {\tilde{E} \mathcal{F}(S)})
    \cong \pi^H_0(N_{\{e\}}^G(HM) \wedge {\tilde{E} \mathcal{F}(S)}) \text{.}\]
  By Corollary~\ref{cor:replace_with_em} we also have isomorphisms
  \[\pi^H_0 (\tilde{N}^G_{\{e\}}(\widehat{HM}_i) \wedge \tilde{E}\mathcal{F}(S))
  \cong \pi^H_0(N_{\{e\}}^G(HM_i) \wedge \tilde{E}\mathcal{F}(S)) \text{.}\]
  We conclude that we have a reflexive coequaliser
  \[\pi^H_0(N_{\{e\}}^G(HM_1) \wedge {\tilde{E} \mathcal{F}(S)}) \, \substack{\xrightarrow{}\\[-0.2em]
      \xleftarrow{}\\[-0.2em] \xrightarrow[]{}} \, \pi^H_0(N_{\{e\}}^G(HM_0) \wedge
    {\tilde{E} \mathcal{F}(S)}) \xrightarrowdbl{} \pi^H_0(N_{\{e\}}^G(HM) \wedge \tilde{E} \mathcal{F}(S))
  \]
  as desired. %
\end{proof}

\subsection{The Teichm\"uller map}\label{sec:norm_teichmuller}

We now construct the topological analogue to the Teichm\"uller map $\tau_{G/H} :
M^{\otimes_R G/H} \to W^S_{H \le G}(R; M)$.

\begin{proposition}\label{prop:norm_teichmuller_properties}
  Given a choice $\{g_i H\}$ of coset representatives for $G/H$, we have
  a (not necessarily additive) Teichm\"uller map
  \[\tau_{G/H} : M^{\otimes_{\mathbb{Z}} G/H} \to \pi^H_0(N_{\{e\}}^G(HM) \wedge
    {\tilde{E} \mathcal{F}(S)}) \text{,}\]
  natural in the choice of abelian group $M$.
  The map $\tau$ has the following properties, analogous to those we proved for
  the Witt vectors in Proposition~\ref{prop:witt_teichmuller_properties}:
  \begin{enumerate}[(i)]
  \item We have $\tau_{G/H}(0) = 0$.
  \item
    The map $R$ interacts well with $\tau_{G/H}$, in the sense that the diagram
    \[\begin{tikzcd}
        M^{\otimes_\mathbb{Z} G/H} \ar[r, "\tau_{G/H}"]
        \ar[rd, "\tau_{G/H}" swap] &
        \pi^H_0(N_{\{e\}}^G(HM) \wedge {\tilde{E} \mathcal{F}(S)})
        \ar[d, "R_{S'}"]\\
        & \pi^H_0(N_{\{e\}}^G(HM) \wedge {\tilde{E} \mathcal{F}(S')})
      \end{tikzcd}\]
    commutes.
  \item
    The map $\tau_{G/H}$ is equivariant, in the sense that
    \[\tau_{G/\prescript{g}{}{H}}(g \cdot n) = c_g \tau_{G/H}(n)\]
    for any $g \in G$ (where we use the map $\tau_{G/\prescript{g}{}{H}}$
    corresponding to the coset representatives $G/\prescript{g}{}{H} = \{g_i g^{-1} (\prescript{g}{}{H})\}$).
  \item The map
    \[M^{\otimes_\mathbb{Z} G/H} \xrightarrow{\tau_{G/H}} \pi^H_0(N_{\{e\}}^G(HM) \wedge
      \tilde{E}\mathcal{F}(\{H\})) = \pi_0(N_{\{e\}}^G(HM)^{\Phi H})\]
    is a monoidal additive isomorphism, and is independent of the choice of coset representatives.
  \item
    Suppose we have coset representatives $\{g_i\}$ for $G/H$ and $\{h_j\}$ for
    $H/K$. Observe that $\{g_i h_j\}$ is a set of coset representatives for
    $G/K$. Then the diagram
    \[\begin{tikzcd}
        M^{\otimes_\mathbb{Z} G/H} \ar[r, "\tau_{G/H}"] \ar[d,
        "({-})^{\otimes_\mathbb{Z} H/K}"] &
        \pi^H_0(N_{\{e\}}^G(HM) \wedge {\tilde{E} \mathcal{F}(S)})
        \ar[dd, "\text{res}_K^H"]\\
        M^{\otimes_\mathbb{Z} G/H \times H/K} \ar[d, "f_{G/H}"]&\\
        M^{\otimes_\mathbb{Z} G/K} \ar[r, "\tau_{G/K}"] & \pi^K_0(N_{\{e\}}^G(HM) \wedge
        {\tilde{E} \mathcal{F}(S\!\! \mid_K)})
      \end{tikzcd}\]
    commutes (where $f_{G/H}$, $\tau_{G/H}$ and $\tau_{G/K}$ are defined using
    the above coset representatives).
  \end{enumerate}
\end{proposition}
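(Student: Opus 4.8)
The plan is to mimic the construction of the Witt-vector Teichm\"uller map in Proposition~\ref{prop:teichmuller}: first build $\tau_{G/H}$ for free coefficients, where the norm is computed by classical tom Dieck splitting, and then extend to all $M$. By Lemma~\ref{lem:norm_refl_coeq} the functor $M \mapsto \pi^H_0(N_{\{e\}}^G(HM) \wedge \tilde{E}\mathcal{F}(S))$ preserves reflexive coequalisers of abelian groups; since $G$ is finite these groups are discrete, so the same holds after composing with the forgetful functor to $\mathrm{Set}$, and by the abelian-group version of Lemma~\ref{lem:finite_module_kan} recorded in Remark~\ref{rem:finite_module_kan} a natural transformation of the corresponding $\mathrm{Set}$-valued functors defined on free abelian groups extends uniquely to all abelian groups. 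Each of properties~(i)--(v) is an equality, or a commuting square, of such natural transformations --- and for (iv) the extension of an additive map (resp.\ of a natural isomorphism) is again additive (resp.\ an isomorphism) --- so it will suffice to construct $\tau_{G/H}$ and verify (i)--(v) for $M$ free. (If $S = \emptyset$ the target is $0$ and $\tau_{G/H} = 0$, so one may assume $S \neq \emptyset$, hence $H \in S$.)

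For $M = \mathbb{Z}(Y)$ free I would, via Corollary~\ref{cor:replace_with_em}, model $N_{\{e\}}^G(HM)$ by $\tilde{N}_{\{e\}}^G(\Sigma^\infty_+ Y) \cong \Sigma^\infty_+(Y^{\times G})$ for a discrete set $Y$, with $G$ permuting the factors. Classical tom Dieck splitting then gives a natural decomposition
\[ \pi^H_0\bigl(\Sigma^\infty_+(Y^{\times G})\bigr) \;\cong\; \bigoplus_{(U)\le H} \bigl(M^{\otimes_{\mathbb{Z}} G/U}\bigr)_{N_H(U)}, \]
where $(U)$ runs over the $H$-conjugacy classes of subgroups and we use $(Y^{\times G})^U \cong Y^{\times G/U}$ together with $\mathbb{Z}(Y)^{\otimes_{\mathbb{Z}} G/U} = \mathbb{Z}(Y^{\times G/U})$; this is exactly the description of $W_{H \le G}(\mathbb{Z}; M)$ in Proposition~\ref{prop:witt_free}. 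I would define $\tau_{G/H}$ as the inclusion of the $(H)$-summand $M^{\otimes_{\mathbb{Z}} G/H} \hookrightarrow \pi^H_0(\tilde{N}_{\{e\}}^G(\Sigma^\infty_+ Y))$ followed by the map to $\pi^H_0(\tilde{N}_{\{e\}}^G(\Sigma^\infty_+ Y) \wedge \tilde{E}\mathcal{F}(S))$ induced by the isotropy-separation unit $S^0 \to \tilde{E}\mathcal{F}(S)$. As in the Witt-vector case the coset representatives $\{g_iH\}$ enter only through the identifications $(Y^{\times G})^U \cong Y^{\times G/U}$ of the tom Dieck summands, in the same role as the maps $f_{G/H}$ there; at the $(H)$-summand the identification is canonical. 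The hard part here will be showing that these maps are natural in $M$ --- this needs care with the cofibrant replacements involved and with transporting the construction along the zigzag of Corollary~\ref{cor:replace_with_em} --- which is the ``main new work'' alluded to in the overview.

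Granting the construction, I would check (i)--(v) by tracking this composite. For (i), naturality along $0 \to M$ gives $\tau_{G/H}(0) = 0$ since $N_{\{e\}}^G(H0) \simeq \ast$. For (ii), $R_{S'}$ is induced by the canonical $\tilde{E}\mathcal{F}(S) \to \tilde{E}\mathcal{F}(S')$ and the composite $S^0 \to \tilde{E}\mathcal{F}(S) \to \tilde{E}\mathcal{F}(S')$ is homotopic to the unit $S^0 \to \tilde{E}\mathcal{F}(S')$, so both $R_{S'}\tau_{G/H}^{(S)}$ and $\tau_{G/H}^{(S')}$ equal the $(H)$-summand inclusion followed by the unit. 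For (iii), the conjugation isomorphism $c_g$ of Definition~\ref{def:conjugation} identifies the tom Dieck decompositions for $H$ and $\prescript{g}{}{H}$, carrying the $(H)$-summand to the $(\prescript{g}{}{H})$-summand via $g\cdot(-)$ and commuting with the isotropy units, which with the prescribed representatives gives $\tau_{G/\prescript{g}{}{H}}(g\cdot n) = c_g\tau_{G/H}(n)$. For (iv), when $S = \{H\}$ the map $\pi^H_0(\tilde{N}_{\{e\}}^G(\Sigma^\infty_+ Y)) \to \pi^H_0(\tilde{N}_{\{e\}}^G(\Sigma^\infty_+ Y) \wedge \tilde{E}\mathcal{F}(\{H\})) = \pi_0(N_{\{e\}}^G(HM)^{\Phi H})$ is the geometric-fixed-points map, which under tom Dieck splitting is the projection onto the $(H)$-summand; hence $\tau_{G/H}^{(\{H\})}$ is the $(H)$-summand inclusion followed by its own projection, i.e.\ the identity isomorphism $M^{\otimes_{\mathbb{Z}} G/H} = \pi_0(N_{\{e\}}^G(HM)^{\Phi H})$, which is additive and, being built from the canonical identification of the $(H)$-summand, independent of the coset representatives (two such choices differ by a permutation of the smash factors indexed by the set $G/H$, which acts as the identity on the symmetric-monoidal tensor power $M^{\otimes_{\mathbb{Z}} G/H}$); this also establishes the coset-independence promised in Section~\ref{sec:norm_technical}. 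For (v), using $\tilde{N}_{\{e\}}^G \cong \tilde{N}^G_K\tilde{N}^K_{\{e\}}$ and that $\text{res}^H_K$ is induced by $\mathrm{Res}^H_K$ together with the canonical $\tilde{E}\mathcal{F}(S) \to \tilde{E}\mathcal{F}(S\!\!\mid_K)$, I would check that $\text{res}^H_K$ sends the $(H)$-summand into the $(K)$-summand via the pullback $(Y^{\times G})^H \hookrightarrow (Y^{\times G})^K$, i.e.\ $Y^{\times G/H} \to Y^{\times G/K}$ along $G/K \twoheadrightarrow G/H$, which on $\pi_0$ is precisely $m \mapsto f_{G/H}(m^{\otimes_{\mathbb{Z}} H/K})$ once the representatives $\{g_ih_jK\}$ are used for $G/K$; this is the same bookkeeping as in Proposition~\ref{prop:witt_teichmuller_properties}(v), and the isotropy units are compatible in the required way.

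The step I expect to be the main obstacle is the construction itself: producing the $(H)$-summand inclusion and the whole composite as a genuine natural transformation of functors on abelian groups (rather than a mere collection of maps), which forces one to be careful about cofibrant replacements and about the identifications hidden in Corollary~\ref{cor:replace_with_em}; once this is in place, (i)--(v) are the naturality statements and explicit tom Dieck computations indicated above, with the coset-representative bookkeeping in (iv) and (v) the only mildly delicate point.
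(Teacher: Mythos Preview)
Your approach is genuinely different from the paper's, and the difference is instructive.

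The paper does \emph{not} restrict to free abelian groups and extend. Instead it gives a direct point-set construction of $\tau_{G/H}$ valid for every abelian group $M$ at once: given $m \in M^{\otimes_{\mathbb{Z}} G/H}$, choose a representative $\alpha : S^n \to ((QHM)^{\wedge G/H})_n$ for the corresponding class in $\pi_0((QHM)^{\wedge G/H})$, and form the $H$-equivariant map
\[
\alpha^{\wedge H} : S^{n\rho_H} \to ((QHM)^{\wedge G/H})_n^{\wedge H} \to ((QHM)^{\wedge G})_{n\abs{H}} \cong (N_{\{e\}}^G(HM))_{n\rho_H},
\]
where the last identification uses the bijection $G/H \times H \cong G$, $(g_iH,s)\mapsto g_is$, coming from the chosen coset representatives. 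This is manifestly natural in $M$, and properties (i)--(iii), (v) are then checked by unwinding this explicit description; (iv) is established by invoking the canonical (coset-representative-free) weak equivalence $(N_{\{e\}}^G X)^{\Phi H}\simeq X^{\wedge G/H}$ of \cite{hill_nonexistence_2016} Proposition~B.209 and identifying $\tau_{G/H}$ as its inverse on $\pi_0$.

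Your tom Dieck route is morally the same map --- the $(H)$-summand inclusion is exactly the ``diagonal'' $\alpha\mapsto\alpha^{\wedge H}$ --- but you have correctly put your finger on the obstacle: the identification $N_{\{e\}}^G(HM) \simeq \Sigma^\infty_+(Y^{\times G})$ via Corollary~\ref{cor:replace_with_em} is natural in the \emph{set} $Y$, not obviously in the free abelian group $\mathbb{Z}(Y)$, since a map $\mathbb{Z}(Y)\to\mathbb{Z}(Y')$ need not arise from a set map. To repair this you would need the $(H)$-summand inclusion to be natural in maps of spectra, not just maps of $G$-spaces; but making that precise is exactly writing down the map $\pi_0(X^{\wedge G/H})\to\pi_0^H(\tilde{N}_{\{e\}}^G X)$ at the spectrum level --- which is the paper's construction. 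So your approach is not wrong, but the ``main obstacle'' you flag is real, and resolving it collapses back to the paper's direct argument, at which point the detour through tom Dieck and the extension machinery of Remark~\ref{rem:finite_module_kan} is no longer needed.
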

\begin{proof}
  We start by constructing the map. It suffices to define
  \[\tau_{G/H} : M^{\otimes_\mathbb{Z} G/H} \to \pi^H_0(N_{\{e\}}^G(HM)) \text{,}\]
  and then we can more generally define $\tau_{G/H} : M^{\otimes_\mathbb{Z} G/H} \to \pi^H_0(N_{\{e\}}^G(HM)
  \wedge {\tilde{E} \mathcal{F}(S)})$ by postcomposing with the truncation map
  $R_{S}$.

  Note that
  \[\pi_0((QHM)^{\wedge G/H}) \cong M^{\otimes_\mathbb{Z} G/H} \text{.}\]
  So given an element $m \in M^{\otimes_\mathbb{Z} G/H}$ we get an element of
  \[\pi_0((QHM)^{\wedge G/H}) \coloneqq \text{colim}_n [S^n, ((QHM)^{\wedge G/H})_n] \text{,}\]
  which can be represented by some map $\alpha : S^n \to ((QHM)^{\wedge G/H})_n$.
  We want to produce an element of $\pi^H_0(N_{\{e\}}^G(QHM))$; it suffices to obtain an
  $H$-equivariant map $S^V \to (N_{\{e\}}^G(QHM))_V$ where $V$ is some finite-dimensional
  $H$-representation. We will exhibit such a map for $V = n \rho_H$, where
  $\rho_H$ is the regular representation constructed as $\mathbb{R}^H$ where $h
  \in H$ acts by sending the factor indexed by $h'$ to the factor indexed by $h'
  h^{-1}$. The desired map is
  \[\begin{tikzcd}S^{n \rho_H} \ar[d, "\alpha^{\wedge H}"] & & \\
     ((QHM)^{\wedge G/H})_n^{\wedge H} \ar[r] & ((QHM)^{\wedge G/H \times
       H})_{n\abs{H}} \ar[r, "\cong"] & ((QHM)^{\wedge
       G})_{n \abs{H}} \ar[d, "\cong"]\\
     & & (N_{\{e\}}^G(HM))_{n \rho_H}
    \end{tikzcd}\]
  where the second map is the canonical inclusion and the third map is induced by
  the isomorphism $G/H \times H \cong G$ corresponding to the choice of coset
  representatives (i.e.\ $(g_i H, s) \mapsto g_i s$). By abuse of notation we
  will sometimes refer to this whole composition as $\alpha^{\wedge H}$.

  It is easy to check this map is $H$-equivariant. Moreover alternative
  representatives $\alpha$ give the same element of $\pi^H_0(N_{\{e\}}^G(HM))$, so we get a
  well-defined map $\tau_{G/H} : M^{\otimes_\mathbb{Z} G/H} \to \pi^H_0(N_{\{e\}}^G(HM))$. %

  We now check this map satisfies the desired properties.

  \begin{enumerate}[(i)]
  \item This is immediate from the definition of $\tau_{G/H}$, taking $\alpha$ to be a constant map.
  \item
    This is immediate, using the fact
    that for $S'' \subseteq S' \subseteq S$ we have $R_{S''}R_{S'} = R_{S''}$.
    \item{
        It is straightforward to check that
        $\tau_{G/H}$ is $G$-equivariant in the appropriate sense---the key
        point is that we have a commutative diagram of isomorphisms of sets
        \[
          \begin{tikzcd}
            G/H \times H \ar[r] \ar[d] & G \ar[d]\\
            G/\prescript{g}{}{H} \times \prescript{g}{}{H} \ar[r] & G
          \end{tikzcd}
          \quad
          \begin{tikzcd}
            (g_i H, s) \ar[r, mapsto] \ar[d, mapsto] & g_i s \ar[d, mapsto] \\
            (g_i g^{-1} \prescript{g}{}{H}, gsg^{-1}) \ar[r, mapsto] & g_i s g^{-1} \text{.}
          \end{tikzcd}
        \]
      }
  \item
    For $X$ a cofibrant spectrum, Lemma~\ref{lem:H_geo_fixed_of_G_norm} gives us
    a canonical weak equivalence
    \[(N_{\{e\}}^GX \wedge {\tilde{E} \mathcal{F}(\{H\})})^H = (N_{\{e\}}^GX)^{\Phi H}
      \simeq ((N_{\{e\}}^HX)^{\wedge G/H})^{\Phi H} \simeq X^{\wedge G/H} \text{.}\]

    In particular, the case $X = QHM$ gives us a canonical monoidal additive isomorphism
    \[\pi^H_0(N_{\{e\}}^G(HM) \wedge {\tilde{E} \mathcal{F}(\{H\})}) \cong M^{\otimes_\mathbb{Z}
        G/H} \text{.}\]

    When $S = \{H\}$ we defined $\tau_{G/H}(m)$ by taking
    $\alpha : S^n \to ((QHM)^{G/H})_n$ corresponding to $m$, producing $\alpha^{\wedge H} : S^{n
      \rho_H} \to (N_{\{e\}}^G(HM))_{n \rho_H}$, then applying the truncation map
    $\pi^H_0(N_{\{e\}}^G(HM)) \to \pi_0(N_{\{e\}}^G(HM)^{\Phi H})$, which corresponds to
    applying the naive $H$-fixed point functor to $\alpha^{\wedge H}$ giving us
    a %
    map $S^n \to ((N_{\{e\}}^G(HM))_{n \rho_H})^H$. Using this description we can check
    that $\tau_{G/H}$ is the inverse of the isomorphism described above, so %
    $\tau_{G/H}$ is a monoidal additive isomorphism and independent of the choice of
    coset representatives.
  \item
      We need to check the diagram
      \[\begin{tikzcd}
        M^{\otimes_\mathbb{Z} G/H} \ar[r, "\tau_{G/H}"] \ar[d,
        "({-})^{\otimes_\mathbb{Z} H/K}"] &
        \pi^H_0(N_{\{e\}}^G(HM) \wedge {\tilde{E} \mathcal{F}(S)})
        \ar[dd, "\text{res}_K^H"]\\
        M^{\otimes_\mathbb{Z} G/H \times H/K} \ar[d, "f_{G/H}"]&\\
        M^{\otimes_\mathbb{Z} G/K} \ar[r, "\tau_{G/K}"] &
        \pi^K_0(N_{\{e\}}^G(HM) \wedge {\tilde{E} \mathcal{F}(S \!\!\mid_K)})
      \end{tikzcd}\]
      where we use coset representatives $G/H = \{g_i H\}$ and $G/K = \{g_i h_j
      K\}$ arising from $H/K = \{h_j K\}$.

      By (ii), it suffices to check this when $S$ is the family
      of all subgroups, i.e.\ we can ignore the ${\tilde{E} \mathcal{F}(S)}$ term.
      Let $m \in M^{\otimes_\mathbb{Z} G/H}$, and let $\alpha : S^n \to
      ((QHM)^{\wedge G/H})_n$ represent the corresponding element of
      $\pi_0((QHM)^{\wedge G/H})
      \cong M^{\otimes_\mathbb{Z} G/H}$. The isomorphism $H/K \times K \cong H$
      given by $(h_j K, s) \mapsto h_j s$ induces an isomorphism of
      $K$-representations $n \rho_H \cong n \abs{H/K} \rho_K$. Then $\text{res}^H_K \tau_{G/H}(m)$ is obtained by
      taking the map $\alpha^{\wedge H} : S^{n \rho_H} \to (N_{\{e\}}^G(HM))_{n \rho_H}$ from the definition
      of $\tau$, then applying this isomorphism to get a map $S^{n \abs{H/K}
        \rho_K} \to (N_{\{e\}}^G(HM))_{n \abs{H/K} \rho_K}$.
      The element $f_{G/H}(m^{\otimes_\mathbb{Z} H/K})
      \in M^{\otimes_\mathbb{Z} G/K}$
      corresponds to the element of $\pi_0((QHM)^{\wedge G/K})$ represented by
      \[\begin{tikzcd}[column sep=small] S^{n \abs{H/K}} \ar[d, "\alpha^{\wedge H/K}"] & &\\
          ((QHM)^{\wedge G/H})_n^{\wedge H/K} \ar[r] & ((QHM)^{\wedge G/H \times
            H/K})_{n \abs{H/K}} \ar[r, "\cong"] & ((QHM)^{\wedge G/K})_{n
            \abs{H/K}} \text{,}
          \end{tikzcd}\]
      (using the usual isomorphism $G/H \times H/K \cong G/K$ given by $(g_i H,
      sK) \mapsto g_i s K$) and unwinding the definition of $\tau_{G/K}$ in
      $\tau_{G/K}(f_{G/H}(m^{\otimes_\mathbb{Z} H/K}))$ gives an element of
      $\pi_0(N_{\{e\}}^G(HM))$ represented by a map $S^{n \abs{H/K} \rho_K} \to
      (N_{\{e\}}^G(HM))_{n \abs{H/K} \rho_K}$. Careful checking shows that our
      representatives for the elements $\text{res}^H_K \tau_{G/H}(m)$ and
      $\tau_{G/K}(f_{G/H}(m^{\otimes_\mathbb{Z} H/K}))$ are in fact equal. The
      key is the commutative diagram
      \[
        \begin{tikzcd}
          G/H \times H/K \times K \ar[r] \ar[d] & G/K \times K \ar[d]\\
          G/H \times H \ar[r] & G
        \end{tikzcd}
        \quad
        \begin{tikzcd}
          (g_i H, h_j K, s) \ar[d, mapsto] \ar[r, mapsto] & (g_i h_j K, s)
          \ar[d, mapsto]\\
          (g_i H, h_j s) \ar[r, mapsto] & g_i h_j s \text{.}
        \end{tikzcd}
      \]
  \end{enumerate}
\end{proof}

\subsection{Computation of the zeroth homotopy}

We are now ready to prove the main result of the paper.
Our approach is analogous to that taken in
\cite{dotto_witt_2025}.
We will ultimately construct a
map $I : \prod_{V \in \can{S}} M^{\otimes_\mathbb{Z} G/V} \to
\pi_0^H(N^G_{\{e\}}(HM) \wedge \tilde{E}\mathcal{F}(S))$ and show that it
descends to an isomorphism out of the Witt vectors. To do so we first need to
define a map from the zeroth homotopy of the norm to
the ghost group, which will factor the usual ghost map.

\begin{lemma} \label{lem:norm_ghost}
  There is an analogue of the ghost map,
  \[\overline{w} : \pi^H_0(N_{\{e\}}^G(HM) \wedge {\tilde{E} \mathcal{F}(S)}) \to \text{gh}_{H \le G}^S(\mathbb{Z}; M) \text{.}\]
  This is a monoidal map of Mackey functors. Moreover we have
  $\overline{w} c_g = \tilde{c}_g \overline{w}$ for any $g \in G$, $\overline{w}R_{S'} = \tilde{R}_{S'} \overline{w}$ for $S' \subseteq S$
  and $\overline{w} \tau_{G/H}(m) = \tilde{\tau}_{G/H}$ given any choice of coset representatives for $G/H$.
\end{lemma}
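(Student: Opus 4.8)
The plan is to build $\overline{w}$ one ghost component at a time. Write $Y = N_{\{e\}}^G(HM)$. For $U \in S$ (so $U \le H$ and $U \in S\!\!\mid_U$), let $\overline{w}_U$ be the composite
\[
\pi^H_0(Y \wedge \tilde{E}\mathcal{F}(S)) \xrightarrow{\text{res}^H_U} \pi^U_0(Y \wedge \tilde{E}\mathcal{F}(S\!\!\mid_U)) \xrightarrow{R_{\{U\}}} \pi_0(Y^{\Phi U}) \xrightarrow{\ \cong\ } M^{\otimes_{\mathbb{Z}} G/U}\text{,}
\]
where the last map, which I call $\psi_U$, is the canonical identification of Section~\ref{sec:norm_technical}; by Proposition~\ref{prop:norm_teichmuller_properties}~(iv) it is the inverse of the Teichm\"uller isomorphism $\tau_{G/U}$, and in particular does not depend on any choice of coset representatives. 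First I would check that $(\overline{w}_U)_{U \in S}$ takes values in the $H$-fixed points of $\prod_{U \in S} M^{\otimes_{\mathbb{Z}} G/U}$, i.e.\ that $a \cdot \overline{w}_U(x) = \overline{w}_{\prescript{a}{}{U}}(x)$ for $a \in H$; this follows because $c_a$ acts as the identity on $\pi^H_0$, because conjugation commutes with restriction and with the truncation maps ($R_{\prescript{g}{}{S'}}c_g = c_g R_{S'}$), and because of the equivariance of the Teichm\"uller map (Proposition~\ref{prop:norm_teichmuller_properties}~(iii) specialised to geometric fixed points). Naturality in $M$ is immediate since every constituent map is natural.

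Next I would check that $\overline{w}$ intertwines the structure maps, which is mostly formal bookkeeping with the relations already recorded. Compatibility with restriction, $\overline{w}\,\text{res}^H_K = \tilde{F}^H_K\,\overline{w}$, follows from $\text{res}^K_U\text{res}^H_K = \text{res}^H_U$ together with $R_{\{U\}}R_{S\mid_U} = R_{\{U\}}$. Compatibility with conjugation, $\overline{w}\,c_g = \tilde{c}_g\,\overline{w}$ for all $g \in G$, follows componentwise from $\text{res}^{\prescript{g}{}{H}}_{\prescript{g}{}{U}}c_g = c_g\,\text{res}^H_U$, $R_{\{\prescript{g}{}{U}\}}c_g = c_g R_{\{U\}}$ and the equivariance of $\tau$, recalling that $\tilde{c}_g$ on ghost groups is by definition assembled from the maps $g \cdot ({-})$. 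Compatibility with truncation, $\overline{w}R_{S'} = \tilde{R}_{S'}\overline{w}$, follows from $\text{res}^H_U R_{S'} = R_{S'\mid_U}\text{res}^H_U$ and $R_{\{U\}}R_{S'\mid_U} = R_{\{U\}}$, since $\tilde{R}_{S'}$ is just the projection onto the components indexed by $S'$. The identity $\overline{w}\,\tau_{G/H}(m) = \tilde{\tau}_{G/H}(m)$ is read off from Proposition~\ref{prop:norm_teichmuller_properties}: part (v) gives $\text{res}^H_U\tau_{G/H}(m) = \tau_{G/U}(f_{G/H}(m^{\otimes_{\mathbb{Z}} H/U}))$, part (ii) shows this is unaffected by the truncation $R_{\{U\}}$ down to $\pi_0(Y^{\Phi U})$, and applying $\psi_U$ (part (iv)) returns exactly $\tilde{\tau}_{G/H}(m)_U = f_{G/H}(m^{\otimes_{\mathbb{Z}} H/U})$; in particular $\overline{w}$ carries the unit $\tau_{G/H}(1)$ of the source to the unit of $\text{gh}$.

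The one step that needs real care is compatibility with transfer, $\overline{w}\,\text{tr}^H_K = \tilde{V}^H_K\,\overline{w}$. Fixing $W \in S$, the $W$-component of the left-hand side is $\psi_W R_{\{W\}}\text{res}^H_W\text{tr}^H_K(x)$. Expanding $\text{res}^H_W\text{tr}^H_K$ by the Mackey double coset formula as $\sum_{WhK \in W\backslash H/K} \text{tr}^W_{W\cap\prescript{h}{}{K}}\,c_h\,\text{res}^K_{W^h\cap K}$, the key point is that $R_{\{W\}}\text{tr}^W_J$ vanishes whenever $J < W$ is a proper subgroup: since $W$ is not subconjugate to $J$ we have $\{W\} \subseteq (S\!\!\mid_W)\setminus J$, so $R_{\{W\}} = R_{\{W\}}R_{(S\mid_W)\setminus J}$, and $R_{(S\mid_W)\setminus J}\text{tr}^W_J = 0$ by the isotropy separation exact sequence (Lemma~\ref{lem:isotropy}). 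Hence only the double cosets with $W \cap \prescript{h}{}{K} = W$ survive, and these are exactly the ones represented by $h$ with $hK \in (H/K)^W$; for such $h$ the term is $c_h\,\text{res}^K_{W^h}(x)$, and pushing $c_h$ past $R_{\{W\}}$ and $\psi_W$ with the equivariance of $\tau$ turns the surviving sum into $\sum_{hK \in (H/K)^W} h \cdot \overline{w}_{W^h}(x)$, which is precisely $(\tilde{V}^H_K\overline{w}(x))_W$ by the component formula for $\tilde{V}^H_K$ in Proposition~\ref{prop:verschiebung}. Together with the previous paragraph this shows $\overline{w}$ is a map of Mackey functors commuting with all the extra structure; monoidality is then checked componentwise, since each $\overline{w}_U$ is a composite of restriction and the geometric-fixed-point identification, both monoidal and compatible with the shuffle isomorphisms, so $\overline{w}_U$ carries the external product to $\tilde{\star}$ on the $U$-component. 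I expect the transfer identity, specifically keeping the double-coset bookkeeping straight while invoking the vanishing of lower transfers after truncation, to be the main obstacle; everything else is formal.
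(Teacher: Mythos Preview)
Your proposal is correct and follows essentially the same approach as the paper: the same componentwise definition $\overline{w}_U = \tau_{G/U}^{-1} R_{\{U\}} \text{res}^H_U$, the same verifications for restriction, conjugation, truncation and the Teichm\"uller map, and the same use of the Mackey double coset formula for the transfer identity. The only cosmetic difference is that you justify the vanishing of $R_{\{W\}}\text{tr}^W_J$ for $J \lneq W$ via the isotropy separation exact sequence, whereas the paper argues via $R_{\{W\}}\text{tr}^W_J = \text{tr}^W_J R_{\{W\}\mid_J}$ with $\{W\}\!\!\mid_J = \emptyset$; both are valid and amount to the same thing.
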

\begin{proof}
  To make the notation less cumbersome, define
  \[\mathcal{N}_{H \le G}^S(M) \coloneqq \pi^H_0(N_{\{e\}}^G(HM) \wedge {\tilde{E}
    \mathcal{F}(S)}) \text{.}\]

  Given $U \in S$, define $\overline{w}_U$ to be the composition
  \[\overline{w}_U : \mathcal{N}_{H \le G}^S(M) \xrightarrow{\text{res}^H_U} \mathcal{N}_{U \le G}^{S \mid_U}(M) \xrightarrow{R_{\{U\}}} \mathcal{N}_{U \le G}^{\{U\}}(M) \xrightarrow{\tau_{G/U}^{-1}} M^{\otimes_\mathbb{Z} G/U}\]
  where recall that we just proved $\tau_{G/U} : M^{\otimes_\mathbb{Z} G/U} \to \mathcal{N}_{U \le
    G}^{\{U\}}(M)$ is an additive isomorphism (and does not depend on a choice
  of coset representatives). Note $\overline{w}_U$ is monoidal, since $\text{res}^H_U$ and $R_{\{U\}}$ are
  monoidal by standard equivariant stable homotopy theory and $\tau^{-1}_{G/U}$
  is monoidal by Proposition~\ref{prop:norm_teichmuller_properties}~(iv).

  Now define
  \[\overline{w} : \mathcal{N}^S_{H \le G}(M) \to \prod_{U \in S}
    M^{\otimes_\mathbb{Z} G/U}\]
  to be the product of the $\overline{w}_U$. We claim
  the image of this map lies
  in the $H$-fixed points of $\prod_{U \in S} M^{\otimes_\mathbb{Z} G/U}$.
  Indeed, for $n \in \mathcal{N}^S_{H \le G}(M)$ and $a \in H$ we have
  \begin{align*}
    a \cdot \overline{w}_U(n) = a \cdot \tau^{-1}_{G/U}(R_{\{U\}} \text{res}^H_U(n)) &=
    \tau^{-1}_{G/\prescript{a}{}{U}}(R_{\{\prescript{a}{}{U}\}}\text{res}^H_{\prescript{a}{}{U}}(c_a n))\\
    &= \tau^{-1}_{G/\prescript{a}{}{U}}(R_{\{\prescript{a}{}{U}\}}\text{res}^H_{\prescript{a}{}{U}}(n))\\
    &= \overline{w}_{\prescript{a}{}{U}}(n) \text{.}
  \end{align*}
  So we get an additive map $\overline{w} : \mathcal{N}^S_{H \le G}(M) \to
  \left( \prod_{U \in S} M^{\otimes_\mathbb{Z} G/U} \right)^H = \text{gh}^S_{H
    \le G}(\mathbb{Z}; M)$. Recalling the monoidal structure of $\text{gh}^S_{H
    \le G}(\mathbb{Z}; M)$ and using monoidality of $\overline{w}_U$ we see that $\overline{w}$ is monoidal.

  Next we need to check that this map commutes with the operators.
  For the restriction map, we need to show that we have $\overline{w} \text{res}^H_K = \tilde{F}^H_K \overline{w} :
  \mathcal{N}^S_{H \le G}(M) \to \text{gh}^{S \mid_K}_{K \le G}(\mathbb{Z}; M)$
  (recall that $\tilde{F}^H_K$ is the restriction map in the Mackey structure
  on the ghost group).
  Checking the
  $U$-component for $U \in S\!\!\mid_K$ gives
  \[\overline{w}_U\text{res}^H_K = \tau^{-1}_{G/U} R \text{res}^K_U \text{res}^H_K = \tau^{-1}_{G/U}R\text{res}^H_U
    = \overline{w}_U \text{,}\]
  which is the $U$-component of $\tilde{F}^H_K \overline{w}$ as desired.

  Similarly for the transfer map we need to check that $\overline{w}\text{tr}^H_K =
  \tilde{V}^H_K\overline{w} : \mathcal{N}^{S \mid_K}_{K \le G}(M) \to
  \text{gh}^S_{H \le G}(\mathbb{Z}; M)$, or in components
  \[\overline{w}_U \text{tr}^H_K = \sum_{hK \in (H/K)^U} h \cdot \overline{w}_{U^h}\]
  for $U \in S$.
But indeed
\begin{align*}
  \overline{w}_U \text{tr}^H_K &= \tau^{-1}_{G/U} R_{\{U\}} \text{res}^H_U \text{tr}^H_K\\
                       &= \sum_{UhK \in U \backslash H / K} \tau^{-1}_{G/U} R_{\{U\}} \text{tr}^U_{U \cap \prescript{h}{}{K}} c_h \text{res}^K_{U^h \cap K}\\
                       &= \sum_{hK \in (H/K)^U} \tau^{-1}_{G/U} R_{\{U\}} c_h \text{res}^K_{U^h}\\
                               &= \sum_{hK \in (H/K)^U} h \cdot \tau^{-1}_{G/U^h} R_{\{U^h\}} \text{res}^K_{U^h}\\
                       &= \sum_{hK \in (H/K)^U} h \cdot \overline{w}_{U^h} \text{.}
\end{align*}
The third equality holds because
if
$U \le \prescript{h}{}{K}$ then $\text{tr}^U_{U \cap \prescript{h}{}{K}}$ is the
identity and $UhK = hK$; or otherwise 
$R_{\{U\}}\text{tr}^U_{U \cap \prescript{h}{}{K}} =
\text{tr}^U_{U \cap \prescript{h}{}{K}}R_{\{U\}\mid_{U \cap \prescript{h}{}{K}}}$ 
and $\{U\}\!\!\mid_{U \cap \prescript{h}{}{K}} = \emptyset$ so
$R_{\{U\}}\text{tr}^U_{U \cap \prescript{h}{}{K}} = 0$.

Next consider the conjugation map. Given $g \in G$, we need
\[\overline{w}_{\prescript{g}{}{U}} c_g
= g \cdot \overline{w}_{U} : \mathcal{N}^S_{H \le G}(M) \to
\text{gh}^{\prescript{g}{}{S}}_{\prescript{g}{}{H} \le G}(\mathbb{Z}; M)\]
for $U \in S$. Indeed
we have $\overline{w}_{\prescript{g}{}{U}} c_g = \tau^{-1}_{G/\prescript{g}{}{U}} R_{\{\prescript{g}{}{U}\}}
\text{res}^{\prescript{g}{}{H}}_{\prescript{g}{}{U}} c_g = g \cdot \tau^{-1}_{G/U} R_{\{U\}}
\text{res}^H_U = g \cdot \overline{w}_U$.

Let $S' \subseteq S$ be truncation sets for $H$. For $U \in S'$, we have
\[\overline{w}_U R_{S'} = \tau^{-1}_{G/U} R_{\{U\}} \text{res}^H_U R_{S'} =
\tau^{-1}_{G/U} R_{\{U\}}R_{S'\mid_U} \text{res}^H_U = \tau^{-1}_{G/U} R_{\{U\}} \text{res}^H_U =
\overline{w}_U\]
as desired.

Finally, we consider the Teichm\"uller map. Choose coset representatives for
$G/H$. Given $m \in M^{\otimes_R G/H}$ and
$U \in S$, we want $\overline{w}_U \tau_{G/H}(m) = f_{G/H}(m^{\otimes_\mathbb{Z} H/U})$.
Recall from Proposition~\ref{prop:norm_teichmuller_properties} (ii) that $R_{\{U\}}\tau_{G/U} = \tau_{G/U}$.
Then we see
\begin{align*}\overline{w}_U \tau_{G/H}(m) = \tau^{-1}_{G/U} R_{\{U\}} \text{res}^H_U \tau_{G/H}(m) &=
                                                                                                      \tau^{-1}_{G/U} R_{\{U\}} \tau_{G/U} f_{G/H}(m^{\otimes_\mathbb{Z} H/U})\\
  &= f_{G/H}(m^{\otimes_\mathbb{Z}
  H/U})
  \end{align*}
as desired.

So $\overline{w}$ is a map of Mackey functors, and since it is pointwise
monoidal it is also monoidal as a map of Mackey functors.
\end{proof}

As with the Witt vectors, the ghost is injective at free objects.

\begin{lemma} \label{lem:top_ghost_injective}
  For $Q$ a free abelian group, the ghost map
  \[\overline{w} : \pi^H_0(N_{\{e\}}^G(HQ) \wedge {\tilde{E} \mathcal{F}(S)}) \to
    \text{gh}^S_{H \le G}(\mathbb{Z}; Q)\]
  is injective.
\end{lemma}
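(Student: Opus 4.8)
The plan is to induct on the (necessarily finite) size of the truncation set $S$, keeping $G$ and $H$ fixed, peeling off a minimal subgroup at each step via the isotropy separation exact sequence. When $S=\emptyset$ the source is $0$, so there is nothing to prove. When $S=\{H\}$, Lemma~\ref{lem:norm_ghost} gives $\overline{w}\,\tau_{G/H}=\tilde{\tau}_{G/H}$, whose single ($H$-)component is $m\mapsto f_{G/H}(m^{\otimes_{\mathbb{Z}} H/H})=m$; since $\tau_{G/H}$ is an additive isomorphism by Proposition~\ref{prop:norm_teichmuller_properties}(iv), $\overline{w}$ is itself an isomorphism, in particular injective.

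For the inductive step, fix a minimal element $K$ of $S$, so that $S\!\!\mid_K=\{K\}$ and $S\setminus K$ is a strictly smaller truncation set for $H$. Applying Lemma~\ref{lem:isotropy} to the connective $G$-spectrum $N_{\{e\}}^G(HQ)$, together with Remark~\ref{rem:isotropy_orbits_form} and the identification $\pi^K_0(N_{\{e\}}^G(HQ)\wedge\tilde{E}\mathcal{F}(\{K\}))\cong Q^{\otimes_{\mathbb{Z}} G/K}$ via $\tau_{G/K}$ (Proposition~\ref{prop:norm_teichmuller_properties}(iii),(iv), which also matches the Weyl-group action), gives an exact sequence
\[
(Q^{\otimes_{\mathbb{Z}} G/K})_{N_H(K)}\xrightarrow{\ \text{tr}^H_K\,\tau_{G/K}\ }\pi^H_0(N_{\{e\}}^G(HQ)\wedge\tilde{E}\mathcal{F}(S))\xrightarrow{\ R_{S\setminus K}\ }\pi^H_0(N_{\{e\}}^G(HQ)\wedge\tilde{E}\mathcal{F}(S\setminus K))\to 0 .
\]
Now let $x\in\ker\overline{w}$. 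Since $\overline{w}\,R_{S\setminus K}=\tilde{R}_{S\setminus K}\,\overline{w}$ by Lemma~\ref{lem:norm_ghost}, and $\overline{w}$ is injective on the right-hand term by the inductive hypothesis, we get $R_{S\setminus K}(x)=0$; by exactness $x=\text{tr}^H_K\,\tau_{G/K}(a)$ for some $a\in(Q^{\otimes_{\mathbb{Z}} G/K})_{N_H(K)}$. Because $\overline{w}$ is a map of Mackey functors and $\overline{w}\,\tau_{G/K}=\tilde{\tau}_{G/K}$, this yields $0=\overline{w}(x)=\tilde{V}^H_K\tilde{\tau}_{G/K}(a)$.

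It then remains to show that $\tilde{V}^H_K\tilde{\tau}_{G/K}\colon(Q^{\otimes_{\mathbb{Z}} G/K})_{N_H(K)}\to\text{gh}^S_{H\le G}(\mathbb{Z};Q)$ is injective, which I would do by inspecting its $K$-component. Using the component formulas for $\tilde{V}^H_K$ and $\tilde{\tau}_{G/K}$ (Propositions~\ref{prop:verschiebung} and \ref{prop:teichmuller}), the equality $(H/K)^K=N_H(K)/K$, and the fact that $K$ acts trivially on $Q^{\otimes_{\mathbb{Z}} G/K}$, this $K$-component is exactly the transfer $\text{tr}^{N_H(K)}_K\colon(Q^{\otimes_{\mathbb{Z}} G/K})_{N_H(K)}\to(Q^{\otimes_{\mathbb{Z}} G/K})^{N_H(K)}$, which is injective by the transfer-injectivity lemma of Section~\ref{sec:movingtensor} (a free module over the torsion-free ring $\mathbb{Z}$). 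Hence $a=0$ and so $x=0$, completing the induction. The only genuine obstacle is bookkeeping: one must check carefully that the connecting map in the isotropy sequence really is $\text{tr}^H_K$ precomposed with the Teichm\"uller identification, that it descends to the $N_H(K)$-orbits as in Remark~\ref{rem:isotropy_orbits_form}, and that the commutation relations for $\overline{w}$ recorded in Lemma~\ref{lem:norm_ghost} apply verbatim in the truncated setting — but each of these is supplied by the cited results.
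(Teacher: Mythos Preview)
Your argument is correct and follows essentially the same strategy as the paper: induct on $\lvert S\rvert$, peel off a minimal $K\in S$ via the isotropy separation sequence, and use that the $K$-component of $\tilde{V}^H_K\tilde{\tau}_{G/K}$ is the injective transfer $\text{tr}^{N_H(K)}_K$ together with the inductive hypothesis. The only stylistic difference is that the paper packages the same steps into a commutative ladder of exact sequences and invokes the four lemma, whereas you carry out the equivalent diagram chase by hand.
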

\begin{proof}
  We continue to write $\mathcal{N}_{H \le G}^S(Q)$ for $\pi^H_0(N_{\{e\}}^G(HQ)
  \wedge {\tilde{E} \mathcal{F}(S)})$.

 We proceed by induction on the size
  of the truncation set $S$, somewhat analogously to Lemma~\ref{lem:mackey_isotropy_surjective}. When $S$ is empty then $\mathcal{N}^\emptyset_{H
    \le G}(Q) = 0$ so $\overline{w}$ is injective.
  Now suppose $S$ is a non-empty truncation set and $\overline{w}$ is injective
  for every smaller truncation set. Pick $K$ a minimal element of
  $S$ (i.e.\ $S$ does not contain any subgroup strictly subconjugate to $K$). By
  Lemma~\ref{lem:isotropy} and Remark~\ref{rem:isotropy_orbits_form} we
  have an exact sequence
  \[(\mathcal{N}^{\{K\}}_{K \le G}(Q))_{N_H(K)} \xrightarrow{\text{tr}_K^H} \mathcal{N}^S_{H \le
    G}(Q) \xrightarrow{R_{S \setminus K}} \mathcal{N}^{S \setminus K}_{H \le G}(Q) \to 0 \text{.} \]
Note $\tau_{G/K} :  Q^{\otimes_\mathbb{Z} G/K} \cong \mathcal{N}^{\{K\}}_{K
  \le G}(Q)$ is an isomorphism. We get a commutative diagram
  \[\begin{tikzcd}
    & (Q^{\otimes_\mathbb{Z} G/K})_{N_H(K)} \ar[r, "\text{tr}_K^H
    \tau_{G/K}"] \ar[d, "\text{tr}_K^{N_H(K)}"] & \mathcal{N}^S_{H \le G}(Q)
    \ar[r, "R_{S \setminus K}"]
    \ar[d, "\overline{w}"]& \mathcal{N}^{S \setminus
    K}_{H \le G}(Q) \ar[d, "\overline{w}"]\\
  0 \ar[r] & (Q^{\otimes_\mathbb{Z} G/K})^{N_H(K)} \ar[r] & \left( \prod_{U \in S} Q^{\otimes_\mathbb{Z}
    G/U} \right)^{H} \ar[r, "\tilde{R}_{S \setminus K}"] & \left( \prod_{U \in S \setminus K} Q^{\otimes_\mathbb{Z}
    G/U} \right)^H
  \end{tikzcd}\]
with exact rows.

  The left square commutes since
  $\overline{w}\text{tr}_K^H \tau_{G/K} = \tilde{V}^H_K \tilde{\tau}_{G/K}$,
  where (as $K$ is minimal in $S$) we have that $\tilde{\tau}_{G/K}$ is the
  identity map and $\tilde{V}^H_K$ is the transfer $\text{tr}^{N_H(K)}_K :
  Q^{\otimes_{\mathbb{Z}} G/K} \to Q^{\otimes_{\mathbb{Z}} G/K}$ followed by the
  inclusion of the $K$ component of $\text{gh}^S_{H \le G}(\mathbb{Z}; Q)$.
  The right square commutes by the previous lemma.

  The left vertical map $\text{tr}_K^{N_H(K)}$ is %
  injective since $Q$ is free. We deduce that
  $\text{tr}_K^H \tau_{G/K}$ must also be injective, so the top row is in fact left
  exact. The rightmost vertical map is
  injective by the inductive hypothesis. So by the four lemma we deduce that
  $\overline{w} : \mathcal{N}^S_{H \le G}(Q) \to \left( \prod_{U \in S}
    Q^{\otimes_\mathbb{Z} G/U} \right)^H  = \text{gh}^S_{H \le G}(\mathbb{Z}; Q)$
  is injective, and the induction holds.
\end{proof}

Define the map
$I : \prod_{V \in \can{S}} M^{\otimes_{\mathbb{Z}} G/V} \to \pi^H_0(N_{\{e\}}^G(HM) \wedge {\tilde{E} \mathcal{F}(S)})$
by the formula
\[I(n) = \sum_{V \in \can{S}} \text{tr}^H_V \tau_{G/V} (n_V) \text{,}\]
where for each $V \in \can{S}$ we need some choice of coset representatives for
$G/V$ to define
$\tau_{G/V} : M^{\otimes_{\mathbb{Z}} G/V} \to \pi^V_0(N_{\{e\}}^G(HM) \wedge
{\tilde{E} \mathcal{F}(S \!\!\mid_V)})$.

\begin{lemma}
  The ghost map
  \[w : \prod_{V \in \can{S}} M^{\otimes_\mathbb{Z} G/V} \to \text{gh}^S_{H \le
      G}(\mathbb{Z}; M)\]
  factors as $w = \overline{w}I$, where we use the same choices of coset
  representatives to define $w$ and $I$.
\end{lemma}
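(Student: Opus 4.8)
The plan is to compute $\overline{w}I(n)$ directly, using the additivity of $\overline{w}$ together with the compatibility identities established in Lemma~\ref{lem:norm_ghost}, and then to recognise the result as the formula for the ghost map recorded in Lemma~\ref{lem:ghost_via_operators}. Since $G$ is finite, $H$ is finite and $S$ is a finite set of subgroups, so the sum defining $I$ has only finitely many terms and no convergence subtleties arise.

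First I would apply $\overline{w}$ to $I(n) = \sum_{V \in \can{S}} \text{tr}^H_V \tau_{G/V}(n_V)$. As $\overline{w}$ is a map of Mackey functors (in particular additive) by Lemma~\ref{lem:norm_ghost}, this gives $\overline{w}I(n) = \sum_{V \in \can{S}} \overline{w}\,\text{tr}^H_V \tau_{G/V}(n_V)$. Using the transfer compatibility $\overline{w}\,\text{tr}^H_V = \tilde{V}^H_V \overline{w}$ from the same lemma (where $\tilde{V}^H_V$ is the transfer on ghost groups), and then the Teichm\"uller compatibility $\overline{w}\,\tau_{G/V}(m) = \tilde{\tau}_{G/V}(m)$ (again from Lemma~\ref{lem:norm_ghost}, for the chosen coset representatives for $G/V$), each term collapses to $\tilde{V}^H_V \tilde{\tau}_{G/V}(n_V)$, so that $\overline{w}I(n) = \sum_{V \in \can{S}} \tilde{V}^H_V \tilde{\tau}_{G/V}(n_V)$.

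Finally I would invoke Lemma~\ref{lem:ghost_via_operators}, which says precisely that the ghost map is $n \mapsto \sum_{V \in \can{S}} \tilde{V}^H_V \tilde{\tau}_{G/V}(n_V)$ when the $\tilde{\tau}_{G/V}$ are defined using the same coset representatives used to define $w$. Comparing the two expressions yields $w = \overline{w}I$. There is no real obstacle: all the content has been front-loaded into Lemmas~\ref{lem:norm_ghost} and~\ref{lem:ghost_via_operators}, and the only point needing care is the bookkeeping of coset representatives — one must use the same choice for each $G/V$ in defining $I$, in defining $w$, and in the identity $\overline{w}\tau_{G/V} = \tilde{\tau}_{G/V}$ — which is exactly what the statement of the lemma stipulates.
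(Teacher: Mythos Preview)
Your proposal is correct and follows essentially the same approach as the paper: apply $\overline{w}$ to $I(n)$, use additivity together with the compatibilities $\overline{w}\,\text{tr}^H_V = \tilde{V}^H_V \overline{w}$ and $\overline{w}\,\tau_{G/V} = \tilde{\tau}_{G/V}$ from Lemma~\ref{lem:norm_ghost}, and then recognise the resulting expression $\sum_{V \in \can{S}} \tilde{V}^H_V \tilde{\tau}_{G/V}(n_V)$ as the ghost map via Lemma~\ref{lem:ghost_via_operators}. The paper's proof is a one-line version of exactly this argument.
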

\begin{proof}
  We have just seen how $\overline{w}$ interacts with all the operators, so we
  can compute
  \[\overline{w}(I(n)) = \overline{w}\left(\sum_{V \in \can{S}} \text{tr}^H_V \tau_{G/V}(n_V) \right) =
    \sum_{V \in \can{S}} \tilde{V}^H_V \tilde{\tau}_{G/V}(n_V) = w(n)\]
  where the final equality was observed in Lemma~\ref{lem:ghost_via_operators}.
\end{proof}

We now prove that $I$ descends to an isomorphism out of the Witt vectors.

\begin{theorem}
  \label{thm:witt_norm_iso}
  The map $I$ descends to the quotient and gives an isomorphism
  \[I : W_{H \le G}^S({\mathbb{Z}}; M) \cong \pi^H_0(N_{\{e\}}^G(HM) \wedge \tilde{E} \mathcal{F}(S)) \text{,}\]
  natural in $M$. This isomorphism respects the Mackey structure, $c_g$, $R$ and $\tau$
  operators and monoidal structure.
\end{theorem}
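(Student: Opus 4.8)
The plan is to follow the standard strategy used throughout the paper: reduce to free abelian coefficients, where both sides embed in the ghost group via injective ghost maps with the same image, and then bootstrap to general coefficients using that both sides preserve reflexive coequalisers.

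First I would check that $I$ descends to a well-defined map out of $W^S_{H\le G}(\mathbb{Z};M)$. By Lemma~\ref{lem:ghost_via_operators} the quotient map $q:\prod_{V\in\can{S}}M^{\otimes_{\mathbb Z} G/V}\to W^S_{H\le G}(\mathbb{Z};M)$ is $n\mapsto\sum_V V^H_V\tau_{G/V}(n_V)$, and the map $I$ is the ``same'' formula but using the spectrum-level transfer and Teichm\"uller map. So I would argue as follows: the functor $M\mapsto\pi^H_0(N_{\{e\}}^G(HM)\wedge\tilde E\mathcal F(S))$ preserves reflexive coequalisers (Lemma~\ref{lem:norm_refl_coeq}), and by the preceding lemma $w=\overline w I$. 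For $(T;Q)$ free (here ``free'' just means free abelian group $Q$, with base ring $\mathbb Z$), Lemma~\ref{lem:top_ghost_injective} says $\overline w$ is injective; since the Witt vector ghost map $w:W^S_{H\le G}(\mathbb Z;Q)\to\text{gh}^S_{H\le G}(\mathbb Z;Q)$ is also injective with image $\text{im}(w)=\text{im}(\overline w I)$, and $q:\prod_V Q^{\otimes_{\mathbb Z}G/V}\xrightarrowdbl{} W^S_{H\le G}(\mathbb Z;Q)$ is a quotient with the property that $w\circ q$ equals the original ghost map $w=\overline w I$, we get that $I$ factors uniquely as $I=\bar I\circ q$ for a natural transformation $\bar I:\iota^\ast W^S_{H\le G}(\mathbb Z;-)\Rightarrow\mathcal N^S_{H\le G}(-)$ on free abelian groups, and $\overline w\bar I=w$.

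Next I would show $\bar I$ is an isomorphism on free coefficients. Injectivity: if $\bar I(x)=0$ then $w(x)=\overline w\bar I(x)=0$, so $x=0$ since $w$ is injective on free coefficients. Surjectivity: I would induct on $|S|$, using the exact sequences. For $S=\emptyset$ both sides vanish. For $S$ non-empty, pick $K$ minimal in $S$; by Corollary~\ref{cor:witt_exact_minimal} and by Lemma~\ref{lem:isotropy} (with Remark~\ref{rem:isotropy_orbits_form}) there are compatible right-exact sequences
\[\begin{tikzcd}[column sep=small]
(Q^{\otimes_{\mathbb Z}G/K})_{N_H(K)} \ar[r,"V^H_K\tau_{G/K}"] \ar[d,equal] & W^S_{H\le G}(\mathbb Z;Q) \ar[r,"R_{S\setminus K}"] \ar[d,"\bar I"] & W^{S\setminus K}_{H\le G}(\mathbb Z;Q) \ar[r] \ar[d,"\bar I"] & 0\\
(Q^{\otimes_{\mathbb Z}G/K})_{N_H(K)} \ar[r,"\text{tr}^H_K\tau_{G/K}"] & \mathcal N^S_{H\le G}(Q) \ar[r,"R_{S\setminus K}"] & \mathcal N^{S\setminus K}_{H\le G}(Q) \ar[r] & 0\text{,}
\end{tikzcd}\]
the left square commuting because $\bar I V^H_K\tau_{G/K}=\text{tr}^H_K\tau_{G/K}$ (both lift $\tilde V^H_K\tilde\tau_{G/K}$ along the injective $\overline w$, using $\overline w\,\text{tr}^H_K=\tilde V^H_K\overline w$ and $\overline w\tau_{G/K}=\tilde\tau_{G/K}$ from Lemma~\ref{lem:norm_ghost}), and the right square by naturality of $\bar I$. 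The left vertical map is the identity and the right vertical map is surjective by induction, so the four (or five) lemma gives that $\bar I$ is surjective, hence an isomorphism.

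Finally I would extend to general $M$. The functor $M\mapsto W^S_{H\le G}(\mathbb Z;M)$ preserves reflexive coequalisers (Theorem~\ref{thm:witt_properties}(iii), together with $W^S_{H\le G}(\mathbb Z;-)$ being a composite with restriction of scalars along $\mathbb Z\to\mathbb Z$; more directly, tensor powers and products preserve them), and $M\mapsto\mathcal N^S_{H\le G}(M)$ preserves them by Lemma~\ref{lem:norm_refl_coeq}. Taking a free resolution of $M$ by free abelian groups and applying $\bar I$ levelwise, the two outer isomorphisms induce an isomorphism on the coequalisers, i.e.\ $\bar I$ (which I now just call $I$) is a natural isomorphism $W^S_{H\le G}(\mathbb Z;M)\cong\mathcal N^S_{H\le G}(M)$ for all $M$. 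Compatibility with $c_g$, $R_{S'}$ and $\tau_{G/H}$ follows since on each side these operators are characterised by how they interact with the (injective, on free coefficients) ghost maps: Lemma~\ref{lem:norm_ghost} records $\overline w c_g=\tilde c_g\overline w$, $\overline w R_{S'}=\tilde R_{S'}\overline w$, $\overline w\tau_{G/H}=\tilde\tau_{G/H}$, which match the analogous identities for the Witt vector operators from Section~\ref{sec:operators}, so the identities $I c_g=c_g I$ etc.\ hold after applying the injective $\overline w$ on free coefficients and then extend to all $M$ by the reflexive-coequaliser argument. The main obstacle I anticipate is verifying the commutativity of the left-hand square (that $I$ intertwines $V^H_K\tau_{G/K}$ with $\text{tr}^H_K\tau_{G/K}$ in the right normalisation) and, relatedly, making sure the choices of coset representatives used to define $I$, $\tau_{G/V}$ and $w$ are handled consistently so that $w=\overline w I$ really holds on the nose — but both of these are already packaged into the preceding two lemmas.
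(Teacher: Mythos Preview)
Your proposal is correct and follows essentially the same strategy as the paper's proof: reduce to free abelian coefficients using $w=\overline{w}I$ together with injectivity of $\overline{w}$ (Lemma~\ref{lem:top_ghost_injective}) to get a well-defined injective $\bar I$, establish surjectivity, then extend by reflexive coequalisers and read off compatibility with the operators from the ghost identities in Lemma~\ref{lem:norm_ghost}. The one cosmetic difference is in the surjectivity step: the paper invokes Lemma~\ref{lem:mackey_isotropy_surjective} and simply checks that $\bar I$ is an isomorphism on $K$-geometric fixed points for each $K\in S$, whereas you run the induction on $|S|$ by hand using the matched exact sequences and the four lemma --- but Lemma~\ref{lem:mackey_isotropy_surjective} is itself proved by exactly that induction, so the two arguments are the same up to packaging.
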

\begin{proof}
  We continue to write $\mathcal{N}_{H \le G}^S(M)$ for $\pi^H_0(N_{\{e\}}^G(HM)
  \wedge {\tilde{E} \mathcal{F}(S)})$, and additionally write
  $\underline{\mathcal{N}}^S_G(M)$ for the $H$-Mackey functor
  $\underline{\pi}_0(N_{\{e\}}^G(HM) \wedge \tilde{E}\mathcal{F}(S))$.
  For $Q$ a free abelian group, Lemmas~\ref{lem:norm_ghost} and \ref{lem:top_ghost_injective} show that $\overline{w} : \mathcal{N}^S_{H \le G}(Q) \to \text{gh}^S_{H \le
    G}(\mathbb{Z}; Q)$ is an additive injection, respecting the Mackey structure.
  The ghost map $w : \prod_{V \in \underline{S}} Q^{\otimes_\mathbb{Z} G/V} \to
  \text{gh}^S_{H \le G}(\mathbb{Z}; Q)$ descends to an additive injection
  $W^S_{H \le G}(\mathbb{Z}; Q) \to \text{gh}^S_{H \le G}(\mathbb{Z}; Q)$, also
  respecting the Mackey structure. So
  since $w = \overline{w} I$, we deduce that $I$ descends to the
  quotient giving an additive injection
  \[I : W^S_{H \le G}(\mathbb{Z}; Q) \to \mathcal{N}^S_{H \le G}(Q) \text{,}\]
  and these maps assemble to give an injective map of $H$-Mackey functors
  $\underline{W}^S_G(\mathbb{Z}; Q) \to \underline{\mathcal{N}}^S_G(Q)$.

  We will show that $I$ is also surjective. By
  Lemma~\ref{lem:mackey_isotropy_surjective} it suffices to check that for $K
  \in S$, the map of Mackey functors induces a surjection between the $K$-geometric fixed
  points.
  But indeed we know that $W^{\{K\}}_{K \le G}(\mathbb{Z}; Q)$ and
  $\mathcal{N}^{\{K\}}_{K \le G}(Q)$ are both isomorphic to
  $Q^{\otimes_{\mathbb{Z}} G/K}$, and it's easy to check that the induced map
  between them is an isomorphism. Hence $I$ is surjective.

  So when restricting to $Q$ free abelian, we've shown that $I$ descends
  along the quotient map $q$ to give an isomorphism of abelian groups.
  How about at an arbitrary
  abelian group $M$? We know that the Witt vectors
  preserve reflexive coequalisers in $\text{Mod}$ (essentially by definition). Reflexive coequalisers in
  $\text{Mod}$ are computed by taking the reflexive coequalisers of the
  underlying rings and modules, so the inclusion $\text{Ab} \to \text{Mod}$
  preserves reflexive coequalisers. So $W^S_{H \le G}(\mathbb{Z}; {-})$
  preserves reflexive coequalisers of abelian groups; and similarly for
  $M \mapsto \prod_{V \in \can{S}} M^{\otimes_\mathbb{Z} G/V}$. The functor
  $\mathcal{N}^S_{H \le G}({-})$ preserves reflexive coequalisers by Lemma~\ref{lem:norm_refl_coeq}.
  So by Lemma~\ref{lem:finite_module_kan} and Remark~\ref{rem:finite_module_kan}
  we deduce that in general $I : \prod_{V \in \can{S}} M^{\otimes_{\mathbb{Z}}
    G/V} \to \mathcal{N}^S_{H \le G}(M)$ factors as the quotient map $q$ followed by a natural isomorphism of
  abelian groups
  $W^S_{H \le G}(\mathbb{Z}; M) \cong \mathcal{N}^S_{H \le G}(M)$.
  Since the operators on the Witt vectors are the unique lifts of
  maps between ghost groups, and by Lemma~\ref{lem:norm_ghost} the
  operators on $\mathcal{N}^S_{H \le G}(\mathbb{Z}; Q)$ lift the same maps
  between ghost groups, we deduce that $I$ respects all the operators. Similarly
  since the monoidal structure on Witt vectors is the unique lift of the
  monoidal structure on the ghost groups, and by Lemma~\ref{lem:norm_ghost}
  $\overline{w}$ is monoidal, we deduce that $I$ is monoidal.
\end{proof}

\begin{remark}
  Recall that when \cite{dotto_witt_2025} prove that their Witt vectors compute the
  components of $\text{TR}$ with coefficients, they only use a limited set of
  properties of $\text{TR}$ axiomatised in their Proposition~2.11. Similarly we haven't used very many properties of the norm in order to prove Theorem~\ref{thm:witt_norm_iso}. We just needed that the functor $M
  \mapsto \pi_0^H(N^G_{\{e\}}(HM) \wedge \tilde{E}\mathcal{F}(S))$ preserves
  reflexive coequalisers, and comes with associated Mackey structure and a
  Teichm\"uller map satisfying
  Proposition~\ref{prop:norm_teichmuller_properties}---most crucially,
  when $S = \{H\}$ we get the isomorphism $\pi^H_0(N^G_{\{e\}}(HM) \wedge
  \tilde{E}\mathcal{F}(\{H\})) \cong \pi_0(N^G_{\{e\}}(HM)^{\Phi H}) \cong M^{\otimes_\mathbb{Z} G/H}$.
\end{remark}

\begin{theorem} \label{thm:main_theorem}
  For $X$ a connective spectrum and $G$ a finite group, we have an
  isomorphism of $G$-Mackey functors
  \[\underline{\pi}_0(N_{\{e\}}^G X) \cong \underline{W}_G(\mathbb{Z}; \pi_0 X)\]
  natural in $X$. More generally if $S$ is a truncation set for a subgroup $H
  \le G$ we have a natural
  isomorphism of $H$-Mackey functors
  \[\underline{\pi}_0(N_{\{e\}}^G X \wedge {\tilde{E} \mathcal{F}(S)}) \cong \underline{W}^S_G(\mathbb{Z}; \pi_0 X) \text{.}\]
  These isomorphisms also respect the monoidal structure.
\end{theorem}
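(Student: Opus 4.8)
The plan is to derive Theorem~\ref{thm:main_theorem} from Theorem~\ref{thm:witt_norm_iso}, which already settles the case of Eilenberg--MacLane spectra, together with the reduction results of this section.

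First I would pass from a general connective spectrum $X$ to the Eilenberg--MacLane spectrum $H\pi_0 X$. Since $N^G_{\{e\}}X = \tilde{N}^G_{\{e\}}(QX)$ and $\pi_0 QX \cong \pi_0 X$, Corollary~\ref{cor:replace_with_em} gives a natural isomorphism of Mackey functors
\[\underline{\pi}_0(N^G_{\{e\}}X \wedge \tilde{E}\mathcal{F}(S)) \cong \underline{\pi}_0(N^G_{\{e\}}(H\pi_0 X) \wedge \tilde{E}\mathcal{F}(S)) \text{.}\]
Then Theorem~\ref{thm:witt_norm_iso} applied with $M = \pi_0 X$ provides a natural isomorphism of $H$-Mackey functors
\[\underline{\pi}_0(N^G_{\{e\}}(H\pi_0 X) \wedge \tilde{E}\mathcal{F}(S)) \cong \underline{W}^S_G(\mathbb{Z}; \pi_0 X) \text{,}\]
respecting the Mackey structure together with the $c_g$, $R$ and $\tau$ operators; here one uses that the underlying $K$-spectrum of $\tilde{E}\mathcal{F}(S)$ is $\tilde{E}\mathcal{F}(S\!\!\mid_K)$, so that $\underline{\pi}_0(N^G_{\{e\}}Y \wedge \tilde{E}\mathcal{F}(S))(K) = \pi^K_0(N^G_{\{e\}}Y \wedge \tilde{E}\mathcal{F}(S\!\!\mid_K))$ matches $\underline{W}^S_G(\mathbb{Z}; M)(K) = W^{S\mid_K}_{K\le G}(\mathbb{Z}; M)$. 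Composing these isomorphisms establishes the truncated statement. The untruncated statement is the special case $S = S_0$, the truncation set of all subgroups of $G$ (so $H = G$): then $\mathcal{F}(S_0) = \emptyset$, so $\tilde{E}\mathcal{F}(S_0) \simeq S^0$, hence $N^G_{\{e\}}X \wedge \tilde{E}\mathcal{F}(S_0) \simeq N^G_{\{e\}}X$, while $\underline{W}^{S_0}_G(\mathbb{Z}; \pi_0 X) = \underline{W}_G(\mathbb{Z}; \pi_0 X)$ by definition.

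It remains to check that the isomorphism respects the monoidal structures. Recall from Section~\ref{sec:norm_technical} that $X \mapsto \underline{\pi}_0(N^G_{\{e\}}X \wedge \tilde{E}\mathcal{F}(S))$ is symmetric monoidal via the strong monoidality of $\tilde{N}^G_{\{e\}}$, idempotence of $\tilde{E}\mathcal{F}(S)$ under smashing, and the box-product formula for $\pi_0$ of a smash of connective spectra (Remark~\ref{rem:truncations_agree_spectra}), and that $\underline{W}^S_G$ is (lax, and strong for $S = S_0$) symmetric monoidal. By the reduction above the monoidal structure transports to the functor $M \mapsto \underline{\pi}_0(N^G_{\{e\}}(HM) \wedge \tilde{E}\mathcal{F}(S))$ on $(\mathrm{Ab}, \otimes_{\mathbb{Z}})$, and both this functor and $M \mapsto \underline{W}^S_G(\mathbb{Z}; M)$ preserve reflexive coequalisers (Lemma~\ref{lem:norm_refl_coeq}, and essentially by definition for the Witt vectors), so by Lemma~\ref{lem:finite_module_kan} and Remark~\ref{rem:finite_module_kan} it suffices to verify compatibility of the external products and units on free coefficients $Q$. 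There $\overline{w} : \underline{\pi}_0(N^G_{\{e\}}(HQ) \wedge \tilde{E}\mathcal{F}(S)) \to \underline{\text{gh}}^S_G(\mathbb{Z}; Q)$ is an injective, monoidal map of Mackey functors (Lemmas~\ref{lem:top_ghost_injective} and \ref{lem:norm_ghost}), and $w = \overline{w}I$ with $w$ the ghost map underlying the lax monoidal functor $\underline{\text{gh}}^S_G$, injective on free coefficients; since the external product on the Witt vectors and the smash-induced product on the norm side both lift the product $\tilde{\star}$ on ghost groups along these injections, $I$ intertwines them, and the units agree because each is $\tau_{G/H}(1)$ followed by a truncation map, both of which $I$ preserves by Theorem~\ref{thm:witt_norm_iso}.

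I expect the bookkeeping in this last step --- matching the product on the Witt side (defined as a lift along $w$) with the smash-product-induced product on $\underline{\pi}_0(N^G_{\{e\}}(-))$ through the ghost groups --- to be the only point requiring genuine care; everything else is a direct assembly of Theorem~\ref{thm:witt_norm_iso}, Corollary~\ref{cor:replace_with_em}, and the elementary computation $\tilde{E}\mathcal{F}(S_0) \simeq S^0$.
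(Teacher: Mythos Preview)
Your proposal is correct and follows essentially the same route as the paper: reduce to Eilenberg--MacLane spectra via Corollary~\ref{cor:replace_with_em}, invoke Theorem~\ref{thm:witt_norm_iso}, and recover the untruncated case as $S = S_0$. Your treatment of the monoidal compatibility is more detailed than the paper's one-line argument (which just notes that $\overline{w}$ is monoidal and the Witt monoidal structure is the unique lift of that on $\underline{\text{gh}}^S_G$), but it unpacks exactly the same idea.
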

\begin{proof}
  We have monoidal natural isomorphisms of Mackey functors
  \[\underline{\pi}_0(N_{\{e\}}^G X \wedge {\tilde{E} \mathcal{F}(S)}) \cong \underline{\pi}_0(N_{\{e\}}^G(H \pi_0 X) \wedge \tilde{E} \mathcal{F}(S)) \cong \underline{W}^S_G(\mathbb{Z}; \pi_0 X)\]
  where the first isomorphism is Corollary~\ref{cor:replace_with_em} (applied to
  $QX$) and the
  second isomorphism is given by Theorem~\ref{thm:witt_norm_iso}. The untruncated
  version is the special case where $S$ is the set of all subgroups of $G$.
\end{proof}

\appendix

\section{Example computations} \label{app:computation}

Our Witt vectors are fairly amenable to explicit calculation, either directly
from the definition, or by using Proposition~\ref{prop:witt_free} or
Lemma~\ref{lem:witt_ab_group_free_coeff} to compute Witt vectors with
free coefficients and then taking reflexive coequalisers.
Very small cases are reasonable to do by hand, and it would be straightforward
for a computer algebra system to compute somewhat larger cases. To illustrate
this we include some calculations for $G = D_6$, the dihedral
group of order $6$.

Define
\[D_6 = \langle r, s \mid r^3 = s^2 = e, srs^{-1} = r^{-1} \rangle \text{.}\]
Then $D_6$ has elements
\[\{e, r, r^2, s, sr, sr^2\}\]
with subgroups
\[\{e\}, \{e, r, r^2\}, \{e, s\}, \{e, sr\}, \{e, sr^2\}, D_6\]
where the three order $2$ subgroups are all conjugate.

We choose $\langle s
\rangle = \{e, s\}$ to represent the conjugacy class of order 2 subgroups. We take
$\{e, r, r^2, s, sr, sr^2\}$ as coset representatives for $D_6 / \{e\}$, $\{e, r, r^2\}$ as
representatives for $D_6/\langle s \rangle$, $\{e, s\}$ as
representatives for $D_6/\langle r \rangle$, and $\{e\}$ as
the representative for $D_6/D_6$. It will be useful to fix an
ordering on each set of coset representatives, and we use the orderings just given.

\subsection{Direct from definition}

We know that $W_{D_6}(R; M)$ has underlying set a quotient of
\[M^{\otimes_R D_6/D_6} \times
M^{\otimes_R D_6/\langle r \rangle} \times M^{\otimes_R D_6/\langle s \rangle}
\times M^{\otimes_R D_6/\{e\}} \cong M \times M^{\otimes_R 2} \times
M^{\otimes_R 3} \times M^{\otimes_R 6} \text{.}\]
We will switch between the left and right sides of the isomorphism
wherever convenient, using the orderings fixed above.

We can use the ghost maps to gain some understanding of this quotient and
obtain formulae describing the addition operation at the level of
representatives in the above product of tensor powers. Recall the ghost
components are maps $w_U : \prod_{V \in \can{S}} M^{\otimes_R D_6/V} \to
M^{\otimes_R D_6/U}$. We have
\begin{align*}
  w_{D_6}(n) &= n_{D_6}\\
  w_{\langle r \rangle}(n) &= n_{D_6}^{\otimes_R D_6 / \langle r \rangle} + \text{tr}^{D_6}_{\langle r \rangle} n_{\langle r \rangle}\\
  w_{\langle s \rangle}(n) &= n_{D_6}^{\otimes_R D_6 / \langle s \rangle} + n_{\langle s \rangle}\\
  w_{\{e\}}(n) &= n_{D_6}^{\otimes_R D_6/\{e\}} + \text{tr}^{D_6}_{\langle r \rangle} f_{D_6/\langle r \rangle}(n_{\langle r \rangle}^{\otimes_R \langle r \rangle/\{e\}}) + \text{tr}^{D_6}_{\langle s \rangle} f_{D_6/\langle s \rangle}(n_{\langle s \rangle}^{\otimes_R \langle s \rangle/\{e\}}) + \text{tr}^{D_6}_{\{e\}} n_{\{e\}} \text{,}
\end{align*}
where $f_{D_6/\langle r \rangle} : M^{\otimes_R D_6/\langle r \rangle \times
  \langle r \rangle/\{e\}} \to M^{\otimes_R D_6/\{e\}}$ is defined using our choice of coset
representatives for $D_6/\langle r \rangle$, and similarly for $f_{D_6/\langle s \rangle}$.
Note in this case we could write all the formulae neatly using transfers, but this is not
possible in general; for many larger choices of $G$ we would need to write out sums
over $gV \in (G/V)^U$ as in the definition of the ghost map.

For $(T; Q)$ free then $W_{D_6}(T; Q)$ has underlying set $Q \times Q^{\otimes_T 2} \times
Q^{\otimes_T 3} \times Q^{\otimes_T 6}$ modulo identifying any two elements
which have the same images under all the ghost components. Suppose we have $n$ and
$n'$ such that $w_V(n) = w_V(n')$ for all distinguished subgroups $V$. Since
$w_{D_6}(n) = w_{D_6}(n')$ we see that $n_{D_6} = n'_{D_6}$.
From $V = \langle s \rangle$ we get $n_{\langle s \rangle} = n'_{\langle s
  \rangle}$. From $V = \langle r \rangle$ we get
$\text{tr}^{D_6}_{\langle r \rangle} n_{\langle r \rangle} =
\text{tr}^{D_6}_{\langle r \rangle} n'_{\langle r \rangle}$. Since $(T; Q)$ is
free, this is equivalent to saying that $n_{\langle r \rangle}$ and $n'_{\langle r \rangle}$ represent the same element
in the group of orbits $(Q^{\otimes_T 2})_{C_2}$. The condition that
$w_{\{e\}}(n) = w_{\{e\}}(n')$ is harder to interpret in general, since
$\text{tr}^{D_6}_{\langle r \rangle} f_{D_6/\langle r \rangle}(n_{\langle r
  \rangle}^{\otimes_T \langle r \rangle/\{e\}})$ and $\text{tr}^{D_6}_{\langle r
  \rangle} f_{D_6/\langle r \rangle}({n'}_{\langle r \rangle}^{\otimes_T \langle
  r \rangle/\{e\}})$ are not necessarily the same and so we have a more
complicated condition involving both the $\{e\}$ and $\langle r \rangle$ components. We can still make conclusions
in particular cases: for example when $T = Q = \mathbb{Z}$ it is easy to check
that $w$ is already injective, so it induces an isomorphism of sets $\mathbb{Z}^4 \cong \prod_{V \in
  \can{S}} \mathbb{Z}^{\otimes_\mathbb{Z} D_6/V} \cong W_{D_6}(\mathbb{Z}; \mathbb{Z})$.

Let us determine formulae for the addition operation, in terms of
representatives for elements of the quotient. Suppose we have $n, n' \in Q
\times Q^{\otimes_T 2} \times
Q^{\otimes_T 3} \times Q^{\otimes_T 6}$. We want to find an $m$ such that $[n] +
[n'] = [m] \in W_{D_6}(T; Q)$. It suffices to ensure that $w_V(m) = w_V(n) +
w_V(n')$ for each distinguished subgroup $V$. We can consider each $V$ in turn going from larger to smaller, where at
each stage we use the equation for the $V$ component of the ghost map to
determine a value for $m_V$. By the proof of the Dwork lemma this is guaranteed
to work: we will never ``get stuck'' and need to change one of our earlier choices.

Considering $V = D_6$ shows
\[m_{D_6} = n_{D_6} + n'_{D_6} \text{.}\]
From $V = \langle s \rangle$ we get
\begin{align*}
  m_{\langle s \rangle} &= n_{\langle s \rangle} + n'_{\langle s \rangle} +
                          n_{D_6}^{\otimes_T D_6/\langle s \rangle} + n_{D_6}^{\otimes_T D_6/\langle s
                          \rangle} - (n_{D_6} + n'_{D_6})^{\otimes_T D_6/\langle s \rangle}\\
                        &= n_{\langle s \rangle} +  n'_{\langle s \rangle} - \text{tr}_{\langle s
                          \rangle}^{D_6}(n_{D_6} \otimes_T n'_{D_6} \otimes_T n'_{D_6} + n_{D_6} \otimes_T n_{D_6} \otimes_T n'_{D_6})\text{.}
\end{align*}
Looking at $V = \langle r \rangle$ we see that
\begin{align*}
  \text{tr}_{\langle r \rangle}^{D_6} m_{\langle r \rangle} &= \text{tr}_{\langle
    r \rangle}^{D_6} n_{\langle r \rangle} + \text{tr}_{\langle r
    \rangle}^{D_6} n'_{\langle r \rangle} + n_{D_6}^{\otimes_T D_6/\langle r
                                                              \rangle} + {n'}_{D_6}^{\otimes_T D_6/\langle r \rangle} - (n_{D_6}+n'_{D_6})^{\otimes_T D_6/\langle r \rangle}\\
                                                            &= \text{tr}^{D_6}_{\langle r \rangle} (n_{\langle r \rangle} + n'_{\langle r \rangle}) - n_{D_6} \otimes_T n'_{D_6} - n'_{D_6} \otimes_T n_{D_6}\\
  &= \text{tr}^{D_6}_{\langle r \rangle}(n_{\langle r \rangle} + n'_{\langle r \rangle} - n_{D_6} \otimes_T n'_{D_6}) \text{.}
\end{align*}
So we can take
\[m_{\langle r \rangle} = n_{\langle r \rangle} + n'_{\langle r \rangle} -
  n_{D_6} \otimes_T n'_{D_6} \text{.}\]
We can do much the same to determine a formula for $m_{\{e\}}$, but expanding out
sixth tensor powers rapidly becomes tedious and so we will not reproduce it here.

We derived these formulae under the condition that $(T; Q)$ is free; but they
are natural in the choice of coefficients and so by functoriality of $W_{D_6}$
we conclude that they hold for general coefficients $(R; M)$. We can consider these formulae to be a
generalisation of the Witt polynomials describing the ring structure of the
usual Witt vectors of a ring.

\subsection{Explicit computations with free abelian groups}

For $(T; Q)$ free then Proposition~\ref{prop:witt_free} says that we have an
isomorphism of (free) abelian groups
\begin{equation}\label{eq:explicit_free_iso}W_{D_6}(T; Q) \cong \bigoplus_{V \in \can{S}} (Q^{\otimes_T G/V})_{N_{D_6}(V)}
  \text{.}\end{equation}
The ghost group is
$\text{gh}_{D_6}(\mathbb{Z}; \mathbb{Z}) = \left( \bigoplus_{U \in S} Q^{\otimes_T
    G/U} \right)^H \cong \bigoplus_{V \in \can{S}} (Q^{\otimes_T G/V})^{N_{D_6}(V)}$, and
under the isomorphism (\ref{eq:explicit_free_iso}) the ghost map has formula
\[w_U(n) = \sum_{V \in \can{S}} \sum_{gV \in (D_6/V)^U} g \cdot \phi^V_{U^g}(n_V) \text{.} \]
This lets us find generators of $W_{D_6}(T; Q)$ as a free abelian subgroup of
$\text{gh}_{D_6}(T; Q)$. We will apply this to compute $W_{D_6}(\mathbb{Z}; \mathbb{Z}/3\mathbb{Z})$.
Larger choices of coefficients are painful to compute with by hand, but could certainly be done with computer assistance.

\begin{lemma}
  We have
  \[W_{D_6}(\mathbb{Z}; \mathbb{Z}/3\mathbb{Z}) \cong (\mathbb{Z}/3\mathbb{Z})^2
    \times \mathbb{Z}/9\mathbb{Z} \text{.}\]
\end{lemma}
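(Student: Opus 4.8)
The plan is to build the computation up along a chain of truncation sets and to resolve the resulting extension problems using a small free resolution of $\mathbb{Z}/3\mathbb{Z}$. Write $\underline{S} = \{D_6, \langle r\rangle, \langle s\rangle, \{e\}\}$ for the four conjugacy classes of subgroups, and set $S_0 = \{D_6\} \subset S_1 = \{D_6,\langle r\rangle\} \subset S_2 = S_1 \cup \{\langle s\rangle, \langle sr\rangle, \langle sr^2\rangle\} \subset S_3$, where $S_3$ is the set of all subgroups, so that each $S_i$ is obtained from $S_{i-1}$ by adjoining a single minimal conjugacy class $K_i$. Corollary~\ref{cor:witt_exact_minimal} then gives right exact sequences
\[\big((\mathbb{Z}/3\mathbb{Z})^{\otimes_{\mathbb{Z}} D_6/K_i}\big)_{N_{D_6}(K_i)} \xrightarrow{V^{D_6}_{K_i} \tau_{D_6/K_i}} W^{S_i}_{D_6}(\mathbb{Z}; \mathbb{Z}/3\mathbb{Z}) \xrightarrow{R_{S_{i-1}}} W^{S_{i-1}}_{D_6}(\mathbb{Z}; \mathbb{Z}/3\mathbb{Z}) \to 0\text{.}\]
Since $\mathbb{Z}/3\mathbb{Z}$ is a cyclic $\mathbb{Z}$-module, every tensor power $(\mathbb{Z}/3\mathbb{Z})^{\otimes_{\mathbb{Z}} n}$ is again cyclic of order $3$ and the permutation (Weyl-group) action on it is trivial, so each left-hand group is just $\mathbb{Z}/3\mathbb{Z}$. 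Starting from $W^{S_0}_{D_6}(\mathbb{Z}; \mathbb{Z}/3\mathbb{Z}) \cong \mathbb{Z}/3\mathbb{Z}$, this exhibits $W_{D_6}(\mathbb{Z};\mathbb{Z}/3\mathbb{Z})$ as an iterated extension by copies of $\mathbb{Z}/3\mathbb{Z}$ of total order dividing $81$.

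To pin these sequences down I would compute, for each $i$, the order of the generator $V^{D_6}_{K_i}\tau_{D_6/K_i}(1)$ and its image under $R_{S_{i-1}}$. The ghost map gives upper bounds (e.g.\ $w_{K_i}(V^{D_6}_{K_i}\tau_{D_6/K_i}(1)) = \operatorname{tr}^{N_{D_6}(K_i)}_{K_i}(1) \neq 0$, so the generator is nonzero), but $w$ is not injective for $M = \mathbb{Z}/3\mathbb{Z}$ and $\tau$ is not additive, so exact orders need an honest argument. For that I would use the explicit Witt-polynomial-type addition formulas from the previous subsection (established for free coefficients, hence valid for $\mathbb{Z}/3\mathbb{Z}$ by functoriality of $W_{D_6}$): these already give $3\,\tau_{D_6/D_6}(1) = 0$ in $W^{S_1}_{D_6}(\mathbb{Z};\mathbb{Z}/3\mathbb{Z})$ and a generator of $\ker(R_{S_0})$ of order $3$, so that $W^{S_1}_{D_6}(\mathbb{Z};\mathbb{Z}/3\mathbb{Z}) \cong (\mathbb{Z}/3\mathbb{Z})^2$, splitting off $\tau_{D_6/D_6}(1)$; analogous finite computations then handle the $\langle s\rangle$- and $\{e\}$-generators. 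Equivalently, one can resolve everything at once by applying $W_{D_6}(\mathbb{Z};-)$ to the reflexive coequaliser $(\mathbb{Z};\mathbb{Z}^2) \rightrightarrows (\mathbb{Z};\mathbb{Z}) \to (\mathbb{Z};\mathbb{Z}/3\mathbb{Z})$ with parallel maps $(a,b)\mapsto a$, $(a,b)\mapsto a+3b$ and common section $a\mapsto(a,0)$; since $W_{D_6}(\mathbb{Z};-)$ preserves reflexive coequalisers (as in the proof of Theorem~\ref{thm:witt_norm_iso}) and both $(\mathbb{Z};\mathbb{Z})$ and $(\mathbb{Z};\mathbb{Z}^2)$ are free, Lemma~\ref{lem:witt_ab_group_free_coeff} gives explicit bases, with $W_{D_6}(\mathbb{Z};\mathbb{Z}) \cong \mathbb{Z}^4$ and $W_{D_6}(\mathbb{Z};\mathbb{Z}^2) \cong \bigoplus_{V\in\underline{S}}\mathbb{Z}\big((\{y_1,y_2\}^{\times D_6/V})_{N_{D_6}(V)}\big)$ of ranks $2+3+8+16 = 29$ (the orbit counts obtained from Burnside's lemma), after which $W_{D_6}(\mathbb{Z};\mathbb{Z}/3\mathbb{Z})$ is the cokernel of $(p_1)_\ast - (p_1+3p_2)_\ast$ between these.

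Carrying this through I expect $W^{S_1}_{D_6}(\mathbb{Z};\mathbb{Z}/3\mathbb{Z}) \cong (\mathbb{Z}/3\mathbb{Z})^2$, and then exactly one of the two remaining extensions to be non-split, producing a $\mathbb{Z}/9\mathbb{Z}$ and an order-$81$ group $\cong (\mathbb{Z}/3\mathbb{Z})^2 \times \mathbb{Z}/9\mathbb{Z}$. The main obstacle is identifying which of the top two extensions fails to split: the $\mathbb{Z}/9\mathbb{Z}$ arises from the interaction of the $\langle r\rangle$- (or $\langle s\rangle$-) component with the $\{e\}$-component, encoded precisely in the sixth-tensor-power terms of the ghost map that the first subsection of this appendix declined to write out, so one must either push the rank-$16$ coequaliser computation through or carry out the corresponding finite arithmetic with the Verschiebung and Teichm\"uller maps on $\mathbb{Z}/3\mathbb{Z}$. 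Everything else — orbit counts, transfers, and checking that the maps commute with the exact sequences — is routine bookkeeping.
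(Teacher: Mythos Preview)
Your reflexive-coequaliser approach is exactly what the paper does: it uses the same resolution $\mathbb{Z}^2 \rightrightarrows \mathbb{Z} \twoheadrightarrow \mathbb{Z}/3\mathbb{Z}$ (with $p(a,b)=a+3b$, $q(a,b)=a$), embeds $W_{D_6}(\mathbb{Z};\mathbb{Z})$ into the ghost group $\mathbb{Z}^4$, and then tabulates $p_*(w(y))$ and $q_*(w(y))$ for each monomial generator $y$ of $W_{D_6}(\mathbb{Z};\mathbb{Z}^2)$. These images depend only on the pair $(u,v)$ counting occurrences of the two basis letters, so your $29$ generators collapse to a $16$-row table, after which the quotient is routine integer linear algebra. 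The paper never unpacks the sixth-tensor-power addition formula; working entirely inside the ghost group sidesteps that, which is the practical advantage of the coequaliser route over your iterated-extension approach.

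One genuine slip in your exact-sequence sketch: the ghost argument you give for nonvanishing fails at the last step. For $K_3 = \{e\}$ one has $\operatorname{tr}^{N_{D_6}(\{e\})}_{\{e\}}(1) = \lvert D_6\rvert = 6 \equiv 0 \pmod 3$, and in fact every ghost component of $V^{D_6}_{\{e\}}\tau_{D_6/\{e\}}(1)$ vanishes, so $w$ cannot witness that this element is nonzero. The paper supplies this separately by a cardinality check: the quotient map $\prod_{V\in\underline{S}} M^{\otimes_{\mathbb{Z}} D_6/V} \twoheadrightarrow W_{D_6}(\mathbb{Z};M)$ is already a bijection of sets when $M=\mathbb{Z}$ (the ghost map is injective on torsion-free coefficients), and bijectivity passes through the reflexive coequaliser to $M=\mathbb{Z}/3\mathbb{Z}$, forcing $\lvert W_{D_6}(\mathbb{Z};\mathbb{Z}/3\mathbb{Z})\rvert = 3^4 = 81$ and hence injectivity of the final $V^{D_6}_{\{e\}}\tau_{D_6/\{e\}}$.
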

\begin{proof}
  First we consider $T = Q = \mathbb{Z}$. By Proposition~\ref{prop:witt_free}, $W_{D_6}(\mathbb{Z};
  \mathbb{Z}) \cong \bigoplus_{V \in \can{S}} \mathbb{Z}$. The ghost group is
  $\bigoplus_{V \in \can{S}} \mathbb{Z}$, and the ghost map has formula
  \[w_U(n) = \sum_{V \in \can{S}} \abs{(D_6/V)^U} n_V \text{.} \]
  Ordering the subgroups from largest to smallest, we find that
  $W_{D_6}(\mathbb{Z}; \mathbb{Z})$
  considered as a subgroup of $\text{gh}_{D_6}(\mathbb{Z}; \mathbb{Z}) \cong \mathbb{Z}^4$ has basis
  \begin{equation} \label{eq:free_basis_list}\{(1, 1, 1, 1), (0, 2, 0, 2), (0, 0, 1, 3), (0, 0, 0, 6)\} \subset \mathbb{Z}^4 \text{.}\end{equation}

  The point of this is to give us a concrete setting in which to
  compute $W_{D_6}(\mathbb{Z}; \mathbb{Z}/3)$ as a quotient of
  $W_{D_6}(\mathbb{Z}; \mathbb{Z})$. We can write
  $\mathbb{Z}/3\mathbb{Z}$ as a reflexive coequaliser of free abelian groups:
  \[\mathbb{Z}^2 \, \substack{\xrightarrow{p}\\[-0.2em]
      \xleftarrow{}\\[-0.2em] \xrightarrow[q]{}} \, \mathbb{Z} \to
    \mathbb{Z}/3\mathbb{Z}\]
  where $p(a, b) = a + 3b$ and $q(a, b) = a$, with common section $s(a) = (a, 0)$. So we have a reflexive coequaliser diagram
  \[W_{D_6}(\mathbb{Z}; \mathbb{Z}^2) \, \substack{\xrightarrow{p_\ast}\\[-0.2em]
      \xleftarrow{\makebox[1.5ex]{}}\\[-0.2em] \xrightarrow[q_\ast]{}} \, W_{D_6}(\mathbb{Z}; \mathbb{Z}) \to
    W_{D_6}(\mathbb{Z}; \mathbb{Z}/3\mathbb{Z}) \text{.}\]
  Our next step is to take generators for $W_{D_6}(\mathbb{Z}; \mathbb{Z}^2)$ and
  compute their images under the ghost map followed by $p_\ast$ and $q_\ast$. This
  will give us the relations defining $W_{D_6}(\mathbb{Z};
  \mathbb{Z}/3\mathbb{Z})$ as a quotient of $W_{D_6}(\mathbb{Z}; \mathbb{Z})$.

  Write $\mathbb{Z}^2$ as $\mathbb{Z}(\alpha, \beta)$, the free abelian group on
  two elements. Then $W_{D_6}(\mathbb{Z}; \mathbb{Z}(\alpha, \beta))$ is generated by elements
  of the form
  \begin{align*}\bigotimes_{gW \in D_6/W} y_{gW} \in (\mathbb{Z}(\alpha,
    \beta)^{\otimes_\mathbb{Z} D_6/W})_{N_{D_6}(W)} &\le \bigoplus_{V \in \can{S}} (\mathbb{Z}(\alpha,
                                                      \beta)^{\otimes_\mathbb{Z} D_6/V})_{N_{D_6}(V)}\\
    &\cong W_{D_6}(\mathbb{Z};
    \mathbb{Z}(\alpha, \beta))\end{align*}
  where $W \in \can{S}$ and $y_{gW} \in \{\alpha, \beta\}$ for $gW \in G/W$.
  Denote this element by $y$. The image of such an element under the $V$-component of the ghost map is given by
  \[w_V(y) = \sum_{gW \in (G/W)^V} g \cdot \left( \bigotimes_{hV^g \in G/V^g} y_{hW}\right) \in
    \mathbb{Z}(\alpha, \beta)^{\otimes_\mathbb{Z} D_6/V} \text{.} \]
  If $V$ is not subconjugate to $W$ then this is zero. Otherwise suppose $y_{gW}$ is $\alpha$ for $u$ values of $gW$ and $\beta$ for $v =
  \abs{G/W} - u$ values, and let $r = \abs{G:V}/\abs{G:W}$. Then after applying $p_\ast$ and $q_\ast$ to the above
  we get
  $p_\ast(w_V(y)) = \abs{(G/W)^V} 1^{ur}
  3^{vr}$ and $q_\ast(w_V(y)) = \abs{(G/W)^V} 1^{ur} 0^{vr}$ respectively.
  
  For example, consider the element
  \[y = \alpha \otimes \beta \in
  (\mathbb{Z}^{\otimes_\mathbb{Z} D_6/\langle r \rangle})_{N_{D_6}(\langle r \rangle)} \le W_{D_6}(\mathbb{Z};
  \mathbb{Z}(\alpha, \beta))\text{.}\]
We see that $w_{D_6}(y)$ and
  $w_{\langle s \rangle}(y)$ are zero. Meanwhile $w_{\langle r
    \rangle}(y) = \alpha \otimes \beta + \beta \otimes \alpha$
  and $w_{\{e\}}(y) = \text{tr}_{\{e\}}^{D_6}(\alpha^{\otimes
    3} \otimes \beta^{\otimes 3})$. And we have
  \[p_\ast (w(y)) = (0, 2 \cdot 3, 0, 2 \cdot 3^3) \text{,} \quad q_\ast(w(y)) = (0, 0, 0, 0) \text{.}\]

  We do similar calculations for all the other generators of
  $W_{D_6}(\mathbb{Z}; \mathbb{Z}(\alpha, \beta))$.
  Observe that
  $p_\ast(w(y))$ and $q_\ast(w(y))$ only depend on the numbers $u$ and $v$ of occurrences of $\alpha$ and
  $\beta$ in $y$, so many different generators give the same results.
  We get
  the following table:

  \[\begin{array}{c c c c c}
      W & u & v & p_\ast(w(y)) & q_\ast(w(y))\\
      \hline
      D_6&1&0&\begin{pmatrix}1&1&1&1\end{pmatrix}&\begin{pmatrix}1&1&1&1\end{pmatrix}\\
      D_6&0&1&\begin{pmatrix}3&3^2&3^3&3^6\end{pmatrix}&\begin{pmatrix}0&0&0&0\end{pmatrix}\\
      \langle r \rangle&2&0&\begin{pmatrix}0&2&0&2\end{pmatrix}&\begin{pmatrix}0&2&0&2\end{pmatrix}\\
      \langle r \rangle&1&1&\begin{pmatrix}0&2\cdot3&0&2\cdot3^3\end{pmatrix}&\begin{pmatrix}0&0&0&0\end{pmatrix}\\
      \langle r \rangle&0&2&\begin{pmatrix}0&2\cdot3^2&0&2\cdot3^6\end{pmatrix}&\begin{pmatrix}0&0&0&0\end{pmatrix}\\
      \langle s \rangle&3&0&\begin{pmatrix}0&0&1&3\end{pmatrix}&\begin{pmatrix}0&0&1&3\end{pmatrix}\\
      \langle s \rangle&2&1&\begin{pmatrix}0&0&3&3 \cdot 3^2\end{pmatrix}&\begin{pmatrix}0&0&0&0\end{pmatrix}\\
      \langle s \rangle&1&2&\begin{pmatrix}0&0&3^2&3 \cdot 3^4\end{pmatrix}&\begin{pmatrix}0&0&0&0\end{pmatrix}\\
      \langle s \rangle&0&3&\begin{pmatrix}0&0&3^3&3 \cdot 3^6\end{pmatrix}&\begin{pmatrix}0&0&0&0\end{pmatrix}\\
      \{e\}&6&0&\begin{pmatrix}0&0&0&6\end{pmatrix}&\begin{pmatrix}0&0&0&6\end{pmatrix}\\
      \{e\}&5&1&\begin{pmatrix}0&0&0&6\cdot 3\end{pmatrix}&\begin{pmatrix}0&0&0&0\end{pmatrix}\\
      \{e\}&4&2&\begin{pmatrix}0&0&0&6\cdot 3^2\end{pmatrix}&\begin{pmatrix}0&0&0&0\end{pmatrix}\\
      \{e\}&3&3&\begin{pmatrix}0&0&0&6\cdot 3^3\end{pmatrix}&\begin{pmatrix}0&0&0&0\end{pmatrix}\\
      \{e\}&2&4&\begin{pmatrix}0&0&0&6\cdot 3^4\end{pmatrix}&\begin{pmatrix}0&0&0&0\end{pmatrix}\\
      \{e\}&1&5&\begin{pmatrix}0&0&0&6\cdot 3^5\end{pmatrix}&\begin{pmatrix}0&0&0&0\end{pmatrix}\\
      \{e\}&0&6&\begin{pmatrix}0&0&0&6\cdot 3^6\end{pmatrix}&\begin{pmatrix}0&0&0&0\end{pmatrix}\\
    \end{array}\]

  So to compute $W_{D_6}(\mathbb{Z}; \mathbb{Z}/3\mathbb{Z})$ we take
  $W_{D_6}(\mathbb{Z}; \mathbb{Z})$ considered as the subgroup of $\mathbb{Z}^4$
  with basis (\ref{eq:free_basis_list}),
  then quotient by the subgroup generated by the elements $p_\ast(w(y)) - q_\ast(w(y))$ for
  each row of the table. After some straightforward algebra we compute
  \[W_{D_6}(\mathbb{Z}; \mathbb{Z}/3\mathbb{Z}) \cong (\mathbb{Z}/3\mathbb{Z})^2
    \oplus \mathbb{Z}/9\mathbb{Z} \text{.}\]
\end{proof}

To double check this, we can show that
$W_{D_6}(\mathbb{Z}; \mathbb{Z}/3\mathbb{Z})$ has order $81$ independently of
the above calculation.
Recall that the underlying set of $W_{D_6}(R; M)$ is a quotient of $\bigoplus_{V \in \can{S}} M^{\otimes_R
D_6/V}$. Since products and tensor powers preserve reflexive coequalisers, we have
a map of reflexive coequalisers
\[\begin{tikzcd}
  \bigoplus_{V \in \can{S}} (\mathbb{Z}^2)^{\otimes_\mathbb{Z} D_6/V} \ar[r, yshift=0.5ex]
  \ar[r, yshift=-0.5ex] \ar[d, twoheadrightarrow] &
  \bigoplus_{V \in \can{S}} \mathbb{Z}^{\otimes_\mathbb{Z} D_6/V} \ar[r,
  twoheadrightarrow] \ar[d, twoheadrightarrow] &
  \bigoplus_{V \in \can{S}} (\mathbb{Z}/3\mathbb{Z})^{\otimes_\mathbb{Z} D_6/V} \ar[d, twoheadrightarrow]\\
  W_{D_6}(\mathbb{Z}; \mathbb{Z}^2) \ar[r, yshift=0.5ex] \ar[r, yshift=-0.5ex] & W_{D_6}(\mathbb{Z}; \mathbb{Z})
  \ar[r, twoheadrightarrow] & W_{D_6}(\mathbb{Z}; \mathbb{Z}/3\mathbb{Z}) \text{.}
\end{tikzcd}\]
But as we remarked earlier the middle
vertical map is an isomorphism of sets, since the ghost map $w : \bigoplus_{V \in \can{S}}
\mathbb{Z}^{\otimes_\mathbb{Z} D_6/V} \cong \mathbb{Z}^4 \to
\text{gh}_{D_6}(\mathbb{Z}; \mathbb{Z})$ is already injective.
By an easy diagram chase we conclude that the
right vertical map is also an isomorphism.
That is, $W_{D_6}(\mathbb{Z};
\mathbb{Z}/3\mathbb{Z}) \cong \bigoplus_{V \in \can{S}}
(\mathbb{Z}/3\mathbb{Z})^{\otimes_\mathbb{Z} D_6/V}\cong (\mathbb{Z}/3\mathbb{Z})^4$ as sets and has $81$
elements.
\printbibliography

\end{document}